\numberwithin{equation}{section}
\newcommand{\dd}{\ensuremath{\mathrm{d}}}
\newcommand{\ind}{\mathbbm{1}}
\newcommand{\dequiv}{\overset{d}{=}}
\newtheorem{definition}{Definition}[section]
\newtheorem{theorem}[definition]{\textbf{Theorem}}
\newtheorem{lemma}[definition]{\textbf{Lemma}}
\newtheorem{proposition}[definition]{\textbf{Proposition}}
\newtheorem{corollary}[definition]{\textbf{Corollary}}
\newtheorem{remark}[definition]{\textbf{Remark}}
\definecolor{brown}{rgb}{0.6,0.4,0.2}
\def\r{{\mathbb R}}
\def\e{{\mathbb E}}
\def\p{{\mathbb P}}
\def\q{{\mathbb Q}}
\def\d{\, \mathrm{d}}
\def\eps{{\varepsilon}}
\def\B{\mathcal B}
\def\cD{\mathcal D}
\def\cL{\mathcal L}
\def\cW{\mathcal W}
\def\cS{\mathcal S}
\def\tcS{\widetilde{\mathcal S}}
\def\cY{\mathcal Y}
\def\cF{\mathcal F}
\def\rW{\mathscr W}
\def\deltahat{\widehat{\delta}}
\def\cShat{\widehat {\mathcal S}}
\title{\Large{\textbf{ Meeting of squared Bessel flow lines and application to the skew Brownian motion}  }}
\author{Elie A\"id\'ekon\footnote{\scriptsize SMS, Fudan University, China, \texttt{aidekon@fudan.edu.cn}} \and Chengshi Wang\footnote{\scriptsize SMS, Fudan University, China, \texttt{cswang21@m.fudan.edu.cn}} \and Yaolin Yu\footnote{\scriptsize SMS, Fudan University, China, \texttt{ylyu23@m.fudan.edu.cn}}}
\date{}
\begin{document}

\maketitle

\begin{abstract}
We study the meeting level between squared Bessel (BESQ) flow lines of different dimensions, and show that it gives rise to a jump Markov process. We apply these results to the skew Brownian flow introduced by Burdzy and Chen \cite{burdzy2001local} and Burdzy and Kaspi \cite{burdzy2004lenses}. It allows us to extend the results of \cite{burdzy2001local} and of Gloter and Martinez \cite{gloter2013distance} describing the local time flow of skew Brownian motions. Finally, we compute the Hausdorff dimension of exceptional times revealed by Burdzy and Kaspi \cite{burdzy2004lenses} when skew Brownian flow lines bifurcate.
\end{abstract}

{\bf Classification}. 60J65, 60J55, 60H10.

{\bf Keywords}. Skew Brownian motion, local time, stochastic flow, squared Bessel process.

\section{Introduction}

Given a white noise $\cW$ on $\r_+\times \r$ and some positive number $\delta$, we consider the strong solution $(\cS_{r,x}(a),\, x\ge r)$ of the SDE
\begin{equation}\label{eq:intro BESQ}
    \cS_{r,x}(a)=a+ \int_r^x \cW([0,\cS_{r,s}(a)],\dd s) +\delta(x-r),\,  x\ge r.
\end{equation}

\noindent For each $(a,r)\in \r_+\times \r$, the distribution of $(\cS_{r,x}(a),\, x\ge r)$ is  a squared Bessel process  (${\rm BESQ}^\delta_a$) of dimension $\delta$ starting from $a$, see Section \ref{s:BESQ process}. Following Dawson and Li \cite{dawson-li12}, we can also view it as some flow line  emanating from the point $(a,r)$. A certain version of the collection of processes $(\cS_{r,x}(a),\, x\ge r)_{(a,r)\in \r_+\times \r}$ is called ${\rm BESQ}^\delta$ flow in \cite[Section 3]{aïdékon2023stochastic}. The idea of a flow whose marginals are BESQ processes trace back to Pitman and Yor \cite{pitmanyor}. These flows naturally appear in the context of continuous-state branching processes, see Bertoin and Le Gall \cite{bertoin-legall00}, Lambert  \cite{lambert} and of generalized Ray--Knight theorems, see Carmona, Petit and Yor \cite{carmona_besq} and A\"id\'ekon, Hu and Shi \cite{aidekon2024infinite}. In \cite{aïdékon2023stochastic}, they were used to deduce disintegration theorems of a perturbed reflecting Brownian motion stopped at a random time given its occupation field. In the present paper, we will prove that BESQ flows are connected to the skew Brownian flow  introduced in Burdzy and Chen \cite{burdzy2001local} and Burdzy and Kaspi \cite{burdzy2004lenses}. \\

The first part of the paper is devoted to the study of the interaction between flow lines of different drifts  driven by the same white noise $\cW$. We distinguish three different situations. In all cases,  $\cS$ is the ${\rm BESQ}^\delta$ flow defined in \eqref{eq:intro BESQ}. The drifts $\delta,\deltahat,\delta'$ are positive numbers.

Let $Y=(Y_x,\,x\ge 0)$ denote the ${\rm BESQ}^{\deltahat}$ flow line  driven by $\cW$ starting from $(0,0)$. Suppose that $\deltahat < \delta+2$. Then any flow line $\cS_{r,\cdot}(0)$ emanating from $(0,r)$ with $r\ge 0$ meets $Y$. This is due to the fact that $\cS_{r,x}(0)-Y_x$ for $x\ge r$ is a ${\rm BESQ}^{\deltahat-\delta}$ process, hence hits $0$ a.s. if $\deltahat-\delta< 2$. Call $U(r)$ the meeting level of $Y$ and $\cS_{r,\cdot}(0)$, i.e.  
\begin{equation*}\label{U_r}
  U(r):=\inf\{x\ge r\,:\, \cS_{r,x}(0)=Y_x\}.  
\end{equation*}

\noindent See Figure \ref{fig:intro1} (a). For $a,b>0$, we let ${\mathcal B}(a,b)$ denote the beta($a,b$) distribution, i.e. the distribution on $[0,1]$ with density $\frac{1}{B(a,b)}x^{a-1}(1-x)^{b-1}$. For simplicity, we consider in the introduction the right-continuous versions of the processes. The following proposition shows that $U$ is a Markov process. We then define the law of $U$ given $U(0)=z$ as the law of this Markov process with starting position $z$.

\begin{theorem}\label{thm: main left}
    The process $(U(r)-r,\, r\ge 0)$ is a homogeneous Feller process starting from $0$. For any $r>0$, $\frac{U(r)}{r}$ has distribution ${\mathcal B}(\frac{2-\deltahat+\delta}{2},\frac{\deltahat}{2})$. Conditionally on $U(0)=z>0$, the process $U$ stays constant then jumps at time $\mathrm{x}$ where  $\frac{{\mathrm x}}{z}$ has distribution ${\mathcal B}(1,\frac{\delta}{2})$. Conditionally on ${\mathrm x}=x$, $\frac{z-x}{U(x)-x}\sim {\mathcal B}(\frac{2-\deltahat+\delta}{2},1)$.
\end{theorem}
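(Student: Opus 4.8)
The plan is to combine the coalescence of the $\delta$-flow lines with three explicit facts about squared Bessel processes — the first-passage law of a $\mathrm{BESQ}^{\deltahat-\delta}$ to $0$, the absorption probability of a $\mathrm{BESQ}^{0}$, and the Gamma law / Laplace transform of $\mathrm{BESQ}^{\delta}_0$ — and to assemble the Beta laws through the Beta–Gamma algebra. Throughout I write $\gamma_p$ for a $\mathrm{Gamma}(p)$ (shape $p$) variable, independent copies as needed, and set $a=\tfrac{2-\deltahat+\delta}{2}$.

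\textbf{Structure.} Since all lines $\cS_{r,\cdot}(0)$, $r\ge 0$, are driven by the same $\cW$ and share the drift $\delta$, they coalesce and never cross, so $\cS_{r',\cdot}(0)\le\cS_{r,\cdot}(0)$ on $[r',\infty)$ for $r<r'$. A squeezing argument then shows $\cS_{r',\cdot}(0)$ can meet $Y$ only after $\cS_{r,\cdot}(0)$ does, so $r\mapsto U(r)$ is non-decreasing; moreover $U(r')=U(r)$ precisely when $\cS_{r',\cdot}(0)$ has coalesced with $\cS_{r,\cdot}(0)$ before their common meeting level. Hence $U$ is constant between coalescence events and jumps upward at them, i.e. $V(r):=U(r)-r$ decreases with slope $-1$ and jumps up; this is the ``stays constant then jumps'' picture. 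The homogeneous Feller property of $V$ I would obtain from the regeneration of the flow and the independence properties of $\cW$, while Brownian scaling of $\cW$ makes $U$ self-similar of index $1$; the latter is what legitimises starting $U$ from an arbitrary level $z$.

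\textbf{Marginal.} Conditionally on $\cF_r$, the difference $Y_x-\cS_{r,x}(0)$, $x\ge r$, is a $\mathrm{BESQ}^{\deltahat-\delta}$ started at $Y_r$, and $U(r)-r$ is its hitting time of $0$, finite since $\deltahat-\delta<2$. By Bessel scaling this hitting time equals $Y_r\,\widetilde T$ with $\widetilde T$ independent of $Y_r$ and, by the classical first-passage law, $\tfrac{1}{2\widetilde T}\sim\gamma_{a}$. Since $Y_r\sim 2r\,\gamma_{\deltahat/2}$, writing both as independent Gamma variables and applying the Beta–Gamma identity gives $\tfrac{r}{U(r)}=\tfrac{\gamma_a}{\gamma_a+\gamma_{\deltahat/2}}\sim\mathcal B\big(\tfrac{2-\deltahat+\delta}{2},\tfrac{\deltahat}{2}\big)$, which is the stated marginal law of the ratio.

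\textbf{The jump.} Starting $U$ from $z$, let the backbone $B$ be the line realising the current meeting level, so $B$ meets $Y$ at level $z$ and $B_r\sim\mathrm{BESQ}^{\delta}_0(r)$. A later line $\cS_{r,\cdot}(0)$ keeps the meeting level at $z$ iff it coalesces with $B$ before level $z$; as $B$ and $\cS_{r,\cdot}(0)$ share the drift $\delta$, their difference is a $\mathrm{BESQ}^{0}$ started at $B_r$, whose absorption probability before level $z$ is $\exp(-B_r/2(z-r))$. Integrating this against $B_r\sim\mathrm{BESQ}^{\delta}_0(r)$ via $\e\big[\exp(-\lambda\,\mathrm{BESQ}^{\delta}_0(r))\big]=(1+2\lambda r)^{-\delta/2}$ yields $\mathbb P(\mathrm{x}>x)=((z-x)/z)^{\delta/2}$, i.e. $\mathrm{x}/z\sim\mathcal B(1,\tfrac{\delta}{2})$. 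At the jump, the overshoot $g:=Y_z-\cS_{\mathrm{x},z}(0)$ of this $\mathrm{BESQ}^{0}$ absorption is, after integrating the backbone, exponential, $g/2(z-\mathrm{x})\sim\gamma_1$; from $(Y_z,z)$ the line then meets $Y$ after a further $\mathrm{BESQ}^{\deltahat-\delta}$ hitting time $T=g/2\gamma_a$ with $\gamma_a$ independent of $g$. Hence $U(\mathrm{x})-\mathrm{x}=(z-\mathrm{x})+T=(z-\mathrm{x})(\gamma_a+\gamma_1)/\gamma_a$, and the Beta–Gamma identity gives $\tfrac{z-\mathrm{x}}{U(\mathrm{x})-\mathrm{x}}=\tfrac{\gamma_a}{\gamma_a+\gamma_1}\sim\mathcal B(a,1)$, as claimed.

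\textbf{Main obstacle.} The delicate point is the conditioning. Mechanism (i) used that, under $\{U=z\}$, the backbone still behaves like a free $\mathrm{BESQ}^{\delta}_0$ for the catching computation, and mechanism (ii) used that the overshoot $g$ is exponential given the first failure time $\mathrm{x}$; both require disentangling the ``catching from below'', which sees only the $\delta$-flow, from the ``meeting with $Y$'', which sees the $\deltahat$-line, and controlling the size-biasing created by selecting the first $r$ that fails. I expect this to be the crux: it should be handled by a regeneration/independence argument built on the strong Markov property of the flow and its dual ($\deltahat$-)flow, or else by identifying $V$ as the unique self-similar homogeneous Feller process that decreases at unit rate, jumps upward, and has the marginals above, and reading off the jump kernel from the forward equation. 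Establishing the Feller property and this characterisation rigorously is the remaining technical load.
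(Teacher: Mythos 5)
Your computations all land on the paper's answers, and two of them are essentially the paper's own arguments: the paper also realizes $U(r)$ as the hitting time of $0$ of a concatenated ${\rm BESQ}_0(\deltahat\,|_r\,\deltahat-\delta)$ process via property ({\bf P}) and invokes the Beta--Gamma algebra (Lemma \ref{distribution:hitting0}), and its formula $P^z(\mathrm{x}\ge x)=(1-x/z)^{\delta/2}$ is your Laplace-transform/exponential-absorption computation in disguise. The problem is that the two points you yourself flag as ``the crux'' are genuine gaps, and they are exactly where the paper has to work. The first is the homogeneous Feller property, which is part of the statement and which you do not prove. ``Regeneration of the flow and independence properties of $\cW$'' is the right idea but is not routine: the curve along which one must split the noise is $Y^r$, the concatenation of $\cS_{r,\cdot}(0)$ up to level $U(r)$ with $Y$ above it, and this curve is built from the meeting level itself. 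To conclude that, conditionally on $U(r)$, the noise below $Y^r$ is an independent white noise with the law of the noise below a fresh ${\rm BESQ}(\delta\,|_{U(r)}\,\deltahat)$ flow line, the paper first proves that $U(r)$ is a stopping time of the noise \emph{above} $Y^r$ (the $\oplus$-flow line property, Proposition \ref{boundary}) and then applies the decomposition theorem (Proposition \ref{p:difference} (iv)); this splitting machinery is the content of Section \ref{s:decomposition} and cannot be waved through, since the conditional law of the flow given $U(r)$ is precisely what is at stake. Note also that ``conditionally on $U(0)=z$'' must be \emph{defined}; the paper does so by replacing $Y$ with the ${\rm BESQ}(\delta\,|_z\,\deltahat)$ flow line $Y^0$, which incidentally dissolves your worry about whether the backbone is still a free ${\rm BESQ}^\delta_0$ under the conditioning: it is one by construction.

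The second gap is the conditional jump law. Your exponential-overshoot claim is correct and is exactly the content of the paper's Lemma \ref{l:conv_hitting0} (conditionally on survival, the value at level $z$ of a ${\rm BESQ}^0$ started from a vanishing initial mass converges to an exponential of mean $2(z-x)$, whence the $\mathcal{B}(\frac{2-\deltahat+\delta}{2},1)$ law by Beta--Gamma), but conditioning on $\{\mathrm{x}=x\}$ is a first-failure conditioning over a continuum of lines, and you leave it unresolved. The paper circumvents it rather than confronting it head-on: it computes the joint quantity $P^z(\mathrm{x}\in[x,y),\,\mathfrak{U}>u)$, sandwiches it between the two-line probability $\p(\cS_{x,z}(0)=Y^0_z,\ \cS_{y,u}(0)<Y^0_u)$ and that probability minus $P^z(\mathrm{x}\ge x,\ U\ \text{jumps at least twice in}\ [x,y))$, proves the double-jump term is $O((y-x)^2)$ by the strong Markov property (i.e. by the already-established part (ii) of Theorem \ref{thm:U_process}), factorizes the two-line probability by property ({\bf P}) into $\p(T_x<z)\,\p(T_{y-x,z-x}>u-x)$, and lets $y\downarrow x$. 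So the size-biasing you worry about is killed by a double-jump estimate which itself rests on the Markov property -- the first gap feeds the second. Your fallback suggestion (characterizing $U(r)-r$ as the unique self-similar Feller process with unit downward drift, upward jumps and the stated marginals) is not carried out either, and such a uniqueness statement would itself require proof.
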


The second situation involves the same flow line $Y$, but we look now at the meeting level between $Y$ and $\cS_{r,\cdot}(0)$ when $r\le 0$. Therefore we require $\delta<\deltahat +2$ while we set again, but now for $r\le 0$,
\[
U(r):=\inf\{x\ge 0\,:\, \cS_{r,x}(0)=Y_x\}.
\]
\noindent This case is also pictured in Figure \ref{fig:intro1} (a). For future reference, we artificially introduce a parameter $\delta'$, which is simply equal to $\delta$ in the following theorem.
\begin{theorem}\label{thm: main right} 
    The process $(U(-r)+r,\, r\ge 0)$ is a homogeneous Feller process starting from $0$. For any $r>0$, $\frac{r}{U(-r)+r}$ has distribution ${\mathcal B}(\frac{2-\delta+\deltahat}{2},\frac{\delta'}{2})$. Conditionally on $U(0)=z>0$, the process $U(-r)$ stays constant then jumps at time $\mathrm{x}$ where  $\frac{z}{z+{\mathrm x}}$ has distribution ${\mathcal B}(\frac{\delta'}{2},1)$. Conditionally on ${\mathrm x}=x$, $\frac{z+x}{U(-x)+x}\sim {\mathcal B}(\frac{2-\delta+\deltahat}{2},1)$.
\end{theorem}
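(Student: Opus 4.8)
The proof follows the scheme of Theorem~\ref{thm: main left}, except that now the line being varied is the one emanating in the past; I would split it into the one-dimensional marginal, the Markov/homogeneity statement together with the jump time, and finally the post-jump law. For $r\ge 0$ set $V_r:=\cS_{-r,0}(0)$ and, for $x\ge 0$, $D^{(r)}_x:=\cS_{-r,x}(0)-Y_x$. Since two flow lines driven by the same white noise have a difference solving the ${\rm BESQ}$ equation of the difference of dimensions, $D^{(r)}$ is a ${\rm BESQ}^{\delta-\deltahat}$ process started from $V_r$, and $U(-r)=\inf\{x\ge 0:\,D^{(r)}_x=0\}$ is its first hitting time of $0$, finite a.s.\ because $\delta-\deltahat<2$. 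The first observation, proved from the independence of white noise over disjoint space-time regions, is a decoupling: $V_r$ is measurable with respect to the noise in the slab $\{s\le 0\}$, $Y$ is driven by the noise below its graph on $\{s\ge 0\}$, and $D^{(r)}$ by the noise in the strip between $Y$ and $\cS_{-r,\cdot}(0)$ on $\{s\ge 0\}$; hence, conditionally on $V_r=v$, the law of $D^{(r)}$ (and so of $U(-r)$) is that of a ${\rm BESQ}^{\delta-\deltahat}$ started at $v$ and is independent of $Y$.

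This reduces the marginal to two explicit facts. On one hand, $V_r$ is the time-$0$ value of a ${\rm BESQ}^{\delta}$ started at $0$ at time $-r$, so $V_r/(2r)$ is $\mathrm{Gamma}(\delta'/2,1)$ distributed (here $\delta'=\delta$). On the other hand, the first hitting time of $0$ of a ${\rm BESQ}^{\delta-\deltahat}$ started at $v$ satisfies $v/(2U(-r))\sim\mathrm{Gamma}(\tfrac{2-\delta+\deltahat}{2},1)$ conditionally on $V_r=v$. Writing $V_r=2rG_1$ and $U(-r)=V_r/(2G_2)$ with independent $G_1\sim\mathrm{Gamma}(\delta'/2,1)$ and $G_2\sim\mathrm{Gamma}(\tfrac{2-\delta+\deltahat}{2},1)$, one gets $\tfrac{r}{U(-r)+r}=\tfrac{G_2}{G_1+G_2}\sim\mathcal{B}(\tfrac{2-\delta+\deltahat}{2},\tfrac{\delta'}{2})$.

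For the process structure I would exploit that the lines $\cS_{-r,\cdot}(0)$ are monotone and coalescing, so $r\mapsto U(-r)$ is nondecreasing and constant between jumps, with a jump exactly when a line started slightly earlier fails to coalesce with the current one before the meeting time. The gap at the current meeting time $z=U(-r_0)$ between the line started at $-r'$ and the reference one equals $\Phi_z(V_{r'}-V_{r_0})$, where $\Phi_z$ is the time-$z$ map of the ${\rm BESQ}^0$ (Feller) flow; this is a subordinator whose extinction probability $\p(\Phi_z(w)=0)=e^{-w/(2z)}$ forces its first jump to sit at an $\mathrm{Exp}(1/(2z))$-distributed argument. Using the cocycle property one checks that $r\mapsto V_r$ is an additive process with $\e[e^{-\theta(V_{r''}-V_{r'})}]=\big((1+2r'\theta)/(1+2r''\theta)\big)^{\delta/2}$, whence the first jump time $\mathrm{x}$ after $r_0$ satisfies $\p(\mathrm{x}-r_0>s)=\e[e^{-(V_{r_0+s}-V_{r_0})/(2z)}]=\big((z+r_0)/(z+r_0+s)\big)^{\delta/2}$. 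The crucial point is that this depends on $(z,r_0)$ only through $U(-r_0)+r_0$: this is what makes $(U(-r)+r,\,r\ge0)$ a \emph{homogeneous} Markov process, gives the Feller property once continuity of the kernels is verified, and yields $\tfrac{z}{z+\mathrm{x}}\sim\mathcal{B}(\tfrac{\delta'}{2},1)$ when $r_0=0$.

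Finally, by the strong Markov property at $\mathrm{x}$ the new line restarts from the meeting point, and the increment $U(-\mathrm{x})-z$ is the first hitting time of $0$ of a fresh ${\rm BESQ}^{\delta-\deltahat}$ whose starting height is the overshoot of $V$ across the exponential level (equivalently, the relevant jump of the composed subordinator $\Phi_z\circ V$). Determining the law of this overshoot conditionally on $\mathrm{x}=x$ and feeding it into the ${\rm BESQ}$ hitting-time law is what should produce $\tfrac{z+x}{U(-x)+x}\sim\mathcal{B}(\tfrac{2-\delta+\deltahat}{2},1)$, and this is the step I expect to be the main obstacle: it requires a fluctuation/excursion identity for the additive process $V$ at first passage over an independent exponential level, together with a careful analysis of the coalescence at the meeting point. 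The remaining points—rigorous homogeneity and the Feller property deduced from the scaling invariance of the white noise—are routine by comparison.
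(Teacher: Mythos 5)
Your one-dimensional marginal is correct and is essentially the paper's own argument (additivity property ({\bf P}) plus gamma/beta algebra, as in Lemma \ref{distribution:hitting0}), and your subordinator computation of the holding time is also correct: the paper's proof of Theorem \ref{thm:U_process-} (iii) yields exactly $\p(\mathrm{x}>r)=\bigl(\frac{z}{z+r}\bigr)^{\delta'/2}$, matching your formula $\e[e^{-(V_{r_0+s}-V_{r_0})/(2z)}]$. However, there is a genuine gap: the final assertion of the theorem --- that conditionally on $\mathrm{x}=x$ one has $\frac{z+x}{U(-x)+x}\sim\mathcal{B}(\frac{2-\delta+\deltahat}{2},1)$ --- is not proved. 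You flag it yourself as ``the main obstacle'' and only describe what a proof would require (a fluctuation identity for the overshoot of the additive process $V$ across an independent exponential level, together with the coalescence analysis). Since this is one of the three claims of the theorem, the proposal is incomplete at precisely the hardest point. The paper bypasses the overshoot problem entirely: it computes the joint law of $(\mathrm{x},\mathfrak{U})$ directly from the two-point meeting probability $\p\bigl(\cS_{-x,z}(0)=Y^0_z,\ \cS_{-y,u}(0)>Y^0_u\bigr)$, which by property ({\bf P}) factorizes into a product of two hitting-time probabilities; dividing by $y-x$ and letting $y\downarrow x$, the second factor is handled by the small-initial-value asymptotic of Lemma \ref{l:conv_hitting0} (the hitting time of a BESQ started from an infinitesimal value, conditioned on survival, has an explicit limit law), and an $O((y-x)^2)$ estimate on double jumps makes the approximation exact. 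That single computation delivers the post-jump law and the jump-time law simultaneously, which is exactly the step your route leaves open.

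A secondary weakness is the Markov/homogeneity argument. Your ``decoupling'' claims (independence of the noise below the line, between coalescing lines, and above them) are asserted for conditionings on events like $\{U(-r_0)=z\}$, but these events depend on the noise in a \emph{random} region, and the reference line switches from $\cS_{-r_0,\cdot}(0)$ to $Y$ at the random level $U(-r_0)$; making the conditioning legitimate is not routine. This is what the paper's $\ominus$-flow-line machinery is for: Proposition \ref{boundary2} shows the concatenated line $Y^{-r}$ is a $\ominus$-flow line, Proposition \ref{p:U-x adapted} shows $U(-r)$ is measurable with respect to the noise $\cW^-_{Y^{-r}}$ below it, and Proposition \ref{p:difference} (i)--(iii) shows the noise above is an independent white noise driving a flow through which all future meeting levels $U(-y)$, $y\ge r$, are expressed. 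Your picture ($V$ an additive process, $\Phi_z$ the ${\rm BESQ}^0$ flow map with extinction probability $e^{-w/(2z)}$, homogeneity read off from the dependence on $z+r_0$ alone) is an appealing alternative description of the dynamics, but as written it needs this noise-decomposition input to become a proof rather than a heuristic.
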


The third case is slightly different. We suppose that the flow line $Y^*$ is solution of \eqref{eq:intro BESQ} starting from $(0,0)$, but with $(-\cW^*,\deltahat)$ instead of $(\cW,\delta)$, where $\cW^*$ is the image of $\cW$ by the map $(a,r)\mapsto (a,-r)$. In the terminology of \cite{aïdékon2023stochastic}, $Y^*$ is a dual flow line.  The meeting level between $Y^*$ and flow lines of $\cS$ is defined as, for $r\ge 0$,
\[
V(-r):=\inf\{ x\in [-r,0]\,:\, \cS_{-r,x}(0)=Y^*_{-x}\}.
\]

\noindent See Figure \ref{fig:intro1} (b).  We suppose that $\deltahat+\delta>2$ (otherwise the meeting only takes place when $Y^*=0$).
\begin{theorem}\label{thm: main dual}
    The process $(V(-r)+r,\, r\ge 0)$ is a homogeneous Feller process starting from $0$. For any $r>0$, $\frac{V(-r)+r}{r}$ has distribution ${\mathcal B}(\frac{\deltahat}{2},\frac{\delta}{2})$. Conditionally on $V(0)=z>0$,  the  process $V$ stays constant then jumps at time $\mathrm{x}$ where  $\frac{z}{z+{\mathrm x}}$ has distribution ${\mathcal B}(\frac{\delta}{2},1)$. Conditionally on ${\mathrm x}=x$, $\frac{V(-x)+x}{x+z}\sim {\mathcal B}(\frac{\deltahat+\delta}{2},1)$.
\end{theorem}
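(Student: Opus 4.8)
The plan is to exploit the same two structural facts that underlie Theorems~\ref{thm: main left} and \ref{thm: main right}: the additivity property of squared Bessel processes and the spatial independence of the white noise $\cW$ over disjoint position-strips. The decisive point is that, unlike in the first two theorems, the dual construction turns the relevant difference into a ${\rm BESQ}$ of dimension $\delta+\deltahat$ rather than of a codimension. Writing $\sigma(x):=Y^*_{-x}$ and unfolding the definition of $Y^*$ (which is driven by $-\cW^*$), a short computation using the reflection $(a,r)\mapsto(a,-r)$ shows that, as a process indexed by increasing $x$, $\sigma$ solves $\dd\sigma(x)=\cW([0,\sigma(x)],\dd x)-\deltahat\,\dd x$. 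Hence, when $\cS_{-r,x}(0)>\sigma(x)$, the overlapping noise cancels and $Z(x):=\cS_{-r,x}(0)-\sigma(x)$ satisfies $\dd Z=\cW([\sigma,\cS],\dd x)+(\delta+\deltahat)\,\dd x$, i.e. $Z$ is a ${\rm BESQ}^{\delta+\deltahat}$ wherever it is positive. Since $Z(-r)=-Y^*_r<0$, $Z(0)=\cS_{-r,0}(0)>0$, and a ${\rm BESQ}^{\delta+\deltahat}$ never returns to $0$ once positive (this is exactly where $\deltahat+\delta>2$ is used), the level $V(-r)$ is the unique zero of $Z$, and $\{V(-r)\le x_0\}=\{\cS_{-r,x_0}(0)>Y^*_{-x_0}\}$.

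This reduces the marginal law to a one-point computation. First I would observe that $\cS_{-r,x_0}(0)$ is measurable with respect to $\cW$ restricted to the position-strip $[-r,x_0]$, while $Y^*_{-x_0}$ is measurable with respect to $\cW$ restricted to $[x_0,0]$; these strips overlap only at a point, so the two quantities are \emph{independent}. Both are squared Bessel processes started from $0$, so by Brownian scaling they are distributed, up to the common normalizing constant of the flow, as $(x_0+r)\,G_{\delta/2}$ and $(-x_0)\,G_{\deltahat/2}$ for independent Gamma variables $G_{\delta/2},G_{\deltahat/2}$. The ratio of independent Gammas being Beta, I get $\p\big(V(-r)\le x_0\big)=\p\big(\mathcal B(\tfrac{\delta}{2},\tfrac{\deltahat}{2})>\tfrac{-x_0}{r}\big)$, which after the change of variable $t=(x_0+r)/r$ and the identity $1-\mathcal B(\tfrac\delta2,\tfrac\deltahat2)\dequiv\mathcal B(\tfrac\deltahat2,\tfrac\delta2)$ yields $\tfrac{V(-r)+r}{r}\sim\mathcal B(\tfrac{\deltahat}{2},\tfrac{\delta}{2})$, as claimed; the scale invariance of this ratio is transparent from the scaling of the flow.

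For the process structure I would use the coalescing nature of the forward flow lines emanating from level $0$: two such lines $\cS_{s_1,\cdot}(0),\cS_{s_2,\cdot}(0)$ have difference a ${\rm BESQ}^{0}$, hence stick together once they meet. Consequently, as the starting point $-r$ moves to the left, $V(-r)$ stays constant as long as the new forward line coalesces with the previous ones before reaching $Y^*$, and changes exactly when a newly issued line reaches the dual line at a fresh location. Combined with the spatial Markov property of $\cW$ and the Markov property of $Y^*$ at the value $Y^*_{-V(-r)}$, this should give that the pair (meeting position, meeting level) is a homogeneous Markov process, with the Feller property and the autonomy of $V$ following from Brownian scaling, which slaves the level to the position. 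The constant-then-jump description is then immediate, and the jump laws are obtained by a local analysis at a jump: applying the strong Markov property together with the same Gamma/Beta computation as above, but now on the strip freshly revealed between two consecutive meeting configurations, should produce $\tfrac{z}{z+\mathrm x}\sim\mathcal B(\tfrac{\delta}{2},1)$ for the jump location and $\tfrac{V(-x)+x}{x+z}\sim\mathcal B(\tfrac{\deltahat+\delta}{2},1)$ for the post-jump value, the exponent $\tfrac{\deltahat+\delta}{2}$ reflecting once more the dimension of $Z$.

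The main obstacle is this last step. Turning the heuristic ``stays constant then jumps'' into a rigorous statement requires an excursion-theoretic description of the coalescing flow and a careful identification of the $\sigma$-fields revealed up to a given jump, so that the strong Markov property can be applied and the independence of the freshly revealed strip invoked. In particular, obtaining the \emph{precise} conditional Beta laws (rather than mere independence of the past) is delicate: it amounts to a last-exit/first-passage decomposition of the ${\rm BESQ}^{\delta+\deltahat}$ difference $Z$ at the instant the meeting level changes, and it is here that the second Beta parameter $1$, characteristic of a record/last-exit event, must be produced. Establishing the Feller property of the transition semigroup and verifying that the position process is Markov on its own, despite the auxiliary level variable, are the other points I expect to demand care.
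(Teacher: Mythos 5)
Your computation of the one-dimensional marginal is correct in substance and is a genuinely different, more elementary route than the paper's: the paper (Theorem \ref{thm:U_process*} (i), via Proposition \ref{p:V adapted}) converts the meeting event into positivity of a composed forward line $\cShat_{x,z}\circ\cS_{-r,x}(0)$ of an auxiliary killed ${\rm BESQ}(2-\deltahat\,|_0\,2-\delta-\deltahat)$ flow and then invokes the hitting-time Lemma \ref{distribution:hitting0}, whereas you observe that $\cS_{-r,x_0}(0)$ and $Y^*_{-x_0}$ are measurable with respect to the noise on the essentially disjoint strips $[-r,x_0]$ and $[x_0,0]$, hence independent, and compare two independent Gamma variables. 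Both routes, however, rest on the same non-trivial fact: that $\{V(-r)\le x_0\}$ coincides with $\{\cS_{-r,x_0}(0)\ge Y^*_{-x_0}\}$, i.e.\ that a forward line, once (weakly) above the dual line, stays above it. In the paper this is exactly what Propositions \ref{p:properties dual} and \ref{p:V adapted} provide.

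Your justification of that fact is where the first genuine gap lies. The process $\sigma(x)=Y^*_{-x}$ is measurable with respect to the noise on $[x,0]$, i.e.\ with respect to the \emph{future} of the forward filtration, so the integrals $\cW([0,\sigma(x)],\dd x)$ and $\cW([\sigma,\cS],\dd x)$ are anticipating and the asserted SDEs for $\sigma$ and for $Z=\cS_{-r,\cdot}(0)-\sigma$ have no It\^o meaning; the ``cancellation of overlapping noise'' cannot be performed by stochastic calculus. The rigorous counterpart of your heuristic is duality: the curve $Y^*_{-\cdot}-\cS_{-r,\cdot}(0)$, where positive, is a dual line of the difference flow $\tcS^+$ of Proposition \ref{p:difference}, whose dimension $2-\delta-\deltahat$ dualizes to $\delta+\deltahat$ by Proposition \ref{p:BESQdual vary}; this is the content the paper actually establishes and then exploits. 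The second, more serious gap is the reduction from the pair (meeting position, meeting level) to the position alone. Scaling cannot ``slave the level to the position'': conditionally on the past $\mathscr{W}^*_r$, the level $a^r=\cS_{-r,V(-r)}(0)$ is a fixed known quantity, not merely a law, and the dilation $(a,x)\mapsto(\lambda a,\lambda x)$ rescales levels and values \emph{together}, so it can never show that the conditional law of $(V(-y))_{y>r}$ given $(V(-r),a^r)$ does not depend on $a^r$. What makes $V$ autonomous in the paper is the subtraction trick: since the forward and dual lines agree at the meeting point, in the picture obtained by subtracting the current forward line the dual line restarts from $0$ at level $V(-r)$, so $a^r$ cancels and the representation \eqref{eq:repr V 2} closes in $V(-r)$ alone; the Markov property then also needs the independence of the noise above and below the current forward line (Proposition \ref{p:difference} (iii)), i.e.\ the $\ominus$-flow line structure, which your sketch does not supply. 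Finally, the jump laws require ruling out two jumps in a short interval (the $O((y-x)^2)$ estimate via the strong Markov property) and the entrance asymptotics of Lemma \ref{l:conv_hitting0}; you correctly flag both as the crux but provide neither. In short: the proposal contains a nice alternative proof of the entrance marginal, but the process-level assertions (Markov and Feller properties, and the two conditional Beta laws) are not established by it.
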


\begin{figure}[htbp]
\centering
       \scalebox{0.8}{ 
        \def\svgwidth{0.8\columnwidth}
        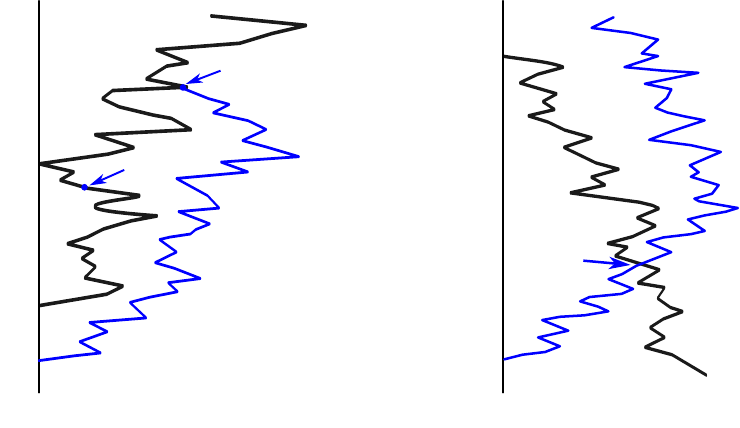
    }
    \caption{The black line represents $Y$/$Y^*$ in picture \textbf{(a)}/\textbf{(b)}, both starting from $(0,0)$. The blue lines in both pictures represent the ${\rm BESQ}^\delta$ flow $\cS$. In picture \textbf{(b)}, the black line goes down and the blue line goes up.}
    \label{fig:intro1}
\end{figure}

\bigskip

These theorems characterize the distribution of the processes via their entrance law and jump distributions. One could also express their transition probabilities in terms of the semigroup of BESQ processes with varying dimensions, or identify their Lamperti transform (since these processes inherit the $1$-self similarity of the BESQ flow).  The process $(U(r)-r,\, r\ge 0)$ of Theorem \ref{thm: main left}, resp. $(V(-r)+r,\, r\ge 0)$ of Theorem \ref{thm: main dual}, returns to $0$ when the flow line $Y$, resp. $Y^*$, returns to $0$, i.e. when its drift $\deltahat$ belongs to $(0,2)$. The proofs  rely on a decomposition theorem for BESQ flows along some flow line in Section \ref{s:decomposition},  which is a generalization of the additivity property of BESQ processes \cite{pitman2018squared,pitmanyor}. Along the way, the concept of $\oplus/\ominus$-flow line  will prove useful in order to obtain some analog of the Markov property for BESQ flows. 

\bigskip

The second part of the paper is concerned with  a seemingly unrelated topic, the skew Brownian flow introduced in  \cite{burdzy2001local, burdzy2004lenses}.  Let $B=(B_t,\,t\ge 0)$ be a standard Brownian motion, $\beta\in (-1,1)$ and $r\in \mathbb{R}$. We consider  the strong solution $X^r$ of the SDE
\begin{equation}\label{eq:SDE Xtr}
    X_t^r=L_t^r-B_t,\, t\ge 0
\end{equation}
where 
\begin{equation}\label{def:L}
L_t^r:=r+\beta \ell_t^r
\end{equation}

\noindent and $\ell_t^{r}$ is the symmetric local time of $X^{r}$ at position $0$:
\begin{align}\label{def:Ltilde}
    \ell_t^{r} = 
    \lim\limits_{\varepsilon \rightarrow 0} \frac{1}{2\varepsilon} \int_0^t \ind_{\{|X_u^{r}| < \varepsilon\}} \, \mathrm{d}u, \, t \ge 0.
\end{align}

\noindent The process $X^r$ behaves as the Brownian motion $-B$ when it is nonzero, but has an asymmetry at $0$ when $\beta\neq 0$. In distribution, $X^0$ is a concatenation of Brownian excursions whose signs are chosen positive with probability $p=\frac{1+\beta}{2}$ and negative otherwise. In particular $|X^0|$ is always distributed as a reflecting Brownian motion. The extreme cases $\beta\in \{-1,1\}$ correspond to reflecting Brownian motions while equation \eqref{eq:SDE Xtr} does not have a solution when $|\beta|>1$, see Harrison and Shepp \cite{harrison1981skew}. Skew Brownian motions were introduced by It\^o and McKean \cite{itomckean} and further studied by Walsh \cite{walsh1978diffusion}. We refer to Lejay \cite{lejay2006constructions} for a review on this topic. The collection  of solutions $(X^r)_{r\in \r}$ driven by the same Brownian motion $B$ is a coalescing flow \cite{barlow1999coalescence, burdzy2001local}: for any $r,r'$, the solutions $X^r$ and $X^{r'}$ meet in a finite time and stay equal afterwards. We refer to \cite{arratia,lejan_raimond,toth_werner} and the references therein for other examples of coalescence in stochastic flows.

The following theorem gives the connection with the first part of the paper. We restrict to $\beta\in (0,1)$ for convenience. Let  $(\cL(t,x))_{t\ge 0,x\in \r}$ be a bicontinuous version of  the local time of $B$ and $\cW$ be the white noise on $\r_+\times \r$ defined through
\begin{align}\label{def:W}
    \cW(g)=\int_0^{\infty} g(\cL(t,B_t), B_t) \dd B_t
\end{align}

\noindent for any $g \in L^2(\r_+\times \r)$. Equation \eqref{def:W} appears in \cite{aidekon2024infinite} where it is observed that $\cW$ is indeed a white noise as a result of the occupation times formula.  For any $r\in \mathbb{R}$, we set
\begin{equation}\label{def:taurx}
\tau^r_x:=\inf\{ t\ge 0 \,:\,  L_t^r > x\},\, x\ge r.
\end{equation}

\noindent In the following theorem, a ${\rm BESQ}(\delta_1\,|_0\, \delta_2)$ flow line $(\cS_{r,x}(a),\,x\ge r)$ behaves as a ${\rm BESQ}$ flow line with drift $\delta_1$ when $x\le 0$ (hence necessarily $r\le x\le 0$) and with drift $\delta_2$ when $x\ge 0$.

\begin{theorem}\label{thm:representation_by_WN}
Let $\beta\in(0,1)$ and $\delta:=\frac{1-\beta}{\beta}$. Fix $r\in \r$. The process $(\cL(\tau^r_x,x),\,x\ge r)$ is the  flow line starting at the point $(0,r)$ of the ${\rm BESQ}(2+\delta \, |_0 \, \delta)$ flow driven by $\cW$.
\end{theorem}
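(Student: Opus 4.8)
The plan is to show that the process $S_x:=\cL(\tau^r_x,x)$ solves, as a process in the spatial variable $x\ge r$, the defining equation \eqref{eq:intro BESQ} of the flow line started at $(0,r)$ of the ${\rm BESQ}(2+\delta\,|_0\,\delta)$ flow driven by $\cW$, and then to conclude by the strong uniqueness of that SDE (note $S_r=0$, since at $t=\tau^r_r$ no local time has yet been accumulated by $B$ at level $r$). The starting point is an Itô--Tanaka reduction: applying Tanaka's formula to $(B_t-x)^+$ and evaluating at $t=\tau^r_x$ gives
$$S_x=2\,(B_{\tau^r_x}-x)^+-2\,(-x)^+-2\int_0^{\tau^r_x}\ind_{\{B_s>x\}}\,\dd B_s .$$
The essential simplification is that $\tau^r_x$ is an inverse local time of $X^r$, so $X^r_{\tau^r_x}=0$ and hence $B_{\tau^r_x}=L^r_{\tau^r_x}=x$; the running-maximum term drops out and $S_x=-2\,(-x)^+-2\int_0^{\tau^r_x}\ind_{\{B_s>x\}}\,\dd B_s$.

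The heart of the argument is to decompose the stochastic integral as a process in $x$. I would split the threshold $x$ into the moving barrier $L^r_s$ by writing, for $s\le\tau^r_x$, $\ind_{\{B_s>x\}}=\ind_{\{B_s>L^r_s\}}-\ind_{\{L^r_s<B_s\le x\}}$, so that $\int_0^{\tau^r_x}\ind_{\{B_s>x\}}\,\dd B_s=A(x)-N(x)$ with $A(x):=\int_0^{\tau^r_x}\ind_{\{X^r_s<0\}}\,\dd B_s$ and $N(x):=\int_0^{\tau^r_x}\ind_{\{L^r_s<B_s\le x\}}\,\dd B_s$. The term $N$ supplies the martingale part. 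Plugging $g(a,s)=\ind_{\{r\le s\le x,\,0\le a\le S_s\}}$ into \eqref{def:W}, and noting that $S_{B_t}=\cL(\tau^r_{B_t},B_t)$ together with the monotonicity of $t\mapsto\cL(t,y)$ turns the constraint $\cL(t,B_t)\le S_{B_t}$ into $\{L^r_t\le B_t\}=\{X^r_t\le 0\}$, one gets $\int_r^x\cW([0,S_s],\dd s)=\int_0^\infty\ind_{\{r\le B_t\le x,\,X^r_t\le 0\}}\,\dd B_t=N(x)$, whose quadratic variation is $\int_r^x S_s\,\dd s$ by the occupation-times formula; this pins down the integrand as $[0,S_s]$.

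It remains to compute the finite-variation term $A$, and this is where the skewness enters. On $\{X^r<0\}$ the barrier is frozen ($\dd L^r_s=0$), so $\dd B_s=-\dd X^r_s$ there, and Itô--Tanaka applied to $y\mapsto y^-$ yields
$$A(x)=\bigl(X^r_{\tau^r_x}\bigr)^-\!-\bigl(X^r_0\bigr)^-\!-\tfrac12\,L^{0,-}_{\tau^r_x}(X^r),$$
where $L^{0,-}(X^r)$ is the local time of $X^r$ at $0$ from below. Using $X^r_{\tau^r_x}=0$, the skew local-time identity $L^{0,-}_t(X^r)=(1-\beta)\,\ell^r_t$, and $\ell^r_{\tau^r_x}=(x-r)/\beta$, the term $A$ is affine in $x$ with slope $-\tfrac{1-\beta}{2\beta}$. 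Collecting everything, the finite-variation part of $S$ has density $2\,\ind_{\{x<0\}}+\tfrac{1-\beta}{\beta}=2\,\ind_{\{x<0\}}+\delta$: the contribution $2\,\ind_{\{x<0\}}$ comes from the boundary term $-2(-x)^+$ (a consequence of $B_0=0$), and the contribution $\delta$ from the skew local time. Thus $S$ solves \eqref{eq:intro BESQ} driven by $\cW$ through the integrand $[0,S_s]$, with drift $2+\delta$ on $\{x<0\}$ and $\delta$ on $\{x>0\}$; strong uniqueness for this SDE identifies $S$ with the announced ${\rm BESQ}(2+\delta\,|_0\,\delta)$ flow line.

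The main obstacle is the decomposition of the second paragraph: making rigorous the splitting of $x\mapsto\int_0^{\tau^r_x}\ind_{\{B_s>x\}}\,\dd B_s$ into the finite-variation part $A$ and the $\cW$-martingale $N$. Because both the integrand threshold and the upper limit $\tau^r_x$ move with $x$, this is a genuinely two-parameter statement requiring a stochastic-Fubini / limiting argument to commute the $\dd x$- and $\dd B$-integrations, together with care in matching the flow filtration (in which $N$ is a martingale) against the Brownian filtration. Subordinate technical points are the identity $B_{\tau^r_x}=x$, the exact constant in $L^{0,-}=(1-\beta)\ell^r$ (which is precisely what produces $\delta=\tfrac{1-\beta}{\beta}$), the treatment of the cases $r\gtrless 0$ through the boundary terms $(-x)^+$ and $(X^r_0)^-$, and the passage to the right-continuous versions used in the statement.
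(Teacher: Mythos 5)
Your proposal is correct and follows essentially the same route as the paper: Tanaka's formula evaluated at $t=\tau^r_x$ (using $B_{\tau^r_x}=x$ and $X^r_{\tau^r_x}=0$), the skew local-time identity to produce the drift $\delta=\frac{1-\beta}{\beta}$, and identification of the resulting Brownian stochastic integral with $\int_r^x\cW([0,\cL(\tau^r_y,y)],\dd y)$ — your use of $(B_t-x)^+$, $(X^r_t)^-$ and the left local time $(1-\beta)\ell^r_t$ is the algebraically equivalent mirror image of the paper's $(x-B_t)^+$, $(X^r_t)^+$ and $(1+\beta)\ell^r_t$. The ``main obstacle'' you flag — passing from the representation \eqref{def:W}, which is stated for deterministic $g$, to the random predictable integrand $[0,\cL(\tau^r_y,y)]$ — is exactly the point where the paper invokes Proposition 3.2 of \cite{aidekon2024infinite} (together with an a.e.-in-$u$ identity between the two indicator integrands), so this step is settled by citing an existing result rather than by a new stochastic-Fubini argument.
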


\noindent Thanks to Theorem \ref{thm:representation_by_WN}, we can rephrase questions on skew Brownian motions in the framework of BESQ flows.  Fix  $\beta\in (-1,0)\cup (0,1)$ and $\widehat{\beta}\in (0,1)$. Let as before $X^r$ be the skew Brownian motion associated to the parameter $\beta$ while $(\widehat{X},\widehat{L})$ is defined by \eqref{eq:SDE Xtr} and \eqref{def:L} with $(\widehat{\beta},0)$ in place of $(\beta,r)$. 
Following \cite{burdzy2001local, gloter2013distance}, consider for $r\in \mathbb{R}$ the r.v. 
\begin{equation}\label{def:Tbetax}
    T(r) := \inf\{t\ge 0\,:\, X_t^{r}=\widehat X_t\} \in [0,\infty]. 
\end{equation}

\noindent The time $T(r)$ is finite if and only if the skew Brownian motions $\widehat X$ and $X^r$ hit each other. 

\begin{theorem} \label{thm: skewBM meeting}
     Set $\delta= \frac{1-|\beta|}{|\beta|}$ and $\deltahat=\frac{1-\beta_0}{\beta_0}$.
    \begin{enumerate}[(i)]
\item   Suppose that  $\beta\in (0,1)$ and $\deltahat<\delta+2$.  The process $(\widehat L_{T(r)} ,\, r\ge 0)$ is distributed as the process $(U(r),\, r\ge 0)$ of Theorem \ref{thm: main left}. 
\item Suppose that $\beta\in (0,1)$ and $\delta<\deltahat+2$. The process $(\widehat L_{T(-r)}, r\ge 0)$ is distributed as the process $(U(-r),\, r\ge 0)$ of Theorem \ref{thm: main right} with $\delta'=2+\delta$ there. 
\item Suppose that $\beta\in (-1,0)$. The  process $(\widehat L_{T(r)},\,r\ge 0)$ is distributed as the process $(-V(-r),\, r\ge 0)$ of Theorem \ref{thm: main dual} with   $2+\delta$ in place of $\delta$ there. 
 \end{enumerate}
\end{theorem}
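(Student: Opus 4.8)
The plan is to reduce the meeting \emph{time} $T(r)$ of the two skew Brownian motions to the meeting \emph{level} of their squared Bessel representations, and then read off the law from Theorems \ref{thm: main left}, \ref{thm: main right} and \ref{thm: main dual}. The starting observation is that, since $X^r=L^r-B$ and $\widehat X=\widehat L-B$ are driven by the same $B$, one has $X^r_t=\widehat X_t$ if and only if $L^r_t=\widehat L_t$, so that
$$T(r)=\inf\{t\ge 0:\ L^r_t=\widehat L_t\},$$
and $\widehat L_{T(r)}=L^r_{T(r)}$ is the common value of the two local-time drifts at the coalescence time. Throughout I will use Theorem \ref{thm:representation_by_WN} to identify $x\mapsto\cL(\tau^r_x,x)$ with the flow line $\cS_{r,\cdot}(0)$ of the ${\rm BESQ}(2+\delta\,|_0\,\delta)$ flow, and likewise $x\mapsto\cL(\widehat\tau_x,x)$, where $\widehat\tau_x:=\inf\{t:\widehat L_t>x\}$, with the flow line $Y$ attached to $\widehat X$; on $x\ge 0$ the latter is exactly the ${\rm BESQ}^{\deltahat}$ flow line from $(0,0)$ of the three main theorems.

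The core step is the identity
$$\widehat L_{T(r)}=\inf\{x:\ \cL(\tau^r_x,x)=\cL(\widehat\tau_x,x)\},$$
that is, \emph{the first level at which the two flow lines coincide equals the common local-time drift at coalescence}. I would prove the two inclusions separately. Beyond $\widehat L_{T(r)}$ the drifts $L^r$ and $\widehat L$ have already coalesced, whence $\tau^r_x=\widehat\tau_x$ and the flow-line values agree trivially. Below $\widehat L_{T(r)}$ the point is that at time $\tau^r_x$ the drift $L^r$ increases, so $X^r=0$ and therefore $B_{\tau^r_x}=L^r_{\tau^r_x}=x$, and likewise $B_{\widehat\tau_x}=x$; since $t<T(r)$ forces $L^r_t\ne\widehat L_t$, the passage times $\tau^r_x\ne\widehat\tau_x$ are two distinct instants at which $B$ visits the \emph{same} level $x$, and the local time $\cL(\cdot,x)$ increases strictly between two such visits, so $\cL(\tau^r_x,x)\ne\cL(\widehat\tau_x,x)$ and the flow lines have not yet met. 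As everything is built from the single Brownian motion $B$, this identifies $\widehat L_{T(\cdot)}$ with the meeting-level process of the main theorems simultaneously in $r$.

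It remains to match the region and the drift parameters in each case. In (i), $r\ge 0$, both flow lines live on $x\ge 0$, where they are genuine ${\rm BESQ}^\delta$ and ${\rm BESQ}^{\deltahat}$ flow lines, and $\deltahat<\delta+2$ is exactly the meeting condition of Theorem \ref{thm: main left}, which then yields the law of $U(r)$. In (ii) we consider $T(-r)$, $r\ge 0$: now $\cS_{-r,\cdot}(0)$ starts at $(0,-r)$ and traverses the negative region $[-r,0]$ with drift $2+\delta$ before entering $x\ge 0$ with drift $\delta$ and meeting $Y$; this is precisely the configuration of Theorem \ref{thm: main right}, the negative-region drift playing the role of the auxiliary parameter, whence the substitution $\delta'=2+\delta$. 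In (iii), $\beta\in(-1,0)$, the drift $L^r=r+\beta\ell^r$ is \emph{decreasing}, so the flow line of $X^r$ descends; after the reflection $x\mapsto-x$ (equivalently $B\mapsto-B$, $\beta\mapsto-\beta$) it becomes a forward flow line meeting the dual flow line $Y^*$ of Theorem \ref{thm: main dual}, the decreasing parametrization producing the sign flip $\widehat L_{T(r)}=-V(-r)$ and the reflected ${\rm BESQ}(2+\delta\,|_0\,\delta)$ structure turning the meeting-region drift into $2+\delta$; with this substitution the meeting condition $\deltahat+(2+\delta)>2$ of Theorem \ref{thm: main dual} holds automatically.

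I expect the main obstacle to be the core identity of the second paragraph: making rigorous the strict increase of $\cL(\cdot,x)$ between two distinct visits of $B$ to $x$, and the clean identification of $\tau^r_x$ and $\widehat\tau_x$ as coalescence-respecting clocks with $B_{\tau^r_x}=B_{\widehat\tau_x}=x$. The second delicate point is the bookkeeping in (ii) and (iii): tracking the drift jump of the ${\rm BESQ}(2+\delta\,|_0\,\delta)$ flow across $0$, checking that the negative-region drift enters the entrance law and the jump kernel exactly as the abstract parameter $\delta'$ (resp. the substituted $\delta$) does, and verifying that the reflection/duality for $\beta<0$ produces precisely the sign flip $-V$ and the $2+\delta$ substitution.
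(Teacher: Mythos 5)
Your overall strategy is the one the paper follows: embed both skew Brownian motions into BESQ flow lines via Theorem \ref{thm:representation_by_WN}, prove the identity $\widehat L_{T(r)}=\inf\{x:\ \cL(\tau^r_x,x)=\cL(\widehat\tau_x,x)\}$ (this is exactly \eqref{eq:U} of Proposition \ref{description:U_beta>0}, resp.\ \eqref{eq:U beta neg} of Proposition \ref{description:U_beta<0}), and then read off the law from Theorems \ref{thm: main left}--\ref{thm: main dual}; your parameter bookkeeping in (i)--(iii) is also correct. However, both halves of your proof of the core identity contain genuine flaws. For the inequality ``$\inf\le\widehat L_{T(r)}$'' you assert that beyond $T(r)$ the drifts $L^r$ and $\widehat L$ ``have already coalesced'', whence $\tau^r_x=\widehat\tau_x$ for all $x\ge \widehat L_{T(r)}$. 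This is false whenever $\beta\neq\widehat\beta$: skew Brownian motions with different skewness parameters separate immediately after meeting (they bounce rather than coalesce; this is precisely why $r\mapsto \widehat L_{T(r)}$ has a nontrivial jump structure). What is true, and all that is needed, is the single-level statement: at $t=T(r)$ both processes sit at $0$ with $L^r_t=\widehat L_t$, and $\cL(t,L^r_t)=\cS_{r,L^r_t}(0)$, $\cL(t,\widehat L_t)=Y_{\widehat L_t}$ by Lemma \ref{l:taurx} (iii), so $\widehat L_{T(r)}$ is a level at which the two flow lines agree.

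The more serious gap is in the inequality ``$\inf\ge\widehat L_{T(r)}$''. You reduce it to: $\tau^r_x\neq\widehat\tau_x$ are two distinct visits of $B$ to the level $x$, and ``the local time $\cL(\cdot,x)$ increases strictly between two such visits''. As a statement about arbitrary pairs of visits, holding simultaneously for all levels $x$ (which is what you need, since $x$ runs over an uncountable set), this is \emph{false}: if $x$ is the value of a local maximum of $B$ attained at time $t_0$ and $s<t_0$ is the last previous visit of $B$ to $x$, then $B\neq x$ on $(s,t_0)$ and, by continuity of $\cL(\cdot,x)$, $\cL(t_0,x)=\cL(s,x)$. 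So the fact you need is special to the times $\tau^r_x$, i.e.\ to points of increase of $L^r$; it is exactly Lemma \ref{l:taurx} (iii)--(iv), whose proof is not routine: the paper time-changes $B$ by the positive excursions of $X^r$ and invokes Pitman--Winkel's identification of the resulting process as a perturbed reflecting Brownian motion, which has no monotone points, to produce visits of $B$ to the exact level $x$ accumulating at $\tau^r_x$. Since you flag this step as an obstacle but give no argument, and the general fact you appeal to is wrong, the core identity remains unproven. Finally, in case (iii) your reflection and substitution $\delta\mapsto 2+\delta$ are the right bookkeeping, but the identity \eqref{eq:U beta neg} also requires showing that the reflected flow line of $X^r$ (a line driven by $-\cW^*$) meets $Y$ at a \emph{unique} level in $[-r,0]$; the paper establishes this through the non-crossing properties of forward and dual lines (Propositions \ref{p:comparison} and \ref{p:properties dual}), and your sketch does not address it.
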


The marginals of these processes were computed in  \cite[Theorem 3, Corollary 2, Theorem 4]{gloter2013distance}. In each case, the assumptions on $(\delta,\deltahat)$ are necessary and sufficient conditions to have $T(r)<\infty$.  As observed in \cite{burdzy2001local}, the next result is an analog of the second Ray--Knight theorem for the skew Brownian flow. Fix $\beta\in (0,1)$ and let $(X^r, L^r)$ be given by \eqref{eq:SDE Xtr} and \eqref{def:L}. For $z>0$, let $\tau_z^0$ be as in \eqref{def:taurx}.   Statement (i) of the following theorem was already proved in \cite[Theorem 1.2]{burdzy2001local}. 
\begin{theorem}\label{thm: skewBM RK}
Let $z>0$ and $\beta\in (0,1)$. Set $\delta=\frac{1-\beta}{\beta}$.
\begin{enumerate}[(i)]
\item \cite[Theorem 1.2]{burdzy2001local} The process $(L^r_{\tau_z^0},\,r\ge 0)$ is distributed as $(U(r),\,r\ge 0)$ of Theorem \ref{thm: main left} with $\deltahat=0$, conditioned on $U(0)=z$.
\item The process $(L^{-r}_{\tau_z^0},\,r\ge 0)$ is distributed as $(V(-r),\,r\ge 0)$ of Theorem \ref{thm: main dual} with $(2+\delta,0)$ in place of $(\delta,\deltahat)$, conditioned on $V(0)=z$.
\end{enumerate}
\end{theorem}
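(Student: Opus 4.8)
The plan is to transport both statements to the BESQ flow through Theorem \ref{thm:representation_by_WN}, which realizes $(\cL(\tau^r_x,x))_{x\ge r}$ as the flow line $\cS_{r,\cdot}(0)$ of the ${\rm BESQ}(2+\delta\,|_0\,\delta)$ flow driven by $\cW$. Since $L^r$ is continuous and nondecreasing, $L^r_{\tau_z^0}=\sup\{x\ge r:\tau^r_x\le \tau_z^0\}$ is the drift of the line labelled $r$ at the fixed time $\tau_z^0=\tau^0_z$. First I would recast this as a meeting level inside the flow: from $\cL(\tau^r_x,x)=0\iff\sup_{[0,\tau^r_x]}B\le x$, the flow line $\cS_{r,\cdot}(0)$ meets the constant level $0$ exactly when the drift $L^r$ catches up with the running maximum of $B$, and at that instant the drift equals the running maximum of $B$. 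Exploiting the monotone, coalescing structure of $(X^r)_r$ (recall $X^r-X^{r'}=L^r-L^{r'}$, and that $L^r=L^{r'}$ after coalescence), I would compare the catch-up time of line $r$ with that of line $0$ and thereby identify the process $r\mapsto L^r_{\tau_z^0}$ with the meeting-level process against the reference $Y\equiv 0$ (the $\deltahat=0$ specialization), started from $L^0_{\tau_z^0}=z$.

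Granting this identification, $r\mapsto L^r_{\tau_z^0}$ inherits the homogeneous Markov, indeed Feller, property from the flow property of the skew Brownian flow combined with the decomposition theorem for BESQ flows of Section \ref{s:decomposition}, exactly as in the proofs of Theorems \ref{thm: main left} and \ref{thm: main dual}. It then suffices to match the semigroup. In case (i), $r\ge 0$ confines the lines to $x\ge 0$, where $\cS_{r,\cdot}(0)$ is a ${\rm BESQ}^\delta$ flow line and the reference is $Y\equiv 0$; the resulting process is $U$ of Theorem \ref{thm: main left} with $\deltahat=0$ started from $z$, which is \cite[Theorem 1.2]{burdzy2001local}. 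In case (ii), $-r\le 0$ forces the lines into $x\le 0$, where the ${\rm BESQ}(2+\delta\,|_0\,\delta)$ flow carries drift $2+\delta$; here the first-passage structure is the dual one, and through the dual flow line $Y^*$ and the $\ominus$-flow line formalism the process matches $V$ of Theorem \ref{thm: main dual} with $(\delta,\deltahat)$ replaced by $(2+\delta,0)$, started from $z$.

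To reconcile the conditioning, note that for the canonical processes $U(0)=V(0)=0$, and the laws given $U(0)=z$ and $V(0)=z$ are defined, as in the remark following Theorem \ref{thm: main left}, through the respective Feller semigroups. The entrance laws $\frac{U(r)}{r}\sim\B(\frac{2-\deltahat+\delta}{2},\frac{\deltahat}{2})$ and $\frac{V(-r)+r}{r}\sim\B(\frac{\deltahat}{2},\frac{\delta}{2})$, together with the $1$-self-similarity inherited from the BESQ flow, pin down these semigroups, so that fixing the stopping time $\tau_z^0$, equivalently $L^0_{\tau_z^0}=z$, is exactly the choice of starting point $z$.

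The main obstacle is case (ii): one must establish the time-reversal/duality sending the forward drift flow $r\mapsto L^{-r}_{\tau_z^0}$, whose lines enter $x<0$ with drift $2+\delta$, to the dual meeting-level process $V$ rather than to a forward one, while correctly feeding the ${\rm BESQ}(2+\delta\,|_0\,\delta)$ transition at the origin into the drift seen by the negatively labelled lines; this is precisely where the dual-flow-line formalism and the decomposition theorem of Section \ref{s:decomposition} are needed. A secondary subtlety, present already in (i), is that the literal meeting with $Y\equiv 0$ degenerates once the ambient drift is $\ge 2$, so the identification of the semigroup is cleanest when carried out as the Feller limit $\deltahat\downarrow 0$ and matched at the level of entrance and jump kernels rather than pathwise. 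Once the pathwise step is in place, the Markov property in $r$ and the bookkeeping of the conditioning are comparatively routine.
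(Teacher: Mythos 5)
Your high-level plan (embed via Theorem \ref{thm:representation_by_WN}, identify $r\mapsto L^r_{\tau_z^0}$ with a meeting-level process, then invoke the machinery of Section \ref{s:meeting}) is indeed the paper's route, but the identification you propose for (i) is wrong, and it is the crux. You take the reference line to be $Y\equiv 0$ (``the $\deltahat=0$ specialization''), so that the meeting level is the first zero of $\cS_{r,\cdot}(0)$. This cannot equal $L^r_{\tau_z^0}$: for $\beta\le\frac13$ (i.e.\ $\delta\ge 2$) the flow line $\cS_{r,\cdot}(0)$ never returns to $0$, yet $L^r_{\tau_z^0}<\infty$ a.s.; and even for $\beta>\frac13$, by your own computation the zeros of $\cS_{r,\cdot}(0)$ are the levels at which $L^r$ catches the running maximum of $B$, which has nothing to do with $z$. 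The correct reference --- and the way the conditioning $U(0)=z$ is actually realized in Section \ref{s:meetingL} --- is the ${\rm BESQ}(\delta\,|_z\,0)$ flow line $Y$, which by the classical second Ray--Knight theorem together with Theorem \ref{thm:representation_by_WN} is precisely the stopped local-time profile $Y_x=\cL(\tau^0_{\min(z,x)},x)$; Proposition \ref{p:skewBM RK i} then proves pathwise, via Lemma \ref{l:taurx}, that $L^r_{\tau_z^0}=\inf\{x\ge\max(r,z):\cS_{r,x}(0)=Y_x\}$, a meeting with a genuinely nonzero profile. Your fallback --- treating the degeneracy as a ``secondary subtlety'' and passing to a Feller limit $\deltahat\downarrow 0$ matched at the level of entrance and jump kernels --- does not repair this: that limit lives entirely on the BESQ side and never produces any link between $(L^r_{\tau_z^0})_{r\ge 0}$ and the flow, so the Markov/Feller step and the semigroup matching (both of which you derive from that link) have no starting point.

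For (ii) you correctly flag the dual identification as the main obstacle, but you leave it entirely unproved, and that is where most of the work lies. The paper (Proposition \ref{p:skewBM RK ii}) never manipulates the dual line $Y^*$ directly: it proves pathwise, again from Lemma \ref{l:taurx}, the formula $L^{-r}_{\tau_z^0}=\inf\{x\in[-r,z)\,:\,\cShat_{x,z}\circ\cS_{-r,x}(0)>\cS_{0,z}(0)\}\land z$, where $\cShat$ is the ${\rm BESQ}(2\,|_0\,0)$ flow, i.e.\ the local-time flow of $B$ (so that $\cShat_{x,z}(b)=\cL(\tau^{B,x}_b,z)$), and then subtracts the flow line $\cS_{0,\cdot}(0)$ via Proposition \ref{p:difference} so as to land exactly on the characterization \eqref{eq: V second def} of $V(-r)$ from Section \ref{s:meeting*} with $(2+\delta,0)$ in place of $(\delta,\deltahat)$. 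A sketch invoking ``the dual-flow-line formalism'' supplies neither of these two steps, so as it stands both parts of your argument have a genuine gap at the pathwise-identification stage.
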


The method of proof is different from \cite{burdzy2001local,gloter2013distance}. We deduce our results as consequences of the theorems in the first part of the paper, once the connection with the BESQ flow is established. Note that we could as well fix  starting points $(0,0)$ and $(0,r)$ in the BESQ flow and make the dimension $\delta$ vary. This setting would be related to the meeting of skew Brownian motions with varying parameters $\beta$, which is the topic of \cite{Gloter2018BouncingSB}. It would lead for example  to a solution of Open Problem 1.8 of \cite{burdzy2001local}. This problem was given a near to complete answer in \cite{Gloter2018BouncingSB}, the entrance law being left open in that work. We omit to present such results for sake of brevity.

\bigskip

We finish the presentation with a brief and informal discussion on bifurcation events, and refer to Sections \ref{s:bifur} and \ref{s:bifur_time} for the rigorous definitions. Even if the solution of the SDE \eqref{eq:intro BESQ} is unique a.s., there exist exceptional points $(a,r)$ where ${\rm BESQ}^\delta$ flow lines bifurcate at their starting point. These points are the so-called ancestors in continuous-state branching processes  \cite[Section 2.2]{bertoin-legall00}. We show in Section \ref{s:bifur} that such points have Hausdorff dimension $\frac32$ (i.e. it is the dimension of the graph of the standard Brownian motion \cite{taylor}). More interestingly, when $\delta_1 < \delta_2< \delta_1+2$, there are points which are  common bifurcation points for the flows with drifts $\delta_1$ and $\delta_2$ where the flow lines of drift $\delta_1$ and $\delta_2$ are interlaced, see Figure \ref{fig:bifur1} in Section \ref{s:bifur}. These points have Hausdorff dimension $\min(2-d,\frac{3-d}{2})$ where $d:=\delta_2-\delta_1$. Such bifurcation events have a natural interpretation for the skew Brownian flow, in the case we not only allow the starting point to vary, but also the starting time. These bifurcation points  correspond to the so-called ordinary/semi-flat bifurcation times studied by Burdzy and Kaspi \cite{burdzy2004lenses} (we omit the proof of this statement for concision). By a time-reversal argument, we show in Section \ref{s:bifur_time} that ordinary/semi-flat bifurcation times have Hausdorff dimension respectively $\frac12$ and $\frac{2-\delta}{4}$ where $\delta=\frac{1-|\beta|}{|\beta|}$ (there is no semi-flat bifurcation time when $|\beta|\le \frac13$).  This last result does not use the link with the BESQ flow.

\bigskip

\paragraph{Related works} In \cite{pitman2018squared}, Pitman and Winkel interpret \eqref{eq:SDE Xtr} as a decomposition of the Brownian motion $B$ into two parts, depending on whether $B$ is above or below  $L^x$. They show that these two processes are time-changes of perturbed reflecting Brownian motions (PRBM). In view of the link between BESQ flows and PRBM \cite{aidekon2024infinite}, it is natural to reinterpret this picture as a decomposition of $\r_+\times \r$ along a ${\rm BESQ}^\delta$ flow line driven by the white noise $\cW$ given by \eqref{def:W}. It  was the starting point of this project.

The connection between BESQ flows and skew Brownian motions is reminiscent of a connection between flow lines of the planar Gaussian free field and a(nother) flow of skew Brownian motions \cite{gwynne_holden_sun, borga}, which appears in the setting of the $\gamma$-Liouville quantum gravity. We can view our paper as an analog  when $\gamma\to 0$.

\paragraph{Structure of the paper} Section \ref{s:BESQ_Flow} introduces BESQ flows. Section \ref{s:decomposition} contains the decomposition theorem which is at the heart of the proofs of Theorems \ref{thm: main left}, \ref{thm: main right}, \ref{thm: main dual} in Section \ref{s:meeting}. In Section \ref{s:skewBM},  we prove the connection between BESQ flows and skew Brownian motions of Theorem \ref{thm:representation_by_WN} (Section \ref{s:repre_skew_BM}), and prove Theorems \ref{thm: skewBM meeting} and \ref{thm: skewBM RK}. The computation of the Hausdorff dimension of bifurcation times is the topic of Section \ref{s:bifur_time}.

\bigskip

\paragraph{Acknowledgements} We are grateful to Quan Shi for introducing us to the paper \cite{pitman2018squared}, and for  helpful discussions. We thank Xin Sun for pointing out the analogy with \cite{gwynne_holden_sun, borga}. We  thank  Miguel Martinez for useful explanations on the paper \cite{Gloter2018BouncingSB} and Wenjie Sun for drawing our attention to the issue of bifurcation events. E.A. was supported by NSFC grant QXH1411004.


\section{BESQ flows}\label{s:BESQ_Flow}


\subsection{BESQ processes}
\label{s:BESQ process}
Let $\delta \ge 0$.  The squared Bessel process of dimension $\delta$ started at $a\ge 0$, denoted by ${\rm BESQ}^\delta_a$,  is the  unique solution of
\begin{equation}\label{eq:besselprocess}
S_x = a+2\int_0^x \sqrt{|S_r|} \, \d B_r + \delta x, \qquad x\ge 0,
\end{equation}

\noindent where
$B$ is a standard Brownian motion. The ${\rm BESQ}^\delta$ process hits zero at a positive time if and only if $\delta< 2$. It is absorbed at $0$ when $\delta= 0$ and is  reflecting
at $0$ when $\delta\in (0,2)$. When $\delta<0$, we will take for definition of the ${\rm BESQ}^\delta_a$ process the unique solution of 
 \begin{equation}\label{eq:besselprocess negative}
S_x = a+2\int_0^x \sqrt{|S_r|} \, \d B_r + \delta \min(x,T_0), \qquad x\ge 0,
\end{equation}

\noindent where $T_0:=\inf\{x\ge 0\,:\, S_x=0\}$, so that the process is absorbed when hitting $0$, which happens in a finite time a.s. We refer to \cite[Chapter XI]{revuz2013continuous} for background on BESQ processes. When discussing BESQ processes with varying dimensions, the following definition will come in handy. Let $\cF=(\cF_x,\,x\in \r)$ be a  right-continuous filtration. 

\begin{definition}\label{def:drift}
    An $\cF$-predictable process $\overline{\delta}:\r\to\r$ is called a drift function if there exists a deterministic vector $(\delta_i)_{1\le i\le n} \in \r^n$ and $\cF$-stopping times $-\infty=\tilde{t}_1<\tilde{t}_2<\ldots<\tilde{t}_{n+1}=\infty$ such that $\overline{\delta}=\delta_i$ on $A_i:=(\tilde{t}_{i},\tilde{t}_{i+1})$.

    If $\overline{\delta'}$ is another drift function and $r_1$ is an $\cF$-stopping time, we write $\overline{\delta}\,|_{r_1}\, \overline{\delta'}$ for the drift function $\overline{\delta}(x)\ind_{(-\infty,r_1)}(x)+ \overline{\delta'}(x)\ind_{[r_1,\infty)}(x)$.
\end{definition}

We then write ${\rm BESQ}_a(\overline{\delta})$ for the distribution of the continuous process starting from $a$ which is distributed as a ${\rm BESQ}^{\delta_i}$ process on each $A_i$.


\subsection{Martingale measures}
\label{s:mart}

In the setting of \cite[Chapter 2]{Walsh1986AnIT}, let $\cW$ be a white noise on $\r_+\times \r$ with respect to 
some right-continuous filtration  $\cF=(\cF_x,\, x\in \r)$. If $f$ is a bounded predictable process, one can define \cite[Theorem 2.5]{Walsh1986AnIT} the orthogonal martingale measure $f\cdot \cW$ as,  for any Borel set $A\subset \r_+$ with finite Lebesgue measure, and any $x\le y$,
$$
(f\cdot \cW)_{x,y}(A) := \int_{A\times [x,y]} f(\ell,r)\cW(\dd \ell,\dd r).
$$

\noindent  In that case, the stochastic integral with respect to $f\cdot \cW$ can be expressed as 
\begin{equation}\label{eq:fW}
\int_{\r_+\times [x,y]} h(\ell,r) (f\cdot \cW)(\dd \ell, \dd r)
=
\int_{\r_+\times [x,y]} h(\ell,r)f(\ell,r) \cW(\dd \ell,\dd r)
\end{equation}

\noindent for any predictable process $h$ such that $\e[\int_{\r_+\times [x,y]} h^2(\ell,r)f^2(\ell,r) \dd \ell \dd r]<\infty$. 

\bigskip

For any nonnegative predictable process $(S_r,\, r\in \r)$, we also let $\theta_{S} \cW$ be the martingale measure on $\r_+\times \r$
$$
(\theta_S \cW)_{x,y}(A) := \int_{\r_+\times [x,y]} \ind_{A}(\ell-S_r) \cW(\dd \ell, \dd r).
$$

\noindent  By computation of quadratic variations, we observe that $\theta_S \cW$ is still a white noise on $\r_+\times \r$ with respect to $\cF$. For any predictable $h$ such that $\e[\int_{\r_+\times [x,y]} h^2(\ell,r) \dd \ell \dd r]<\infty$, we have the representation 
    \begin{equation}\label{eq:thetaSW}
         \int_{\r_+\times[x,y]} h(\ell,r) \theta_S \cW (\dd \ell,\dd r) = \int_{\r_+\times[x,y]} \ind_{[S_r,\infty)}(\ell)h(\ell-S_r,r) \cW(\dd \ell,\dd r).
    \end{equation}

\noindent This can be proved by standard arguments, using the density of simple functions.


\subsection{Basic facts about BESQ flows}
\label{s:def}
Stochastic flows related to branching processes form a well-established topic \cite{bertoin-legall00,dawson-li12, lambert, pitmanyor}. A particular case is the BESQ flow introduced in \cite{pitmanyor} in relation to the Brownian motion. We will use the following definition  which slightly differs from \cite[Definition 3.3 \& Definition 3.4]{aïdékon2023stochastic}, but defines the same object\footnote{The regularity condition (iii) in \cite{aïdékon2023stochastic}  was weaker than the perfect flow property. Our definition is then a consequence of  \cite[Proposition 3.9]{aïdékon2023stochastic}. 
In the case $\delta\in(0,2)$ and $\cS$ is killed, we use \cite[Proposition A.1]{aïdékon2023stochastic}. }.

\begin{definition}
\label{def:BESQflow}
Let $\delta \in \r$.  We call ${\rm BESQ}^\delta$ flow  driven by $\cW$ a collection ${\mathcal S}$ of continuous processes $({\mathcal S}_{r,x}(a),\, x\ge r)_{r\in \r,a\ge 0}$ such that: 
\begin{enumerate}[(1)]
\item
for each $(a,r)\in \r_+\times \r$, the process $({\mathcal S}_{r,x}(a),\, x\ge r)$ is almost surely the strong solution of the following SDE  
\begin{equation}\label{eq:BESQflow}
{\mathcal S}_{r,x}(a) = 
a + 2 \int_r^x  {\mathcal W}([0,{\mathcal S}_{r,s}(a)], \d s) + 
\delta (x-r) 
,\, x\ge r
\end{equation}
 and which, in the case $\delta<0$, is absorbed when hitting $0$.
 \item Almost surely,
 \begin{enumerate}[(i)]
 \item for all $r\in \r$ and $a\ge 0$, $ {\mathcal S}_{r,r}(a)=a$,
 \item for all $r\le x$, $a\mapsto  {\mathcal S}_{r,x}(a)$ is c\`adl\`ag,
 \item (Perfect flow property) for any $r\le x \le y$ and $a\ge 0$, $\cS_{r,y}(a)=\cS_{x,y}\circ \cS_{r,x}(a)$. 
\end{enumerate} 
\end{enumerate}

We call killed ${\rm BESQ}^\delta$ flow driven by $\cW$ the flow obtained from the ${\rm BESQ}^\delta$ flow by absorbing at $0$ the flow line $\cS_{r,\cdot}(a)$ at time $\inf\{x>r\,:\,\cS_{r,x}(a)=0\}$.

We call general ${\rm BESQ}^\delta$ flow  a  ${\rm BESQ}^\delta$ flow or a  killed ${\rm BESQ}^\delta$ flow. 
\end{definition}

When $\delta\notin (0,2)$ there is no difference between the $\rm BESQ^\delta$ and the killed $\rm BESQ^\delta$ flow.  We will sometimes call the $\rm BESQ^\delta$ flow a non-killed $\rm BESQ^\delta$ flow to distinguish it with its killed version. When $\cS$ is the killed $\rm BESQ^\delta$ flow with $\delta\in(0,2)$, there are times when $\cS_{r,x}(0)>0$ for some $x>r$ until it comes back to $0$ where it gets absorbed. It happens when the non-killed $\rm BESQ^\delta$ flow line emanating from $(0,r)$ starts by an excursion away from $0$. It implies that $\cS$ does not satisfy the perfect flow property, see Proposition \ref{p:perfect} (i) for the analog.  When $\delta\le 0$, flow lines are absorbed at $0$ and no flow lines can exit $a=0$. When $\delta\ge 2$, flow lines do not hit $0$, except at their starting point if they start from $a=0$.

Recall the notation of Section \ref{s:BESQ process}. The flow line $\cS_{r,\cdot}(a)$ of a $\rm BESQ^\delta$ flow is a ${\rm BESQ}_a^\delta$ process. Equation \eqref{eq:BESQflow} implies the following statements, that will be referred throughout the paper as property ({\bf P}). Let $a'\ge a\ge 0$ and $\cS$, $\cS'$ be resp. a $\rm BESQ^\delta$ flow and a $\rm BESQ^{\delta'}$ flow driven by $\cW$.
\begin{enumerate}[({\bf P}1)]
\item  If $\delta'\ge \delta\ge 0$,  the process $\cS'_{r,\cdot}(a')- \cS_{r,\cdot}(a)$ is  a ${\rm BESQ}_{a'-a}^{\delta'-\delta}$ process independent of $\cS_{r,\cdot}(a)$. 
\item  If $\delta\ge 0$ and $\delta'<\delta$  the process $\max(\cS'_{r,\cdot}(a')- \cS_{r,\cdot}(a),0)$ is  a ${\rm BESQ}_{a'-a}^{\delta'-\delta}$ process independent of $\cS_{r,\cdot}(a)$. 
\item  When $\delta <0$, the flow line $\cS_{r,\cdot}(a)$  is  absorbed at $0$ at some time $\varphi(a,r)$. Conditionally on $\varphi(a,r)=t$, if $\delta'\ge \delta$,  $\cS'_{r,\cdot}(a')- \cS_{r,\cdot}(a)$ is a ${\rm BESQ}_{a'-a}(\delta'-\delta\,|_t\, \delta')$ independent of $\cS_{r,\cdot}(a)$.
\item If $\delta'<\delta<0$, conditionally on $\varphi(a,r)=t$, $\max(\cS'_{r,\cdot}(a')- \cS_{r,\cdot}(a),0)$ is a ${\rm BESQ}_{a'-a}(\delta'-\delta\,|_t\, \delta')$ independent of $\cS_{r,\cdot}(a)$.
\end{enumerate}

 It is a form of the well-known  additivity property for BESQ processes, see \cite{pitman2018squared, pitmanyor}.  \\

  The general ${\rm BESQ}^\delta$ flow is determined by $\cW$ in the sense that if $\cS$ and $\cS'$ are both driven by $\cW$, then a.s., $\cS_{r,x}(a)=\cS'_{r,x}(a)$ for all $r\le x$ and $a\ge 0$, see \eqref{eq:approximation+} below.  In our setting, the following result can be deduced from the perfect flow property and the construction of the killed ${\rm BESQ}^\delta$ flow. 

\begin{proposition}\label{p:perfect}(\cite[Proposition 3.9 \& A.5]{aïdékon2023stochastic})
Let $\cS$ be a general ${\rm BESQ}^\delta$ flow. Then almost surely:
\begin{enumerate}[(i)]

 \item ((Almost) perfect flow property) For every $r\le x\le y$ and $a\ge 0$ with $\cS_{r,x}(a)>0$, $\cS_{r,y}(a)=\cS_{x,y}\circ \cS_{r,x}(a)$. 

 \item (Coalescence) If $r,r'< x$, $0\le a,a'$ and $\cS_{r,x}(a)=\cS_{r',x}(a')$, then $\cS_{r,y}(a)=\cS_{r',y}(a')$ for all $y\ge x$. If $\cS$ is a ${\rm BESQ}^\delta$ flow, it also holds  when $\max(r,r')=x$.

\end{enumerate}   
\end{proposition}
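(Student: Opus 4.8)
The plan is to reduce both statements to the perfect flow property of the non-killed flow, which is granted by condition (2)(iii) of Definition \ref{def:BESQflow}, and then to transfer it to the killed flow through a bookkeeping of absorption times. I would first fix the almost sure event on which, for the non-killed flow $\cS$, every flow line $\cS_{r,\cdot}(a)$ is continuous, $\cS_{r,r}(a)=a$, and the identity $\cS_{r,y}(a)=\cS_{x,y}(\cS_{r,x}(a))$ holds simultaneously for all $r\le x\le y$ and $a\ge 0$. Working on this event makes every assertion deterministic, so the only probabilistic input---that this uncountable family of identities holds off a single null set---is precisely what the definition (equivalently \cite[Proposition 3.9]{aïdékon2023stochastic}) supplies, and I would take it as given.

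For the non-killed flow the two claims are almost immediate. Statement (i) is exactly (2)(iii), the positivity hypothesis being superfluous. For (ii), if $\cS_{r,x}(a)=\cS_{r',x}(a')=:b$ with $r,r'<x$, applying the flow identity twice on $[x,\infty)$ gives $\cS_{r,y}(a)=\cS_{x,y}(b)=\cS_{r',y}(a')$ for every $y\ge x$; when $\max(r,r')=x$, say $r'=x$, one uses additionally $\cS_{x,x}(a')=a'$ to deduce $a'=b$ and concludes in the same way.

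The substance lies in the killed flow with $\delta\in(0,2)$, since for $\delta\notin(0,2)$ it coincides with the non-killed one. Writing $\zeta_{r,a}:=\inf\{x>r:\cS_{r,x}(a)=0\}$ for the absorption time and $\cS^{\mathrm{kill}}$ for the killed flow, so that $\cS^{\mathrm{kill}}_{r,\cdot}(a)=\cS_{r,\cdot}(a)$ on $[r,\zeta_{r,a})$ and vanishes afterwards, the key step---which I expect to be the main obstacle---is to show that absorption times are consistent along the flow: whenever $\cS^{\mathrm{kill}}_{r,x}(a)=b>0$, equivalently $x<\zeta_{r,a}$, one has $\zeta_{r,a}=\zeta_{x,b}$. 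This holds because $\cS_{r,\cdot}(a)$ is positive on $(r,x]$ while $\cS_{r,y}(a)=\cS_{x,y}(b)$ for all $y\ge x$, so the first times the two lines hit $0$ after $x$ agree. Granting this, statement (i) for the killed flow follows by comparing $\cS^{\mathrm{kill}}_{r,y}(a)$ with $\cS^{\mathrm{kill}}_{x,y}(b)$: for $y<\zeta_{r,a}=\zeta_{x,b}$ both equal $\cS_{x,y}(b)$, and for $y\ge \zeta_{r,a}$ both vanish.

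Finally, I would prove coalescence (ii) for the killed flow by splitting on the common value $b=\cS^{\mathrm{kill}}_{r,x}(a)=\cS^{\mathrm{kill}}_{r',x}(a')$. If $b>0$, applying (i) to both lines gives $\cS^{\mathrm{kill}}_{r,y}(a)=\cS^{\mathrm{kill}}_{x,y}(b)=\cS^{\mathrm{kill}}_{r',y}(a')$. If $b=0$, then because $r,r'<x$ strictly each line has already reached $0$ by time $x$, hence is absorbed and stays at $0$ for all $y\ge x$. The strict inequality $\max(r,r')<x$ is essential here: if instead $r'=x$ and $a'=0$ while the $(a,r)$-line is already absorbed, the freshly started line $\cS^{\mathrm{kill}}_{x,\cdot}(0)$ may begin with an excursion away from $0$ and separate from the absorbed line---the very phenomenon recorded before the proposition, and the reason the flow property survives only in the almost form recorded in (i).
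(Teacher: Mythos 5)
Your proof is correct and follows essentially the same route the paper indicates: the paper does not write out a proof but cites \cite[Proposition 3.9 \& A.5]{aïdékon2023stochastic} and notes that the result ``can be deduced from the perfect flow property and the construction of the killed ${\rm BESQ}^\delta$ flow,'' which is precisely your reduction. Your bookkeeping of absorption times (the identity $\zeta_{r,a}=\zeta_{x,b}$ when $\cS^{\mathrm{kill}}_{r,x}(a)=b>0$) correctly supplies the details of that deduction, including why coalescence for the killed flow requires $\max(r,r')<x$.
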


When the flow $\cS$ is killed (and $\delta\in (0,2)$), the reason we need to avoid the case $\max(r,r')=x$ is the existence of these exceptional times $r$ such that $\cS_{r,x}(0)>0$ for $x>r$ close enough to $r$. For $a>0$ and $r\le x$, let $\cS_{r,x}(a-):=    \lim_{a'\uparrow a} \cS_{r,x}(a')$.

\begin{proposition}\label{p:properties}
Let $\cS$ be a general $\rm BESQ^\delta$ flow. The following statements hold almost surely.
\begin{enumerate}[(i)]
    \item For any $r<x$ and $a\ge 0$, $\cS_{r,x}(a')=\cS_{r,x}(a)$ for all $a'>a$ close enough to $a$.
    \item For any $r<x$ and $a>0$, $\cS_{r,x}(a')=\cS_{r,x}(a-)$ for any $0\le a'<a$ close enough to $a$.
\end{enumerate}
\end{proposition}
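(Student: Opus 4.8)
The plan is to reduce both statements to a single structural fact: for fixed $r<x$, the nondecreasing map $a\mapsto \cS_{r,x}(a)$ takes only finitely many values on each compact $[0,A]$, i.e. it is a genuine step function. Both (i) and (ii) are immediate for a nondecreasing, càdlàg step function, since such a map is constant on a right-neighbourhood of every point (giving (i)) and equal to its left limit on a left-neighbourhood of every point (giving (ii)). The monotonicity of $a\mapsto \cS_{r,x}(a)$ is free from ({\bf P}1): for $a'>a$ the difference $\cS_{r,\cdot}(a')-\cS_{r,\cdot}(a)$ is a ${\rm BESQ}^0_{a'-a}$ process, hence nonnegative. So the whole task is local finiteness, uniformly in $(r,x)$.

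First I would treat a fixed rational pair $\rho<\xi$ and bound the expected number of distinct values of $a\mapsto\cS_{\rho,\xi}(a)$ on $[0,A]$. For $a'=a+c$, the difference $\cS_{\rho,\cdot}(a')-\cS_{\rho,\cdot}(a)$ is a ${\rm BESQ}^0_{c}$ process; it is absorbed at $0$, and once it hits $0$ the two flow lines coincide forever by coalescence (Proposition \ref{p:perfect} (ii)). Hence $\cS_{\rho,\xi}(a')>\cS_{\rho,\xi}(a)$ if and only if this ${\rm BESQ}^0_c$ is not yet absorbed at ``time'' $\xi-\rho$, an event of probability $1-e^{-c/(2(\xi-\rho))}$ by the explicit Laplace transform of ${\rm BESQ}^0$. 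Partitioning $[0,A]$ into intervals with endpoints $a_i=iA/N$ and letting $M_N$ count the indices $i$ with $\cS_{\rho,\xi}(a_{i+1})>\cS_{\rho,\xi}(a_i)$, I get $\e[M_N]=N\bigl(1-e^{-(A/N)/(2(\xi-\rho))}\bigr)\to A/(2(\xi-\rho))$. Along dyadic refinements $M_N$ increases to the total number of jumps of $\cS_{\rho,\xi}(\cdot)$ on $[0,A]$, so by monotone convergence the expected number of distinct values is at most $1+A/(2(\xi-\rho))<\infty$, hence finite almost surely. Intersecting over the countably many rational pairs $\rho<\xi$ and integers $A$ produces a single almost sure event on which all these counts are finite.

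The delicate point, which I expect to be the main obstacle, is upgrading from rational to arbitrary real times $r<x$ on the same almost sure event, since the assertion quantifies over a continuum of pairs $(r,x)$. Here I would exploit the (almost) perfect flow property together with the fact that applying a flow map can only merge values. Given real $r<x$, choose rationals $r<\rho<\xi<x$. Wherever the intermediate values are positive, Proposition \ref{p:perfect} (i) yields the factorisation $\cS_{r,x}(a)=\cS_{\xi,x}\bigl(\cS_{\rho,\xi}(\cS_{r,\rho}(a))\bigr)$, so the number of distinct values of $\cS_{r,x}(\cdot)$ on $[0,A]$ is at most the number of distinct values of $\cS_{\rho,\xi}(\cdot)$ on the range $\cS_{r,\rho}([0,A])\subseteq[0,A']$, where $A'=\lceil \cS_{r,\rho}(A)\rceil$ is finite; the finitely many $a$ whose intermediate value equals $0$ contribute a single value, since they coalesce by Proposition \ref{p:perfect} (ii). As the right-hand side is finite on our almost sure event, $a\mapsto\cS_{r,x}(a)$ takes finitely many values on every compact, simultaneously for all real $r<x$. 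This gives the step-function property and hence (i) and (ii).

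Finally I would record that the argument, stated above for $\delta\ge 0$ via ({\bf P}1), extends to $\delta<0$ by the analogous additivity statements ({\bf P}2)--({\bf P}4), and to the general (possibly killed) ${\rm BESQ}^\delta$ flow by noting that the killed flow agrees with the non-killed one until absorption while absorbed lines all take the common value $0$; the same step-function structure therefore transfers verbatim.
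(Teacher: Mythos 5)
Your proposal is correct in substance and follows the same skeleton as the paper's proof --- both reduce (i) and (ii) to the single fact that, almost surely and simultaneously for all $r<x$, the nondecreasing c\`adl\`ag map $a\mapsto\cS_{r,x}(a)$ takes only finitely many values on compacts, and then conclude from the regularity in Definition \ref{def:BESQflow} (ii) --- but you establish that key finiteness fact by a genuinely different route. The paper disposes of it in one line by citing the embedding of the BESQ flow in the perturbed reflecting Brownian motion (\cite[Proposition 3.6]{aïdékon2023stochastic}); you prove it from scratch: by ({\bf P}1) each increment $\cS_{\rho,\cdot}(a+c)-\cS_{\rho,\cdot}(a)$ is a ${\rm BESQ}^0_c$ process, absorbed at $0$ with $\p(\hbox{not absorbed by time } t)=1-\ee^{-c/(2t)}$, so a first-moment bound on dyadic grids gives $\e[\lim_k M_{2^k}]\le A/(2(\xi-\rho))<\infty$ for rational $\rho<\xi$, and the (almost) perfect flow property together with coalescence (Proposition \ref{p:perfect}) transports the conclusion to arbitrary real $r<x$. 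Your route buys self-containedness and even a quantitative statement (an explicit bound, of order $A/(2(x-r))$, on the expected number of distinct values), at the cost of length; the paper's route buys brevity by leaning on external structural machinery.

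A few points would need tightening in a final write-up, none of them a genuine gap. First, simultaneous monotonicity of $a\mapsto\cS_{r,x}(a)$ in all arguments is not literally ({\bf P}1), which is a fixed-pair almost-sure statement; get it from rational pairs plus the c\`adl\`ag property, or from continuity of flow lines plus coalescence. Second, ``$M_{2^k}$ increases to the total number of jumps'' is imprecise: a monotone function can a priori increase continuously without jumping, so the correct statement is that infinitely many distinct values forces $M_{2^k}\to\infty$ (any $v$ witness points with strictly increasing values yield, for $k$ large, at least $(v-1)/2$ increasing dyadic intervals, since each such interval can serve at most two consecutive witness pairs); combined with the moment bound this gives finiteness a.s. Third, the set of $a\in[0,A]$ whose intermediate value vanishes is an interval, not necessarily finite, but as you note it contributes a single value by coalescence, which is all that matters. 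Fourth, for the killed flow the transfer is not quite ``verbatim'': right-continuity in $a$ at an absorbed line requires the observation that if the non-killed line from $a$ hits $0$ at some $y_0\in(r,x]$, then by statement (i) for the non-killed flow applied at $(r,y_0)$ the nearby lines from $a'>a$ also vanish at $y_0$, hence their killed versions are absorbed as well.
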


\begin{proof}
    Almost surely, for any $r<x$ and any bounded interval $I$, the set $\{\cS_{r,x}(a): a\in I\}$ is finite. One can see this property for example from the embedding of the BESQ flow in the PRBM \cite[Proposition 3.6]{aïdékon2023stochastic}. Hence (i) and (ii) follow  from Definition \ref{def:BESQflow} (ii). 
\end{proof}

If $\mathcal{D}=\{(a_n,r_n)\}_{n\ge 1}$ is a countable dense (possibly random) subset of $\r_+\times\r$, then for every $a\ge 0$ and $x\ge r$, \cite[Proposition 3.7]{aïdékon2023stochastic}
\begin{align}\label{eq:approximation+}
\cS_{r,x}(a)=\inf_{(a_n,r_n)\in\mathcal{D}:\, r_n\le r,\, \cS_{r_n,r}(a_n)>a} \cS_{r_n,x}(a_n).
\end{align} 

\noindent The set on which the infimum is taken is not empty. Indeed, following the reasoning of step (i) in the proof of \cite[Proposition 2.6]{aïdékon2023stochastic}, a.s., for any $0\le a<a'$ and $r\in \r$, one can find  $(a_n,r_n)\in \mathcal{D}$ arbitrarily close to $(a,r)$ such that
\begin{equation}\label{eq:approx-AHS}
    r_n< r \textrm{ and } \cS_{r_n,r}(a_n)\in (a,a').
\end{equation}

\noindent Then \eqref{eq:approximation+} is a consequence of Proposition \ref{p:perfect} (ii) and Proposition \ref{p:properties} (i). In the same spirit, we present the following property of BESQ flows, called instantaneously coalescing property. 

\begin{proposition}\label{instantaneous_coalescing}
Let $\cS$ be a general $\rm BESQ^\delta$ flow for $\delta\in \r$. Then with probability 1, for every $(a,r)\in \r_+\times\r$ and $\varepsilon>0$, there exist some  $(a_n,r_n)\in \mathcal{D}$ arbitrarily close to $(a,r)$ 
such that $\cS_{r,x}(a)=\cS_{r_n,x}(a_n)$ for all $x\geq r+\varepsilon$. 
\end{proposition}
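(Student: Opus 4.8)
The plan is to show, for each fixed $(a,r)\in\r_+\times\r$ and $\varepsilon>0$, that one can extract a single point $(a_n,r_n)\in\mathcal{D}$, arbitrarily close to $(a,r)$, whose flow line has already merged with $\cS_{r,\cdot}(a)$ by time $r+\varepsilon$. I will obtain this from four ingredients already available: the local constancy of the initial-value map (Proposition \ref{p:properties}(i)), the approximation from below \eqref{eq:approx-AHS}, the (almost) perfect flow property together with coalescence (Proposition \ref{p:perfect}), and monotonicity of $a\mapsto\cS_{r,x}(a)$. Since each of these holds on a full-measure event, I would first intersect the corresponding null sets and work on the resulting almost-sure event, on which the construction below runs for every $(a,r)$ and $\varepsilon$ at once.

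I would set $x_0:=r+\varepsilon$ and proceed in three steps. First, Proposition \ref{p:properties}(i) produces some $a'>a$ with $\cS_{r,x_0}(a')=\cS_{r,x_0}(a)$; because $r<x_0$, coalescence (Proposition \ref{p:perfect}(ii), applied with $r'=r$) promotes this to $\cS_{r,x}(a')=\cS_{r,x}(a)$ for all $x\ge x_0$. Second, I apply \eqref{eq:approx-AHS} to the pair $a<a'$ to find $(a_n,r_n)\in\mathcal{D}$ arbitrarily close to $(a,r)$ with $r_n<r$ and $b:=\cS_{r_n,r}(a_n)\in(a,a')$; note $b>a\ge0$, so $b>0$, and the almost perfect flow property (Proposition \ref{p:perfect}(i)) then gives $\cS_{r_n,x}(a_n)=\cS_{r,x}(b)$ for every $x\ge r$.

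Third, I need $\cS_{r,x}(b)=\cS_{r,x}(a)$ for $x\ge x_0$. For this I would invoke monotonicity of the initial-value map, which follows from property ({\bf P}) taken with $\delta'=\delta$ (the difference of two flow lines starting at $a'\ge a$ is a nonnegative BESQ process, via ({\bf P}1) when $\delta\ge0$ and via ({\bf P}3) when $\delta<0$): from $a<b<a'$ one gets $\cS_{r,x_0}(a)\le\cS_{r,x_0}(b)\le\cS_{r,x_0}(a')$, and equality of the two ends forces $\cS_{r,x_0}(b)=\cS_{r,x_0}(a)$; one more application of coalescence yields $\cS_{r,x}(b)=\cS_{r,x}(a)$ for $x\ge x_0$. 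Chaining the identities of the second and third steps gives $\cS_{r_n,x}(a_n)=\cS_{r,x}(a)$ for all $x\ge r+\varepsilon$, which is the claim.

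The construction is deterministic once the almost-sure event is fixed, so the main obstacle is not analytic but organisational: one must ensure that Propositions \ref{p:properties} and \ref{p:perfect}, \eqref{eq:approx-AHS}, and the non-crossing property hold \emph{simultaneously} for all $(a,r)$, which is exactly how they are quoted and hence reduces to intersecting countably many null sets. The only genuine subtlety I anticipate is the killed case $\delta\in(0,2)$ when the common value at $x_0$ is $0$: I would verify there that both the coalescence step and the composition $\cS_{r,x}\circ\cS_{r_n,r}(a_n)$ remain valid, which they do because I only ever use $\max(r,r)=r<x_0$ and because $b>0$ keeps the perfect-flow composition licit.
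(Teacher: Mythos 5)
Your proposal is correct and follows essentially the same route as the paper: local constancy of the initial-value map (Proposition \ref{p:properties} (i)), the approximation \eqref{eq:approx-AHS} to produce $(a_n,r_n)$ with $b:=\cS_{r_n,r}(a_n)\in(a,a')$, and coalescence (Proposition \ref{p:perfect} (ii)) to conclude -- the paper simply compresses your steps 2--3 into a single sandwich-and-coalesce sentence. One small repair: the monotonicity $\cS_{r,x_0}(a)\le\cS_{r,x_0}(b)\le\cS_{r,x_0}(a')$ at the \emph{random} value $b$ should not be justified by property ({\bf P}), which is a distributional statement for fixed deterministic starting points; instead, derive the simultaneous non-crossing of flow lines started at a common time from continuity together with Proposition \ref{p:perfect} (ii) (if two such lines ever met they would coalesce, so their initial order is preserved), which is exactly the almost-sure, quantifier-free form you need.
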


\begin{proof}
    By Proposition \ref{p:properties} (i), for $(a,r)\in \r_+\times\r$ and $\varepsilon>0$,  one can find $a'>a$ such that $\cS_{r,r+\varepsilon}(a')=\cS_{r,r+\varepsilon}(a)$. By \eqref{eq:approx-AHS}, 
   there exists $(a_n,r_n)\in \mathcal{D}$ such that $r_n\le r$ and $\cS_{r_n,r}(a_n) \in (a,a')$. Hence $\cS_{r,x}(a)=\cS_{r_n,x}(a_n)= \cS_{r,x}(a')$ for all $x\ge r+\varepsilon$ by the coalescent property in Proposition \ref{p:perfect} (ii). 
\end{proof}

\noindent We finally state the following comparison principle.
\begin{proposition}\label{p:comparison}
    Suppose $\delta \le \delta'$ and let $\cS$ and $\cS'$ be respectively a  ${\rm BESQ}^\delta$ flow and a ${\rm BESQ}^{\delta'}$ flow driven by the same white noise $\cW$.   
Then almost surely, $\cS\le \cS'$, i.e. for all $a,a'\ge 0$ and all $r\le x $,  we have the implications: if $\cS_{r,x}(a)\le a'$, resp. $a\le \cS'_{r,x}(a')$, then $\cS_{r,y}(a)\le \cS'_{x,y}(a')$, resp. $\cS_{x,y}(a)\le \cS'_{r,y}(a')$, for all $y\ge x$. 
\end{proposition}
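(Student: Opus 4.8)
The plan is to prove the comparison principle by reducing it to a statement about individual flow lines, then exploiting the coupling through the common white noise $\cW$ together with the (almost) perfect flow property. First I would observe that it suffices to treat the two implications for fixed data $(a,r)$ and $(a',r)$ and then propagate: the essential point is that two flow lines driven by the same $\cW$ cannot cross in the wrong order. Concretely, I would fix a starting abscissa $r$ and compare the flow lines $\cS_{r,\cdot}(a)$ and $\cS'_{r,\cdot}(a')$ emanating from the \emph{same} time $r$ but possibly different heights. If $a\le a'$, I would argue that $\cS_{r,x}(a)\le \cS'_{r,x}(a')$ for all $x\ge r$, using property ({\bf P}1): since $\delta'\ge\delta$, the difference $\cS'_{r,\cdot}(a')-\cS_{r,\cdot}(a)$ is a ${\rm BESQ}^{\delta'-\delta}_{a'-a}$ process, which stays nonnegative because a squared Bessel process with nonnegative dimension started at a nonnegative value never goes below $0$ (in the killed/absorbed conventions of the paper this is built into the definition). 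This gives the monotone comparison for lines starting at the same time.

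Next I would reduce the general statement, where the two lines start at the same time $x$ after one of them has been run up from an earlier time, to this same-time comparison using Proposition \ref{p:perfect} (i), the almost perfect flow property. For the first implication: assume $\cS_{r,x}(a)\le a'$. Set $b:=\cS_{r,x}(a)$, so $b\le a'$ and both $\cS_{r,\cdot}(a)$ and $\cS'$ are defined from time $x$ onward. By the flow property, for $y\ge x$ we have $\cS_{r,y}(a)=\cS_{x,y}(b)$ whenever $b>0$ (and the case $b=0$ is handled separately, since then $\cS_{x,y}(0)$ is either absorbed at $0$ or a genuine $\delta$-flow line from $0$, in both cases dominated). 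Applying the same-time comparison to $\cS_{x,\cdot}(b)$ and $\cS'_{x,\cdot}(a')$ with $b\le a'$ and $\delta\le\delta'$ yields $\cS_{x,y}(b)\le \cS'_{x,y}(a')$, hence $\cS_{r,y}(a)\le \cS'_{x,y}(a')$. The second implication is symmetric: assume $a\le \cS'_{r,x}(a')=:b'$; then $\cS_{x,y}(a)\le \cS'_{x,y}(b')=\cS'_{r,y}(a')$ by the same-time comparison and the flow property applied to $\cS'$.

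The step I expect to require the most care is the handling of the boundary value $0$ and the killed regime $\delta\in(0,2)$, where the perfect flow property fails and one only has the almost perfect version of Proposition \ref{p:perfect} (i), valid when $\cS_{r,x}(a)>0$. When a lower flow line touches $0$ and is absorbed (or, in the non-killed reflecting case, restarts an excursion), one must verify that it cannot overtake the upper line; here I would invoke that at any time the lower line sits at $0$ it is at the minimal possible height, so the comparison is trivially maintained at that instant, and then re-propagate from the next time the difference is controlled, again via property ({\bf P}1) or ({\bf P}2) according to the sign of $\delta'-\delta$ and the conventions for killing. A clean way to package all of this is to prove the inequality first on a countable dense set $\mathcal{D}$ of starting points using the SDE comparison/additivity of ({\bf P}), where everything holds simultaneously almost surely, and then extend to all $(a,r)$ by the approximation formula \eqref{eq:approximation+} together with the right-continuity in $a$ from Definition \ref{def:BESQflow} (ii), so that the almost-sure event does not depend on the (uncountably many) starting points.
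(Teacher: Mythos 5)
Your overall skeleton --- a same-starting-point comparison via the additivity property ({\bf P}), followed by an extension to arbitrary starting data through a countable dense set --- is the same as the paper's, but the step that carries all the difficulty is missing, and your main reduction is circular as written. In your second paragraph you apply the ``same-time comparison'' at time $x$ to the pair $\cS_{x,\cdot}(b)$, $\cS'_{x,\cdot}(a')$ with $b:=\cS_{r,x}(a)$. The comparison you established via ({\bf P}1) holds almost surely for each \emph{fixed, deterministic} starting configuration; the null set depends on that configuration. Here $b$ is random and $(x,a')$ range over uncountably many values, so the comparison cannot simply be invoked at $(b,a',x)$: asserting that it holds simultaneously for all such triples is exactly the content of the proposition being proved. (Two smaller slips: for $\delta<0$ the relevant additivity statement is ({\bf P}3), not ({\bf P}1); and ({\bf P}2) concerns $\delta'<\delta$, so it never enters here. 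Also, the proposition is about non-killed flows, so the worry about the killed regime is beside the point.)

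You do acknowledge the measurability issue and propose to repair it by proving the inequality on a countable dense set $\mathcal{D}$ and extending via \eqref{eq:approximation+} and right-continuity in $a$. That extension does not close the gap, and this is precisely where the paper has to work. In \eqref{eq:approximation+} the index sets for $\cS$ and $\cS'$ are different (the constraints are $\cS_{r_n,r}(a_n)>a$ and $\cS'_{r_n,r}(a_n)>a$ respectively), so the two infima cannot be compared term by term; and a point $(a_n,r_n)\in\mathcal{D}$ furnished by \eqref{eq:approx-AHS} for $\cS$ controls only $\cS_{r_n,x}(a_n)$, while $\cS'_{r_n,x}(a_n)$ may lie far above any prescribed window, which breaks the intended chain of inequalities. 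The paper supplies the missing device: fix $b>a'$ with $\cS'_{x,y}(a')=\cS'_{x,y}(b)$ (Proposition \ref{p:properties} (i)), and then construct a \emph{single} point $(a_n,r_n)\in\mathcal{D}$ whose flow lines under \emph{both} flows enter the window $I=(\cS_{r,x}(a),b)$ at time $x$. This requires a two-stage selection --- first $(a_p,r_p)$ with $\cS_{r_p,x}(a_p)\in I$ by \eqref{eq:approx-AHS}, then $(a_q,r_q)$ with $\cS'_{r_q,x}(a_q)\in(\cS_{r_p,x}(a_p),b)$, then $(a_n,r_n)$ squeezed between the flow lines $\cS_{r_p,\cdot}(a_p)$ and $\cS'_{r_q,\cdot}(a_q)$ --- after which the coalescence property of Proposition \ref{p:perfect} (ii) yields $\cS_{r,y}(a)\le\cS_{r_n,y}(a_n)\le\cS'_{r_n,y}(a_n)\le\cS'_{x,y}(b)=\cS'_{x,y}(a')$. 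Without this construction, or an equivalent one, your argument does not go through.
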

\begin{proof}
 For fixed $(a,r)\in \r_+\times\r$, property ({\bf P}1)  if $\delta\ge 0$ or ({\bf P}3) if $\delta<0$ implies that $(\cS'_{r,r+s}(a)-\cS_{r,r+s}(a),\, s\ge 0)$ is a non-negative process a.s., hence $\cS_{r,y}(a)\le \cS'_{r,y}(a)$  for all $y\ge r$. It holds simultaneously for all $(a_n,r_n)$ in a deterministic countable set $\cD$ dense in $\r_+\times \r$. Suppose that $\cS_{r,x}(a)\le a'$. Let $y>x$. By Proposition \ref{p:properties} (i) with $(x,y,a')$ in place of $(r,x,a)$, one can find $b>a'$ such that $\cS'_{x,y}(a')=\cS'_{x,y}(b)$. Let $(a_n,r_n) \in \cD$ such that  both $\cS_{r_n,x}(a_n)$ and $\cS'_{r_n,x}(a_n)$ are in $I=(\cS_{r,x}(a),b)$. To find such $(a_n,r_n)$, we choose $(a_p,r_p)$ such that $r_p<x$ and $\cS_{r_p,x}(a_p) \in I$ by \eqref{eq:approx-AHS}, then $(a_q,r_q)$ such that  $r_q \in(r_p,x)$ and  $\cS'_{r_q,x}(a_q) \in (\cS_{r_p,x}(a_p),b)$ by the same equation. Any point $(a_n,r_n)$ such that $r_n \in (r_p,x)$ and $\cS_{r_p,r_n}(a_p) < a_n < \cS'_{r_q,r_n}(a_q)$ would fit by the coalescence property in Proposition \ref{p:perfect} (ii). Then, $\cS_{r,y}(a) \le \cS_{r_n,y}(a_n)$  by the coalescence property, which is smaller than $ \cS'_{r_n,y}(a_n) $ by what we already proved, which is itself smaller than $\cS'_{x,y}(b)$ by another use of the coalescence property. We proved $\cS_{r,y}(a)\le \cS'_{x,y}(a')$ indeed. A similar reasoning yields the case $a\le \cS'_{r,x}(a')$.
\end{proof}


\subsection{Bifurcation events}\label{s:bifur}

Let $\delta \in \r$ and $\cS$ be a general $\rm BESQ^\delta$ flow driven by $\cW$. Recall that for $a>0$ and $r\le x$, we write $\cS_{r,x}(a-):=    \lim_{a'\uparrow a} \cS_{r,x}(a')$.

\begin{definition}\label{def:bifur}
We call a point $(a, r) \in (0,\infty)\times\r$ a bifurcation point if $\cS_{r,x}(a) > \cS_{r,x}(a-)$ for some  $x > r$. 
\end{definition}

We could as well look at possible bifurcation points on the line $a=0$, but we omit their discussion for sake of brevity. Notice that the bifurcation between $\cS_{r,\cdot}(a-)$ and $\cS_{r\cdot}(a)$ may happen only at the beginning by  Proposition \ref{p:perfect} (ii) and Proposition \ref{p:properties} (ii).
 Proposition \ref{p:bifur dual} characterizes bifurcation points in terms of the dual flow $\cS^*$ of the following proposition. 

\begin{proposition} \cite[Proposition 2.7]{aïdékon2023stochastic} \label{p:BESQdual}
Define the dual flow ${\mathcal S}^*$ by, for $r\le x$ and $b\ge 0$,
\begin{equation}\label{def:dual}
    {\mathcal S}^*_{r,x}(b) := \inf\{a\ge 0\,:\, {\mathcal S}_{-x,-r}(a)>b\}.
\end{equation}
Then ${\mathcal S}^*$ is a general ${\rm BESQ}^{2-\delta}$ flow driven by the white noise $-\cW^*$, where $\cW^*$ is the image of $\cW$ under the map $(a,r)\mapsto(a,-r)$. In the case $\delta\in (0,2)$, ${\mathcal S}^*$ is killed if ${\mathcal S}$ is not killed, and it is not killed if ${\mathcal S}$ is killed.  
\end{proposition}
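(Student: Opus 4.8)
The plan is to verify for $\cS^*$ the three requirements of a general ${\rm BESQ}^{2-\delta}$ flow in Definition \ref{def:BESQflow}, with driving noise $-\cW^*$, and to treat the killing dichotomy for $\delta\in(0,2)$ separately. For the well-definedness and the regularity condition (ii), I would first note that for fixed $r\le x$ the map $a\mapsto \cS_{-x,-r}(a)$ is non-decreasing, which is the non-crossing/coalescence property of the flow (Proposition \ref{p:perfect} (ii), or property ({\bf P}1) when $\delta\ge 0$ and ({\bf P}3) when $\delta<0$), and is càdlàg in $a$ by Definition \ref{def:BESQflow} (ii) together with Proposition \ref{p:properties}. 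Its right-continuous generalized inverse is exactly $\cS^*_{r,x}(\cdot)$ by \eqref{def:dual}, hence $\cS^*_{r,x}(\cdot)$ is again non-decreasing and càdlàg, and $\cS^*_{r,r}(b)=b$ follows from $\cS_{-r,-r}(a)=a$.

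Next I would derive the perfect flow property (iii) for $\cS^*$ from that of $\cS$. For $r\le x\le y$, the perfect flow property yields $\cS_{-y,-r}=\cS_{-x,-r}\circ \cS_{-y,-x}$, and taking right-continuous generalized inverses reverses the order of composition, giving $\cS^*_{r,y}=\cS^*_{x,y}\circ \cS^*_{r,x}$. The only subtlety is that the identity ``inverse of a composition equals the reverse composition of inverses'' has to be justified at the level of generalized inverses of càdlàg non-decreasing maps, where one must track behaviour at the boundary $0$. This is precisely where the killing correspondence enters: a line of $\cS$ that reflects at $0$ corresponds under inversion to a dual line absorbed at $0$, and conversely, which explains why $\cS^*$ is killed exactly when $\cS$ is not (and vice versa) in the regime $\delta\in(0,2)$.

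The main step is to identify the law, that is, to show each dual line $x\mapsto \cS^*_{r,x}(b)$ solves the SDE \eqref{eq:BESQflow} with drift $2-\delta$ and noise $-\cW^*$. I would handle a single line first and then upgrade to the full flow via coalescence and the approximation formula \eqref{eq:approximation+}. For the single line, the strategy is to write its semimartingale decomposition by applying the generalized It\^o formula for inverse functions to the SDE satisfied by $\cS_{-x,-r}(\cdot)$, and then to read off the martingale part (whose predictable bracket identifies it, through the martingale-measure calculus of Section \ref{s:mart}, as a stochastic integral against $-\cW^*$; the reflection $\cW\mapsto\cW^*$ reflecting that increasing the dual ``time'' $x$ moves the left endpoint $-x$ of $[-x,-r]$ backwards) and the drift. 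The crucial algebraic fact is that the It\^o correction produced by the quadratic variation $4\,\cS\,\dd x$ of a flow line is exactly what converts the drift $\delta$ into $2-\delta$. The hard part will be this last step: rigorously running stochastic calculus for the inverse of a map that is only càdlàg in the space variable, with jumps located at the bifurcation points of Definition \ref{def:bifur}, while the noise is integrated along the moving boundary $[0,\cS_{-x,s}(\cdot)]$; verifying the correction that flips $\delta$ to $2-\delta$ and matching the driving noise to $-\cW^*$, together with the boundary analysis at $0$, is the technical crux, the remaining properties being formal consequences of monotonicity and the flow property.
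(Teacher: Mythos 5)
Your heuristic is the right one (this is a pathwise Siegmund duality: for the generator $2x\partial_x^2+\delta\partial_x$ the dual drift is $\partial_x(2x)-\delta=2-\delta$, which is your ``It\^o correction'' remark), but the step you yourself flag as the crux does not go through as proposed, and it is precisely the step that neither this paper nor a direct computation handles by stochastic calculus. First, there is no SDE to invert: for fixed $r<x$ the map $a\mapsto \cS_{-x,-r}(a)$ is almost surely a step function taking finitely many values on compacts (see the proof of Proposition \ref{p:properties}), so the ``generalized It\^o formula for inverse functions'' has no meaning in the spatial variable; and in the variable in which the dual line actually evolves, namely $x$, the forward flow lines satisfy no SDE at all (their SDE runs in the terminal time $-r$, not in the starting time $-x$). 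Any rigorous version of your plan must instead analyse how $\cS_{-x,-r}=\cS_{-x',-r}\circ\cS_{-x,-x'}$ changes as the starting time recedes, which is a statement about the flow as a whole, not about a single path and its inverse. Second, even granting a semimartingale decomposition of $x\mapsto\cS^*_{r,x}(b)$, the predictable bracket cannot identify the driving noise as $-\cW^*$: the bracket only yields $4\,\cS^*_{r,x}(b)\,\dd x$, i.e.\ that the martingale part is an integral against \emph{some} white noise; that this white noise is the reflection $-\cW^*$ of the \emph{same} noise $\cW$ is joint information about $(\cS^*,\cW)$ that no bracket computation of $\cS^*$ alone can produce.

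This is in fact why the paper does not prove Proposition \ref{p:BESQdual} by calculus: the statement is quoted from \cite{aïdékon2023stochastic}, and the identification of the driving noise as $-\cW^*$ is obtained there by embedding both $\cS$ and its dual \eqref{def:dual} in the occupation field of a perturbed reflecting Brownian motion (\cite[Proposition 3.6]{aïdékon2023stochastic}) and invoking \cite[Theorem 5.1]{aidekon2024infinite}, where the Ray--Knight description makes the duality and the noise reflection simultaneously visible. A smaller but real issue in your outline: for $\delta\in(0,2)$ the dual of a non-killed flow is killed, and a killed flow satisfies only the \emph{almost} perfect flow property of Proposition \ref{p:perfect} (i); so the clean identity $\cS^*_{r,y}=\cS^*_{x,y}\circ\cS^*_{r,x}$ you extract from inverting compositions fails once the dual line is absorbed at $0$, and the statement you verify must be weakened accordingly.
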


\noindent The fact that  ${\mathcal S}^*$ is driven by $-\cW^*$ is for example a consequence of the embedding of ${\mathcal S}$ and ${\mathcal S}^*$ in the PRBM \cite[Proposition 3.6]{aïdékon2023stochastic} and of \cite[Theorem 5.1]{aidekon2024infinite}. We will call the flow lines of $\cS$ forward flow lines, by opposition to the dual flow lines which are the flow lines of $\cS^*$. In the next proposition, we say that a bifurcation point $(a,r)\in (0,\infty)\times \r$ is an ancestor of $(b,x)$ if $\cS_{r,x}(a-) \le b <\cS_{r,x}(a)$, see \cite{bertoin-legall00} for this notion in the setting of CSBPs.
\begin{proposition}\label{p:bifur dual}
 Almost surely, for any $a>0$, $b\ge 0$ and $r<x$,
\begin{equation}\label{equivalence bifurcation}
   (a,r) \textrm{ is an ancestor of } (b,x) \Leftrightarrow   a=\cS^*_{-x,-r}(b). 
\end{equation}
\end{proposition}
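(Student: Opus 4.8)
The plan is to reduce the equivalence to an elementary fact about the generalized inverse of a monotone step function. First I would make the right-hand side explicit: unfolding the definition \eqref{def:dual} with $(r',x')=(-x,-r)$ and using $-x'=r$, $-r'=x$, one gets
\[
\cS^*_{-x,-r}(b)=\inf\{a\ge 0:\ \cS_{-(-r),-(-x)}(a)>b\}=\inf\{a\ge 0:\ \cS_{r,x}(a)>b\}=:a^*.
\]
Thus, writing $F(a):=\cS_{r,x}(a)$, the claim becomes the purely functional statement that $F(a-)\le b<F(a)$ if and only if $a=a^*$.

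Next I would record the structural properties of $F$ driving the argument. The map $a\mapsto F(a)$ is (i) non-decreasing, which follows from property ({\bf P}1) when $\delta\ge 0$ and ({\bf P}3) when $\delta<0$ (for $a\le a'$ the difference $\cS_{r,\cdot}(a')-\cS_{r,\cdot}(a)$ is a non-negative ${\rm BESQ}^0$-type process), combined with the coalescence property of Proposition \ref{p:perfect}(ii) and the dense-set representation \eqref{eq:approximation+} to upgrade to all $a\le a'$ and $r\le x$ on a single event of probability $1$; it is (ii) c\`adl\`ag in $a$ by Definition \ref{def:BESQflow}(ii); and (iii) it has discrete range, i.e. $\{\cS_{r,x}(a):a\in I\}$ is finite for every bounded $I$, as used in the proof of Proposition \ref{p:properties} (via the embedding of the flow in the PRBM). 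Hence, on a single null set, $F$ is simultaneously for all $r<x$ a non-decreasing c\`adl\`ag step function. Since everything below is then a deterministic statement about the fixed function $F$, no further null sets are needed and the equivalence will hold for all $b\ge 0$ at once.

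On the event that $F$ is such a step function, I would prove both directions. For $\Rightarrow$: if $F(a-)\le b<F(a)$, then $a\in\{F>b\}$ gives $a\ge a^*$, while monotonicity gives $F(a')\le F(a-)\le b$ for every $a'<a$, so $a^*\ge a$, whence $a=a^*$. For $\Leftarrow$: since the range of $F$ is discrete, the number $v^+:=\inf\{v\in\mathrm{range}(F):v>b\}$ is attained and satisfies $v^+>b$ strictly, so $\{F>b\}=\{F\ge v^+\}$; as $F$ is non-decreasing and c\`adl\`ag this set equals $[a^*,\infty)$ with $F(a^*)\ge v^+>b$, and $F(a')\le b$ for $a'<a^*$ forces $F(a^*-)\le b$, so $a^*$ is an ancestor of $(b,x)$. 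If $\{F>b\}$ is empty then $a^*=\infty$ and neither side holds for a finite $a>0$; similarly if $a^*=0$ no $a>0$ qualifies on either side; both edge cases are consistent with the biconditional.

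The crux, and the step I expect to require the most care, is property (iii): the strict inequality $F(a^*)>b$ in the reverse direction would fail if $a\mapsto\cS_{r,x}(a)$ could increase continuously through the level $b$, in which case $a^*$ would satisfy $\cS_{r,x}(a^*)=b$ and would not be a bifurcation point at all. Discreteness of the range is exactly what excludes this, so it is the real content behind the equivalence, while the monotone-inverse bookkeeping is routine. The remaining delicate point is the simultaneity of (i)--(iii) over all $(r,x)$ on one probability-$1$ event, which I would handle through the countable dense set $\mathcal{D}$ and \eqref{eq:approximation+} exactly as in the proof of Proposition \ref{p:comparison}.
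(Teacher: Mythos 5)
Your proof is correct and takes essentially the same route as the paper's: the paper's one-line proof simply invokes the definition of $\cS^*$ (the generalized inverse you unfold) together with Proposition \ref{p:properties}, whose content is exactly the step-function structure (finite local range plus c\`adl\`ag, plus the implicit monotonicity in $a$) that you use to obtain the strict inequality $\cS_{r,x}(a^*)>b$. The monotone-inverse bookkeeping you spell out is precisely what the paper leaves implicit.
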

 \begin{proof} 
 It is a consequence of the definition of $\cS^*$ and Proposition \ref{p:properties}.   
 \end{proof}

The following proposition shows that forward and dual flow lines do not cross, a standard property for stochastic flows. 

\begin{proposition}\label{p:properties dual}
Let $\cS$ be a general $\rm BESQ^\delta$ flow and $\cS^*$ be its dual. The following statements hold almost surely.
\begin{enumerate}[(i)]
\item For any $r<x$, $b\ge 0$ and $a\ge \cS^*_{-x,-r}(b)$,  $\cS_{r,x}(a)>b$ and 
\begin{equation}\label{eq:properties dual+}
    \cS_{r,y}(a)\ge \cS^*_{-x,-y}(b),\qquad  y\in [r,x].
\end{equation}
Moreover  $\cS_{r,y}(a)> \cS^*_{-x,-y}(b)$ if $y\in (r,x)$ and $\cS_{r,y}(a)>0$.
\item  For any $r<x$, $b\ge 0$ such that $a:=\cS^*_{-x,-r}(b)>0$ and $0\le a'< a$, 
\[
\cS_{r,y}(a')\le \cS^*_{-x,-y}(b),\qquad  y\in [r,x].
\]
 Moreover $\cS_{r,y}(a')< \cS^*_{-x,-y}(b)$ if  $y\in (r,x)$ and $\cS^*_{-x,-y}(b)>0$.
\end{enumerate}
\end{proposition}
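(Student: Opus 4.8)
The plan is to reduce both parts to two elementary facts. The first is the rewriting of the dual flow obtained directly from \eqref{def:dual}: for $y\in[r,x]$,
\[
\cS^*_{-x,-y}(b)=\inf\{c\ge 0:\cS_{y,x}(c)>b\}.
\]
The second is the almost perfect flow property (Proposition \ref{p:perfect} (i)), which I will use in the form $\cS_{y,x}(\cS_{r,y}(a))=\cS_{r,x}(a)$ whenever $\cS_{r,y}(a)>0$. Besides these I rely on the monotonicity of $a\mapsto\cS_{r,x}(a)$ (a consequence of property (\textbf{P}1) when $\delta\ge 0$ and (\textbf{P}3) when $\delta<0$) and the ancestor characterization of Proposition \ref{p:bifur dual}, which tells us that $a_0:=\cS^*_{-x,-r}(b)$ satisfies $\cS_{r,x}(a_0-)\le b<\cS_{r,x}(a_0)$ when $a_0>0$. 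With these in hand, for part (i) I would first note that $a\ge a_0$ and monotonicity give $\cS_{r,x}(a)\ge\cS_{r,x}(a_0)>b$, which is the strict inequality at $x$ and shows $\cS_{r,x}(a)>0$. For the weak inequality on $[r,x]$, the case $y=r$ is the hypothesis; for $y\in(r,x]$ with $\cS_{r,y}(a)>0$ the perfect flow identity gives $\cS_{y,x}(\cS_{r,y}(a))=\cS_{r,x}(a)>b$, so $\cS_{r,y}(a)$ belongs to $\{c:\cS_{y,x}(c)>b\}$ and thus $\cS_{r,y}(a)\ge\cS^*_{-x,-y}(b)$. The remaining possibility $\cS_{r,y}(a)=0$ at an interior $y$ can occur only for a non-killed flow with $\delta\in(0,2)$ (otherwise the line is absorbed and $\cS_{r,x}(a)=0$, contradicting $\cS_{r,x}(a)>b$); there I would invoke the coalescence clause of Proposition \ref{p:perfect} (ii), which for a non-killed flow is valid even when $\max(r,y)=y$, to conclude $\cS_{y,x}(0)=\cS_{r,x}(a)>b$ and hence $\cS^*_{-x,-y}(b)=0$.

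The delicate point is the strict inequality for interior $y$ with $c:=\cS_{r,y}(a)>0$. I would argue by contradiction, supposing $\cS^*_{-x,-y}(b)=c$. Since $c>0$, the almost perfect flow property applied to the dual flow $\cS^*$, which is a ${\rm BESQ}^{2-\delta}$ flow by Proposition \ref{p:BESQdual}, yields
\[
a_0=\cS^*_{-x,-r}(b)=\cS^*_{-y,-r}(c)=\inf\{a'\ge 0:\cS_{r,y}(a')>c\}.
\]
By Proposition \ref{p:bifur dual} this forces $\cS_{r,y}(a_0)>c$ when $a_0>0$, whence $\cS_{r,y}(a)\ge\cS_{r,y}(a_0)>c=\cS_{r,y}(a)$, a contradiction. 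The degenerate case $a_0=0$ is excluded using Proposition \ref{p:properties} (i), which produces some $a'>0$ with $\cS_{r,y}(a')=c$, whereas the identity $\inf\{a':\cS_{r,y}(a')>c\}=0$ would force $\cS_{r,y}(a')>c$ for all $a'>0$.

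Part (ii) is more direct. With $a_0=\cS^*_{-x,-r}(b)>0$ and $a'<a_0$, monotonicity together with the ancestor property give $\cS_{r,x}(a')\le\cS_{r,x}(a_0-)\le b$. If $\cS_{r,y}(a')>0$, the perfect flow identity gives $\cS_{y,x}(\cS_{r,y}(a'))=\cS_{r,x}(a')\le b$; since $\cS_{y,x}$ is nondecreasing and, when $\cS^*_{-x,-y}(b)>0$, satisfies $\cS_{y,x}(\cS^*_{-x,-y}(b))>b$ by Proposition \ref{p:bifur dual}, the sublevel set $\{c:\cS_{y,x}(c)\le b\}$ equals $[0,\cS^*_{-x,-y}(b))$, so $\cS_{r,y}(a')<\cS^*_{-x,-y}(b)$ strictly. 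If instead $\cS_{r,y}(a')=0$, the inequality is immediate and is strict exactly when $\cS^*_{-x,-y}(b)>0$; the endpoints are handled by $\cS_{r,r}(a')=a'<a_0$ and $\cS_{r,x}(a')\le b$. This gives the weak inequality on $[r,x]$ and strictness for $y\in(r,x)$ with $\cS^*_{-x,-y}(b)>0$.

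The main obstacle is the strict inequality in (i): it is precisely where the forward and dual lines genuinely meet (at $y=r$ when $a=a_0$), and one must show that they separate immediately in the interior. The argument above reroutes this through the flow structure of the dual and the ancestor characterization, so that the remaining work is the careful bookkeeping of boundary values $0$ and the distinction between killed and non-killed flows, for which the almost perfect flow property (Proposition \ref{p:perfect} (i)) and its coalescence complement (Proposition \ref{p:perfect} (ii)) are the essential tools.
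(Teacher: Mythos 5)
Your overall strategy is workable and the later steps are correct, but there is a concrete gap at the very first step of part (i). You justify $\cS_{r,x}(a_0)>b$, where $a_0:=\cS^*_{-x,-r}(b)$, by the ancestor characterization of Proposition \ref{p:bifur dual}, which (as you yourself note) only applies when $a_0>0$. The case $a_0=0$ is not covered, and it is a genuine, generic case: $\cS^*_{-x,-r}(b)=0$ exactly when $\cS_{r,x}(0)>b$, and the proposition quantifies over all $r<x$ and $b\ge 0$. Everything downstream in your part (i) — the weak inequality (which uses $\cS_{r,x}(a)>b$ through the perfect flow identity), the treatment of $\cS_{r,y}(a)=0$, and the strictness argument — rests on this opening claim. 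The repair is exactly the device you deploy later in your ``degenerate case'': since $\inf\{a'\ge 0:\cS_{r,x}(a')>b\}=0$, monotonicity gives $\cS_{r,x}(a')>b$ for every $a'>0$, and Proposition \ref{p:properties} (i) (right local constancy in $a$, applied at $a=0$) then forces $\cS_{r,x}(0)>b$. This is in fact precisely the paper's own opening step, so the gap is easily filled, but as written your base case is incomplete.

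Once that is fixed, your route is correct and organized differently from the paper's, in ways worth noting. The paper proves the endpoint statement by the right-continuity argument above and then propagates it to interior $y$ through the almost perfect flow property of the \emph{dual}: setting $c:=\cS^*_{-x,-y}(b)$, if $c>0$ then $\cS^*_{-x,-r}(b)=\cS^*_{-y,-r}(c)$, and the endpoint statement applied to $(y,c)$ yields the strict inequality $\cS_{r,y}(a)>c$ in one stroke — weak and strict inequalities come out of the same step. You instead derive the weak inequality from the forward flow property plus \eqref{def:dual}, and then prove strictness by a separate contradiction argument (again via the dual's flow property and Proposition \ref{p:bifur dual}); this is sound but more roundabout. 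For part (ii), the paper simply exchanges the roles of $\cS$ and $\cS^*$, which implicitly uses that duality is an involution; your direct argument via $\cS_{r,x}(a')\le\cS_{r,x}(a_0-)\le b$ and the identification of the sublevel set $\{c:\cS_{y,x}(c)\le b\}$ with $[0,\cS^*_{-x,-y}(b))$ avoids invoking that involution, a small gain in self-containedness (note that the identification of the sublevel set when $\cS^*_{-x,-y}(b)=0$ again needs the monotonicity/right-continuity argument, consistent with the fix above).
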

 \begin{proof}
 From the definition of the dual flow in \eqref{def:dual}, we have $\cS_{r,x}(a')>b$ for any $a'>\cS^*_{-x,-r}(b)$. By Proposition \ref{p:properties} (ii), it implies $\cS_{r,x}(a)> b$ also at $a=\cS^*_{-x,-r}(b)$. Let $y\in (r,x)$. Let $c:=\cS^*_{-x,-y}(b)$. If $c=0$, \eqref{eq:properties dual+} is clear. Otherwise, by the perfect flow property of Proposition \ref{p:perfect} (i) applied to $\cS^*$, $\cS^*_{-x,-r}(b)=\cS^*_{-y,-r}(c)$. Hence if $a\ge \cS^*_{-x,-r}(b)=\cS^*_{-y,-r}(c)$, then $\cS_{r,y}(a)>c$ by what we just proved with $(y,c)$ in place of $(x,b)$. It implies (i). If $a'<\cS^*_{-x,-r}(b)$, then $\cS_{r,x}(a')\le b$ by definition of $\cS^*$. Statement (ii) is then a consequence of (i), reversing the roles of $\cS$ and $\cS^*$.
 \end{proof}

\bigskip

As a corollary, we deduce that forward flow lines cannot hit bifurcation points in $(0,\infty)\times \r$, except possibly at their starting point. 
\begin{corollary}\label{c: no middle bifurcation}
    Let $\cS$ be a general ${\rm BESQ}^\delta$ flow. Almost surely, for any $a\ge 0$ and $r<y$ such that $\cS_{r,y}(a)>0$, the point $(\cS_{r,y}(a),y)$ is not a bifurcation point.
\end{corollary}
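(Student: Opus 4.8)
The plan is to argue by contradiction, reducing the non-existence of such bifurcation points to the fact (Proposition \ref{p:properties dual}) that forward and dual flow lines do not cross. First I would fix the almost sure event on which Propositions \ref{p:bifur dual} and \ref{p:properties dual} hold simultaneously for all choices of parameters, so that the corollary becomes a deterministic statement. Fix $a\ge 0$ and $r<y$ with $b:=\cS_{r,y}(a)>0$, and suppose toward a contradiction that $(b,y)$ is a bifurcation point. By Definition \ref{def:bifur}, $(b,y)$ is then an ancestor of some $(c,x)$ with $x>y$ and $c\ge 0$, and Proposition \ref{p:bifur dual} translates this into $b=\cS^*_{-x,-y}(c)$; equivalently, $(b,y)$ sits on the dual flow line issued from $(c,-x)$.

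The core of the argument is that a forward flow line cannot touch a dual flow line at a strictly interior time while remaining positive. To make this precise I would compare the starting height $a$ with $a^*:=\cS^*_{-x,-r}(c)$ and distinguish two cases. If $a\ge a^*$, then Proposition \ref{p:properties dual}(i), applied with $c$ in place of its parameter $b$, is available (its hypothesis reads $a\ge\cS^*_{-x,-r}(c)$); since $y\in(r,x)$ and $\cS_{r,y}(a)=b>0$, its final assertion gives the strict inequality $\cS_{r,y}(a)>\cS^*_{-x,-y}(c)=b$. If instead $a<a^*$, then $a^*>0$ and Proposition \ref{p:properties dual}(ii) applies, with $c$ in place of $b$ and our $a$ serving as the lower height there; since $y\in(r,x)$ and $\cS^*_{-x,-y}(c)=b>0$, it yields $\cS_{r,y}(a)<\cS^*_{-x,-y}(c)=b$. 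In either case $\cS_{r,y}(a)\ne b$, contradicting $b=\cS_{r,y}(a)$.

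I expect the only subtle step to be the initial reduction: recognizing through Proposition \ref{p:bifur dual} that being a bifurcation point is the same as lying on some dual flow line, which is exactly what lets the non-crossing statement apply. After that, the remaining points require only care in bookkeeping: one must verify that the relevant time $y$ is strictly interior to $(r,x)$ — so that the \emph{strict} inequalities of Proposition \ref{p:properties dual} are in force, the boundary (non-strict) versions being insufficient — and that the positivity hypotheses $\cS_{r,y}(a)>0$ and $\cS^*_{-x,-y}(c)>0$ hold, both of which are immediate from $b>0$. Notably, the dichotomy $a\ge a^*$ versus $a<a^*$ already covers all cases, so I do not anticipate needing the precise value of $a^*$ nor the monotonicity of $a\mapsto\cS_{r,y}(a)$.
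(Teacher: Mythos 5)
Your proof is correct and takes essentially the same route as the paper: reduce via Proposition \ref{p:bifur dual} to showing that one cannot have $\cS_{r,y}(a)=\cS^*_{-x,-y}(c)>0$ for $r<y<x$, then split into the cases $a\ge \cS^*_{-x,-r}(c)$ and $a<\cS^*_{-x,-r}(c)$, handled by the strict-inequality parts of Proposition \ref{p:properties dual}(i) and (ii) respectively.
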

\begin{proof}
    By Proposition \ref{p:bifur dual}, it is enough to show that a.s., one cannot find $a,b\ge 0$ and $r<y<x$ such that $\cS_{r,y}(a)=\cS^*_{-x,-y}(b)>0$. Let $a,b\ge 0$ and $r<y<x$. If $a\ge \cS^*_{-x,-r}(b)$, we apply Proposition \ref{p:properties dual} (i) to see that if $\cS_{r,y}(a)>0$, then $\cS_{r,y}(a)>\cS^*_{-x,-y}(b)$. If $a<\cS^*_{-x,-r}(b)$ and $\cS^*_{-x,-y}(b)>0$, we use Proposition \ref{p:properties dual} (ii).
\end{proof}

\begin{theorem}\label{t:bifur}
The set of bifurcation points has Hausdorff dimension $\frac32$ almost surely.
\end{theorem}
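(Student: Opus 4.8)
The plan is to compute the Hausdorff dimension by establishing matching upper and lower bounds, both leveraging the characterization of bifurcation points via the dual flow provided in Proposition~\ref{p:bifur dual}. By that proposition, a point $(a,r)\in(0,\infty)\times\r$ is a bifurcation point if and only if there exists some $x>r$ and $b\ge 0$ with $a=\cS^*_{-x,-r}(b)$, i.e. $(a,r)$ is an ancestor of some $(b,x)$. The key observation I would exploit is that, by property $(\textbf{P}1)$ applied to the forward flow together with the comparison structure, the bifurcation set should be describable in terms of the level sets or the graph of some auxiliary process. The announced value $\frac32$ is precisely the dimension of the graph of standard Brownian motion \cite{taylor}, which strongly suggests that bifurcation points at a fixed ``height'' trace out (a time-change of) a Brownian graph. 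So I would aim to exhibit an explicit coupling between the bifurcation structure and a Brownian graph.

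\textbf{Upper bound.} For the bound $\dim_H\le \frac32$, I would cover the bifurcation set efficiently. Fix a compact box $[\alpha_1,\alpha_2]\times[r_1,r_2]\subset (0,\infty)\times\r$. A bifurcation point $(a,r)$ in this box is an ancestor, so there is a nontrivial gap $\cS_{r,x}(a)-\cS_{r,x}(a-)>0$ opening up immediately after $r$. I would quantify, for a mesh of scale $\eps$, the expected number of boxes of side $\eps$ that can contain such a point. The heuristic is that the bifurcation points of a given ``size of gap'' form a process whose temporal fluctuations (in $r$) are of Brownian order $\eps^{1/2}$ while the spatial resolution is $\eps$, giving roughly $\eps^{-1}\cdot\eps^{-1/2}=\eps^{-3/2}$ boxes, hence box-counting dimension $\frac32$. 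Making this rigorous requires a moment estimate: bound $\e[N_\eps]$, the expected number of occupied $\eps$-boxes, by $C\eps^{-3/2}$, using the explicit BESQ transition densities (equivalently the joint law of $(\cS^*_{-x,-r}(b))$ across nearby $r$) to control the probability that a gap of the relevant magnitude appears in a given box.

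\textbf{Lower bound.} For $\dim_H\ge\frac32$, I would construct a random measure supported on (a subset of) the bifurcation set and apply the energy method / Frostman's lemma: if $\int\int |(a,r)-(a',r')|^{-s}\,\mu(\dd a\,\dd r)\,\mu(\dd a'\,\dd r')<\infty$ for all $s<\frac32$, then $\dim_H\ge\frac32$ a.s. on the event $\mu\neq 0$. The natural candidate measure is the pushforward, under the ancestor map $(b,x)\mapsto(\cS^*_{-x,-r}(b),r)$ evaluated at an appropriate reference time, of Lebesgue measure on a set of descendants $(b,x)$; concretely, fixing a horizontal reference line and letting $b$ range, the map $b\mapsto$ (ancestor at time $r$) should, via the dual flow line $x\mapsto\cS^*_{-x,-r}(b)$ and property $(\textbf{P}1)$, behave like a Brownian-type path, so that its graph carries a measure with the desired energy finiteness for $s<\frac32$. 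The two-point energy estimate then reduces to a two-point correlation bound on the dual flow, again available from the BESQ/additivity structure.

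\textbf{Main obstacle.} I expect the principal difficulty to be the lower bound, specifically producing the two-point correlation estimate that controls $\e\big[|(a,r)-(a',r')|^{-s}\big]$ uniformly enough to yield finite $s$-energy for every $s<\frac32$. This is where the Brownian-graph heuristic must be turned into a genuine inequality: one must show that the joint law of two nearby ancestors does not concentrate too strongly, which means controlling the probability that two dual flow lines starting from nearby levels remain close, together with the transversal (spatial) fluctuation being genuinely of order (temporal)$^{1/2}$. Handling the singularity as $(a',r')\to(a,r)$ requires a careful decomposition according to whether the temporal separation $|r-r'|$ or the spatial separation $|a-a'|$ dominates, and invoking the additivity property $(\textbf{P}1)$--$(\textbf{P}4)$ to reduce the increments of the dual flow to independent BESQ increments whose densities are explicitly known. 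The upper bound, by contrast, should follow more routinely once the first-moment box count is set up, since it only needs an expectation bound rather than almost-sure control.
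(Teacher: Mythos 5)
Your upper bound strategy has a genuine flaw. The set of bifurcation points is almost surely \emph{dense} in $(0,\infty)\times\r$: by Proposition \ref{p:bifur dual}, it contains every point (other than the starting point) of every dual flow line emanating from a countable dense set $\mathcal D$, and such a dual line started at $(b,x)$ with $x$ slightly above $r_0$ and $b$ close to $a_0$ passes within any prescribed distance of $(a_0,r_0)$. Consequently the number $N_\eps$ of occupied $\eps$-boxes in any fixed compact box is of order $\eps^{-2}$ almost surely, so the first-moment estimate $\e[N_\eps]\le C\eps^{-3/2}$ you propose is false, and no box-counting argument can give the exponent $\frac32$: the box dimension of the bifurcation set is $2$. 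The point you are missing is that Hausdorff dimension is countably stable while box dimension is not, and the proof must exploit this. The paper's argument does exactly that: every bifurcation point is an ancestor of some point of $\mathcal D$, and by Proposition \ref{p:bifur dual} the ancestors of a fixed $(b,x)$ are precisely the points with $a>0$ on the graph of the single dual flow line $r\mapsto\cS^*_{-x,-r}(b)$. Hence the bifurcation set is a countable union of graphs of (possibly killed) ${\rm BESQ}^{2-\delta}$ processes (Proposition \ref{p:BESQdual}), each of Hausdorff dimension $\frac32$ by \eqref{BM:haus_dim}, and countable stability gives both bounds at once.

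Your lower bound is workable in spirit but redundant: constructing a Frostman measure on the graph of a dual flow line and checking $s$-energy finiteness for $s<\frac32$ amounts to reproving Taylor's theorem on the dimension of a Brownian-type graph, which the paper simply cites (equation \eqref{BM:haus_dim}, transferred to BESQ processes by local absolute continuity). Once you observe the exact identification of the bifurcation set with a countable union of dual flow line graphs, no moment estimates, energy integrals, or two-point correlation bounds are needed at all.
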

\begin{proof} 
Let  $\mathcal{D}$ be a deterministic countable set  of points dense in $(0,\infty)\times \r$. Any bifurcation point should be an ancestor of a point in $\mathcal D$. By  Proposition \ref{p:bifur dual} (i), it yields that bifurcation points  are exactly the points  with $a>0$ which lie on the dual flow lines emanating from a point in $\mathcal D$ (except its starting point). Because ${\mathcal D}$ is countable, it remains to compute the Hausdorff dimension of the graph of a single dual flow line. According to Proposition \ref{p:BESQdual}, a dual flow line is a (possibly killed) $\rm BESQ^{2-\delta}$ process, whose graph  has dimension $\frac32$ by equation \eqref{BM:haus_dim}. 
\end{proof}

\bigskip

We close this section by studying points which are common bifurcation points for two different flows. We consider two $\rm BESQ$ flows $\cS^1$ and $\cS^2$ of dimensions $\delta_1<\delta_2$. 

\begin{theorem}\label{t:bifur quadri}
    The set of points $(a,r)\in (0,\infty)\times \r$ such that $\cS^1_{r,x}(a)>\cS^2_{r,x}(a-)$ for some $x>r$ is nonempty if and only if $d:=\delta_2-\delta_1 \in (0,2)$. In that case, its Hausdorff dimension is $\min(2-d,\frac{3-d}{2})$ a.s. 
\end{theorem}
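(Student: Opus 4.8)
The plan is to transfer the problem to the dual flows and reduce it to computing the Hausdorff dimension of the graph of a squared Bessel path over the zero set of an independent ${\rm BESQ}^d$ process. Write $A$ for the set in question. First I would characterize $A$ through duality. Let $\cS^{1,*},\cS^{2,*}$ be the dual flows of $\cS^1,\cS^2$, which by Proposition \ref{p:BESQdual} are ${\rm BESQ}^{2-\delta_1}$ and ${\rm BESQ}^{2-\delta_2}$ flows driven by $-\cW^*$. Since $\delta_1<\delta_2$, the comparison principle (Proposition \ref{p:comparison}) gives $\cS^1\le\cS^2$ and $\cS^{2,*}\le\cS^{1,*}$. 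Fix $(a,r)\in A$ with a witness $x>r$ and set $b:=\cS^2_{r,x}(a-)$. Then $(a,r)$ is an ancestor of $(b,x)$ for $\cS^2$, so $a=\cS^{2,*}_{-x,-r}(b)$ by Proposition \ref{p:bifur dual}, while the interlacing condition $\cS^1_{r,x}(a)>b$ is equivalent, by Proposition \ref{p:properties dual}(i) applied to $\cS^1$, to $a\ge\cS^{1,*}_{-x,-r}(b)$. Together with $\cS^{2,*}\le\cS^{1,*}$ this forces
\[
a=\cS^{1,*}_{-x,-r}(b)=\cS^{2,*}_{-x,-r}(b),
\]
i.e. the two dual flow lines issued from $(b,x)$ meet at $(a,r)$; conversely any such meeting point lies in $A$. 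To pass to a countable description I would argue as in the proof of Theorem \ref{t:bifur}: a point $(a,r)\in A$ is, for every $x'$ near its witness and every $b'\in[\cS^2_{r,x'}(a-),\cS^1_{r,x'}(a))$, a common ancestor of $(b',x')$ for both flows; since the strict interlacing inequality is open in $x'$, this admissible family has nonempty interior and thus meets any fixed countable dense set $\cD$. Writing $\phi^{(b,x)}:=\cS^{2,*}_{-x,-\cdot}(b)$ and $R^{(b,x)}:=\cS^{1,*}_{-x,-\cdot}(b)-\cS^{2,*}_{-x,-\cdot}(b)$, this yields
\[
A=\bigcup_{(b,x)\in\cD}M_{(b,x)},\qquad M_{(b,x)}:=\{(\phi^{(b,x)}(y),y):\,y<x,\ R^{(b,x)}(y)=0,\ \phi^{(b,x)}(y)>0\}.
\]

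The second step identifies the geometry of a single $M_{(b,x)}$. By property (\textbf{P}1) applied to the dual flows issued from the common point $(b,x)$, the process $R^{(b,x)}$ is a ${\rm BESQ}^{d}$ process started at $0$ and is independent of the ${\rm BESQ}^{2-\delta_2}$ path $\phi^{(b,x)}$. Hence $M_{(b,x)}$ is exactly the graph of $\phi^{(b,x)}$ over the zero set $Z$ of an independent ${\rm BESQ}^d$ process. Since such a process started at $0$ returns to $0$ if and only if $d<2$, the set $Z$ (and therefore $A$) is nonempty precisely when $d\in(0,2)$; in that range $\dim_H Z=\gamma:=1-\tfrac d2$ and $Z$ carries a Frostman measure $\lambda$ (the local time of $R^{(b,x)}$ at $0$) of finite $\alpha$-energy for every $\alpha<\gamma$.

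The heart of the matter is then the dimension of a squared Bessel graph over an independent set of dimension $\gamma$, for which I expect $\dim_H M_{(b,x)}=\min(\gamma+\tfrac12,\,2\gamma)=\min(\tfrac{3-d}{2},\,2-d)$ a.s. For the upper bound I would use two complementary covers. A uniform grid of side $\eps$ has $\asymp\eps^{-\gamma}$ occupied time-columns, each crossed with vertical oscillation $\asymp\sqrt\eps$, giving $\asymp\eps^{-(\gamma+1/2)}$ boxes and the exponent $\tfrac{3-d}{2}$; alternatively, covering $Z$ by $\asymp\rho^{-2\gamma}$ time-intervals of length $\rho^2$ and capping the piece of graph above each by one ball of radius $\asymp\rho$ (using $\tfrac12$-Hölder regularity of $\phi^{(b,x)}$) gives $\asymp\rho^{-2\gamma}$ balls and the exponent $2-d$. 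Both are valid covers, so $\dim_H M_{(b,x)}\le\min(\tfrac{3-d}{2},2-d)$. For the lower bound I would run an energy argument with $\mu:=(\phi^{(b,x)},\mathrm{id})_*\lambda$: conditioning on $R^{(b,x)}$ and using that, away from the zeros of $\phi^{(b,x)}$, a ${\rm BESQ}^{2-\delta_2}$ increment over $[y,y']$ has density $\lesssim|y-y'|^{-1/2}$, one obtains $\e_{\phi}[\,|(\phi(y),y)-(\phi(y'),y')|^{-s}\,]\lesssim|y-y'|^{-s/2}$ for $s<1$ and $\lesssim|y-y'|^{-(s-1/2)}$ for $s>1$. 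The expected $s$-energy of $\mu$ is then bounded by the $(s/2)$-, resp. $(s-\tfrac12)$-energy of $\lambda$, which is finite as soon as $s<2-d$ (regime $s<1$), resp. $s<\tfrac{3-d}{2}$ (regime $s>1$), giving the matching lower bound. As $\cD$ is countable, all $M_{(b,x)}$ achieve this dimension a.s., whence $\dim_H A=\sup_{(b,x)\in\cD}\dim_H M_{(b,x)}=\min(\tfrac{3-d}{2},2-d)$.

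The step I expect to be the main obstacle is this last dimension computation, and in particular the lower bound: it needs sharp increment/small-ball estimates for the squared Bessel path $\phi^{(b,x)}$, which is only locally comparable to Brownian motion away from its zeros, together with the two-regime energy estimate and the crucial independence of $\phi^{(b,x)}$ and $Z$ supplied by the additivity property (\textbf{P}1). A secondary technical point is to make the reduction $A=\bigcup_{\cD}M_{(b,x)}$ rigorous, in particular verifying that the interlacing inequality is genuinely open so that the admissible set of base points has nonempty interior.
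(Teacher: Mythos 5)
Your proposal follows the same route as the paper up to the final step: both arguments pass to the dual flows, characterize the interlaced bifurcation points as common ancestors of points $(b,x)$ in a countable dense set $\cD$ (equivalently, points lying on both dual lines $\cS^{1,*}_{-x,\cdot}(b)$ and $\cS^{2,*}_{-x,\cdot}(b)$), and use additivity to see that $\cS^{1,*}_{-x,\cdot}(b)-\cS^{2,*}_{-x,\cdot}(b)$ is a ${\rm BESQ}^{d}_0$ process independent of $\cS^{2,*}_{-x,\cdot}(b)$, so the set in question is the graph of a squared Bessel path over the zero set of an independent ${\rm BESQ}^d$ process, nonempty iff $d\in(0,2)$. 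Where you diverge is the dimension computation itself: the paper does not reprove it, but invokes the known formula \eqref{BM:haus_dim} of Xiao \cite{yimin1994dimension}, $\dim Gr\, X(E)=\min(2\dim E,\dim E+\tfrac12)$, transferred from Brownian motion to Bessel paths by local absolute continuity away from $0$ (this is Corollary \ref{dimension:BESQ_difference}), whereas you propose to re-derive this special case by hand via two covering bounds for the upper bound and an energy argument (local time at $0$ of the ${\rm BESQ}^d$ process as Frostman measure, two-regime increment-density estimate) for the lower bound. Your sketch is the standard proof of such graph-dimension formulas and the exponents come out right, but it is exactly the step you flag as the main obstacle, and it can be bypassed by citing \cite{yimin1994dimension}; the localization away from the zeros of $\phi^{(b,x)}$ that worries you is precisely what the absolute-continuity argument in the appendix handles. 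One minor correction: when $\delta_2>2$ the dual flow $\cS^{2,*}$ has dimension $2-\delta_2<0$, so the additivity/independence step needs property ({\bf P}3) rather than ({\bf P}1) alone, the identification of the difference as a ${\rm BESQ}^d_0$ process being valid only up to the absorption time of $\cS^{2,*}_{-x,\cdot}(b)$ — your restriction $\phi^{(b,x)}>0$ in $M_{(b,x)}$ is consistent with this but the independence claim should be stated in that killed form, as the paper does.
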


\begin{remark}
    Such points are necessarily bifurcation points for both $\cS^1$ and $\cS^2$. Indeed the comparison principle in Proposition \ref{p:comparison} shows that $\cS^2_{r,x}(a)\ge \cS^1_{r,x}(a)$ and $\cS^2_{r,x}(a-)\ge \cS^1_{r,x}(a-)$ for all $r\le x$ and $a>0$. Hence $\cS^1_{r,x}(a)>\cS^2_{r,x}(a-)$ implies $\cS^1_{r,x}(a-)\le \cS^2_{r,x}(a-)<\cS^1_{r,x}(a)\le \cS^2_{r,x}(a)$. See Figure \ref{fig:bifur1}.
\end{remark}

\begin{figure}[htbp]
\centering
   \scalebox{0.9}{ 
        \def\svgwidth{0.7\columnwidth}
\begingroup%
  \makeatletter%
  \providecommand\color[2][]{%
    \errmessage{(Inkscape) Color is used for the text in Inkscape, but the package 'color.sty' is not loaded}%
    \renewcommand\color[2][]{}%
  }%
  \providecommand\transparent[1]{%
    \errmessage{(Inkscape) Transparency is used (non-zero) for the text in Inkscape, but the package 'transparent.sty' is not loaded}%
    \renewcommand\transparent[1]{}%
  }%
  \providecommand\rotatebox[2]{#2}%
  \newcommand*\fsize{\dimexpr\f@size pt\relax}%
  \newcommand*\lineheight[1]{\fontsize{\fsize}{#1\fsize}\selectfont}%
  \ifx\svgwidth\undefined%
    \setlength{\unitlength}{345bp}%
    \ifx\svgscale\undefined%
      \relax%
    \else%
      \setlength{\unitlength}{\unitlength * \real{\svgscale}}%
    \fi%
  \else%
    \setlength{\unitlength}{\svgwidth}%
  \fi%
  \global\let\svgwidth\undefined%
  \global\let\svgscale\undefined%
  \makeatother%
  \begin{picture}(1,0.52797623)%
    \lineheight{1}%
    \setlength\tabcolsep{0pt}%
    \put(0,0){\includegraphics[width=\unitlength,page=1]{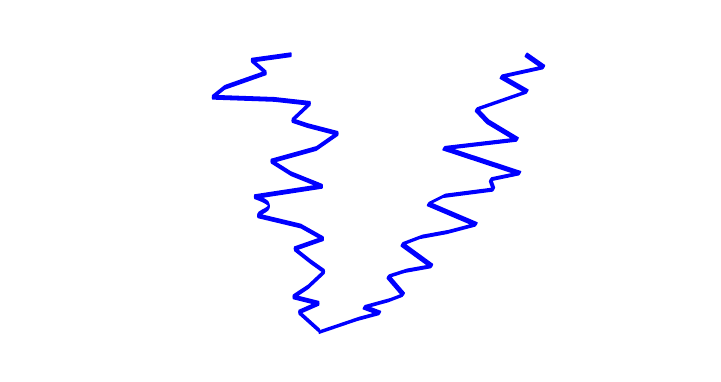}}%
    \put(0.4038871,0.01457691){\color[rgb]{0,0,0}\transparent{0.82599998}\makebox(0,0)[lt]{\lineheight{1.25}\smash{\begin{tabular}[t]{l}$(a,r)$\end{tabular}}}}%
    \put(0.52406002,0.49231692){\color[rgb]{0,0,0}\transparent{0.82599998}\makebox(0,0)[lt]{\lineheight{1.25}\smash{\begin{tabular}[t]{l}$\cS_{r,x}^1(a)$\end{tabular}}}}%
    \put(0.13470827,0.49231692){\color[rgb]{0,0,0}\transparent{0.82599998}\makebox(0,0)[lt]{\lineheight{1.25}\smash{\begin{tabular}[t]{l}$\cS_{r,x}^1(a-)$\end{tabular}}}}%
    \put(0.68398858,0.49231692){\color[rgb]{0,0,0}\transparent{0.82599998}\makebox(0,0)[lt]{\lineheight{1.25}\smash{\begin{tabular}[t]{l}$\cS_{r,x}^2(a)$\end{tabular}}}}%
    \put(0.32620421,0.49231692){\color[rgb]{0,0,0}\transparent{0.82599998}\makebox(0,0)[lt]{\lineheight{1.25}\smash{\begin{tabular}[t]{l}$\cS_{r,x}^2(a-)$\end{tabular}}}}%
    \put(0,0){\includegraphics[width=\unitlength,page=2]{bifur1.pdf}}%
  \end{picture}%
\endgroup%

    }
    \caption{Schematic representation of the bifurcation point. The black lines represent the flow $\cS^1$ and the blue lines represent the flow $\cS^2$. Here, $\cS^1_{r,x}(a) > \cS^2_{r,x}(a-)$ for some $x>r$, where $\cS^1$ and $\cS^2$ are two $\rm BESQ$ flows of dimensions $\delta_1<\delta_2$.}
    \label{fig:bifur1}
\end{figure}

\begin{proof}[Proof of Theorem \ref{t:bifur quadri}]
    Let $\mathcal D$ be a deterministic countable set dense in $(0,\infty)\times \r$. Any such bifurcation point $(a,r)$  must be an ancestor of a point $(b,x)\in \mathcal D$ simultaneously for $\cS^1$ and $\cS^2$. Conversely, if it is an ancestor of $(b,x)$, then $\cS^1_{r,x}(a-)\le b< \cS^1_{r,x}(a)$ and $\cS^2_{r,x}(a-)\le b<\cS^2_{r,x}(a)$ so that $\cS^2_{r,x}(a-) <\cS^1_{r,x}(a)$. Hence we can fix $(b,x)$ and consider the bifurcation points $(a,r)$ for $\cS^1$ and $\cS^2$ which are ancestors of $(b,x)$.  Recall by Proposition \ref{p:BESQdual} that the dual flows $\cS^{1,*}$ and $\cS^{2,*}$ are killed BESQ flows with dimensions $2-\delta_1$ and $2-\delta_2$ respectively. By Proposition \ref{p:bifur dual}, the ancestors of $(b,x)$  are the points which belong to both dual lines $\cS^{1,*}_{-x,\cdot}(b)$ and $\cS^{2,*}_{-x,\cdot}(b)$ (except its starting point) until $\cS^{2,*}_{-x,\cdot}(b)$ hits $0$ (if $2-\delta_2<2$, after which time the dual line stays at $0$).  By property ({\bf P}1) if $2-\delta_2\ge 0$ or ({\bf P}3) if $2-\delta_2< 0$ of Section \ref{s:def}, $\cS^{1,*}_{-x,\cdot}(b)-\cS^{2,*}_{-x,\cdot}(b)$ is up to this time a ${\rm BESQ}^{d}_0$ process.  In particular it goes back to $0$ a.s. if and only if  $d<2$. In that case, the Hausdorff dimension of the set of ancestors of $(b,x)$ is $\min(2-d, \frac{3-d}{2})$ by Corollary \ref{dimension:BESQ_difference}.    
\end{proof}


\subsection{BESQ flows with varying parameters}

We will deal with BESQ flows, where we allow the parameter $\delta$ to take different values. It motivates the following definition. Recall the notion of a drift function and the notation of Definition \ref{def:drift}. 
\begin{definition}\label{def:vary}
   Let $\cW$ be a white noise with respect to some filtration $\cF$. Let $\overline{\delta}$ be a drift function with respect to $\cF$. We call a collection of continuous processes $\cS=(\cS_{r,x}(a),\, x\ge r)_{r\in \r,a\ge 0}$ a ${\rm BESQ}(\overline{\delta})$ flow driven by $\cW$ if 
   
    (1) The restriction of $\cS$ to each $A_i$ is a ${\rm BESQ}^{\delta_i}$ flow driven by $\cW$.

    (2) The regularity conditions of Definition \ref{def:BESQflow} hold.

If $E$ is a (possibly random) Borel set, we call $\rm BESQ(\overline{\delta})$ flow killed on $E$ the flow obtained from the $\rm BESQ(\overline{\delta})$ flow by absorbing the flow line $\cS_{r,\cdot}(a)$ at $0$ at time $\inf\{x \in E\cap (r,\infty) \,:\, \cS_{r,x}(a)=0\}$.

 If $E=\r$, we will simply say that $\cS$ is a killed  $\rm BESQ(\overline{\delta})$ flow. We call general  $\rm BESQ(\overline{\delta})$ flow a $\rm BESQ(\overline{\delta})$ flow or a  killed $\rm BESQ(\overline{\delta})$ flow.
\end{definition}

Notice that a flow line of a ${\rm BESQ}(\overline{\delta})$ flow is the strong solution of (remember that a ${\rm BESQ}^{\delta}$ flow line is absorbed at $0$ when $\delta<0$)
\begin{equation}\label{eq:BESQflow vary}
\dd_x {\mathcal S}_{r,x}(a) = 
2 {\mathcal W}([0,{\mathcal S}_{r,x}(a)], \dd x) + 
\begin{cases}
    \overline{\delta}(x) \dd x & \textrm{ if } \overline{\delta}(x)\ge 0, \\
    \overline{\delta}(x) \ind_{\{{\mathcal S}_{r,x}(a)>0\}} \dd x & \textrm{ if } \overline{\delta}(x)< 0.
\end{cases}
\end{equation}

If $\delta\in \r$, we identify $\delta$ and the constant function $\overline{\delta}\equiv \delta$, so that a $\rm BESQ(\delta)$ flow is simply a $\rm BESQ^\delta$ flow.  
The following lemma shows that one can construct the $\rm BESQ(\overline{\delta})$ flow by gluing $\rm BESQ^{\delta}$ flows. 

\begin{lemma}\label{l:gluing}
     Let $\cS$ be a (non-killed) ${\rm BESQ}(\overline{\delta})$ flow.  The restriction of $\cS$ to the closure of each $A_i$ gives a ${\rm BESQ}^{\delta_i}$ flow, call it $\cS^i$.  Let $x>r$ and $a\ge 0$. Let $i\le j$ such that $\inf A_i\le r<\sup A_i$, and $\inf A_j < x\le \sup A_j$. We can define by induction $t_i:=r$, $a_i:=a$, and for $j \ge k> i$, $a_k:=\cS^{k-1}_{t_{k-1},t_k}(a_{k-1})$, $t_{k}:=\inf A_k$. Almost surely for all $r\le x$ and $a\ge 0$, $\cS_{r,x}(a)=\cS^j_{t_{j},x}(a_j)$.  
 \end{lemma}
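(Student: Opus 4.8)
The plan is to reduce the statement to the perfect flow property of the composite flow $\cS$ together with the defining fact that $\cS$ restricts to a ${\rm BESQ}^{\delta_i}$ flow on each slab $\overline{A_i}$.

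First I would establish the preliminary claim that $\cS^i$, the restriction of $\cS$ to starting and ending times in $\overline{A_i}=[\tilde t_i,\tilde t_{i+1}]$, is indeed a ${\rm BESQ}^{\delta_i}$ flow. For a starting time $r\in[\tilde t_i,\tilde t_{i+1})$, the flow line $\cS_{r,\cdot}(a)$ solves \eqref{eq:BESQflow vary}, and on $\overline{A_i}$ the drift function equals $\delta_i$ off the single endpoint $\tilde t_{i+1}$, which is Lebesgue-negligible; hence $\cS_{r,\cdot}(a)$ restricted to $[r,\tilde t_{i+1}]$ is the strong solution of the ${\rm BESQ}^{\delta_i}$ SDE \eqref{eq:BESQflow} (absorbed at $0$ if $\delta_i<0$), and the regularity conditions (i)--(iii) of Definition \ref{def:BESQflow} are inherited from those of $\cS$. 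This produces the ${\rm BESQ}^{\delta_i}$ flow $\cS^i$, and by construction $\cS^i_{u,v}(a')=\cS_{u,v}(a')$ for all $a'\ge0$ whenever $\tilde t_i\le u\le v\le \tilde t_{i+1}$.

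Next I would fix the almost-sure event $\Omega_0$ on which the perfect flow property of $\cS$ (Definition \ref{def:BESQflow} (2)(iii), available because $\cS$ is non-killed) holds for every triple $r\le x\le y$ and every $a\ge 0$. On $\Omega_0$, fix $r\le x$ and $a\ge 0$, and let $i\le j$ be the indices of the statement, so that the ordered times $r=t_i\le t_{i+1}=\tilde t_{i+1}\le\cdots\le t_j=\tilde t_j\le x$ lie in consecutive slabs. Applying the perfect flow property $j-i$ times (splitting at $t_{i+1},\dots,t_j$) yields
\[
\cS_{r,x}(a)=\cS_{t_j,x}\circ\cS_{t_{j-1},t_j}\circ\cdots\circ\cS_{t_i,t_{i+1}}(a).
\]
Since $[t_{k-1},t_k]\subseteq\overline{A_{k-1}}$ for $i<k\le j$ and $[t_j,x]\subseteq\overline{A_j}$, the preliminary claim lets me replace each inner map $\cS_{t_{k-1},t_k}$ by $\cS^{k-1}_{t_{k-1},t_k}$ and the outer map $\cS_{t_j,x}$ by $\cS^j_{t_j,x}$. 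The inductive definition $a_k=\cS^{k-1}_{t_{k-1},t_k}(a_{k-1})$ with $a_i=a$ then telescopes the composition to $\cS^j_{t_j,x}(a_j)$, which is the desired identity; the degenerate case $j=i$ reduces directly to $\cS_{r,x}(a)=\cS^i_{r,x}(a)$. Since everything takes place on the single event $\Omega_0$, the identity holds simultaneously for all $r\le x$ and $a\ge0$.

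The only genuinely delicate point is the preliminary claim: I must check that the restriction to the closed slab is a bona fide ${\rm BESQ}^{\delta_i}$ flow despite $\tilde t_i,\tilde t_{i+1}$ being stopping times, i.e. that the value of $\overline{\delta}$ at the random boundary is irrelevant and that the strong-solution and flow properties survive restriction; and, when some $\delta_i<0$, that the absorption at $0$ built into \eqref{eq:BESQflow vary} matches the absorption convention of the ${\rm BESQ}^{\delta_i}$ flow, so that the gluing is consistent across the switching time. Beyond that, applying the perfect flow property at the stopping times $t_k$ raises no difficulty, because on $\Omega_0$ the identity holds for all deterministic triples at once, hence for the particular random values substituted.
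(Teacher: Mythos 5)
Your overall architecture matches the paper's: prove that the restriction of $\cS$ to each closed slab is a genuine ${\rm BESQ}^{\delta_i}$ flow, then telescope with the perfect flow property (the paper dispatches this second step in one sentence, exactly as you do, and your handling of the random times $t_k$ there is fine). The problem is your justification of the preliminary claim, which is where the entire content of the lemma lies. You derive it from the assertion that every flow line of $\cS$ solves \eqref{eq:BESQflow vary}. But condition (1) of Definition \ref{def:vary}, via Definition \ref{def:BESQflow} (1), gives the SDE only for each \emph{fixed} pair $(a,r)$ with $r$ in the \emph{open} interval $A_i$, with an exceptional null set depending on $(a,r)$. It says nothing about flow lines started exactly at $\widetilde{t}_i$, nor (equivalently, by the flow property) about flow lines restarted at the random value $\cS_{r,\widetilde{t}_{i}}(a)$ after crossing a switching time. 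The statement that flow lines solve \eqref{eq:BESQflow vary} in that generality is essentially what Lemma \ref{l:gluing} establishes, so invoking it as an input is circular; and your Lebesgue-negligibility remark only explains why the value of $\overline{\delta}$ at the endpoint is harmless in the drift integral, not why the boundary-started flow line obeys any SDE at all. Note also that the difficulty has little to do with $\widetilde{t}_i$ being a stopping time: it persists even for deterministic switching times.

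The paper closes exactly this gap with a comparison argument that your proposal is missing. Let ${\mathcal T}^i$ be the ${\rm BESQ}^{\delta_i}$ flow driven by $\cW$; it agrees with $\cS$ for pairs of times in the open interval $A_i$, since both are determined by $\cW$ there (see \eqref{eq:approximation+}). For $a<a'$, continuity at $\widetilde{t}_i$ gives $\cS_{\widetilde{t}_i,x}(a)\le {\mathcal T}^i_{\widetilde{t}_i,x}(a')$ for $x$ slightly above $\widetilde{t}_i$; the perfect flow property together with the monotonicity of the coinciding maps $\cS_{u,x}={\mathcal T}^i_{u,x}$ (for $u,x\in A_i$) propagates this inequality up to $\widetilde{t}_{i+1}$; letting $a'\downarrow a$ and using right-continuity in the starting-value variable, then the symmetric bound, yields $\cS_{\widetilde{t}_i,\cdot}(a)={\mathcal T}^i_{\widetilde{t}_i,\cdot}(a)$ on $[\widetilde{t}_i,\widetilde{t}_{i+1})$, and continuity extends this to $x=\widetilde{t}_{i+1}$. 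Only after this identification does the boundary-started flow line inherit the ${\rm BESQ}^{\delta_i}$ dynamics, making $\cS^i$ a bona fide ${\rm BESQ}^{\delta_i}$ flow. Some argument of this type, approximating the random boundary data by fixed (or countably many) flow lines and squeezing, is needed for your preliminary claim; without it the proof is incomplete.
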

 \begin{proof}
 We only show that $\cS^i$ is indeed a ${\rm BESQ}^{\delta_i}$ flow driven by $\cW$. The rest follows from the perfect flow property. So far we know that the restriction of $\cS$ to $A_i=(\widetilde{t}_i,\widetilde{t}_{i+1})$ is a ${\rm BESQ}^{\delta_i}$ flow driven by $\cW$ by definition. Let ${\mathcal T}^i$ be the ${\rm BESQ}^{\delta_i}$ flow driven by $\cW$. We have ${\mathcal T}^i=\cS^i$ on $A_i$. Let us show that is is also the case on the closure of $A_i=(\widetilde{t}_i,\widetilde{t}_{i+1})$.  If $\inf A_i=\widetilde{t}_i>-\infty$ and $0\le a<a'$, $\cS_{\widetilde{t}_i,x}(a)\le {\mathcal T}^i_{\widetilde{t}_i,x}(a')$ for $x>\widetilde{t}_i$ small enough by continuity, which implies that $\cS_{\widetilde{t}_i,x}(a)\le {\mathcal T}^i_{\widetilde{t}_i,x}(a')$ for all $\widetilde{t}_i\le x< \widetilde{t}_{i+1}$ by the perfect flow property. By right-continuity, it implies that $\cS_{\widetilde{t}_{i},x}(a)\le {\mathcal T}^i_{\widetilde{t}_{i},x}(a)$. We also have $\cS_{\widetilde{t}_{i},x}(a)\ge {\mathcal T}^i_{\widetilde{t}_{i},x}(a)$ by symmetry. So $\cS_{\widetilde{t}_{i},x}(a)={\mathcal T}^i_{\widetilde{t}_{i},x}(a)$ for $\widetilde{t}_{i}\le r\le x<\widetilde{t}_{i+1}$. We then extend the flow to $x=\widetilde{t}_{i+1}$ if $\widetilde{t}_{i+1}<\infty$ by continuity.
 \end{proof}

Conversely,  such a gluing produces a $\rm BESQ({\overline \delta})$ flow since the regularity conditions of Definition \ref{def:BESQflow} still hold after gluing.  We now discuss duality for ${\rm BESQ}(\overline{\delta})$ flows. The analog of Proposition \ref{p:BESQdual}  reads as follows.

\begin{proposition}\label{p:BESQdual vary}
    Let $\overline{\delta}$ be a deterministic drift function and $\overline{\delta}^*(x):=\overline{\delta}(-x)$. If $\cS$ is a $\rm BESQ(\overline{\delta})$ flow, resp. killed $\rm BESQ(\overline{\delta})$ flow, driven by $\cW$, then its dual \eqref{def:dual} is a killed $\rm BESQ(2-\overline{\delta}^*)$ flow, resp. a   $\rm BESQ(2-\overline{\delta}^*)$ flow, driven by $-\cW^*$.
\end{proposition}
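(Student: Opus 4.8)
The plan is to reduce Proposition \ref{p:BESQdual vary} to the constant-drift duality of Proposition \ref{p:BESQdual} by combining it with the gluing construction of Lemma \ref{l:gluing}. First I would recall the setup: the drift function $\overline{\delta}$ is determined by deterministic levels $(\delta_i)_{1\le i\le n}$ on the intervals $A_i=(\tilde t_i,\tilde t_{i+1})$ with deterministic $\tilde t_i$, and the dual flow $\cS^*$ is defined by \eqref{def:dual}, namely $\cS^*_{r,x}(b):=\inf\{a\ge 0:\cS_{-x,-r}(a)>b\}$. The key observation is that the duality transform $(a,r)\mapsto(a,-r)$ reverses the order of the slabs: the image of the slab $A_i$ under $r\mapsto -r$ is the slab $-A_i=(-\tilde t_{i+1},-\tilde t_i)$, and this is precisely the slab on which $\overline{\delta}^*(x)=\overline{\delta}(-x)$ equals $\delta_i$. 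So the dual drift function $2-\overline{\delta}^*$ takes the value $2-\delta_i$ on $-A_i$. This is exactly consistent with Proposition \ref{p:BESQdual}, which tells us that on each slab the dual of a $\rm BESQ^{\delta_i}$ flow is a $\rm BESQ^{2-\delta_i}$ flow driven by $-\cW^*$.

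The main step is to verify the duality slab by slab and then check the matching at the interfaces. I would fix the white noise and let $\cS^*$ denote the dual \eqref{def:dual}. On the interior of each image slab $-A_i$, the flow $\cS$ coincides with the constant-drift flow $\cS^i$ of dimension $\delta_i$, so by Proposition \ref{p:BESQdual} the restriction of $\cS^*$ to $-A_i$ is a $\rm BESQ^{2-\delta_i}$ flow driven by $-\cW^*$, and its killed/non-killed status is swapped exactly as claimed (killed $\leftrightarrow$ non-killed in the borderline range $\delta_i\in(0,2)$). Granting this, the restriction of $\cS^*$ to each slab of the reversed partition is a constant-drift flow driven by $-\cW^*$ with the right dimension. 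Because $\cS^*$ is built from $\cS$ by the single formula \eqref{def:dual} and inherits the regularity conditions (i)--(iii) of Definition \ref{def:BESQflow} — continuity in $x$, c\`adl\`ag in the starting height $a$, and the (almost) perfect flow property from Proposition \ref{p:perfect} applied to $\cS^*$ — the glued object satisfies exactly the two requirements (1)--(2) of Definition \ref{def:vary}. By the converse gluing remark following Lemma \ref{l:gluing}, this identifies $\cS^*$ as a (killed or non-killed, as appropriate) $\rm BESQ(2-\overline{\delta}^*)$ flow driven by $-\cW^*$.

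The step I expect to be the main obstacle is the behaviour at the interfaces $\tilde t_i$, i.e. ensuring that the constant-drift duality can genuinely be stitched across slab boundaries without a defect appearing exactly on the reversed boundary lines. The subtlety is that Proposition \ref{p:BESQdual} is a statement about a flow driven on all of $\r$ by a single dimension, whereas here I only know that $\cS$ agrees with $\cS^i$ on the interior $A_i$; I must argue that the dual formula \eqref{def:dual}, which involves $\cS_{-x,-r}$ over a range of starting times $-x$, correctly produces the $\rm BESQ^{2-\delta_i}$ law whenever the whole segment $[-x,-r]$ lies inside one slab, and that the composition structure of $\cS^*$ at the breakpoints is governed by the perfect flow property of the dual. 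Concretely, I would invoke the characterization of the dual via ancestors (Proposition \ref{p:bifur dual}) and the almost perfect flow property of Proposition \ref{p:perfect}(i) for $\cS^*$ to compose the slab-wise duals, mirroring the inductive gluing $a_k=\cS^{k-1}_{t_{k-1},t_k}(a_{k-1})$ of Lemma \ref{l:gluing} applied to the reversed flow. The only care needed is the killing/reflection convention at the precise level $x=\tilde t_i$, which, since the partition is deterministic and a single line carries zero Hausdorff measure of bifurcation in the $x$-direction, does not affect the identification of the flow and can be fixed by the same continuity/right-continuity argument used at the end of the proof of Lemma \ref{l:gluing}.
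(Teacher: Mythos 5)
Your slab-wise identification (first two paragraphs) is sound: for $r<x$ with $[-x,-r]$ inside one slab, formula \eqref{def:dual} only involves $\cS$ on that slab, so Proposition \ref{p:BESQdual} identifies the restriction of $\cS^*$ to each reversed slab, with the killed/non-killed swap. The genuine gap is the step you yourself flag: proving that $\cS^*$ actually has the glued structure, i.e. that its flow lines compose across the interface levels. Concretely, for $r<x<y$ with $-x=\tilde{t}_i$, writing $c_0:=\cS^*_{r,x}(b)$, the perfect flow property of $\cS$ and the right local constancy of $c\mapsto\cS_{-x,-r}(c)$ (Proposition \ref{p:properties}) give
\[
\cS^*_{r,y}(b)=\inf\{a\ge 0:\ \cS_{-y,-x}(a)\ge c_0\},\qquad \cS^*_{x,y}(c_0)=\inf\{a\ge 0:\ \cS_{-y,-x}(a)> c_0\},
\]
and these two quantities differ exactly when some forward flow line on the slab below the interface arrives at level $-x$ precisely at the value $c_0$. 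Ruling out this coincidence is the crux, and it is a probabilistic statement about two independent objects: $c_0$ is measurable with respect to $\cW$ restricted to $\r_+\times(-x,-r)$, while the countable set of values $\{\cS_{-y,-x}(a):a\ge 0\}$ is measurable with respect to $\cW$ on $\r_+\times(-y,-x)$. This is precisely what the paper's proof supplies: it constructs the candidate $\cY$ (the killed ${\rm BESQ}(2-\overline{\delta}^*)$ flow driven by $-\cW^*$, which has the composition structure by construction), proves $\cY_{-x,-r}(b)=\cS^*_{-x,-r}(b)$ by induction over the slabs using Proposition \ref{p:properties dual}, and at each interface invokes the independence of the slab flows together with Proposition \ref{instantaneous_coalescing} to exclude the equality $\cS_{t_i,t_{i+1}}(a')=b_{i+1}$; a separate final argument (the dual of $\cY$ is $\cS$) then yields the killed case of the statement.

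Neither of your proposed fixes closes this gap. Invoking Proposition \ref{p:perfect}(i) or Proposition \ref{p:bifur dual} for $\cS^*$ across slabs is circular: at this stage $\cS^*$ is known to be a general BESQ flow only within each slab, and the paper extends Propositions \ref{p:perfect}, \ref{p:properties dual}, \ref{p:bifur dual} and Corollary \ref{c: no middle bifurcation} to varying-drift flows only \emph{after} Proposition \ref{p:BESQdual vary} is proved. And the remark that ``a single line carries zero Hausdorff measure of bifurcation'' is beside the point: the interfaces are deterministic levels crossed by every flow line, and the failure mode above is not about a negligible set of levels but about a coincidence between the dual line's interface value and the random countable range of the forward flow on the adjacent, independent slab. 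It must be excluded by an independence/atomlessness argument as in the paper, not by the continuity/right-continuity argument closing the proof of Lemma \ref{l:gluing}, which addresses a different issue (extending the slab identification to the closed slab).
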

\begin{proof}
    Let $\cS$ be a $\rm BESQ(\overline{\delta})$ flow. By Lemma \ref{l:gluing}, $\cS$ is obtained by the gluing of the $\rm BESQ^{\delta_i}$ flows $\cS^i$, $1\le i\le n$. On each $A_i$, the dual of $\cS^i$ is a killed $\rm BESQ^{2-\delta_i}$ flow $\cS^{*,i}$ driven by $-\cW^*$. Let $b\ge 0$, $r< x$,  $\mathcal Y$ be the killed $\rm BESQ(2-\overline{\delta}^*)$ flow  driven by $-\cW^*$ and $a:={\mathcal Y}_{-x,-r}(b)$. We want to show that $a=\cS^*_{-x,-r}(b)$. Let $(a_k)_{i\le k\le j}$, $r=t_i< t_{i+1}<  \cdots < t_j < x$ as defined in Lemma \ref{l:gluing} and let $t_{j+1}:=x$. Suppose first that ${\mathcal Y}_{-x,\cdot}(b)$ does not touch $0$ on $(-x,-r)$.  By the perfect flow property, it is the composition of flow lines of $\cS^{*,k}$, $i\le k\le j$, so that $a=\cS^{*,i}_{-t_{i+1},-t_i}(b_{i+1})$ where $b_{j+1}=x$, $b_k=\cS^{*,k}_{-t_{k+1},-t_{k}}(b_{k+1})={\mathcal Y}_{-x,-t_k}(b)$, $i< k\le j$. By Proposition \ref{p:properties dual} (i) applied to  $A_i$, $a_{i+1}=\cS_{t_i,t_{i+1}}(a)> b_{i+1}$. By induction, $\cS_{t_k,t_{k+1}}(a_k) > b_{k+1}$, $i\le k\le j$. By composition, $\cS_{t,x}(a) > b$. Similarly, let $a'<a$. 
     Using now Proposition \ref{p:properties dual} (ii), we have $\cS_{t_i,t_{i+1}}(a')\le b_{i+1}$. The independence of the flows $\cS^{k}$ and Proposition \ref{instantaneous_coalescing} rules out the equality. Hence $\cS_{t_i,t_{i+1}}(a')< b_{i+1}$ and we can proceed by induction to show that  $\cS_{t,x}(a') \le b$.
    It yields that $\cS^*_{-x,-r}(b)=a$ by definition of $\cS^*$. We deal now with the case where ${\mathcal Y}_{-x,\cdot}$ touched $0$ on $(-x,-r)$, in particular $a=0$. Let $r'>r$ be the sup of $s\in (r,t_j)$ such that ${\mathcal Y}_{-x,-s}(b)=0$. If $r'<t_j$, we have $\cS_{r',x}(0)> b$ from what we already proved, and since  $\cS_{r,x}(a)\ge \cS_{r',x}(0)$ (by the perfect flow property for example), we get $\cS^*_{-x,-r}(b)=0=a$ indeed. If $r'=t_j$, we have $\cS_{t_j,x}(0)>b$ by reasoning on the $\rm BESQ^{\delta_j}$ flow $\cS^j$, hence $\cS^*_{-x,-r}(b)=0=a$ in this case as well. We proved that $\cY$ is the dual flow of $\cS$. If $a:={\mathcal Y}_{-x,-r}(b)$, then $\cS_{r,x}(a)>b$ and $\cS_{r,x}(a')\le b$ for all $a'<a$. 

    Finally, we show that the dual of ${\mathcal Y}$ is $\cS$, which would end the proof. Fix $c\ge 0$, $r< x$ and let $b:=\cS_{r,x}(c)$. If we set $a:={\mathcal Y}_{-x,-r}(b)$, we proved that $\cS_{r,x}(a) > b$, hence $a> c$, i.e. ${\mathcal Y}^*_{r,x}(c)\le b$.  We prove the reverse inequality. We can suppose that $b>0$. We want to show that for all $b'<b$, ${\mathcal Y}_{-x,-r}(b')\le c$. Take $b'<b$. We can suppose that ${\mathcal Y}_{-x,-r}(b')>0$. For $a'<{\mathcal Y}_{-x,-r}(b')$, we have $\cS_{r,x}(a')\le b'<b$. It yields that $a'<c$ hence ${\mathcal Y}_{-x,-r}(b')\le c$ indeed by taking the limit in $a'$. We proved ${\mathcal Y}^*_{r,x}(c)= b=\cS_{r,x}(c)$ which is what we wanted to show.

\end{proof}

Proposition \ref{p:perfect} still holds for the general $\rm BESQ(\overline{\delta})$ flow $\cS$ as a consequence of the perfect flow property when $\cS$ is non-killed and by construction when $\cS$ is killed. Proposition \ref{p:properties} also remains true. Indeed, in the notation of Lemma \ref{l:gluing}, for each $r\in \r$, one can find $r'>r$ such $(r,r')$ is contained in some $A_i$. We can then apply Proposition \ref{p:properties} to $A_i$ when $x\le r'$, then the coalescence property of Proposition \ref{p:perfect} when $x\ge r'$.  Similarly, equation \eqref{eq:approx-AHS} is valid by restriction to each $A_i$. It implies \eqref{eq:approximation+} and Proposition \ref{instantaneous_coalescing}. The comparison principle in Proposition \ref{p:comparison} still holds for some  ${\rm BESQ}(\overline{\delta})$ and  ${\rm BESQ}(\overline{\delta'})$ flows with $\overline{\delta}\le \overline{\delta'}$, by following the same proof. 
The proofs of Proposition \ref{p:bifur dual} and \ref{p:properties dual} still hold without change. Finally, Corollary \ref{c: no middle bifurcation} is still true. Indeed, let $r<y$ and $a\ge 0$ such that $\cS_{r,y}(a)>0$. If $\inf A_i<y<\sup A_i$ for some $i$, then we can apply Corollary \ref{c: no middle bifurcation} to the ${\rm BESQ}^{\delta_i}$ flow to show that $(\cS_{r,y}(a),y)$ is not a bifurcation point. If $y=\inf A_i$ for some $i$, one take $(a_n,r_n)$ in some deterministic countable set ${\mathcal D}$ such that $\cS_{r_n,y}(a_n)=\cS_{r,y}(a)$ by Proposition \ref{instantaneous_coalescing},  and by independence of $\cS^{i-1}$ and $\cS^i$, argue that a.s.,  $(\cS_{r_n,y}(a_n),y)$ cannot be a bifurcation point.

\bigskip

\fbox{
\begin{minipage}{0.9\textwidth}
\textit{
From now on, we will freely apply Propositions \ref{p:perfect}, \ref{p:properties}, \ref{instantaneous_coalescing}, \ref{p:bifur dual}, \ref{p:properties dual}, Corollary \ref{c: no middle bifurcation}, equations  \eqref{eq:approximation+} and \eqref{eq:approx-AHS} to general ${\rm BESQ}(\overline{\delta})$ flows, and Proposition \ref{p:comparison} to ${\rm BESQ}(\overline{\delta})$ flows.
}
\smallskip
\end{minipage}
}

\bigskip

\noindent The following lemma characterizes a ${\rm BESQ}(\overline{\delta})$ killed on some set $E$. 
\begin{lemma}\label{l:killed}
    Let $E$ be some Borel set of $\r$ and $\cS=(\cS_{r,x}(a),\, x\ge r)_{(a,r)\in \r_+\times \r}$ be a collection of continuous processes. 
    For $(a,r)\in \r_+\times \r$, we let $\varphi(a,r):=\inf\{x \in E\cap (r,\infty)\,:\, \cS_{r,x}(a) =0\}$. We suppose that 
    \begin{enumerate}[(1)]
        \item for each fixed $(a,r)$, the process $(\cS_{r,x}(a),\, x\ge r)$ is the strong solution of \eqref{eq:BESQflow vary} up to time $\varphi(a,r)$. 
        \item Almost surely,
        \begin{enumerate}[(i)]
            \item for all $r\in \r$ and $a\ge 0$, $\cS_{r,r}(a)=a$,
            \item for all $a\ge 0$ and  $r\le x<\varphi(a,r)$, the map $a' \mapsto \cS_{r,x}(a')$ is c\`adl\`ag at $a$,
            \item for any  $a\ge 0$ and $r\le x \le y <\varphi(a,r)$, $y<\varphi(\cS_{r,x}(a),x)$ and $\cS_{r,y}(a)=\cS_{x,y}\circ \cS_{r,x}(a)$,
            \item for all $r\ge 0$, $a\ge 0$ and $x\ge \varphi(a,r)$, $\cS_{r,x}(a)=0$.
        \end{enumerate}
    \end{enumerate}
Then $\cS$ is a ${\rm BESQ}(\overline{\delta})$ flow killed on $E$ driven by $\cW$.
\end{lemma}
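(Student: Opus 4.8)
The plan is to identify $\cS$ with the canonical killed flow. Let $\widetilde{\cS}$ be the (non-killed) ${\rm BESQ}(\overline{\delta})$ flow driven by $\cW$, which exists by Lemma~\ref{l:gluing} and is determined by $\cW$, and let $\widehat{\cS}$ be obtained from $\widetilde{\cS}$ by absorbing each flow line $\widetilde{\cS}_{r,\cdot}(a)$ at $0$ at the time $\widehat{\varphi}(a,r):=\inf\{x\in E\cap(r,\infty):\widetilde{\cS}_{r,x}(a)=0\}$. By Definition~\ref{def:vary}, $\widehat{\cS}$ is precisely a ${\rm BESQ}(\overline{\delta})$ flow killed on $E$ driven by $\cW$, so it suffices to show $\cS=\widehat{\cS}$ almost surely.

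First I would prove the equality of flow lines for each fixed starting point $(a,r)$. Up to the killing time, hypothesis~(1) says that $\cS_{r,\cdot}(a)$ solves \eqref{eq:BESQflow vary}, and so does $\widetilde{\cS}_{r,\cdot}(a)$ on all of $[r,\infty)$; by pathwise uniqueness for the squared-Bessel equation the two paths agree on $[r,\varphi(a,r))$, hence on $[r,\varphi(a,r)]$ by continuity. Because the killing time is read off from the trajectory as its first zero located in $E$, this coincidence forces $\varphi(a,r)=\widehat{\varphi}(a,r)$; beyond this common time $\cS_{r,\cdot}(a)\equiv0$ by~(2)(iv) and $\widetilde{\cS}$ has been absorbed at $0$ by construction. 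Thus $\cS_{r,\cdot}(a)=\widehat{\cS}_{r,\cdot}(a)$ almost surely for every fixed $(a,r)$.

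Next I would upgrade this to a single event of full probability on which $\cS=\widehat{\cS}$ for all $(a,r)$ and $x\ge r$. Fix a deterministic countable dense $\mathcal{D}\subset\r_+\times\r$; by the previous step and continuity in $x$, almost surely $\cS_{r_n,x}(a_n)=\widehat{\cS}_{r_n,x}(a_n)$ for every $(a_n,r_n)\in\mathcal{D}$ and $x\ge r_n$. I would then verify that $\cS$ obeys the approximation formula \eqref{eq:approximation+}: conditions~(2)(ii)--(iii) provide, up to killing, the right-continuity of Proposition~\ref{p:properties}~(i) and the coalescence of Proposition~\ref{p:perfect}~(ii), which together with \eqref{eq:approx-AHS} are exactly what is used to establish \eqref{eq:approximation+}. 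Since $\widehat{\cS}$ is a general ${\rm BESQ}(\overline{\delta})$ flow it also satisfies \eqref{eq:approximation+}, and because the two flows coincide on $\mathcal{D}$ the index set $\{(a_n,r_n)\in\mathcal{D}:r_n\le r,\ \cS_{r_n,r}(a_n)>a\}$ and the values $\cS_{r_n,x}(a_n)$ entering the two infima are identical. Hence the infima agree and $\cS_{r,x}(a)=\widehat{\cS}_{r,x}(a)$ everywhere on this event.

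The delicate points, which I expect to be the crux, are twofold. First, the killing time $\varphi(a,r)$ is defined self-referentially through $\cS$, so its identification with $\widehat{\varphi}(a,r)$ must be argued before the paths are known to coincide globally; the resolution is that pathwise uniqueness only compares the two solutions up to their first common zero lying in $E$, which is granted by hypothesis~(1), and one then checks that the zero sets in $E$ of the two (already coinciding) paths match. Second, one must transfer the flow regularity---coalescence and the representation \eqref{eq:approximation+}---to $\cS$ itself; this amounts to re-running the arguments of Section~\ref{s:def} with conditions~(2)(ii)--(iii) in place of the construction, rather than invoking them for a flow already known to be of ${\rm BESQ}(\overline{\delta})$ type.
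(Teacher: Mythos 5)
Your proposal is correct in substance and rests on the same two ingredients as the paper's proof: pathwise uniqueness to identify $\cS$ with the non-killed flow $\tcS$ up to the killing time for (countably many) fixed starting points, and the dense-set machinery of Section \ref{s:def} to upgrade this to all starting points simultaneously. The difference is organizational, but it matters for where the real work sits. The paper never re-derives \eqref{eq:approximation+} for the abstract collection $\cS$; instead it first extracts from hypothesis (2)(iii) the monotonicity facts $\varphi(a,r)\le\varphi(a',r)$ for $a\le a'$ and $\varphi(a,r)=\varphi(\cS_{r,x}(a),x)$ for $x<\varphi(a,r)$, then, for arbitrary $(a,r)$, uses \eqref{eq:approx-AHS} to pick $(a_n,r_n)\in\mathcal{D}$ whose $\tcS$-lines stay positive on $[r_n,r]$ with $b_n:=\tcS_{r_n,r}(a_n)>a$ close to $a$, and concludes through the chain $\cS_{r,x}(b_n)=\cS_{r_n,x}(a_n)=\tcS_{r_n,x}(a_n)=\tcS_{r,x}(b_n)$ for $x<\varphi(a,r)$, letting $b_n\downarrow a$ with (2)(ii) on the $\cS$ side and Proposition \ref{p:properties} (i) on the $\tcS$ side. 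This direct sandwich is precisely the ``re-running of the Section \ref{s:def} arguments'' that your plan defers, so be aware that your deferred step is the whole proof rather than a routine check; note also that (2)(ii) only gives c\`adl\`ag, not the locally-constant-to-the-right property of Proposition \ref{p:properties} (i), so on the $\cS$ side one must argue by the limit $b_n\downarrow a$ rather than quote \eqref{eq:approximation+} verbatim. Finally, your identification $\varphi(a,r)=\widehat{\varphi}(a,r)$ is stated a bit too quickly: once the paths agree on $[r,\varphi(a,r)]$, the zeros of $\cS$ lying in $E$ \emph{after} $\varphi(a,r)$ are artifacts of the absorption imposed by (2)(iv), so ``the zero sets in $E$ match'' needs the zero at $\varphi(a,r)$ itself to be seen by $E$ (or approximated from within $E$ by zeros of $\tcS$); the paper's proof glosses over exactly the same point in its final sentence, so this is a shared imprecision rather than a gap specific to your argument.
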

\begin{proof}
    Note that $\varphi(a,r)\le \varphi(a',r)$ if $a\le a'$. Otherwise, the flow line $\cS_{r,\cdot}(a')$ would have met $\cS_{r,\cdot}(a)$ at a time when both are still positive, at which time we necessarily have coalescence by (iii). Similarly,   $\varphi(a,r)=\varphi(\cS_{r,x}(a),x)$ for any $x<\varphi(a,r)$ by (iii).  Let $\tcS$ be the non-killed ${\rm BESQ}(\overline{\delta})$ flow driven by $\cW$. Let $\mathcal{D}$ be a deterministic countable set of points $(a_n,r_n)$ dense in $\r_+\times \r$. By pathwise uniqueness, one has a.s., for any $(a_n,r_n)\in \mathcal{D}$, $\cS_{r_n,x}(a_n)= \tcS_{r_n,x}(a_n)$ for all $x\in [r_n,\varphi(a_n,r_n)]$.  Let $(a,r)\in \r_+\times \r$. By \eqref{eq:approx-AHS} (applied to the killed ${\rm BESQ}(\overline{\delta})$ flow, say), one can find $(a_n,r_n)$ such that $r_n\le r$, $\min_{r'\in [r_n,r]} \tcS_{r_n,r'}(a_n)>0$  and $b_n:=\tcS_{r_n,r}(a_n)>a$ is arbitrarily close to $a$. Since $b_n=\cS_{r_n,r}(a_n)$, (iii) implies $\cS_{r,x}(b_n)=\cS_{r_n,x}(a_n)$ if $x<\varphi(a,r)$ where we use that $\varphi(a,r) \le \varphi(b_n,r)=\varphi(a_n,r_n)$. Recall that $\cS_{r_n,x}(a_n)=\tcS_{r_n,x}(a_n)$ by pathwise uniqueness. Using (ii) and Proposition \ref{p:properties} (i) for $\tcS$, we deduce that $\cS_{r,x}(a)=\tcS_{r,x}(a)$ for all $x<\varphi(a,r)$.  Finally, the flow lines are absorbed at $0$ at time $\varphi(a,r)$ by (iv). It completes the proof. 
\end{proof}



\section{Decomposition of BESQ flows along a flow line}
\label{s:decomposition}

For a deterministic constant $r_0\in \r$, we let $S=(S_x,\, x\ge r_0)$ be some continuous, non-negative, $\cF$-predictable process. 
We introduce the martingale measures  $\cW^-_S$ and $\cW^+_S$ defined by
\begin{align}
 \label{eq:W-}   \cW^-_S(\dd \ell, \dd x) &:= \cW(\dd \ell, \dd x),\, && \ell\le S_x, x\ge r_0, \\
\label{eq:W+}    \cW^+_S(\dd \ell, \dd x)  &:= \cW(S_x+\dd \ell, \dd x), && \ell\ge 0, x\in \r.
\end{align}

\noindent To be more precise, we extend $S$ to $\r$ by setting $S_x=0$ when $x< r_0$ and 
we define $\cW^-_S$ and $\cW^+_S$ as the martingale measures $f^-_S \cdot \cW$ and $\theta_{S} \cW$ of Section \ref{s:mart} where $f^-_S(\ell,x):= \ind_{ [0,S_x]}(\ell,x)$. We already observed in Section \ref{s:mart} that $\cW^+_S$ defines a white noise with respect to $\cF$. 

\bigskip

Following Definition \ref{def:drift}, let ${\overline \delta}$ and ${\overline \delta'}$ be two drift functions,  and let $r_1$ be an $\cF$-stopping time.  Let $\cW^-_S$ and $\cW^+_S$ be defined via \eqref{eq:W-} and \eqref{eq:W+} with the white noise $\cW$ and $(S_x,\,x\ge r_0)$ being the ${\rm BESQ}(\overline{\delta})$ flow line driven by $\cW$ starting at $(0,r_0)$.
Finally, we let $\cS$ be the  ${\rm BESQ}(\overline{\delta}\,|_{r_1}\,\overline{\delta'})$ flow driven by $\cW$ and set $Y_x:=\cS_{r_0,x}(0)$ for $x\ge r_0$. 

\begin{definition}\label{d:minus plus flow line}
We say that $Y$ is a {\bf $\ominus$-flow line}, resp. a {\bf $\oplus$-flow line}, if the stopping time $r_1$ is a stopping time with respect to the natural filtration of $\cW^-_S$, resp. $\cW^+_S$. 
\end{definition}

The following two propositions will be used in Section \ref{s:meeting} to study the interaction between flow lines in a BESQ flow.


\begin{proposition}\label{p:independence_W-&W^+}
Let $Z\ge 0$ be a  random variable independent of $\cW$ and $r_0\in \r$ a constant. In the setting of Definition \ref{def:vary}, let $\overline{\delta}$ be a deterministic drift function and let $(S_x,\,x\ge r_0)$ be the BESQ$(\overline{\delta})$ flow line driven by $\cW$ starting at $(Z,r_0)$. Define $\cW^-_S$ and $\cW^+_S$ via \eqref{eq:W-} and \eqref{eq:W+}. Then $\cW^-_S$ and $\cW^+_S$ are independent. In particular, $\cW^+_S$ is a white noise independent of $S$ and therefore of $S_0=Z$.
\end{proposition}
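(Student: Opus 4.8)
The plan is to prove the stronger statement that $\cW^+_S$ is a white noise independent of the whole $\sigma$-algebra $\G:=\sigma(Z,\cW^-_S)$, and then to deduce everything from it. The key structural input is that the flow line $S$ is itself $\G$-measurable: $S$ is the strong solution of \eqref{eq:BESQflow vary} with initial value $Z$ and martingale part $2\int_{r_0}^{\cdot}\cW([0,S_u],\dd u)$, and since $\cW([0,S_u],\dd u)=\cW^-_S(\r_+\times \dd u)$, the only randomness entering $S$ comes from $Z$ and from the noise $\cW^-_S$ lying below its own graph. Granting this, the independence of $\cW^+_S$ and $\G$ immediately gives $\cW^+_S\perp\cW^-_S$ (since $\sigma(\cW^-_S)\subseteq\G$), $\cW^+_S\perp S$ (since $\sigma(S)\subseteq\G$), and $\cW^+_S\perp Z$ (since $Z\in\G$), which is exactly the assertion.

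First I would record two facts. That $\cW^+_S=\theta_S\cW$ is a white noise with respect to $\cF$ was already observed in Section \ref{s:mart} from its quadratic variations; equivalently, for deterministic $h$ the martingale $M^+_x:=\int_{\r_+\times(-\infty,x]}h\,\dd\cW^+_S$ has the \emph{deterministic} bracket $\int_{(-\infty,x]}\int_0^\infty h^2(u,s)\,\dd u\,\dd s$, after the substitution $u=\ell-S_s$. Second, for deterministic $g$ the process $M^-_x:=\int_{\r_+\times(-\infty,x]}g\,\dd\cW^-_S$ is a continuous $\cF$-martingale, and by \eqref{eq:fW}--\eqref{eq:thetaSW} the integrands of $M^-$ and $M^+$ against $\cW$ are $g(\ell,s)\ind_{\{\ell\le S_s\}}$ and $\ind_{\{\ell\ge S_s\}}h(\ell-S_s,s)$, whose product is supported on the Lebesgue-null curve $\{\ell=S_s\}$; hence $\langle M^-,M^+\rangle\equiv 0$. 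So $M^+$ has a deterministic bracket and is orthogonal to every $\cW^-_S$-integral. Heuristically this already forces independence, a continuous martingale with deterministic bracket orthogonal to a generating family of martingales being independent of the $\sigma$-algebra they generate; the work is in making this rigorous against the whole of $\G$.

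To that end I would discretize the curve. Fix $T$ and a grid $r_0=x_0<\cdots<x_n=T$ of vanishing mesh, and split $\cW$ along the step process $S^{(n)}_x:=S_{x_k}$ on each strip $I_k:=[x_k,x_{k+1})$. On $I_k$ the below-step-curve noise $\cW^-_k:=\cW|_{[0,S_{x_k}]\times I_k}$ and the downward shift by $S_{x_k}$ of the above-step-curve noise $\cW|_{[S_{x_k},\infty)\times I_k}$, call it $\cW^+_k$, are built from $\cW$ on disjoint spatial regions separated by the $\cF_{x_k}$-measurable level $S_{x_k}$. Conditioning on $\cF_{x_k}$ and using that the increments of $\cW$ over $\r_+\times[x_k,\infty)$ are a fresh white noise independent of $\cF_{x_k}$, together with the spatial translation invariance of white noise, one sees that given $\cF_{x_k}$ the field $\cW^+_k$ is a white noise on $\r_+\times I_k$ with law not depending on $\cF_{x_k}$ and independent of $\cW^-_k$; a short computation then upgrades this to: $\cW^+_k$ is a white noise independent of $\cF_{x_k}\vee\sigma(\cW^-_k)$, in particular of $Z$ and of $\{\cW^\pm_j\}_{j<k}$. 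Assembling the strips yields approximations $\cW^{\pm,(n)}$ with $\int h\,\dd\cW^{\pm,(n)}\to\int h\,\dd\cW^{\pm}_S$ in $L^2$ as the mesh tends to $0$, by dominated convergence and the continuity of $S$ (the symmetric difference of the regions below the step and true curves has vanishing area).

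The main obstacle is upgrading the clean conditional statement ``given $\cF_{x_k}$, $\cW^+_k$ is an independent fresh white noise'' to unconditional independence of $\cW^+_S$ from \emph{all} of $\G$, which contains the below-curve noise on later strips $I_j$, $j>k$: for finite $n$ the region $[0,S_{x_j}]\times I_j$ defining $\cW^-_j$ has a random boundary $S_{x_j}$ that depends, through the fluctuation of $S$ on the earlier strip $I_k$, on $\cW^+_k$. This residual coupling is carried by the noise in the thin sliver $\{(\ell,x):x\in I_k,\ S_{x_k}\le\ell\le S_x\}$, whose area tends to $0$ with the mesh; so it is an error term that disappears in the limit. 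I would therefore prove factorization of the joint characteristic functionals $\e[f(\langle h,\cW^{+,(n)}\rangle)\,g(Z,\langle\phi,\cW^{-,(n)}\rangle)]$ up to an error bounded by this sliver, and pass to the limit to get $\cW^+_S\perp\G$ exactly. A more conceptual route to the same end is to show that each $M^+$ stays a martingale with unchanged deterministic bracket under the initial enlargement of $\cF$ by $\G$ and then invoke L\'evy's characterization together with the predictable representation property of the white noise filtration; justifying that enlargement is the same difficulty in another guise.
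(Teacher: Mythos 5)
Your strategy is genuinely different from the paper's, and its preparatory steps are sound: the measurability of $S$ with respect to $\sigma(Z,\cW^-_S)$, the strip-by-strip statement that conditionally on $\cF_{x_k}$ the shifted above-step noise $\cW^+_k$ is a fresh white noise independent of $\cF_{x_k}\vee\sigma(\cW^-_k)$, and the $L^2$ convergence of the discretized integrals are all correct. The gap is exactly at the step you flag as "the main obstacle," and your proposed resolution — that the residual coupling is "an error term bounded by the sliver whose area tends to $0$" — is a heuristic, not an argument, and it is not accurate as stated. The coupling between $\cW^+_k$ and the later below-step noises $\cW^-_j$, $j>k$, is mediated by the random boundary levels $S_{x_j}$, which are recoverable from $\sigma(Z,\cW^{-,(n)})$; consequently that $\sigma$-algebra determines, for every strip, the net integral of $\cW$ over the sliver (it equals $\tfrac12(S_{x_{k+1}}-S_{x_k})$ minus known below-step and drift contributions). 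So the conditional law of $\cW^{+,(n)}$ given $\G^{(n)}:=\sigma(Z,\cW^{-,(n)})$ is a white noise conditioned on a nonlinear, implicit constraint: the flow-line reconstruction on each strip, driven by below-step noise plus the part of the above-step noise under the (unknown, noise-dependent) path, must hit a prescribed endpoint. Quantifying that this conditioning washes out needs more than an area bound: the per-strip perturbations are individually of order (sliver area)$^{1/2}\sim\mathrm{mesh}^{3/4}$, and there are $\sim\mathrm{mesh}^{-1}$ strips, so a naive additive bound diverges; one must prove the errors combine in quadrature, or introduce a high/low truncation at a fixed height $\eta$ plus a good event on which the curve stays in its band — and even then exact independence of the high parts from $\G^{(n)}$ fails off the good event, forcing a coupling with a modified system in which the noise above the band is literally replaced by an independent copy. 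At that point you have reconstructed, strip by strip and with error terms, precisely the paper's argument.

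The paper avoids all of this with a one-shot resampling: take independent $(\cW_1,\cW_2,Z')$, let $S'$ be the flow line of $\cW_1$ started at $(Z',r_0)$, and define a field $\cW'$ equal to $\cW_1$ below the graph of $S'$ and to the translate of $\cW_2$ above it. Stochastic-calculus identities \eqref{eq:fW} and \eqref{eq:thetaSW} show $\cW'^-_{S'}=\cW_1^-$ and $\cW'^+_{S'}=\cW_2$, and that $S'$ is the flow line driven by $\cW'$; hence $(\cW',S')\dequiv(\cW,S)$, and the pair $(\cW^-_S,\cW^+_S)$ inherits the law of the manifestly independent pair $(\cW_1^-,\cW_2)$. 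No discretization, no limits, no error terms. If you want to salvage your route, the honest path is to replace the "sliver area" claim by this coupling (global, not strip-by-strip), which is then no longer your route but the paper's.
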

\begin{proof}
   Let $\cW_1$ and $\cW_2$ be two independent white noises and $Z'\dequiv Z$ independent of $(\cW_1,\cW_2)$. Define the process $S'$ as the BESQ$(\overline{\delta})$ flow line driven by $\cW_1$ starting at $(Z',r_0)$. Let $\cW_1^-$ be as in \eqref{eq:W-} with $(\cW,S)$ replaced with  $(\cW_1,S')$.  Then define the white noise $\cW'$ as, for every deterministic $g\in L^2(\r_+\times\r)$,
   \begin{align}\label{eq:W'}
       \cW'(g) := \int_{\r_+\times
        \r} g(\ell,r) \cW_1^-(\dd \ell,\dd r) + \int_{\r_+\times\r} g(\ell+S'_r,r) \cW_2(\dd \ell, \dd r).
   \end{align}
   By density of simple functions, one can extend \eqref{eq:W'} to predictable processes (with appropriate integrability conditions).  Consider $\cW'^-_{S'}$ and $\cW'^+_{S'}$ in the notation \eqref{eq:W-} and \eqref{eq:W+}. By \eqref{eq:fW} applied to $\cW'$ and $f(\ell,r)=\ind_{[0,S'_r]}(\ell)$, for any suitable test function $h$,
   \begin{align*}
       \int_{\r_+\times
        \r} h(\ell,r) \cW'^-_{S'}(\d \ell,\d r) 
        &= \int_{\r_+\times \r}  h(\ell,r) \ind_{[0,S'_r]}(\ell) \cW'(\d \ell,\d r) \\
        &=
        \int_{\r_+\times \r}  h(\ell,r) \ind_{[0,S'_r]}(\ell) \cW_1^-(\d \ell,\d r) \\
        &= 
        \int_{\r_+\times \r}  h(\ell,r)  \cW_1^-(\d \ell,\d r)
   \end{align*}

   \noindent where the second equality is \eqref{eq:W'} and the third one comes from \eqref{eq:fW} applied to $\cW_1$ and $f(\ell,r)=\ind_{[0,S'_r]}(\ell)$. Hence, $\cW'^-_{S'} = \cW_1^-$. Similarly, by \eqref{eq:thetaSW} applied to $\theta_{S'}$ and $\cW'$, then \eqref{eq:W'},  for any suitable test function $h$,
   \begin{align*}
       \int_{\r_+\times
        \r} h(\ell,r) \cW'^+_{S'}(\d \ell,\d r) 
        &= \int_{\r_+\times \r}   \ind_{[S'_r,\infty)}(\ell) h(\ell-S'_r,r) \cW'(\d \ell,\d r) \\
        &=
        \int_{\r_+\times \r}  h(\ell,r)  \cW_2(\d \ell,\d r). 
   \end{align*}
     
\noindent We used that the integral with respect to $\cW_1^-$ vanishes in \eqref{eq:W'} by \eqref{eq:fW}.   Thus $\cW'^+_{S'} = \cW_2$.  We proved
\begin{equation}\label{eq:W'S}
    (\cW_1^-,\cW_2) = (\cW'^-_{S'},\cW'^+_{S'}).
\end{equation}

\noindent In view of \eqref{eq:fW} applied to $\cW_1$ and $f(\ell,r)=h(\ell,r)=\ind_{[0,\cS'_r]}(\ell)$, 
\[
 \int_{r_0}^y  \cW_1^-([0,S'_r],\d r) = \int_{r_0}^y  \cW_1([0,S'_r],\d r)
\]

\noindent hence the process $S'$ is also driven by $\cW_1^-$. Observe that 
\[
\int_{r_0}^y  \cW_1^-([0,S'_r],\d r) = \int_{r_0}^y  \cW_1^-(\r_+,\d r)
\]

\noindent hence $S'$ is measurable with respect to $\cW_1^-$. Applying \eqref{eq:W'} to $g(\ell,r):=\ind_{[0,\cS_r]}(\ell)$, we conclude that $S'$ is the BESQ$(\overline{\delta})$ flow line driven by $\cW'$ starting at $(Z',r_0)$. It proves that $(\cW',S')$ has the same distribution as $(\cW,S)$, and from \eqref{eq:W'S} we deduce that $(\cW_1^-,\cW_2)$ is distributed as $(\cW_\cS^-,\cW_\cS^+)$.  The independence between $\cW^-_S$ and $\cW^+_S$ is then a consequence of the independence between $\cW_1$ and $\cW_2$. Since $S$ is driven by $\cW^-_S$ for the same reason $S'$ is driven by $\cW_1^-$, it follows that $\cW^+_S$ is independent of $S$.
\end{proof}

\bigskip


\begin{proposition}\label{p:difference}
    In the setting of Definition \ref{def:vary}, let $\overline{\delta_0}, \overline{\delta_1}$, $\overline{\delta_2}$ be deterministic drift functions such that $\overline{\delta_0}(x)>0$ on $\r$.  Let $r_0\in \r$ be a constant and $r_1\ge r_0$ be an $\cF$-stopping time. We write $Y=(Y_x,\, x\ge r_0)$ for the ${\rm BESQ}(\overline{\delta_0}\,|_{r_1}\, \overline{\delta_1})$ flow line  starting at $(0,r_0)$ and we let $\cS$ be the ${\rm BESQ}(\overline{\delta_2})$ flow, both driven by $\cW$. Let $\cW^-_Y$ and $\cW^+_Y$ defined as in \eqref{eq:W-} and \eqref{eq:W+} from $(\cW,Y)$ in place of $(\cW,S)$. We extend $Y$ to $\r$ by setting $Y_x=0$ when $x<r_0$.  Consider the collection of processes 
 \begin{align*}
\cS^+_{r,x}(a) &:=\cS_{r,x}(a+ Y_r)- Y_x, &&  a\ge 0,\,  t(Y)>x\ge r, \\
\cS^-_{r,x}(a) &:=  \cS_{r,x}(a), && a\le Y_r,\, t(Y)>x\ge r\ge r_0
\end{align*}

\noindent where $t(Y):=\inf\{x\ge r_1\,:\, Y_x=0 \hbox{ and } \overline{\delta_1}(x)< 0\}$. We impose that 
\begin{equation}\label{eq:killing S-+}
    \cS^+_{r,x}(a)=0  \mbox{ \rm if } x\ge \varphi_+(a,r), \qquad \cS^-_{r,x}(a)=Y_x \mbox{ \rm if } x\ge \varphi_-(a,r),
\end{equation}

\noindent where, for any $a$ and $r$, 
\begin{align}\label{not:phi+}
 \varphi_+(a,r) & :=\inf\{y> r\,:\, \cS_{r,y}(a+ Y_r)\le Y_y  
\hbox{ and }  \overline{\delta_2}(y)\le  (\overline{\delta_0}\,|_{r_1}\, \overline{\delta_1})(y)\},\\
\varphi_-(a,r) & :=\inf\{y> r\,:\, \cS_{r,y}(a)\ge Y_y\hbox{ and } \overline{\delta_2}(y) \ge (\overline{\delta_0}\,|_{r_1}\, \overline{\delta_1})(y)\}. \nonumber 
\end{align}

\noindent See Figure \ref{fig:decomp}. Then
\begin{enumerate}[(i)]
\item  $\cS^+$ is a ${\rm BESQ}(\overline{\delta_2}\,|_{r_0}\,\overline{\delta_2} - \overline{\delta_0}\,|_{r_1}\, \overline{\delta_2} -\overline{\delta_1})$ flow driven by the white noise $\cW^+_Y$ killed on $\{\overline{\delta_2} \le (\overline{\delta_0}\,|_{r_1}\, \overline{\delta_1})\}$ and restricted to $(-\infty,t(Y))$.
\item  The noise  $\cW$ driving the SDEs of $Y$ and $\cS^-$ can be replaced with $\cW^-_Y$. In particular, $Y$ and $\cS^-$ are measurable with respect to $\cW^-_Y$ and $r_1$.
\item If $Y$ is a $\ominus$-flow line, then $\cW^+_Y$ is a white noise on $\r_+ \times \r$  independent of $\cW^-_Y$. 
\item Let $b\in \r$ and $\overline{\delta'}:= (\overline{\delta_0} \,|_b\, \overline{\delta_1})$. If $Y$ is a $\oplus$-flow line, then conditionally on 
$r_1=b$, $\cW^-_Y$ is independent of $\cW^+_Y$ and has the law of $\cW^-$ associated by \eqref{eq:W-} to  the ${\rm BESQ}(\overline{\delta'})$ flow line starting at $(0,r_0)$. 
\end{enumerate}
\end{proposition}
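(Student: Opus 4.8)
The plan is to establish (i) and (ii) directly from the defining SDEs, and then to deduce the independence statements (iii) and (iv) by freezing the random switch $r_1$ and reducing to Proposition \ref{p:independence_W-&W^+}. For (i), I would subtract the SDE \eqref{eq:BESQflow vary} for $Y$ from that for $\cS_{r,\cdot}(a+Y_r)$. On the set $\{\cS^+_{r,x}(a)>0\}=\{\cS_{r,x}(a+Y_r)>Y_x\}$ the martingale increments combine as
\[
\cW([0,\cS_{r,x}(a+Y_r)],\dd x)-\cW([0,Y_x],\dd x)=\cW([Y_x,Y_x+\cS^+_{r,x}(a)],\dd x)=\cW^+_Y([0,\cS^+_{r,x}(a)],\dd x)
\]
by the representation $\cW^+_Y=\theta_Y\cW$ and \eqref{eq:thetaSW}, while the drift increments produce $\overline{\delta_2}(x)-(\overline{\delta_0}\,|_{r_1}\,\overline{\delta_1})(x)$ (and $\overline{\delta_2}(x)$ for $x<r_0$, where $Y_x=0$). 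Thus $\cS^+$ solves \eqref{eq:BESQflow vary} for the drift function $\overline{\delta_2}\,|_{r_0}\,\overline{\delta_2}-\overline{\delta_0}\,|_{r_1}\,\overline{\delta_2}-\overline{\delta_1}$ driven by $\cW^+_Y$. Since $\{\cS_{r,y}(a+Y_r)\le Y_y\}=\{\cS^+_{r,y}(a)=0\}$ and the drift condition in $\varphi_+$ is exactly $\{\overline{\delta_2}\le(\overline{\delta_0}\,|_{r_1}\,\overline{\delta_1})\}$, the prescribed absorption at $\varphi_+$ is precisely the killing on $E=\{\overline{\delta_2}\le(\overline{\delta_0}\,|_{r_1}\,\overline{\delta_1})\}$, and the range $x<t(Y)$ gives the restriction to $(-\infty,t(Y))$. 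It then remains to verify the regularity hypotheses of Lemma \ref{l:killed}, which are inherited from the continuity, c\`adl\`ag dependence on the starting height and perfect flow property of $\cS$ and $Y$; Lemma \ref{l:killed} then identifies $\cS^+$ with the claimed killed flow.

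For (ii), taking $f=h=\ind_{[0,Y_x]}$ in \eqref{eq:fW} gives $\int_{r_0}^{y}\cW^-_Y([0,Y_s],\dd s)=\int_{r_0}^{y}\cW([0,Y_s],\dd s)$, so $\cW$ may be replaced by $\cW^-_Y$ in the SDE for $Y$; and because $\cW^-_Y$ is supported on $\{\ell\le Y_s\}$ one also has $\int_{r_0}^{y}\cW^-_Y([0,Y_s],\dd s)=\int_{r_0}^{y}\cW^-_Y(\r_+,\dd s)$, exhibiting $Y$ as the strong solution of an SDE driven by $\cW^-_Y$ with drift switch at $r_1$, hence measurable with respect to $(\cW^-_Y,r_1)$ (as in the proof of Proposition \ref{p:independence_W-&W^+}). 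For $\cS^-$, the assumption $a\le Y_r$ together with property ({\bf P}) applied piecewise shows that $\max(\cS_{r,\cdot}(a)-Y_\cdot,0)$ is a BESQ process of dimension $\overline{\delta_2}-(\overline{\delta_0}\,|_{r_1}\,\overline{\delta_1})$ started at $0$; it is absorbed at $0$ while that dimension is negative, so $\cS_{r,x}(a)\le Y_x$ up to $\varphi_-(a,r)$, and $\cS^-=Y$ afterwards. Thus $\cS^-\le Y$ throughout, $[0,\cS^-_{r,x}(a)]\subseteq[0,Y_x]$, and the same computation shows $\cS^-$ is driven by $\cW^-_Y$ and measurable with respect to $(\cW^-_Y,r_1)$.

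For (iii) and (iv) the idea is to freeze the switch. On $\{r_1=b\}$ the line $Y$ coincides with the flow line $Y^{(b)}$ of the ${\rm BESQ}(\overline{\delta_0}\,|_b\,\overline{\delta_1})$ flow from $(0,r_0)$, whose drift function is now \emph{deterministic}; hence $\cW^-_Y=\cW^-_{Y^{(b)}}$ and $\cW^+_Y=\cW^+_{Y^{(b)}}$ there, and Proposition \ref{p:independence_W-&W^+} applied to $Y^{(b)}$ (with $Z=0$) yields that $\cW^+_{Y^{(b)}}$ is a white noise independent of $\cW^-_{Y^{(b)}}$, with the standard white-noise law independent of $b$. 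Definition \ref{d:minus plus flow line} tells us on which side of $Y$ the event $\{r_1=b\}$ lives: since $S$ and $Y^{(b)}$ agree on $[r_0,b]$ we have $\cW^\pm_S=\cW^\pm_{Y^{(b)}}$ there, so $\{r_1=b\}\in\cF^{\cW^-_S}_{b}=\sigma(\cW^-_{Y^{(b)}}|_{[r_0,b]})$ in the $\ominus$ case and $\{r_1=b\}\in\cF^{\cW^+_S}_{b}=\sigma(\cW^+_{Y^{(b)}}|_{[r_0,b]})$ in the $\oplus$ case. Conditioning on an event measurable with respect to one side does not alter the law of the other side and preserves their independence, because $\cW^-_{Y^{(b)}}\perp\cW^+_{Y^{(b)}}$. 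In the $\oplus$ case this gives (iv) at once: given $r_1=b$, the noise $\cW^-_Y=\cW^-_{Y^{(b)}}$ keeps its unconditional law, namely the law of $\cW^-$ of the ${\rm BESQ}(\overline{\delta_0}\,|_b\,\overline{\delta_1})$ line, and remains independent of $\cW^+_Y$. In the $\ominus$ case $\{r_1=b\}$ is $\cW^-_{Y^{(b)}}$-measurable, so conditionally on $r_1=b$ the noise $\cW^+_Y=\cW^+_{Y^{(b)}}$ is a white noise independent of $\cW^-_Y$ with conditional law not depending on $b$; since $r_1$ is itself $\cW^-_Y$-measurable (the $\ominus$ property combined with $\cW^-_S=\cW^-_Y$ on $[r_0,r_1]$), the conditional law of $\cW^+_Y$ given $(r_1,\cW^-_Y)$ is this fixed white-noise law, whence $\cW^+_Y$ is globally a white noise independent of $\cW^-_Y$, which is (iii).

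The steps I expect to be most delicate are two. In (ii), controlling the first crossing of the two flow lines $\cS_{r,\cdot}(a)$ and $Y$, which carry different, time-varying and possibly sign-changing drifts: the clean statement that $\cS_{r,\cdot}(a)$ stays below $Y$ until $\varphi_-(a,r)$ must be extracted from property ({\bf P}) by decomposing along the pieces $A_i$ on which the drifts are constant. In (iii)--(iv), the genuine obstacle is making the conditioning on $\{r_1=b\}$ rigorous when $r_1$ has no atoms; this is carried out through a regular-conditional-distribution argument, and its validity rests entirely on the $\ominus$/$\oplus$-measurability of $\{r_1=b\}$ with respect to the correct side of $\cW_Y$. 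Here one must keep track of the fact that $\cW^\pm_S$ and $\cW^\pm_{Y^{(b)}}$ coincide only on the time interval $[r_0,b]$, which is precisely the interval relevant to the stopping time $r_1$.
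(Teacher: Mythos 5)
Your treatment of (i) and (ii) follows essentially the same route as the paper: subtract the SDEs, rewrite the martingale part via \eqref{eq:thetaSW} and \eqref{eq:fW}, match the imposed absorption with $\varphi_\pm$, and invoke Lemma \ref{l:killed}. One soft spot: the paper does not obtain the ordering ``$\cS_{r,\cdot}(a)\le Y$ up to $\varphi_-$'' (nor ``$\cS_{r,\cdot}(a+Y_r)\ge Y$ up to $\varphi_+$'') from property ({\bf P}) --- whose independence content is unavailable here, since $Y_r$ is itself a functional of the noise and the drift ordering flips across pieces --- but from a crossing argument with the comparison principle (Proposition \ref{p:comparison}): at a hypothetical crossing time before $\varphi_\pm$ the drifts are ordered the wrong way, contradicting the comparison of the two flows restarted at the crossing point. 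The paper also checks that $\cS_{r,\cdot}(a+Y_r)>0$ wherever $\overline{\delta_2}<0$ before $\varphi_+\wedge t(Y)$, so the indicator in \eqref{eq:BESQflow vary} can be dropped. These are fixable details.

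The genuine gap is in (iii)--(iv). Your strategy is to freeze the switch: on $\{r_1=b\}$ identify $Y$ with the deterministic-switch line $Y^{(b)}$, apply Proposition \ref{p:independence_W-&W^+} to $Y^{(b)}$, and argue that conditioning on $\{r_1=b\}$, an event measurable with respect to the correct side, does not perturb the other side. This argument is valid only when $r_1$ takes countably many values. For a general stopping time, $\p(r_1=b)=0$ for every $b$, so the pathwise identity $Y=Y^{(b)}$ a.s.\ on $\{r_1=b\}$ (coming from pathwise uniqueness) is vacuous and carries no information; no regular conditional distribution can be assembled from these null-event identifications alone, and this is precisely the step you defer to ``a regular-conditional-distribution argument.'' Closing it would require, for instance, dyadic approximation $r_1^{(n)}=2^{-n}\lceil 2^n r_1\rceil$ (still a stopping time for the same filtration), the discrete freezing argument, and then a convergence argument showing $\widetilde{Y}^{(n)}\to Y$ and $\cW^{\pm}_{\widetilde{Y}^{(n)}}\to\cW^{\pm}_Y$ in a sense preserving independence and marginal laws --- none of which is sketched. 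The paper avoids the issue entirely: it cuts space-time at the stopping time $r_1$ and along the lines $S$ and $Y$ into four martingale measures $\cW_1,\dots,\cW_4$, applies Proposition \ref{p:independence_W-&W^+} twice --- once to $(\cW,S)$ below $r_1$, once to the time-shifted pair $(\widetilde{\cW},\widetilde{Y})$ above $r_1$, where $\widetilde{\cW}$ is independent of $(r_1,\cW_1,\cW_2)$ by the strong Markov property of the white noise at the stopping time $r_1$ --- and reassembles $\cW^+_Y$ as the concatenation of $\cW_2$ and $\cW_4$ at level $r_1$. The $\ominus/\oplus$ dichotomy enters only to decide whether $r_1$ is measurable with respect to $\cW^-_S$ or independent of it, and the whole argument works for arbitrary stopping times. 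Your proposal needs this mechanism, or an equivalent discretization-plus-limit argument, to become a proof.
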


\begin{figure}[htbp]
\centering
   \scalebox{1}{ 
        \def\svgwidth{0.4\columnwidth}
\begingroup%
  \makeatletter%
  \providecommand\color[2][]{%
    \errmessage{(Inkscape) Color is used for the text in Inkscape, but the package 'color.sty' is not loaded}%
    \renewcommand\color[2][]{}%
  }%
  \providecommand\transparent[1]{%
    \errmessage{(Inkscape) Transparency is used (non-zero) for the text in Inkscape, but the package 'transparent.sty' is not loaded}%
    \renewcommand\transparent[1]{}%
  }%
  \providecommand\rotatebox[2]{#2}%
  \newcommand*\fsize{\dimexpr\f@size pt\relax}%
  \newcommand*\lineheight[1]{\fontsize{\fsize}{#1\fsize}\selectfont}%
  \ifx\svgwidth\undefined%
    \setlength{\unitlength}{369.3364563bp}%
    \ifx\svgscale\undefined%
      \relax%
    \else%
      \setlength{\unitlength}{\unitlength * \real{\svgscale}}%
    \fi%
  \else%
    \setlength{\unitlength}{\svgwidth}%
  \fi%
  \global\let\svgwidth\undefined%
  \global\let\svgscale\undefined%
  \makeatother%
  \begin{picture}(1,0.91148365)%
    \lineheight{1}%
    \setlength\tabcolsep{0pt}%
    \put(0,0){\includegraphics[width=\unitlength,page=1]{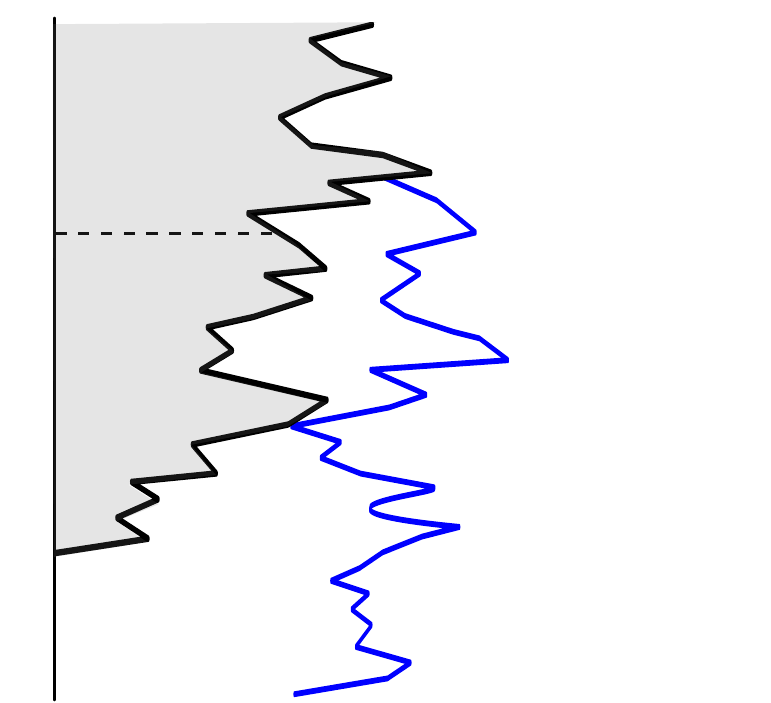}}%
    \put(0.50089546,0.87817402){\color[rgb]{0.10196078,0.10196078,0.10196078}\transparent{0.82599998}\makebox(0,0)[lt]{\lineheight{1.25}\smash{\begin{tabular}[t]{l}$Y$\end{tabular}}}}%
    \put(0.136147,0.67240905){\color[rgb]{0,0,0}\transparent{0.82599998}\makebox(0,0)[lt]{\lineheight{1.25}\smash{\begin{tabular}[t]{l}$\cW_Y^-$\end{tabular}}}}%
    \put(-0.00006943,0.18123615){\color[rgb]{0,0,0}\transparent{0.82599998}\makebox(0,0)[lt]{\lineheight{1.25}\smash{\begin{tabular}[t]{l}$r_0$\end{tabular}}}}%
    \put(-0.00006943,0.5895304){\color[rgb]{0,0,0}\transparent{0.82599998}\makebox(0,0)[lt]{\lineheight{1.25}\smash{\begin{tabular}[t]{l}$r_1$\end{tabular}}}}%
    \put(0.78687073,0.79747316){\color[rgb]{0,0,0}\transparent{0.82599998}\makebox(0,0)[lt]{\lineheight{1.25}\smash{\begin{tabular}[t]{l}$\cW_Y^+$\end{tabular}}}}%
    \put(0,0){\includegraphics[width=\unitlength,page=2]{decomposition.pdf}}%
  \end{picture}%
\endgroup%

    }
    \caption{The black line represents a ${\rm BESQ}(\overline{\delta_0}\,|_{r_1}\, \overline{\delta_1})$ flow line $Y=(Y_x,\, x\ge r_0)$. The blue lines represent the ${\rm BESQ}(\overline{\delta_2})$ flow $\cS$, both driven by $\cW$. The shaded gray area represents $\cW_Y^-$, while the white area indicates $\cW_Y^+$.}
    \label{fig:decomp}
\end{figure}

\bigskip

\begin{proof}
\begin{enumerate}[(i)]
\item 
Recall that $Y$ is a ${\rm BESQ}(\overline{\delta_0}\,|_{r_1}\, \overline{\delta_1})$ flow line starting at $(0,r_0)$ and $\overline{\delta_0}>0$. Observe that for all $x\in (r_1,t(Y))$, if $\overline{\delta_1}(x)<0$, then $Y_x>0$. By \eqref{eq:BESQflow vary}, we have for any $x \in (r_0,t(Y))$,
\[
\dd Y_x = 2 \, \cW([0, Y_x],\dd x) +   (\overline{\delta_0}\,|_{r_1}\, \overline{\delta_1})(x) \dd x.
\]

\noindent Since $Y_r=0$ when $r\le r_0$, we can write for any $x<t(Y)$
\[
\dd Y_x = 2 \, \cW([0, Y_x],\dd x) +   (0\,|_{r_0}\,\overline{\delta_0}\,|_{r_1}\, \overline{\delta_1})(x) \dd x.
\]

\noindent By Proposition \ref{p:comparison}, a.s. for all $r\le x\le \varphi_+(a,r)$ and $a\ge 0$, $\cS_{r,x}(a+Y_r)\ge Y_x$. Otherwise, let $x\in [r,\varphi_+(a,r))$  such that $\cS_{r,x}(a+Y_r)= Y_x$ and $\cS_{r,y}(a+Y_r)< Y_y$ 
for $y>x$ close enough to $x$. Since $x<\varphi_+(a,r)$, $\overline{\delta_2}(y) > (\overline{\delta_0}\,|_{r_1}\, \overline{\delta_1})(y)$. 
By the perfect flow property of $\cS$ and of the ${\rm BESQ}(\overline{\delta_0}\,|_{r_1} \, \overline{\delta_1})$ flow $\cS'$, we have $\cS_{x,y}(Y_x)<\cS'_{x,y}(Y_x)$, 
which contradicts the comparison principle at the point $(Y_x,x)$ between $\cS$ and $\cS'$.  Let $r\le x<\varphi_+(a,r)$ such that $\overline{\delta}_2(x)< 0$. If $x\le r_1$, then $\cS_{r,x}(a+Y_r)>Y_x\ge 0$ by definition \eqref{not:phi+} and the fact that $\overline{\delta_0}>0$. If $x>r_1$ and $\overline{\delta_1}(x)\ge 0$, then $\cS_{r,x}(a+Y_r)>Y_x\ge 0$ for the similar reason.  If $x \in (r_1,t(Y))$ and $\overline{\delta_1}(x)<0$, then $Y_x>0$ by definition of $t(Y)$, hence $\cS_{r,x}(a+Y_r)\ge Y_x>0$. We proved that a.s. for any $r\le x<\min(t(Y),\varphi_+(a,r))$ such that $\overline{\delta}_2(x)< 0$, $\cS_{r,x}(a+Y_r)>0$ and by \eqref{eq:BESQflow vary},   
\[
\dd_x \cS_{r,x}(a+Y_r) = 2 \, \cW([0, \cS_{r,x}(a+Y_r)],\dd x) + \overline{\delta_2}(x)\dd x.
\]

\noindent We deduce that for any $r\le x<\min(t(Y),\varphi_+(a,r))$,
\begin{align*}
    \dd_x \, \cS^+_{r,x}(a) &= \dd_x \, (\cS_{r,x}(a+Y_r) - Y_x)\\
    &=  2 \, \cW([Y_x, \cS_{r,x}(a+Y_r)],\dd x) +   (\overline{\delta_2}\,|_{r_0}\, \overline{\delta_2}-\overline{\delta_0}\,|_{r_1}\, \overline{\delta_2}-\overline{\delta_1})(x) \dd x\\
    &= 2 \cW^+_Y([0,\cS^+_{r,x}(a)],\dd x) + (\overline{\delta_2}\,|_{r_0}\, \overline{\delta_2}-\overline{\delta_0}\,|_{r_1}\, \overline{\delta_2}-\overline{\delta_1})(x) \dd x
\end{align*}
by \eqref{eq:thetaSW}. By construction,  $\cS^+_{r,\cdot}(a)$ is absorbed at $0$ if it touches $0$ on $\{\overline{\delta_2} \le (\overline{\delta_0}\,|_{r_1}\, \overline{\delta_1})\}$, i.e. at time $\varphi_+(a,r)$. Therefore, up to the regularity conditions of Lemma \ref{l:killed},  $\cS^+$ is a ${\rm BESQ}(\overline{\delta_2}\,|_{r_0}\,\overline{\delta_2} - \overline{\delta_0}\,|_{r_1}\, \overline{\delta_1})$ flow on $(-\infty,t(Y))$ driven by the white noise $\cW^+_Y$ and killed on $\{\overline{\delta_2} \le (\overline{\delta_0}\,|_{r_1}\, \overline{\delta_1})\}$. We check that $\cS^+$ satisfies the regularity conditions of Lemma \ref{l:killed}. Statement (i) says that $\cS^+_{r,r}(a)=a$ which is true. Let $r< x<t(Y)$ and $a'\ge a\ge 0$ such that $x< \varphi_+(a,r)$. Then  $\cS^+_{r,x}(a)=\cS_{r,x}(a+Y_r)-Y_x$ and $\cS^+_{r,x}(a')=\cS_{r,x}(a'+Y_r)-Y_x$. We deduce the c\`adl\`ag property (ii). Moreover, if $r \le x\le y< \min(\varphi_+(a,r),t(Y))$, 
then by the perfect flow property of $\cS$, $y<\varphi_+( \cS_{r,x}(a+Y_r), x)$ and  $\cS_{r,y}^+(a)=\cS_{r,y}(a+Y_r)-Y_y=\cS_{x,y}\circ \cS_{r,x}(a+Y_r)-Y_y= \cS_{x,y}^+ ( \cS_{r,x}(a+Y_r)-Y_x)= \cS_{x,y}^+\circ \cS_{r,x}^+(a)$. That proves the perfect flow property (iii). Together with the absorption at time $\varphi_+(a,r)$, it yields that the regularity conditions of Lemma \ref{l:killed} are satisfied. 

\item  By  the same reasoning as above, the comparison principle (Proposition \ref{p:comparison}) implies $\cS^-_{r,x}(a) \le Y_x$ for all $r_0\le r\le x<t(Y)$ and $a\le Y_r$. Indeed, if it is not the case, we find
$r \in [r_0,t(Y))$, $a\le Y_r$ and $x \in [r,\varphi_-(a,r))$ such that $\cS_{r,x}(a) = Y_x$ and $\cS_{r,y}(a) > Y_y$ for $y>x$ close enough to $x$. Then, by the definition of $\varphi_-$, $\overline{\delta_2}(y) < (\overline{\delta_0}\,|_{r_1}\, \overline{\delta_1})(y)$ which is in contradiction with $\cS_{x,y}(Y_x)>\cS'_{x,y}(Y_x)$. 
Statement (ii) is then a consequence of \eqref{eq:fW} with $f(\ell,r)=\ind_{[0,Y_r]}(\ell)$ and $h(\ell,r)=\ind_{[x,y]}(r)$ (in the case of $Y$) or $h(\ell,r)=\ind_{[x,y]}(r)\ind_{[0,\cS^-_{x,r}(a)]}(\ell)$  (in the case of $\cS^-$).

\item Recall the definition of a $\ominus$-flow line in Definition \ref{d:minus plus flow line}. We let $S$ be the ${\rm BESQ}(\overline{\delta_0})$ flow line starting at $(0,r_0)$ and define $\cW^-_S$ and $\cW^+_S$ via \eqref{eq:W-} and \eqref{eq:W+}.  We introduce the following martingale measures ($A\subset \r_+$ is an arbitrary Borel set with finite Lebesgue measure and $y\ge x\ge 0$).
\begin{itemize}
\item  $\cW_1$ is  $\cW_Y^-$ restricted to $(-\infty,r_1]$. Equivalently, $\cW_1$ is $\cW_\cS^-$ restricted to $(-\infty,r_1]$.
\item $\cW_2(A\times [-y,-x]):=\cW_S^+(A\times [r_1-y,r_1-x])=\cW_Y^+(A\times [r_1-y,r_1-x])$.
\item $\cW_3(A\times [x,y]):=\cW_Y^-(A\times[r_1+x,r_1+y])$.
\item $\cW_4(A\times [x,y]):= \cW^+_Y(A\times [r_1+x,r_1+y])$.
\end{itemize}

\begin{figure}[htbp]
\centering
   \scalebox{1}{ 
        \def\svgwidth{0.4\columnwidth}
        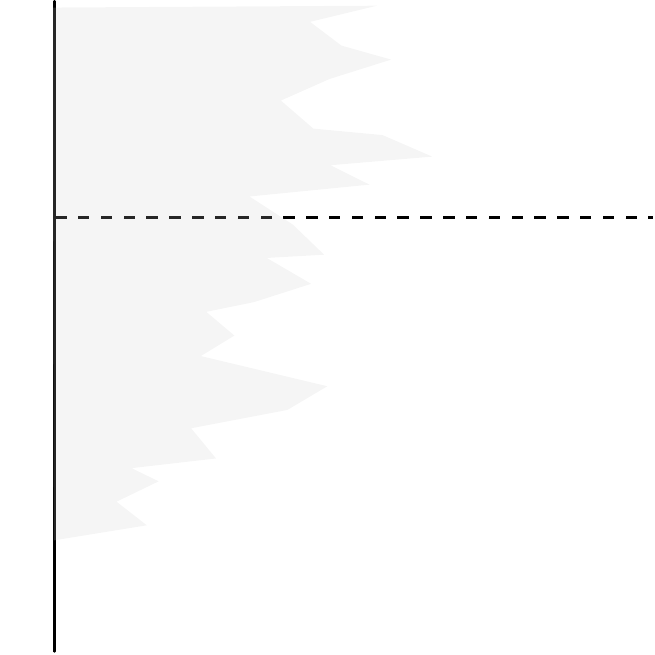
    }
    \caption{The regions shaded in colors from darkest to lightest represent the areas of $\cW_1$, $\cW_2$, $\cW_3$, and $\cW_4$, respectively.}
    \label{fig:decomp_proof}
\end{figure}

See Figure \ref{fig:decomp_proof}. We observe that $\cW^+_Y$ is measurable with respect to  $(\cW_2,\cW_4,r_1)$ while $\cW^-_Y$  is measurable with respect to $(\cW_1,\cW_3)$.  We will prove that:
\begin{enumerate}[(a)]
\item $\cW_2$ is a white noise independent of $(r_1,\cW_1)$.
\item Conditionally on $\cW_1$, the martingale measures $\cW_2$ and $\cW_3$ are independent.
\item $\cW_4$ is a white noise independent of $(r_1,\cW_1,\cW_2,\cW_3)$.
\end{enumerate}

\noindent It will imply that $(\cW_2,\cW_4)$ is a pair of independent white noises, independent of $(r_1,\cW_1,\cW_3)$ hence of $\cW_Y^-$. Since $\cW^+_Y$ is obtained by concatenating $\cW_2$ and $\cW_4$ at level $r_1$, it is still a white noise independent of $\cW_Y^-$.  
It remains to prove (a), (b) and (c).

Statement (a) is a consequence of Proposition \ref{p:independence_W-&W^+} and the fact that $r_1$ is measurable with respect to $\cW_S^-$, hence is independent of $\cW_S^+$. 

We introduce the martingale measure $\widetilde{\cW}$ defined via $\widetilde{\cW}(A\times[x,y])=\cW(A\times[r_1+x,r_1+y])$, and let ${\widetilde Y}_r:= Y_{r_1+r}$ for $0\le r<t(Y)-r_1$. Then $\widetilde{\cW}$ is  a white noise independent of $(\cW_1,\cW_2)$ and ${\widetilde Y}$ is a ${\rm BESQ}(\overline{\delta_1})$ flow line driven by ${\widetilde \cW}$ starting from $(Y_{r_1},0)$. Moreover, $\cW_3$ is equal to ${\widetilde \cW}_{\widetilde Y}^-$ in the notation \eqref{eq:W-}, hence is measurable with respect to $({\widetilde \cW},Y_{r_1})$. Since $Y_{r_1}$ is measurable with respect to $\cW_1$ (by (ii)), we deduce statement (b).

Finally, statement (c) is a consequence of Proposition \ref{p:independence_W-&W^+} applied to $(\widetilde {\cW},\widetilde{Y})$ in place of $(\cW,S)$ and the fact that $r_1$ is a stopping time with respect to $\cF$. It shows that, conditionally on $(\cW_1,\cW_2)$, $\cW_4$ is a white noise independent of $\cW_3$.

\item We keep the notation of the proof of (iii). We need to prove that conditionally on $r_1=b$, the pair $(\cW_1,\cW_3)$ is independent of $(\cW_2,\cW_4)$, that $\cW_1$ has the law of $\cW_S^-$ restricted to $(-\infty,b]$, and conditioning further on $\cW_1$ and $Y_{r_1}=y_0$, the martingale measure $\cW_3$ has the law of $\cW_{\widehat Y}^-$ where ${\widehat Y}$ is the ${\rm BESQ}(\overline{\delta_1})$ flow line starting at $(y_0,0)$. 
Recall that $Y_{r_1}$ is measurable with respect to $\cW_1$ by (ii). Therefore it is enough to prove that conditionally on $r_1=b$:
\begin{enumerate}[(a')]
\item $\cW_1$ is independent of $\cW_2$ and has the law of $\cW_S^-$ restricted to $(-\infty,b]$.
\item Conditionally on $Y_{r_1}=y_0$, $\cW_3$ is  independent of $(\cW_1,\cW_2)$ and has the law of $\cW_{\widehat Y}^-$.
\item $\cW_4$ is a white noise independent of $(\cW_1,\cW_2,\cW_3)$.
\end{enumerate}

We have already seen that statement (c') is a consequence of Proposition \ref{p:independence_W-&W^+}, so let us prove (a') and (b'). Statement (a') comes from Proposition \ref{p:independence_W-&W^+} applied to $(\cW,S)$, and the fact that $r_1$ is measurable with respect to $\cW_S^+$, hence is independent of $\cW_S^-$. Finally, we recall that we can write $\cW_3$ as $\widetilde{\cW}^-_{\widetilde Y}$ where ${\widetilde Y}$ is the ${\rm BESQ}(\overline{\delta_1})$ flow line starting at $(Y_{r_1},0)$ driven by ${\widetilde \cW}$, that $\widetilde{\cW}$ is independent of $(\cW_1,\cW_2)$ and that  $Y_{r_1}$ is measurable with respect to $\cW_1$. It yields (b').
\end{enumerate}
\end{proof}

\section{Meeting of flow lines}\label{s:meeting}
This section  studies the interaction between flow lines of different parameters. Recall from the introduction that ${\mathcal B}(a,b)$ denotes the beta($a,b$) distribution. Theorems \ref{thm: main left}, \ref{thm: main right}, \ref{thm: main dual} are particular cases of Theorems \ref{thm:U_process}, \ref{thm:U_process-}, \ref{thm:U_process*} respectively.


\subsection{Meeting of a forward flow line from the left}\label{s:meetingL}

Fix $z\ge 0$, $\delta > 0$ and $\deltahat <\delta+2$. Let $\cS$ be the ${\rm BESQ}^\delta$ flow and $Y^0=(Y^0_x)_{x\ge 0}$ be the ${\rm BESQ}(\delta \,|_z\, \deltahat)$ flow line starting at $(0,0)$ both driven by $\cW$ . For $r\ge 0$, let 
\begin{equation}\label{def:U}
    U(r):= \inf\{x\ge \max(r,z)\,:\, \cS_{r,x}(0)= Y^0_x \}.
\end{equation}

\noindent See Figure \ref{fig:meeting1}. We observe that $(U(r),\,r\ge 0)$ is non-decreasing. It is also left-continuous as can be seen for example from Proposition \ref{p:properties} (i) and equation \eqref{eq:approx-AHS}. If $\deltahat \le 0$, since $Y^0$ is absorbed when hitting $0$ after $z$, $U(r)=r$ for all $r$ greater than this hitting time. In the general case, using property ({\bf P}) of Section \ref{s:def} and that $\deltahat- \delta<2$, we see that $U(r)<\infty$ a.s. Moreover, $U(r)$ is a stopping time for the natural filtration of $\cW$. For $r\ge 0$, we define the process $Y^r$ by 
\begin{align}\label{def:Yx}
    Y_{x}^r:=\begin{cases}
    \cS_{r,x}(0) & \text{if } x\in[r,U(r)], \\
    Y^0_x & \text{if } x>U(r).
\end{cases}
\end{align}

\begin{figure}[htbp]
\centering
   \scalebox{1}{ 
        \def\svgwidth{0.4\columnwidth}
\begingroup%
  \makeatletter%
  \providecommand\color[2][]{%
    \errmessage{(Inkscape) Color is used for the text in Inkscape, but the package 'color.sty' is not loaded}%
    \renewcommand\color[2][]{}%
  }%
  \providecommand\transparent[1]{%
    \errmessage{(Inkscape) Transparency is used (non-zero) for the text in Inkscape, but the package 'transparent.sty' is not loaded}%
    \renewcommand\transparent[1]{}%
  }%
  \providecommand\rotatebox[2]{#2}%
  \newcommand*\fsize{\dimexpr\f@size pt\relax}%
  \newcommand*\lineheight[1]{\fontsize{\fsize}{#1\fsize}\selectfont}%
  \ifx\svgwidth\undefined%
    \setlength{\unitlength}{190.49326258bp}%
    \ifx\svgscale\undefined%
      \relax%
    \else%
      \setlength{\unitlength}{\unitlength * \real{\svgscale}}%
    \fi%
  \else%
    \setlength{\unitlength}{\svgwidth}%
  \fi%
  \global\let\svgwidth\undefined%
  \global\let\svgscale\undefined%
  \makeatother%
  \begin{picture}(1,1.05723665)%
    \lineheight{1}%
    \setlength\tabcolsep{0pt}%
    \put(0,0){\includegraphics[width=\unitlength,page=1]{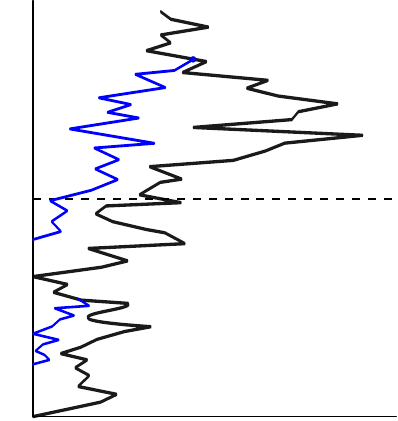}}%
    \put(0.02384631,0.54686684){\color[rgb]{0,0,0}\makebox(0,0)[lt]{\lineheight{1.25}\smash{\begin{tabular}[t]{l}$z$\end{tabular}}}}%
    \put(0.02407601,0.00454664){\color[rgb]{0,0,0}\makebox(0,0)[lt]{\lineheight{1.25}\smash{\begin{tabular}[t]{l}$0$\end{tabular}}}}%
    \put(0.01316879,0.12808214){\color[rgb]{0,0,1}\makebox(0,0)[lt]{\lineheight{1.25}\smash{\begin{tabular}[t]{l}$r_1$\end{tabular}}}}%
    \put(0,0){\includegraphics[width=\unitlength,page=2]{meeting_1.pdf}}%
    \put(0.56252912,0.59184917){\color[rgb]{0,0,1}\makebox(0,0)[lt]{\lineheight{1.25}\smash{\begin{tabular}[t]{l}$U(r_1)$\end{tabular}}}}%
    \put(0.01316879,0.43964279){\color[rgb]{0,0,1}\makebox(0,0)[lt]{\lineheight{1.25}\smash{\begin{tabular}[t]{l}$r_2$\end{tabular}}}}%
    \put(0,0){\includegraphics[width=\unitlength,page=3]{meeting_1.pdf}}%
    \put(0.62414789,0.94643583){\color[rgb]{0,0,1}\makebox(0,0)[lt]{\lineheight{1.25}\smash{\begin{tabular}[t]{l}$U(r_2)$\end{tabular}}}}%
    \put(0,0){\includegraphics[width=\unitlength,page=4]{meeting_1.pdf}}%
  \end{picture}%
\endgroup%

    }
    \caption{The blue lines represent the ${\rm BESQ}^\delta$ flow lines starting from different points $(0, r_i)$, $i=1,2$. The black line represents $Y^0$, the ${\rm BESQ}(\delta \,|_z\, \widehat{\delta})$ flow line starting from $(0, 0)$. The levels $U(r_i)$, $i=1,2$, are the meeting levels as defined in \eqref{def:U}.}
    \label{fig:meeting1}
\end{figure}

\noindent  The following proposition shows that it is a $\oplus$-flow line in the terminology of Section \ref{s:decomposition}.

\begin{proposition}\label{boundary}
For every fixed $r\ge 0$, $Y^r$ is a $\oplus$-flow line. 
\end{proposition}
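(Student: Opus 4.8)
The plan is to recognise $Y^r$ as a flow line with a single change of drift at the random level $U(r)$, and then to show that this level is measurable in the noise lying \emph{above} the line $\cS_{r,\cdot}(0)$. Concretely, I would set $r_0:=r$, $\overline\delta\equiv\delta$, $\overline{\delta'}\equiv\deltahat$, $r_1:=U(r)$, and take $S:=\cS_{r,\cdot}(0)$ to be the ${\rm BESQ}^\delta$ flow line from $(0,r)$, with $\cW^-_S,\cW^+_S$ attached to $S$ through \eqref{eq:W-}--\eqref{eq:W+}. In this language Definition \ref{d:minus plus flow line} requires two things: that $Y^r$ be the ${\rm BESQ}(\delta\,|_{U(r)}\,\deltahat)$ flow line started at $(0,r)$, and that $U(r)$ be a stopping time of the natural filtration of $\cW^+_S$.

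For the first point, note from \eqref{def:Yx} that on $[r,U(r)]$ we have $Y^r=\cS_{r,\cdot}(0)=S$, a ${\rm BESQ}^\delta$ line, while $\cS_{r,U(r)}(0)=Y^0_{U(r)}$ by the definition \eqref{def:U} of $U(r)$. Since $U(r)\ge\max(r,z)\ge z$, the restriction of $Y^0$ to $[U(r),\infty)$ lies in the $\deltahat$-regime of $Y^0$, and by the (almost) perfect flow property (Proposition \ref{p:perfect}) it coincides with the ${\rm BESQ}^{\deltahat}$ flow line issued from $(Y^0_{U(r)},U(r))=(\cS_{r,U(r)}(0),U(r))$. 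Glued to the piece on $[r,U(r)]$, this is precisely the ${\rm BESQ}(\delta\,|_{U(r)}\,\deltahat)$ flow line started at $(0,r)$, so $Y^r$ has the required form.

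The substance is the second point, for which I would work with the difference $G_x:=Y^0_x-S_x$, $x\ge r$. First one checks $G\ge0$ on $[r,U(r)]$: at $x=r$ one has $G_r=Y^0_r\ge0=S_r$; when $r<z$, on $[r,z]$ the two lines are ${\rm BESQ}^\delta$ flow lines ordered at time $r$ and hence, by coalescence (Proposition \ref{p:perfect}) together with continuity, do not cross; and on $[\max(r,z),U(r))$ they are unequal by the very definition of $U(r)$, so continuity forces $G>0$ there. On $\{G_x>0\}$ one subtracts the SDEs \eqref{eq:BESQflow vary} for $Y^0$ and $S$ and applies the shift identity \eqref{eq:thetaSW} in the form $\cW((S_x,Y^0_x],\dd x)=\cW^+_S([0,G_x],\dd x)$ to obtain
\begin{equation*}
\dd G_x=2\,\cW^+_S([0,G_x],\dd x)+\bigl(0\,|_z\,(\deltahat-\delta)\bigr)(x)\,\dd x,
\end{equation*}
and with absorption at $0$ this exhibits $G$ as a (possibly killed) ${\rm BESQ}(0\,|_z\,\deltahat-\delta)$ process driven \emph{entirely} by $\cW^+_S$, whose first hitting time of $0$ at or after $\max(r,z)$ is exactly $U(r)$.

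The one genuinely delicate point — and the main obstacle — is that $S$ and $Y^0$ \emph{separately} depend on $\cW^-_S$, so one must be sure that only their difference, and the initial datum, are controlled by $\cW^+_S$. This is resolved by observing that $S_x=0$ for $x\le r$, whence by \eqref{eq:W+} the noise $\cW^+_S$ coincides with $\cW$ on $\r_+\times(-\infty,r]$; since $G_r=Y^0_r$ is determined by $\cW$ restricted to $[0,r]$, it is measurable with respect to $\cW^+_S$ up to time $r$. Pathwise uniqueness for the displayed SDE then makes $(G_x)_{x\ge r}$ adapted to the filtration of $\cW^+_S$, so the hitting time $U(r)$ is a stopping time of that filtration, which is exactly the assertion that $Y^r$ is a $\oplus$-flow line. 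It remains only to dispose of the degenerate configurations ($Y^0_r=0$, coalescence of $Y^0$ and $S$ before $z$, or absorption of $Y^0$ when $\deltahat\le0$), in each of which $U(r)$ collapses to $\max(r,z)$ or to a deterministic level and the conclusion is immediate.
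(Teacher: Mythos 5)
Your proof is correct and takes essentially the same approach as the paper: both identify $U(r)$ as the first zero after $\max(r,z)$ of the difference process $Y^0_\cdot-\cS_{r,\cdot}(0)$ and show that this process is driven by, hence adapted to, $\cW^+_S$, so that $U(r)$ is a stopping time of that filtration. The only difference is presentational — the paper obtains the key fact by citing Proposition \ref{p:difference} (i) applied to $(S,\widetilde{\cS},\delta,\delta\,|_z\,\deltahat)$, while you re-derive that special case by hand (subtracting the SDEs, using the shift identity \eqref{eq:thetaSW}, and checking measurability of the initial value $G_r=Y^0_r$ and pathwise uniqueness), which is precisely the content that the cited proposition packages away.
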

\begin{proof}
We adopt the framework of Section \ref{s:decomposition} with $r_0=r$, $S_x=\cS_{r,x}(0)$, $Y_x=Y^r_x$, $r_1=U(r)$. We need to show that $U(r)$ is a stopping time with respect to the filtration of $\cW^+_S$ defined in \eqref{eq:W+}. Let $\widetilde{\cS}$ be  the ${\rm BESQ}(\delta \,|_z\, \deltahat)$ flow driven by $\cW$. By Proposition \ref{p:difference} (i) applied to $(S, \widetilde{\cS}, \delta, \delta\,|_z\,\deltahat)$ in place of $(Y,\cS,\overline{\delta_0}\,|_{r_1} \overline{\delta_1},\overline{\delta_2})$ there, the process $\tcS_{0,x}^+(0):=\widetilde{\cS}_{0,x}(0)-S_x=Y^0_x-\cS_{r,x}(0)$  is driven by $\cW_S^+$. In particular, $U(r)=\inf\{x\ge \max(r,z)\,:\, \tcS_{0,x}^+(0)=0 \}$ is a stopping time with respect to $\cW^+_S$. 
\end{proof}

Observe that $Y^r \ge Y^y$ if $r\le y$. Consider $\cW^+_{Y^r}$ in the notation \eqref{eq:W+}, and let $\mathscr{W}^+_r:=\sigma(\cW^+_{Y^r})$. It defines a filtration. 

\begin{proposition}\label{p:Ux adapted}
    The process $(U(r))_{r\ge 0}$ is adapted to the filtration $\mathscr{W}^+$.
\end{proposition}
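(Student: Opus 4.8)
The plan is to show that for each fixed $r\ge 0$, the meeting level $U(r)$ is measurable with respect to $\mathscr{W}^+_r=\sigma(\cW^+_{Y^r})$; since $\mathscr{W}^+$ is a filtration, this is exactly the asserted adaptedness. Write $S_x:=\cS_{r,x}(0)$, extended by $S_x=0$ for $x<r$, as in the proof of Proposition \ref{boundary}. Recall from that proof that the process $M_x:=Y^0_x-\cS_{r,x}(0)=\tcS^+_{0,x}(0)$ is, by Proposition \ref{p:difference} (i), the flow line from $(0,0)$ of a ${\rm BESQ}$ flow with a \emph{deterministic} drift function (here $\overline{\delta_0}=\overline{\delta_1}=\delta$, so no random bend enters the drift), driven by the white noise $\cW^+_S$, and that $U(r)=\inf\{x\ge\max(r,z):M_x=0\}$.

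The first step is the pathwise identity $\cW^+_{Y^r}=\cW^+_S$ on the region $\{x\le U(r)\}$. This holds because $Y^r_x=\cS_{r,x}(0)=S_x$ for $x\in[r,U(r)]$ and $Y^r_x=0=S_x$ for $x<r$, so the shifts $\theta_{Y^r}$ and $\theta_S$ coincide there; concretely $\cW^+_{Y^r}(\dd\ell,\dd x)=\cW(Y^r_x+\dd\ell,\dd x)=\cW(S_x+\dd\ell,\dd x)=\cW^+_S(\dd\ell,\dd x)$ for $x\le U(r)$. Note also that $\cW^+_{Y^r}=\theta_{Y^r}\cW$ is itself a white noise with respect to $\cF$ by Section \ref{s:mart}, since $Y^r$ is a nonnegative, continuous, $\cF$-predictable process (using that $U(r)$ is a $\cW$-stopping time, as observed before Proposition \ref{boundary}).

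The second step reconstructs $U(r)$ from $\cW^+_{Y^r}$ alone. Let $M'$ be the flow line from $(0,0)$ of the \emph{same} ${\rm BESQ}$ flow with deterministic drift, now driven by $\cW^+_{Y^r}$ in place of $\cW^+_S$. By pathwise uniqueness of the ${\rm BESQ}$ SDE and the identity of the two driving noises on $\{x\le U(r)\}$, one gets $M'_x=M_x$ for all $x\in[0,U(r)]$. Since $M_x\ne 0$ for $x\in[\max(r,z),U(r))$ by definition of $U(r)$, while $M_{U(r)}=0$, the process $M'$ first returns to $0$ after $\max(r,z)$ exactly at $U(r)$, so $U(r)=\inf\{x\ge\max(r,z):M'_x=0\}$. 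As $M'$ is adapted to $\cW^+_{Y^r}$, its hitting time $U(r)$ is $\mathscr{W}^+_r$-measurable, which is what we want.

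The main obstacle is the localization underlying $M'=M$ on $[0,U(r)]$: because $U(r)$ is itself defined through $M$, one must argue carefully that the agreement of the two noises on the \emph{random} region $\{x\le U(r)\}$, together with the fact that $U(r)$ is a $\cW^+_S$-stopping time (Proposition \ref{boundary}), propagates to the solutions up to time $U(r)$. This is the standard fact that a stopping time is measurable with respect to its driving noise stopped at that time; equivalently, one may phrase it as $U(r)$ being $\mathcal{G}_{U(r)}$-measurable, where $(\mathcal{G}_x)$ is the natural filtration of $\cW^+_S$, and then observe that $\mathcal{G}_{U(r)}$ is generated by the stopped noise $\cW^+_S|_{\{x\le U(r)\}}=\cW^+_{Y^r}|_{\{x\le U(r)\}}\subseteq\mathscr{W}^+_r$. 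I expect verifying this localization cleanly, rather than the algebra, to be the delicate part.
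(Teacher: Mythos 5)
Your overall route is sound and close to, but not identical with, the paper's. The paper obtains measurability directly by applying Proposition \ref{p:difference} (i) to the bent line $Y^r$ itself (i.e.\ with $r_1=U(r)$, $\overline{\delta_0}=\delta$, $\overline{\delta_1}=\deltahat$, $\overline{\delta_2}=\delta\,|_z\,\deltahat$), which produces in one stroke a process driven by $\cW^+_{Y^r}$ whose first zero after $\max(r,z)$ is $U(r)$. You instead reuse the application to the straight line $S=\cS_{r,\cdot}(0)$ from Proposition \ref{boundary} and transfer from $\cW^+_S$ to $\cW^+_{Y^r}$ by hand, using that the two noises coincide on $\{x\le U(r)\}$ together with pathwise uniqueness. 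Both arguments ultimately rest on the same localization (in the paper too, the drift of the flow driven by $\cW^+_{Y^r}$ depends on $r_1=U(r)$ beyond level $U(r)$, so one must restrict to the path before its first zero); yours makes the transfer explicit, while the paper's choice avoids any comparison of two driving noises.

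There is, however, a genuine flaw in the formalization you propose for the step you yourself flag as delicate. You argue that $\mathcal{G}_{U(r)}$ is generated by the stopped noise $\cW^+_S|_{\{x\le U(r)\}}=\cW^+_{Y^r}|_{\{x\le U(r)\}}$ and that this is contained in $\mathscr{W}^+_r$. That last inclusion is circular: the restriction of $\cW^+_{Y^r}$ to the \emph{random} region $\{x\le U(r)\}$ is a function of the pair $(\cW^+_{Y^r},U(r))$, not of $\cW^+_{Y^r}$ alone, so one cannot assert $\sigma\bigl(\cW^+_{Y^r}|_{\{x\le U(r)\}}\bigr)\subseteq \mathscr{W}^+_r$ before knowing that $U(r)$ is $\mathscr{W}^+_r$-measurable, which is the statement being proved. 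The correct way to close the gap is to stay under the common filtration $\cF$: both $\cW^+_S=\theta_S\cW$ and $\cW^+_{Y^r}=\theta_{Y^r}\cW$ are white noises with respect to $\cF$ (Section \ref{s:mart}), $U(r)$ is an $\cF$-stopping time (as noted before Proposition \ref{boundary}), and by \eqref{eq:thetaSW} the integrals of any $\cF$-predictable integrand supported on $\{x\le U(r)\}$ against the two noises coincide, since $S=Y^r$ there. Hence the stopped process $M_{\cdot\wedge U(r)}$ solves the stopped BESQ SDE driven by $\cW^+_{Y^r}$, and pathwise uniqueness for this SDE, applied up to the $\cF$-stopping time $U(r)$, yields $M'=M$ on $[0,U(r)]$. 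With this replacement, your identification $U(r)=\inf\{x\ge \max(r,z)\,:\, M'_x=0\}$ and the measurability of the strong solution $M'$ with respect to $\sigma(\cW^+_{Y^r})$ do complete the proof.
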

\begin{proof}
    Fix $r\ge 0$. Let as above $\tcS$ be  the ${\rm BESQ}(\delta \,|_z\, \deltahat)$ flow driven by $\cW$. Proposition \ref{p:difference}  (i) applied to $(Y^r,\tcS, r,U(r), \delta , \deltahat,\delta\,|_z \deltahat )$ in place of $(Y,\cS, r_0, r_1, \overline{\delta_0},\overline{\delta_1},\overline{\delta_2})$ implies that,  in similar notation to the proposition,  $\tcS^+$ is driven by $\cW_{Y^r}^+$ up to $t(Y)$ and killed on $\{ \delta\,|_z \deltahat  \le \delta\,|_{U(r)}\, \deltahat \}$. Notice that if $\tcS_{0,x}^+(0)=0$ for some $x< z$, then $\tcS_{0,z}^+(0)=0$ and that $U(r)=\inf\{x\ge \max(r,z)\,:\, \tcS_{0,x}^+(0)=0 \}$. Hence for any $y\ge \max(r,z)$, the event $\{U(r)>y\}$ is the event that the flow line driven by $\cW_{Y^r}^+$ starting from $(0,0)$ has not touched $0$ on the interval $[0,y]$. It is therefore measurable with respect to $\cW_{Y^r}^+$ (it is even a stopping time). 
    \end{proof}


Let $P^z$ be the distribution of $\left(U(r)-r\right)_{r\ge 0}$.
\begin{theorem}\label{thm:U_process}
    \begin{enumerate}[(i)]
\item Suppose $z=0$ and $\deltahat>0$. For all $r>0$, $\frac{r}{U(r)}$ has distribution $\mathcal{B}(\frac{2-\widehat \delta+\delta}{2},\frac{\widehat \delta}{2})$. 
\item The family $(P^z,z\ge 0)$ defines a time-homogeneous Feller process. For each $z\ge 0$, the process  $\left(U(r)-r\right)_{r\ge 0}$ is a time-homogeneous $(\mathscr{W}^+_r)_{r\ge 0}$-adapted Feller process starting from $z$. 
\item Suppose $z>0$ and write $\mathrm{x}:=\inf\{r\ge 0\,:\, U(r)>z\}$, $\mathfrak{U}:=\lim_{r\downarrow \mathrm{x}} U(r)$. We have $\frac{\mathrm x}{z}\sim\mathcal{B}(1,\frac{\delta}{2})$ and conditionally on  $\{{\mathrm{x}}=x\}$, $\frac{z-x}{\mathfrak{U}-x}\sim \mathcal{B}(\frac{2-\deltahat+\delta}{2},1)$.
\end{enumerate}
\end{theorem}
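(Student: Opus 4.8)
The plan is to reduce each assertion to an explicit squared Bessel computation, using two ingredients: the additivity property~(\textbf{P}) of Section~\ref{s:def}, and the classical first-passage fact that a ${\rm BESQ}^\nu$ process with $\nu<2$ started from $a>0$ hits $0$ at a time distributed as $\frac{a}{2\gamma}$ with $\gamma\sim\mathrm{Gamma}(1-\tfrac\nu2)$ independent of $a$ (for $\nu=1$ this is the first hitting time of $0$ by a Brownian motion from $\sqrt a$). All the beta laws then come from the beta--gamma algebra $\frac{\gamma_\alpha}{\gamma_\alpha+\gamma_\beta}\sim\mathcal B(\alpha,\beta)$ for independent gamma variables. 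To prove~(i), fix $z=0$ and $\widehat\delta>0$, so $Y^0$ is a ${\rm BESQ}^{\widehat\delta}$ flow line from $(0,0)$ and $\cS_{r,\cdot}(0)$ a ${\rm BESQ}^\delta$ flow line from $(0,r)$. As $\cS_{r,\cdot}(0)$ is driven by $\cW$ on $[r,\infty)$ it is independent of $Y^0_r$, which is measurable with respect to $\cW$ on $[0,r]$; conditionally on $Y^0_r=a$, property~(\textbf{P}) makes $D:=Y^0-\cS_{r,\cdot}(0)$ a ${\rm BESQ}^{\widehat\delta-\delta}$ process started from $a$ at time $r$ up to its first zero, which is $U(r)$. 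Hence $U(r)-r=\frac{Y^0_r}{2\gamma'}$ with $\gamma'\sim\mathrm{Gamma}(\frac{2-\widehat\delta+\delta}{2})$ independent of $Y^0_r$, and since $Y^0_r\sim 2r\,\mathrm{Gamma}(\frac{\widehat\delta}{2})$ we obtain $\frac{r}{U(r)}=\frac{\gamma'}{\gamma'+\gamma''}\sim\mathcal B(\frac{2-\widehat\delta+\delta}{2},\frac{\widehat\delta}{2})$ with $\gamma''\sim\mathrm{Gamma}(\frac{\widehat\delta}{2})$ independent of $\gamma'$.

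For~(ii), the homogeneous Markov and Feller properties I would read off from Section~\ref{s:decomposition}. By Proposition~\ref{boundary} the curve $Y^r$ of~\eqref{def:Yx} is a $\oplus$-flow line and by Proposition~\ref{p:Ux adapted} the whole process $(U(s))_{s\ge0}$ is adapted to the filtration $\mathscr W^+$. Applying Proposition~\ref{p:difference}(iv) to $Y^r$ shows that, conditionally on $U(r)$, the noise $\cW^-_{Y^r}$ encoding the past is independent of $\cW^+_{Y^r}$ and regenerates as the $\cW^-$ of a fresh ${\rm BESQ}(\delta\,|_{U(r)}\,\widehat\delta)$ flow line; this conditional independence plus regeneration forces the law of the future of $U$ to depend on the past only through $U(r)-r$, which is the Markov property. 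Time-homogeneity and the identification of $(P^z)_{z\ge0}$ as a single scaling family I would get from the $1$-self-similarity of the ${\rm BESQ}^\delta$ flow (rescaling space and time by a common factor $\lambda$ multiplies $U$ by $\lambda$ and the switch level $z$ by $\lambda$), and the Feller property from the explicit transition mechanism established in~(iii).

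For~(iii) I would treat the jump time first. Here $U(0)=z$ and, by~\eqref{def:U}, $U(r)$ stays equal to $z$ exactly while $\cS_{r,\cdot}(0)$ has coalesced with $Y^0$ before time $z$, so $\mathrm x=\inf\{r:\cS_{r,z}(0)<\cS_{0,z}(0)\}$. Since on $[0,z]$ both $Y^0=\cS_{0,\cdot}(0)$ and $\cS_{r,\cdot}(0)$ are ${\rm BESQ}^\delta$ flow lines, property~(\textbf{P}) makes $\cS_{0,\cdot}(0)-\cS_{r,\cdot}(0)$ a ${\rm BESQ}^0$ process started from $M_r:=\cS_{0,r}(0)$ at time $r$, and non-coalescence by $z$ is exactly survival of this process to time $z$. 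Conditioning on $M_r$ and using the ${\rm BESQ}^0$ survival probability $\p(H_0>z-r\mid M_r)=1-\ee^{-M_r/(2(z-r))}$ together with $M_r\sim 2r\,\mathrm{Gamma}(\frac\delta2)$, whose Laplace transform is $(1+2r\lambda)^{-\delta/2}$, I find
\[
\p(\mathrm x>r)=\e\!\left[\ee^{-M_r/(2(z-r))}\right]=\Big(\tfrac{z-r}{z}\Big)^{\delta/2},
\]
that is $\frac{\mathrm x}{z}\sim\mathcal B(1,\frac\delta2)$.

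Finally, conditionally on $\mathrm x=x$, as $r\downarrow x$ the line $\cS_{r,\cdot}(0)$ fails to coalesce with $Y^0$ before $z$, so at time $z$ a gap $\delta_0:=\lim_{r\downarrow x}\big(\cS_{0,z}(0)-\cS_{r,z}(0)\big)>0$ opens; beyond $z$ the drift of $Y^0$ switches to $\widehat\delta$, and property~(\textbf{P}) makes $Y^0-\cS_{r,\cdot}(0)$ a ${\rm BESQ}^{\widehat\delta-\delta}$ process started from $\delta_0$ at time $z$, so $\mathfrak U-z=\frac{\delta_0}{2\gamma}$ with $\gamma\sim\mathrm{Gamma}(\frac{2-\widehat\delta+\delta}{2})$ independent of $\delta_0$. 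Writing $\frac{z-x}{\mathfrak U-x}=\frac{\gamma}{\gamma+\delta_0/(2(z-x))}$, the desired $\mathcal B(\frac{2-\widehat\delta+\delta}{2},1)$ law follows from the beta--gamma algebra provided that, conditionally on $\mathrm x=x$, the variable $\frac{\delta_0}{2(z-x)}$ is standard exponential and independent of $\gamma$. Proving this identity is the main obstacle, since $\delta_0$ is the size of the flow's jump at the coalescence threshold, i.e. the value at time $z$ of a ${\rm BESQ}^0$ excursion of the difference flow born at the instant $\mathrm x$. I would obtain it from It\^o's excursion measure for the Feller diffusion---whose entrance law makes that value exponential with mean $2(z-\mathrm x)$---or by upgrading the survival computation above to the joint law of $(\mathrm x,\delta_0)$; the independence of $\gamma$ from $(\mathrm x,\delta_0)$ is clear, as $\gamma$ is a functional of $\cW$ on $[z,\infty)$ whereas $\mathrm x$ and $\delta_0$ are determined by $\cW$ on $[0,z]$.
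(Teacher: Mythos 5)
Your overall skeleton is sound, and a good part of it coincides with the paper's own argument. Part (i) is exactly the paper's proof of Theorem \ref{thm:U_process} (i), unpacked: the paper routes the same gamma/beta computation through Lemma \ref{distribution:hitting0}. Your jump-time law in (iii) is also the paper's computation \eqref{jump_1} (the paper phrases $\p(\mathrm x\ge r)$ as the hitting probability of a ${\rm BESQ}(\delta\,|_r\,0)$ process; your Laplace-transform calculation is the same thing). For the Markov property in (ii) you invoke precisely the paper's ingredients (Propositions \ref{boundary}, \ref{p:Ux adapted} and \ref{p:difference}); one slip: you call $\cW^-_{Y^r}$ ``the noise encoding the past'', whereas in the paper's scheme $\cW^-_{Y^r}$ drives the \emph{future} $(U(y))_{y\ge r}$ (via Proposition \ref{p:difference} (ii)) and the past filtration is $\mathscr W^+_r$; the conditional independence you quote from Proposition \ref{p:difference} (iv) is then used exactly as in the paper, so this is a labelling error, not a logical one.

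There are, however, two genuine gaps. First, the Feller property in (ii): you defer it to ``the explicit transition mechanism established in (iii)'', but (iii) only describes the first jump from a strictly positive state. When $\widehat\delta\in(0,2)$ the process $U(r)-r$ returns to $0$ and its jumps accumulate there, so reconstructing the semigroup and verifying strong continuity from the jump law alone is not routine; the paper instead proves Feller by a sandwich with a dominating flow line, $U(0)\le U(r)\le\max(U(0),\widetilde U(r))$, where $\widetilde U(r)$ is the meeting level with a ${\rm BESQ}^{\max(\delta,\widehat\delta)}$ line whose law is given by (i) and tends to $0$ in probability as $r\downarrow 0$, giving $|U(r)-r-U(0)|\le \widetilde U(r)$ uniformly in the starting point. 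Some such argument is missing from your proposal. Second, in (iii) you correctly reduce the conditional law of $\mathfrak U$ to the claim that, given $\mathrm x=x$, the gap satisfies $\delta_0/(2(z-x))\sim\mathrm{Exp}(1)$ independent of $\gamma$ (this is indeed equivalent to the stated $\mathcal B(\frac{2-\widehat\delta+\delta}{2},1)$ law), but you explicitly leave this ``main obstacle'' unproven. The paper closes exactly this hole by a limiting two-point computation: it evaluates $\lim_{y\downarrow x}\frac{1}{y-x}\p(\cS_{x,z}(0)=Y^0_z,\,\cS_{y,u}(0)<Y^0_u)$ via Lemma \ref{l:conv_hitting0} (whose proof is precisely the exponential entrance-law fact you want to import from excursion theory), and — crucially — it first establishes the estimate \eqref{eq:jump twice} that $U$ jumps at most once in $[x,y)$ up to $O((y-x)^2)$, via the strong Markov property and monotonicity of $z'\mapsto P^{z'}(\mathrm x<\varepsilon)$. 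Without such a double-jump exclusion, the two-point function does not identify the joint law of $(\mathrm x,\mathfrak U)$ (or of $(\mathrm x,\delta_0)$); your sketch needs the same estimate, and also a justification for applying property ({\bf P}) conditionally on the event $\{\mathrm x=x\}$, since $\mathrm x$ is not a fixed level. Your ``upgrade the survival computation to the joint law of $(\mathrm x,\delta_0)$'' route is viable and is essentially the paper's technique; as written, though, the two hardest steps of the theorem are the ones left open.
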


\begin{proof}
\begin{enumerate}[(i)]
\item Let $r>0$. By property ({\bf P})  of Section \ref{s:def}, the process
$$
\left\{
\begin{array}{ll}
Y^0_x & \hbox{ if } x\in [0,r],\\
\max(Y^0_x - \cS_{r,x}(0),0) & \hbox{ if } x>r
\end{array}
\right.
$$

\noindent is a ${\rm BESQ}_0(\widehat \delta \,|_r\, \widehat \delta-\delta)$ process. The result follows from Lemma \ref{distribution:hitting0} by taking $\delta_1=\widehat \delta$ and $\delta_2=\widehat \delta-\delta$. 
\item Let $r>0$.  We first show the Markov property of $\left(U(r)-r\right)_{r\ge 0}$. 
Let $\cW^-_{Y^r}$ be defined via \eqref{eq:W-}. By Proposition \ref{p:difference} (ii) (applied to $Y=Y^r$ and $\cS$ the ${\rm BESQ}^\delta$ flow driven by $\cW$), the process $(U(y),\, y\ge r)$ is measurable with respect to $U(r)$ and $\cW^-_{Y^r}$: 
$$
U(y)=\inf\{x\ge \max(y,U(r))\,:\,\cS^-_{y,x}(0)=Y_x^r \}
$$
where we used the notation $\cS^-$ of the proposition. Since $Y^r$ is a $\oplus$-flow line by Proposition \ref{boundary}, Proposition \ref{p:difference} (iv) implies that conditionally on $U(r)=r+z'$, $\cW^-_{Y^r}$ is independent of $\cW^+_{Y^r}$ and has the law of $\cW^-$ associated by \eqref{eq:W-} to the 
${\rm BESQ}(\delta \,|_{r+z'}\, \deltahat)$ flow line, denoted as $Y$, driven by $\cW$ starting at $(0,r)$. 
We deduce that conditionally on $\cW^+_{Y^r}$ and $U(r)=r+z'$, the r.v. $U(y)$ is distributed as
$$
\inf\{x\ge \max(y,r+z')\,:\,\cS_{y,x}(0)=Y_x \}
$$
where $\cS$ is the ${\rm BESQ}^\delta$ flow driven by $\cW$ (by another use of Proposition \ref{p:difference} (ii)).  It completes the proof of the Markov property and we can also verify the time-homogeneity.

We show now that it is a Feller process.  Notice that $U(r)\ge U(0)$ since $r\mapsto U(r)$ is non-decreasing. On the other hand, observe that $Y^0_x\le \widetilde{\cS}_{0,x}(0)$ where $\widetilde{\cS}$ is a ${\rm BESQ}^{\max(\delta,\deltahat)}$ flow driven by $\cW$ by the comparison principle. It yields that $U(r)\le \max(U(0),\widetilde U(r))$ where $\widetilde U(r):=\inf\{x\ge r \,:\, \cS_{r,x}(0)= \widetilde{\cS}_{0,x}(0)\}$. The distribution of $r/\widetilde U(r)$ is given by (i), with $\max(\delta,\deltahat)$ in place of $\deltahat$. The inequality $|U(r)-r-U(0)|\le \widetilde U(r)$ yields the Feller property.

\item  Let $z\ge y>x\ge 0$ and $u\ge z$.  Notice that a.s. for any $r\in (0,z)$, $\{\cS_{r,z}(0)=Y^0_z\} = \{\mathrm{x}\ge r\}$. Moreover
$$
P^z( \mathrm{x} \in [x,y),\, \mathfrak{U} > u)
\le
\p(\cS_{x,z}(0)=Y^0_z,\, {\cS}_{y,u}(0)< Y^0_u).
$$
On the other hand, we have
\begin{align*}
&P^z( \mathrm{x} \in [x,y),\, \mathfrak{U} > u) \\
&\ge P^z( \mathrm{x} \in [x,y),\, \mathfrak{U} > u,\, U \text{ only has one jump in } [x,y)) \\
&\ge \p(\cS_{x,z}(0)=Y^0_z,\, {\cS}_{y,u}(0)< Y^0_u) - P^z(\mathrm{x}\ge x,\, U \text{ jumps at least twice in } [x,y)).
\end{align*}

\noindent  We first prove that
\begin{equation}\label{eq:jump twice}
\lim_{y\downarrow x} \frac{1}{y-x} P^z(\mathrm{x}\ge x,\, U \text{ jumps at least twice in } [x,y)) = 0.
\end{equation}

\noindent Observe that, by Lemma \ref{distribution:hitting0} and property ({\bf P}) of Section \ref{s:def}, 
\begin{align}\label{jump_1}
    P^z({\rm x} \ge x) = \p(\cS_{x,z}(0) = Y^0_z) = \p(T_x<z)= \left( 1- \frac{x}{z}\right)^{\frac{\delta}{2}},
\end{align}
where $T_x$ is the hitting time of $0$ by a ${\rm BESQ}(\delta \, |_x\, 0)$ process. Let  $\rm x_2>\rm x$ be the second jump time of $U$.   Since $z\to P^z({\rm x} \in [0,\varepsilon))$ is decreasing,  the strong Markov property at time $\mathrm{x}$ implies that for every $z>0$, $P^z( {\rm x, x_2}\in [0,\varepsilon))\le P^{z}({\rm x} \in [0,\varepsilon))P^{z-\varepsilon}({\rm x} \in [0,\varepsilon))\le C \eps^2$ for $\varepsilon\in [0,\frac{z}{2}]$ by \eqref{jump_1}. By the Markov property at time $x$, using that $U(x)=z$ a.s. on the event $\{\mathrm{x}\ge x\}$, 
\[
P^z(\mathrm{x}\ge x,\, U \text{ jumps at least twice in } [x,y)) \le C (y-x)^2
\]

\noindent if $y-x\in [0,\frac{z-x}{2}]$. It proves \eqref{eq:jump twice}. Notice that \eqref{jump_1} already gives the distribution of $\rm x$. Let us find the conditional distribution of $\mathfrak{U}$. We deduce from \eqref{eq:jump twice} that 
\begin{align*}
\frac{1}{\dd x}P^z( \mathrm{x} \in \dd x,\, \mathfrak{U} > u)& =\lim_{y\downarrow x}\frac{1}{y-x}
P^z( \mathrm{x} \in [x,y),\, \mathfrak{U} > u) \\
&= \lim_{y\downarrow x}\frac{1}{y-x} \p(\cS_{x,z}(0)=Y^0_z,\, {\cS}_{y,u}(0)< Y^0_u).
\end{align*}

\noindent By property ({\bf P}) of Section \ref{s:def}, we have 
$$
\p(\cS_{x,z}(0)=Y^0_z,\, {\cS}_{y,u}(0)< Y^0_u)
= 
\p(T_x<z)\p(T_{y-x,z-x}>u-x)
$$
with $T_x$ as before and  $T_{a,b}$  the hitting time of $0$ after time $b$ by a ${\rm BESQ}_0(\delta\,|_a\, 0 \,|_{b}\, \deltahat-\delta)$ process. By Lemma \ref{l:conv_hitting0} (i), 
$$
\lim_{y\downarrow x} \frac{1}{y-x}\p(T_{y-x,z-x}>u-x)= \frac{\delta}{2(z-x)} \bigg( \frac{z-x}{u-x} \bigg)^{\frac{2+\delta+\deltahat}{2}}.
$$
Hence by \eqref{jump_1}, we get
$$
\frac{1}{\dd x}P^z( \mathrm{x} \in \dd x,\, \mathfrak{U} > u)
=
\frac{\delta}{2z} \bigg( \frac{z-x}{u-x} \bigg)^{\frac{2+\delta+\deltahat}{2}}  \left( 1- \frac{x}{z}\right)^{\frac{\delta}{2}-1}.
$$ 
It gives the joint distribution of $(\mathrm{x},\mathfrak{U})$.
\end{enumerate}
\end{proof}


\subsection{Meeting of a forward flow line from the right}\label{s:meetingR}

Fix $z\ge 0$, $\delta'>0$, $\delta\in \r$ and $\deltahat >\max(\delta-2,0)$. Let $\cS$ be the ${\rm BESQ}(\delta'\,|_0 \, \delta)$ flow driven by $\cW$, and $Y^0$ be the ${\rm BESQ}(\delta\,|_{z}\,\deltahat)$ flow line driven by $\cW$ starting at $(0,0)$. We use the definition of $U$ in \eqref{def:U} on $\r_-$, i.e. for $r\ge 0$, we let
\begin{equation}\label{def:U-}
    U(-r):= \inf\{x\ge z\,:\, \cS_{-r,x}(0)=Y^0_x \}.
\end{equation}

\noindent See Figure \ref{fig:meeting2}. For $r\ge 0$, $U(-r)$ is finite a.s. by property ({\bf P}) of Section \ref{s:def} and $\delta-\deltahat<2$. Using the analog of the notation  \eqref{def:Yx}, i.e.
\begin{align*}
    Y_{x}^{-r}:=\begin{cases}
    \cS_{-r,x}(0) & \text{if } x\in[-r,U(-r)], \\
    Y^0_x & \text{if } x>U(-r),
\end{cases}
\end{align*}

\noindent the process $Y^{-r}$ is the ${\rm BESQ}(\delta' \,|_0\, \delta\,|_{U(-r)}\, \deltahat )$ flow line starting at $(0,-r)$. The following proposition  is the analog of Proposition \ref{boundary}.

\begin{figure}[htbp]
\centering
   \scalebox{1}{ 
        \def\svgwidth{0.4\columnwidth}
\begingroup%
  \makeatletter%
  \providecommand\color[2][]{%
    \errmessage{(Inkscape) Color is used for the text in Inkscape, but the package 'color.sty' is not loaded}%
    \renewcommand\color[2][]{}%
  }%
  \providecommand\transparent[1]{%
    \errmessage{(Inkscape) Transparency is used (non-zero) for the text in Inkscape, but the package 'transparent.sty' is not loaded}%
    \renewcommand\transparent[1]{}%
  }%
  \providecommand\rotatebox[2]{#2}%
  \newcommand*\fsize{\dimexpr\f@size pt\relax}%
  \newcommand*\lineheight[1]{\fontsize{\fsize}{#1\fsize}\selectfont}%
  \ifx\svgwidth\undefined%
    \setlength{\unitlength}{200.51640548bp}%
    \ifx\svgscale\undefined%
      \relax%
    \else%
      \setlength{\unitlength}{\unitlength * \real{\svgscale}}%
    \fi%
  \else%
    \setlength{\unitlength}{\svgwidth}%
  \fi%
  \global\let\svgwidth\undefined%
  \global\let\svgscale\undefined%
  \makeatother%
  \begin{picture}(1,0.99965271)%
    \lineheight{1}%
    \setlength\tabcolsep{0pt}%
    \put(0,0){\includegraphics[width=\unitlength,page=1]{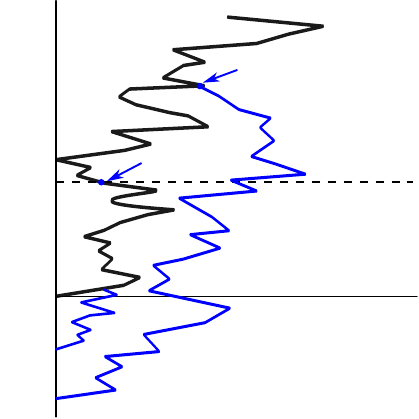}}%
    \put(0.01251049,0.14205297){\color[rgb]{0,0,1}\makebox(0,0)[lt]{\lineheight{1.25}\smash{\begin{tabular}[t]{l}$-r_1$\end{tabular}}}}%
    \put(0.01251049,0.02946883){\color[rgb]{0,0,1}\makebox(0,0)[lt]{\lineheight{1.25}\smash{\begin{tabular}[t]{l}$-r_2$\end{tabular}}}}%
    \put(0.0774345,0.27530866){\color[rgb]{0,0,0}\makebox(0,0)[lt]{\lineheight{1.25}\smash{\begin{tabular}[t]{l}$0$\end{tabular}}}}%
    \put(0.07721637,0.55558941){\color[rgb]{0,0,0}\makebox(0,0)[lt]{\lineheight{1.25}\smash{\begin{tabular}[t]{l}$z$\end{tabular}}}}%
    \put(0.34770818,0.59627018){\color[rgb]{0,0,1}\makebox(0,0)[lt]{\lineheight{1.25}\smash{\begin{tabular}[t]{l}$U(-r_1)$\end{tabular}}}}%
    \put(0.57653973,0.82348206){\color[rgb]{0,0,1}\makebox(0,0)[lt]{\lineheight{1.25}\smash{\begin{tabular}[t]{l}$U(-r_2)$\end{tabular}}}}%
  \end{picture}%
\endgroup%

    }
    \caption{The blue lines represent the ${\rm BESQ}(\delta'\,|_0\,\delta)$ flow lines starting from different points $(0, -r_i)$, $i=1,2$. The black line represents $Y^0$, the ${\rm BESQ}(\delta \,|_z\, \widehat{\delta})$ flow line starting from $(0, 0)$. The levels $U(-r_i)$, $i=1,2$, are the meeting levels as defined in \eqref{def:U-}.}
    \label{fig:meeting2}
\end{figure}

\begin{proposition}\label{boundary2}
For every fixed $r\ge 0$, $Y^{-r}$ is a $\ominus$-flow line.
\end{proposition}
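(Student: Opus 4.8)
The plan is to mirror the proof of Proposition~\ref{boundary}, using part (ii) of Proposition~\ref{p:difference} in place of part (i) and the noise $\cW^-_S$ in place of $\cW^+_S$. I adopt the framework of Section~\ref{s:decomposition} with $r_0=-r$, $S_x=\cS_{-r,x}(0)$ (the reference ${\rm BESQ}(\delta'\,|_0\,\delta)$ flow line), $Y_x=Y^{-r}_x$ and $r_1=U(-r)$, and recall from Definition~\ref{d:minus plus flow line} that it suffices to prove that $U(-r)$ is a stopping time with respect to the natural filtration of $\cW^-_S$ from \eqref{eq:W-}.

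The guiding picture is that $Y^0$ starts later than $S$, at the point $(0,0)$ from height $0\le S_0$, so that until the two lines meet $Y^0$ stays below $S$ and is therefore governed by the noise lying under $S$, namely $\cW^-_S$. To make this precise, let $\widetilde{\cS}$ be the ${\rm BESQ}(\delta\,|_z\,\deltahat)$ flow driven by $\cW$, so that $Y^0=\widetilde{\cS}_{0,\cdot}(0)$. I would apply Proposition~\ref{p:difference}~(ii) with $(S,\widetilde{\cS},\delta'\,|_0\,\delta,\delta\,|_z\,\deltahat)$ in place of $(Y,\cS,\overline{\delta_0}\,|_{r_1}\,\overline{\delta_1},\overline{\delta_2})$; here the required hypothesis $\overline{\delta_0}=\delta'>0$ holds and the stopping time of the proposition is the deterministic value $0$. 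In that notation $\cS^-_{0,\cdot}(0)=\widetilde{\cS}_{0,\cdot}(0)=Y^0$, which is legitimate since $0\le S_0$, and the comparison principle built into the statement yields $Y^0_x\le S_x$. Part (ii) then guarantees that both the reference line $S$ and $\cS^-=Y^0$ are measurable with respect to $\cW^-_S$; in particular $S-Y^0\ge 0$ is a continuous process adapted to the filtration generated by $\cW^-_S$.

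Finally, $U(-r)=\inf\{x\ge z:\,S_x=Y^0_x\}$ is the hitting time of $0$ by the continuous adapted process $S-Y^0$ on $[z,\infty)$, hence a stopping time of $\cW^-_S$, which is exactly the $\ominus$-flow line property. The one delicate point, which I expect to be the main obstacle, is that Proposition~\ref{p:difference}~(ii) only controls $S$ and $Y^0$ up to the absorption time $t(S):=\inf\{x\ge 0:\,S_x=0\}$; this is harmless when $\delta\ge 0$, since then $t(S)=+\infty$. When $\delta<0$ one must separately treat the event $\{t(S)<z\}$, on which $Y^0\le S$ forces $Y^0$ to be absorbed as well, so that $S_z=Y^0_z=0$ and $U(-r)=z$; as this event is $\cW^-_S$-measurable, the conclusion that $U(-r)$ is a stopping time is unaffected. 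The substance of the argument is thus the single application of Proposition~\ref{p:difference}~(ii), exactly dual to the use of part (i) in Proposition~\ref{boundary}.
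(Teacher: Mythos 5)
Your overall strategy is the same as the paper's: apply Proposition \ref{p:difference} (ii) with $(S,\tcS,\delta'\,|_0\,\delta,\delta\,|_z\,\deltahat)$ in place of $(Y,\cS,\overline{\delta_0}\,|_{r_1}\,\overline{\delta_1},\overline{\delta_2})$, and read off $U(-r)$ as a hitting time of processes adapted to $\cW^-_S$. However, the identification step contains a genuine error. You claim that $\tcS^-_{0,\cdot}(0)=Y^0$ identically, that $Y^0_x\le S_x$ for all $x$, and hence that the whole process $Y^0$ is measurable with respect to $\cW^-_S$. This fails in the regime $\deltahat>\delta$, which is allowed here (Section \ref{s:meetingR} only assumes $\deltahat>\max(\delta-2,0)$). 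Indeed, at the meeting level $U(-r)$ the two lines touch, and afterwards $\max(Y^0-S,0)$ is a ${\rm BESQ}^{\deltahat-\delta}_0$ process, hence immediately strictly positive: $Y^0$ crosses \emph{above} $S$ (the comparison principle gives the inequality opposite to the one you invoke), and from then on its SDE reads the noise above the graph of $S$, i.e.\ $\cW^+_S$, which by Proposition \ref{p:independence_W-&W^+} is independent of $\cW^-_S$. So $Y^0$ is not $\cW^-_S$-measurable, and your final step ("hitting time of $0$ by the continuous adapted process $S-Y^0$") relies on a false adaptedness claim. Correspondingly, $\tcS^-_{0,\cdot}(0)$ is not equal to $Y^0$ globally: by the absorption rule \eqref{eq:killing S-+} it coalesces with $S$ at time $\varphi_-(0,0)$, which (when $\deltahat\ge\delta$ and no meeting occurs before level $z$) is exactly $U(-r)$.

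The gap is local and the repair is precisely where the paper is more careful: one only has $\tcS^-_{0,x}(0)=Y^0_x$ for $x\le U(-r)$ (before $z$ this uses coalescence of the two flows, which agree on $(0,z)$; absorption cannot occur strictly before $U(-r)$), and this is enough, because
\begin{equation*}
U(-r)=\inf\{x\ge z\,:\, S_x=\tcS^-_{0,x}(0)\}.
\end{equation*}
Since Proposition \ref{p:difference} (ii) makes $S$ and $\tcS^-_{0,\cdot}(0)$ --- not $Y^0$ --- measurable with respect to $\cW^-_S$, this rewritten hitting time is a stopping time of $\cW^-_S$, which is the $\ominus$-flow line property. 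Your separate treatment of the absorption of $S$ when $\delta<0$ is correct but becomes unnecessary once $U(-r)$ is expressed through the absorbed process $\tcS^-_{0,\cdot}(0)$ rather than through $Y^0$.
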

\begin{proof}
We apply the setting of Section \ref{s:decomposition} with $r_0=-r$, $S_x=\cS_{-r,x}(0)$, $Y_x=Y^{-r}_x$, $r_1=U(-r)$ and we need to show that $U(-r)$ is a stopping time with respect to the filtration of $\cW^-_S$ defined in \eqref{eq:W-}. Let $\widetilde{\cS}$ be  the ${\rm BESQ}(\delta \,|_z\, \deltahat)$ flow driven by $\cW$.  By Proposition \ref{p:difference} (ii) applied to $(S,\tcS,\delta'\,|_0\,\delta,\delta\,|_z\, \deltahat)$ in place of  $(Y,\cS,\overline{\delta_0}\,|_{r_1} \overline{\delta_1},\overline{\delta_2})$ there, the processes $\tcS_{0,\cdot}^-(0)$ in the notation of the proposition and  $S_x=\cS_{-r,x}(0)$ are driven by $\cW_S^-$. Notice that $\tcS_{0,x}^-(0)=\widetilde{\cS}_{0,x}(0)=Y^0_x$ for $x\le U(-r)$. It entails that $U(-r)=\inf\{x\ge z\,:\, S_x= \tcS_{0,x}^-(0)\}$ is a stopping time with respect to $\cW^-_S$.
\end{proof}

Since $Y^{-r} \le Y^{-y}$ if $r\le y$, we can define a filtration via $\mathscr{W}^-_r:=\sigma(\cW^-_{Y^{-r}})$ in the notation \eqref{eq:W-}.  

\begin{proposition}\label{p:U-x adapted}
    The process $(U(-r))_{r\ge 0}$ is adapted to the filtration $\mathscr{W}^-$.
\end{proposition}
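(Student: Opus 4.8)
This is the exact counterpart of Proposition \ref{p:Ux adapted}: here the forward line $\cS_{-r,\cdot}(0)$ meets $Y^0$ from above, so the information locating the meeting now sits \emph{below} the boundary line $Y^{-r}$. The plan is therefore to rerun the proofs of Proposition \ref{p:Ux adapted} and Proposition \ref{boundary2}, replacing $\cW^+$ by $\cW^-$ and part (i) of Proposition \ref{p:difference} by part (ii).

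First I would fix $r\ge 0$ and let $\tcS$ be the ${\rm BESQ}(\delta\,|_z\,\deltahat)$ flow driven by $\cW$, so that $Y^0=\tcS_{0,\cdot}(0)$. I would then apply Proposition \ref{p:difference} (ii) with $(Y^{-r},\tcS)$ in the roles of $(Y,\cS)$, i.e. $r_0=-r$, $r_1=U(-r)$, $\overline{\delta_0}=\delta'\,|_0\,\delta$, $\overline{\delta_1}=\deltahat$ and $\overline{\delta_2}=\delta\,|_z\,\deltahat$; this is legitimate because, as recorded in Section \ref{s:meetingR}, $Y^{-r}$ is precisely the ${\rm BESQ}(\delta'\,|_0\,\delta\,|_{U(-r)}\,\deltahat)$ flow line through $(0,-r)$. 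Part (ii) then shows that both $Y^{-r}$ and the associated sub-flow $\tcS^-$ are driven by, hence measurable with respect to, $\cW^-_{Y^{-r}}$.

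Next I would check that $U(-r)$ can be read off from two $\cW^-_{Y^{-r}}$-driven curves. Before the meeting the forward line lies above $Y^0$: extending $Y^0$ by $0$ on $(-\infty,0]$, one has $\cS_{-r,0}(0)>0=Y^0_0$; on $[0,z]$ the lines $\cS_{-r,\cdot}(0)$ and $Y^0$ are two flow lines of the \emph{same} ${\rm BESQ}^\delta$ flow, so they stay ordered or coalesce by Proposition \ref{p:perfect} (ii); and on $[z,U(-r)]$ they cannot cross before their first meeting. Hence $Y^0\le Y^{-r}$ on $[-r,U(-r)]$, so there $\cS_{-r,\cdot}(0)=Y^{-r}$ and $Y^0=\tcS^-_{0,\cdot}(0)$, both governed by $\cW^-_{Y^{-r}}$. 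Therefore, by \eqref{def:U-},
$$
U(-r)=\inf\{x\ge z\,:\,\cS_{-r,x}(0)=Y^0_x\},
$$
and for every $y\ge z$ the event $\{U(-r)>y\}$ is the event that the two $\cW^-_{Y^{-r}}$-driven processes $\cS_{-r,\cdot}(0)$ and $Y^0$ have not met on $[z,y]$; this event lies in $\sigma(\cW^-_{Y^{-r}})=\mathscr{W}^-_r$ (indeed $U(-r)$ is even a stopping time of the filtration generated by $\cW^-_{Y^{-r}}$). Letting $r$ vary then yields the $\mathscr{W}^-$-adaptedness. The hypothesis $\deltahat>\max(\delta-2,0)$ guarantees $U(-r)<\infty$, so the infimum is attained.

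The delicate point — exactly as in Proposition \ref{boundary2} — will be to ensure that the two curves whose first meeting defines $U(-r)$ genuinely run below $Y^{-r}$ up to that time, so that Proposition \ref{p:difference} (ii), which concerns only the region under $Y$ and (unlike part (i)) does not use the sign of $\overline{\delta_0}$, produces them from $\cW^-_{Y^{-r}}$. If one prefers not to invoke this last remark when $\delta\le 0$, one can argue instead through Proposition \ref{boundary2}: it exhibits $U(-r)$ as a stopping time of $\cW^-_S$ with $S=\cS_{-r,\cdot}(0)$, and since $S=Y^{-r}$ on $[-r,U(-r)]$ the martingale measures $\cW^-_S$ and $\cW^-_{Y^{-r}}$ coincide on $\{x\le U(-r)\}$; consequently the stopped $\sigma$-field at $U(-r)$ is contained in $\sigma(\cW^-_{Y^{-r}})$, which again gives the $\mathscr{W}^-_r$-measurability.
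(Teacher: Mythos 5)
Your proposal is correct and takes essentially the same route as the paper: the paper's proof likewise applies Proposition \ref{p:difference} (ii) with exactly the substitution $(Y^{-r},\tcS,-r,U(-r),\delta'\,|_0\,\delta,\deltahat,\delta\,|_z\,\deltahat)$, deduces that $Y^{-r}$ and $\tcS^-$ are driven by (hence measurable with respect to) $\cW^-_{Y^{-r}}$, identifies $U(-r)=\inf\{x\ge z:\tcS^-_{0,x}(0)=Y^{-r}_x\}$ — handling a possible meeting before level $z$ via \eqref{eq:killing S-+}, where you instead use the ordering/coalescence argument — and concludes that $\{U(-r)>y\}$ is a $\cW^-_{Y^{-r}}$-measurable event (indeed a stopping time). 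Your optional fallback via stopped $\sigma$-fields is unnecessary and is in fact the only shaky step (expressing the noise stopped at level $U(-r)$ as a $\sigma(\cW^-_{Y^{-r}})$-measurable object presupposes essentially what is to be proved); the main argument suffices as it stands.
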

\begin{proof}
    Fix $r\ge 0$. Let as above $\tcS$ be  the ${\rm BESQ}(\delta \,|_z\, \deltahat)$ flow driven by $\cW$. Proposition \ref{p:difference}  (ii) applied to $(Y^{-r},\tcS,  -r,U(-r), \delta'\,|_0\, \delta ,\deltahat,\delta\,|_z \deltahat )$ in place of $(Y,\cS, r_0, r_1, \overline{\delta_0},\overline{\delta_1},\overline{\delta_2})$ implies that,  in the notation of the proposition,  $\tcS^-$ is driven by $\cW_{Y^{-r}}^-$ as well as $Y^{-r}$. We observe that $U(-r)=\inf\{x\ge z\,:\, \tcS_{0,x}^-(0)=Y^{-r}_x \}$ and if $\tcS_{0,x}^-(0)=Y^{-r}_x$ for some $x<z$, then $\tcS_{0,z}^-(0)=Y^{-r}_z$ by \eqref{eq:killing S-+}. Hence for any $y\ge z$,  the event $\{U(-r)>y\}$ is the event that the flow line driven by $\cW_{Y^{-r}}^-$ starting from $(0,0)$ has not hit $Y^{-r}$ on the interval $[0,y]$. It is therefore a stopping time  with respect to $\cW_{Y^{-r}}^-$, hence is also measurable with respect to it. 
    \end{proof}

The following theorem is the analog of Theorem \ref{thm:U_process}.  Denote by $Q^z$ the law of the process $(U(-r)+r)_{r\ge 0}$. Notice that this process is right-continuous.

\begin{theorem}\label{thm:U_process-}

    \begin{enumerate}[(i)]
\item Suppose $z=0$. For $r>0$, 
$\frac{r}{U(-r)+r}$ has distribution $\mathcal{B}(\frac{2-\delta+\widehat \delta}{2},\frac{\delta'}{2})$. 
\item The family $(Q^z,z\ge 0)$ defines a time-homogeneous Feller process. For each $z\ge 0$, the process $\left(U(-r)+r\right)_{r\ge 0}$ is a time-homogeneous $(\mathscr{W}^-_r)_{r\ge 0}$-adapted Feller process starting from $z$. 
\item Suppose $z>0$ and write $\mathrm{x}:=\inf\{r\ge 0\,:\, U(-r)>z\}$, $\mathfrak{U}:=U(-\mathrm{x})$. 
We have $\frac{z}{z+\mathrm{x}}\sim\mathcal{B}(\frac{\delta'}{2},1)$ and conditionally on  $\{{\mathrm{x}}=x\}$, $\frac{z+x}{\mathfrak{U}+x}\sim \mathcal{B}(\frac{2-\delta+\deltahat}{2},1)$.
\end{enumerate}
\end{theorem}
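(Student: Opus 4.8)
The overall strategy is to mirror the proof of Theorem \ref{thm:U_process}, systematically exchanging the roles of $\cW^-$ and $\cW^+$ because $Y^{-r}$ is now a $\ominus$-flow line (Proposition \ref{boundary2}) rather than a $\oplus$-flow line, and reducing every distributional claim to the hitting-time Lemmas \ref{distribution:hitting0} and \ref{l:conv_hitting0}. For part (i) (so $z=0$) I would first glue $\cS_{-r,\cdot}(0)$ and $Y^0$ into a single squared Bessel process: set $W_x:=\cS_{-r,x}(0)$ for $x\in[-r,0]$ and $W_x:=\max(\cS_{-r,x}(0)-Y^0_x,0)$ for $x\ge 0$, which is continuous at $0$ because $Y^0_0=0$. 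On $[-r,0]$ the process $W$ has drift $\delta'$, while property (\textbf{P}) of Section \ref{s:def} (in the case selected by the sign of $\delta$ and by the comparison of $\delta$ with $\deltahat$) shows that on $[0,\infty)$ the positive part of the difference is a ${\rm BESQ}^{\delta-\deltahat}$ process started from $\cS_{-r,0}(0)$; hence $W$ is a ${\rm BESQ}_0(\delta'\,|_0\,\delta-\deltahat)$ process, which returns to $0$ after time $0$ since $\deltahat>\delta-2$. As $\cS_{-r,\cdot}(0)\ge Y^0$ before their first meeting, $U(-r)=\inf\{x\ge 0:\,W_x=0\}$ is the hitting time of $0$ after time $0$. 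Shifting time by $r$ and applying Lemma \ref{distribution:hitting0} with $\delta_1=\delta'$ and $\delta_2=\delta-\deltahat$ yields $\frac{r}{U(-r)+r}\sim\mathcal B(\frac{2-\delta+\deltahat}{2},\frac{\delta'}{2})$.

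For the Markov property in part (ii) I would decompose the flow along $Y^{-r}$. By Proposition \ref{p:difference} (i) the flow $\cS^+$ lying above $Y^{-r}$ is a BESQ flow driven by $\cW^+_{Y^{-r}}$, whose drift function $\delta'\,|_{-r}\,0\,|_{U(-r)}\,(\delta-\deltahat)$ is determined by $U(-r)$ alone. For $y\ge r$ the flow line $\cS_{-y,\cdot}(0)$ lies above $Y^{-r}$, and writing $\cS^+_{-y,\cdot}(0)=\cS_{-y,\cdot}(0)-Y^{-r}$ one checks (treating the coalescence case, in which $\cS^+_{-y,\cdot}(0)$ reaches $0$ before $U(-r)$ and hence $U(-y)=U(-r)$) that $U(-y)=\inf\{x\ge U(-r):\,\cS^+_{-y,x}(0)=0\}$. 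Thus the future $(U(-y))_{y\ge r}$ is a measurable function of $U(-r)$ and of the flow $\cS^+$, i.e. of $U(-r)$ and $\cW^+_{Y^{-r}}$. Since $Y^{-r}$ is a $\ominus$-flow line, Proposition \ref{p:difference} (iii) makes $\cW^+_{Y^{-r}}$ a white noise independent of $\cW^-_{Y^{-r}}$, which carries the whole past $(U(-y))_{0\le y\le r}$ together with $U(-r)$ (Proposition \ref{p:U-x adapted}); this is exactly the Markov property, and because $U(-r)$ is $\cW^-_{Y^{-r}}$-measurable the conditioning does not alter the fresh law of $\cW^+_{Y^{-r}}$. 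Time-homogeneity then follows from stationarity of the white noise: translating the $x$-variable by $r$ turns the regenerated configuration (the flow $\cS^+$ of drift $\delta'\,|_{-r}\,0\,|_{U(-r)}\,(\delta-\deltahat)$ meeting the level $0$ after $U(-r)$) into an exact copy of the meeting problem defining $Q^{U(-r)+r}$, so that $(U(-(r+s))+(r+s))_{s\ge 0}$ given $U(-r)+r=z_1$ has law $Q^{z_1}$. The Feller property is obtained, as in Theorem \ref{thm:U_process} (ii), by a comparison argument controlling the deviation $U(-r)+r-z$: monotonicity gives $U(-r)+r\ge z$, and the comparison principle (Proposition \ref{p:comparison}) provides a matching upper bound that vanishes as $r\to 0$, yielding continuity of the semigroup.

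For part (iii) (so $z>0$) I would determine the first jump time $\mathrm x$ and the post-jump value $\mathfrak U$. Because $\cS_{-r,\cdot}(0)\ge \cS_{0,\cdot}(0)=Y^0$ on $[0,z]$, we have $\{\mathrm x\ge x\}=\{\cS_{-x,z}(0)=Y^0_z\}$, the event that $\cS_{-x,\cdot}(0)$ and $\cS_{0,\cdot}(0)$ coalesce before level $z$. Glueing $\cS_{-x,\cdot}(0)$ on $[-x,0]$ (drift $\delta'$) to their difference $\cS_{-x,\cdot}(0)-\cS_{0,\cdot}(0)$ on $[0,\infty)$ (a ${\rm BESQ}^0$ process by property (\textbf{P}), both continuations having drift $\delta$) gives a ${\rm BESQ}_0(\delta'\,|_0\,0)$ process whose hitting time of $0$ after time $0$ is the coalescence level; Lemma \ref{distribution:hitting0} with $\delta_1=\delta'$, $\delta_2=0$ then gives $Q^z(\mathrm x\ge x)=(\frac{z}{z+x})^{\delta'/2}$, i.e. $\frac{z}{z+\mathrm x}\sim\mathcal B(\frac{\delta'}{2},1)$. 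For the conditional law of $\mathfrak U$ I would follow the scheme of Theorem \ref{thm:U_process} (iii): establish the no-double-jump estimate $\lim_{y\downarrow x}\frac{1}{y-x}Q^z(\mathrm x\ge x,\ U\text{ jumps twice in }[x,y))=0$ from the strong Markov property and the marginal of $\mathrm x$ just found, then write $\frac{1}{\dd x}Q^z(\mathrm x\in\dd x,\ \mathfrak U>u)=\lim_{y\downarrow x}\frac{1}{y-x}\p(\cS_{-x,z}(0)=Y^0_z,\ \cS_{-y,u}(0)<Y^0_u)$, factor this probability by property (\textbf{P}) into a hitting-before-$z$ factor and a hitting-after factor, and evaluate the latter with Lemma \ref{l:conv_hitting0} (i). The resulting joint density yields $\frac{z+x}{\mathfrak U+x}\sim\mathcal B(\frac{2-\delta+\deltahat}{2},1)$ conditionally on $\mathrm x=x$.

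I expect the main obstacle to be the Markov step of (ii): in contrast with the $\oplus$ case of Theorem \ref{thm:U_process}, where Proposition \ref{p:difference} (iv) regenerates $\cW^-_{Y^r}$ conditionally on $U(r)$, here the work lies in (a) expressing the future meeting levels through the flow $\cS^+$ above $Y^{-r}$ while correctly handling the coalescence case, so that $U(-y)=\inf\{x\ge U(-r):\cS^+_{-y,x}(0)=0\}$, and (b) extracting time-homogeneity by identifying the $x$-translate of the regenerated configuration with the meeting problem defining $Q^{U(-r)+r}$, using the stationarity of $\cW^+_{Y^{-r}}$. Once this is in place the distributional statements in (i) and (iii) are routine reductions to the hitting-time lemmas, the only genuinely computational point being the limit evaluation via Lemma \ref{l:conv_hitting0} (i) in (iii).
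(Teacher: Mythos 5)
Your proposal follows the paper's proof essentially step for step: part (i) is the same gluing of $\cS_{-r,\cdot}(0)$ with the positive part of its difference from $Y^0$ into a ${\rm BESQ}_0(\delta'\,|\,\delta-\deltahat)$ process followed by Lemma \ref{distribution:hitting0}; part (ii) is the same decomposition along the $\ominus$-flow line $Y^{-r}$, with the representation $U(-y)=\inf\{x\ge U(-r):\cS^+_{-y,x}(0)=0\}$, Proposition \ref{p:difference} (i) and (iii), Proposition \ref{p:U-x adapted}, and the same comparison argument for the Feller property; part (iii) is the same no-double-jump plus factorization scheme.

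Two slips in part (iii), both easily repaired but both stemming from copying the forward case of Theorem \ref{thm:U_process} (iii) rather than adapting to the mirrored geometry. First, a flow line started at $(0,-y)$ lies \emph{above} $Y^0$ before coalescence (it has accumulated drift $\delta'$ on $[-y,0]$ while $Y^0$ starts at $0$), so the relevant event is $\{\cS_{-y,u}(0)>Y^0_u\}$, not $\{\cS_{-y,u}(0)<Y^0_u\}$. Second, the difference process controlling that event is a ${\rm BESQ}_{-(y-x)}(\delta'\,|_0\,0\,|_{x+z}\,\delta-\deltahat)$ process whose short $\delta'$-segment sits to the \emph{left} of time $0$; this is the hypothesis of Lemma \ref{l:conv_hitting0} (ii), which is what the paper invokes, not of part (i), which places the short segment inside the positive times. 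Since both parts of that lemma share the same conclusion, your final density and the beta law for $\mathfrak{U}$ come out correct anyway, but as written the cited hypothesis does not match the process at hand.
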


\begin{proof}
\begin{enumerate}[(i)]
\item Let $r>0$.  By the property ({\bf P}) of Section \ref{s:def}, the  process $$
\left\{
\begin{array}{ll}
\cS_{-r,-r+x}(0) & \hbox{ if } x\in [0,r],\\
\max(\cS_{-r,-r+x}(0) - Y^0_{-r+x},0) & \hbox{ if } x>r
\end{array}
\right.
$$

\noindent is a ${\rm BESQ}_0 (\delta'\,|_r\,\delta-\deltahat )$. Therefore $U(-r)+r$ is the hitting time of $0$ after time $r$ by a ${\rm BESQ}_0 (\delta'\,|_r\,\delta-\deltahat )$ process. The result follows from Lemma \ref{distribution:hitting0} by taking $\delta_1=\delta'$ and $\delta_2=\delta-\widehat \delta$. 

\item The proof follows the same approach of Theorem \ref{thm:U_process} (ii). For $r>0$ and any $y\ge r$, we can rewrite the expression for $U(-y)$:
\begin{equation}\label{eq:U(-y)}
    U(-y)=\inf\{x\ge U(-r)\,:\,\cS^+_{-y,x}(0)=0 \},
\end{equation}
where $\cS^+$ is defined in Proposition \ref{p:difference} (applied to $Y=Y^{-r}$ and $\cS$ the BESQ$(\delta' \, |_0 \, \delta)$ flow driven by $\cW$). Let $\cW^+_{Y^{-r}}$ be defined via \eqref{eq:W+}, then by Proposition \ref{p:difference} (i), $\cS^+$ is a ${\rm BESQ}(\delta'\,|_{-r}\, 0\,|_{U(-r)} \, \delta- \deltahat)$ flow driven by $\cW^+_{Y^{-r}}$ killed on $[-r,U(-r)]$ (and on $[U(-r),+\infty)$ if $\delta\le \deltahat$). Proposition \ref{boundary2} and Proposition \ref{p:difference} (iii) establish that $\cW_{Y^{-r}}^-$ and $\cW^+_{Y^{-r}}$ are independent. Since $U(-r)$ is measurable with respect to $\cW_{Y^{-r}}^-$ by Proposition \ref{p:U-x adapted}, conditionally on $U(-r)=-r+z'$, $\cS^+$ is independent of $\cW^-_{Y^{-r}}$ and is distributed as a ${\rm BESQ}(\delta'\,|_{-r}\, 0\,|_{-r+z'} \, \delta- \deltahat)$ flow killed on $[-r,-r+z']$.  We deduce the Markov property and we can check the time-homogeneity from \eqref{eq:U(-y)}.

We prove now that it is a Feller process as in the proof of Theorem \ref{thm:U_process}.  We have $U(-r)\ge U(0)$ by definition and since $Y^0_x\ge \widetilde{\cS}_{0,x}(0)$ where $\widetilde{\cS}$ is a ${\rm BESQ}^{\min(\delta,\deltahat)}$ flow driven by $\cW$, we  see that $U(-r)\le \max(U(0),\widetilde U(-r))$ where $\widetilde U(-r):=\inf\{x\ge 0 \,:\, \cS_{-r,x}(0)= \widetilde{\cS}_{0,x}(0)\}$. The inequality $|U(-r)+r-U(0)|\le \widetilde U(-r)+r$ yields the Feller property.

\item The proof follows the lines of Theorem \ref{thm:U_process} (iii) so we feel free to skip the details. Let $0\le x<y$ and $u\ge z$. We have 
$$
Q^z( \mathrm{x} \in (x,y],\, \mathfrak{U} > u)
=
\p(\cS_{-x,z}(0)=Y^0_z,\, {\cS}_{-y,u}(0)> Y^0_u) + O((y-x)^2).
$$

\noindent  The property ({\bf P}) of Section \ref{s:def} implies that
$$
\p(\cS_{-x,z}(0)=Y^0_z,\, {\cS}_{-y,u}(0)> Y^0_u)=\p(T_x\le z+x)\p(T_{y-x,x+z}>u+x),
$$
where $T_x$ is the hitting time of $0$ after time $x$ by a ${\rm BESQ}_0(\delta' \, |_x \, 0)$ process and  $T_{a,b}$ is the hitting time of $0$ after time $b$ by a ${\rm BESQ}_{-a}(\delta'\,|_0\, 0 \,|_{b}\, \delta-\deltahat)$ process. By Lemma \ref{l:conv_hitting0} (ii), 
$$
\lim_{y\downarrow x} \frac{1}{y-x}\p(T_{y-x,x+z}>u+x)= \frac{\delta'}{2(x+z)} \bigg( \frac{x+z}{x+u} \bigg)^{\frac{2+\deltahat-\delta}{2}}
$$
and from Lemma \ref{distribution:hitting0}, 
$$
\p( \mathrm{x} \in \dd x,\, \mathfrak{U} \ge u) 
=
\frac{\delta'}{2(x+z)} \bigg( \frac{x+z}{x+u} \bigg)^{\frac{2+\deltahat-\delta}{2}} \bigg(\frac{z}{x+z}\bigg)^{\frac{\delta'}{2}}. 
$$
The proof is complete.
\end{enumerate}
\end{proof}


\subsection{Meeting of a forward and a dual line}\label{s:meeting*}
Let $z\ge 0$, $\delta>0$, $\deltahat>2-\delta$. Let  $\cS$ be a ${\rm BESQ}(\delta\,|_0 \, 0)$ flow driven by $\cW$ and $Y^*$ be the flow line starting from $(0,-z)$ of the ${\rm BESQ}(\delta+\deltahat\,|_0 \, \deltahat)$ flow driven by $-\cW^*$. We let, for $r\ge 0$, 
\begin{equation}\label{def:V}
     V(-r):= \inf\{x\in [-r,z]\,:\, \cS_{-r,x}(0)=Y^{*}_{-x} \}.
\end{equation}

\noindent See Figure \ref{fig:meeting3}. The quantity $V(-r)$ is finite since $\cS_{-r,-r}(0)=0\le Y^{*}_{r}$ and $\cS_{-r,z}(0)\ge 0= Y^{*}_{-z}$.  
Write $\cW^{r,-}$ for the martingale measure $\cW_S^-$ of \eqref{eq:W-} with $S_x=\cS_{-r,x}(0)$. We define the filtration
\begin{equation}
\mathscr{W}^*_r:=\sigma(\cW^{r,-}),\, r\ge 0.\label{filtration*} 
\end{equation}

\begin{figure}[htbp]
\centering
   \scalebox{1}{ 
        \def\svgwidth{0.4\columnwidth}
\begingroup%
  \makeatletter%
  \providecommand\color[2][]{%
    \errmessage{(Inkscape) Color is used for the text in Inkscape, but the package 'color.sty' is not loaded}%
    \renewcommand\color[2][]{}%
  }%
  \providecommand\transparent[1]{%
    \errmessage{(Inkscape) Transparency is used (non-zero) for the text in Inkscape, but the package 'transparent.sty' is not loaded}%
    \renewcommand\transparent[1]{}%
  }%
  \providecommand\rotatebox[2]{#2}%
  \newcommand*\fsize{\dimexpr\f@size pt\relax}%
  \newcommand*\lineheight[1]{\fontsize{\fsize}{#1\fsize}\selectfont}%
  \ifx\svgwidth\undefined%
    \setlength{\unitlength}{246.27121813bp}%
    \ifx\svgscale\undefined%
      \relax%
    \else%
      \setlength{\unitlength}{\unitlength * \real{\svgscale}}%
    \fi%
  \else%
    \setlength{\unitlength}{\svgwidth}%
  \fi%
  \global\let\svgwidth\undefined%
  \global\let\svgscale\undefined%
  \makeatother%
  \begin{picture}(1,0.81392725)%
    \lineheight{1}%
    \setlength\tabcolsep{0pt}%
    \put(0,0){\includegraphics[width=\unitlength,page=1]{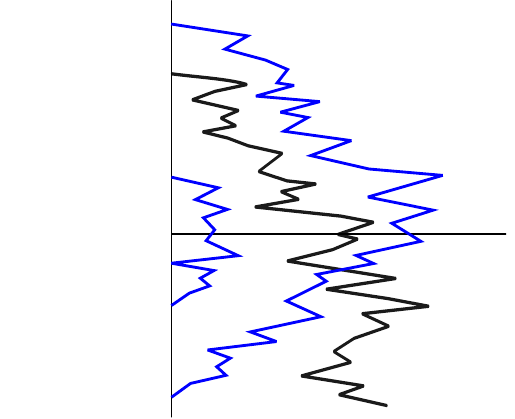}}%
    \put(-0.0141057,0.65999183){\color[rgb]{0,0,1}\makebox(0,0)[lt]{\lineheight{1.25}\smash{\begin{tabular}[t]{l}$V(-r_1)=$\end{tabular}}}}%
    \put(0,0){\includegraphics[width=\unitlength,page=2]{meeting_3.pdf}}%
    \put(0.82441099,0.29104155){\color[rgb]{0,0,1}\makebox(0,0)[lt]{\lineheight{1.25}\smash{\begin{tabular}[t]{l}$V(-r_2)$\end{tabular}}}}%
    \put(0,0){\includegraphics[width=\unitlength,page=3]{meeting_3.pdf}}%
    \put(0.27971381,0.34353887){\color[rgb]{0,0,0}\makebox(0,0)[lt]{\lineheight{1.25}\smash{\begin{tabular}[t]{l}$0$\end{tabular}}}}%
    \put(0.21162593,0.20289601){\color[rgb]{0,0,1}\makebox(0,0)[lt]{\lineheight{1.25}\smash{\begin{tabular}[t]{l}$-r_1$\end{tabular}}}}%
    \put(0.21162593,0.0286982){\color[rgb]{0,0,1}\makebox(0,0)[lt]{\lineheight{1.25}\smash{\begin{tabular}[t]{l}$-r_2$\end{tabular}}}}%
    \put(0.27588168,0.66128155){\color[rgb]{0,0,0}\makebox(0,0)[lt]{\lineheight{1.25}\smash{\begin{tabular}[t]{l}$z$\end{tabular}}}}%
    \put(0,0){\includegraphics[width=\unitlength,page=4]{meeting_3.pdf}}%
  \end{picture}%
\endgroup%

    }
    \caption{The blue lines represent the ${\rm BESQ}(\delta\,|_0\, 0)$ flow lines starting from different points $(0, -r_i)$, $i=1,2$, driven by $\cW$. The black line represents $Y^*$, the ${\rm BESQ}(\delta+\deltahat \,|_0\, \deltahat)$ flow line starting from $(0, -z)$ driven by $-\cW^*$. The levels $V(-r_i)$, $i=1,2$, are the meeting levels as defined in \eqref{def:V}.}
    \label{fig:meeting3}
\end{figure}

\noindent We first prove that the process $(V(-r),\,r\ge 0)$ is adapted to this filtration. Let $\widehat{\cS}$ be the killed $\rm BESQ(2-\deltahat\, |_0\, 2-\delta-\deltahat)$ flow driven by $\cW$. By Proposition \ref{p:BESQdual vary}, its dual $\cShat^*$ is a non-killed $\rm BESQ(\delta+\deltahat\, |_0\, \deltahat)$ flow driven by $-\cW^*$. In particular, by definition of $Y^*$, $Y^*_x=\cShat^*_{-z,x}(0)$.
\begin{proposition}\label{p:V adapted}

(i) Almost surely, for any $r\ge 0$: for any $x \in [V(-r),z)$
    \begin{equation}\label{eq: V upper bound}
        \widehat{\cS}_{x,z}\circ \cS_{-r,x}(0) >0.
    \end{equation}
    while for any $x\in [-r,V(-r))$, 
    \begin{equation}\label{eq: V lower bound}
         \widehat{\cS}_{x,z} \circ\cS_{-r,x}(0)=0.
    \end{equation}
    In particular,  with the convention that $\inf \emptyset=+\infty$, (see Figure \ref{fig:meeting3pr})
    \begin{equation}\label{eq: V second def}
        V(-r)=\inf \{x\in[-r,z): \cShat_{x,z}\circ\cS_{-r,x}(0)>0 \} \land z.
    \end{equation}
    
(ii)    For any $r\ge 0$, $V(-r)$ is measurable with respect to $\rW^*_r$.
\end{proposition}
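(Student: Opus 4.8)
The backbone of the argument is a duality identity: for every $x\in[-r,z)$,
\begin{equation*}
\widehat{\cS}_{x,z}\circ\cS_{-r,x}(0)>0 \quad\Leftrightarrow\quad \cS_{-r,x}(0)\ge Y^*_{-x}.
\end{equation*}
I would read this off from the definition \eqref{def:dual} of the dual flow: since $Y^*_{-x}=\widehat{\cS}^*_{-z,-x}(0)=\inf\{a\ge 0:\widehat{\cS}_{x,z}(a)>0\}$, the non-decreasing map $a\mapsto\widehat{\cS}_{x,z}(a)$ vanishes for $a<Y^*_{-x}$ and is strictly positive for $a\ge Y^*_{-x}$, the value at $a=Y^*_{-x}$ being positive by Proposition \ref{p:properties dual} (i). Everything then reduces to comparing the continuous forward line $x\mapsto\cS_{-r,x}(0)$ with the continuous dual line $x\mapsto Y^*_{-x}$; the natural object is $g(x):=\cS_{-r,x}(0)-Y^*_{-x}$, and $V(-r)$ in \eqref{def:V} is precisely its first zero.

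For part (i), the lower bound \eqref{eq: V lower bound} is immediate: $g(-r)=-Y^*_r\le 0$, so by continuity $g<0$ on $[-r,V(-r))$ and hence $\widehat{\cS}_{x,z}\circ\cS_{-r,x}(0)=0$ there. The substance is the non-crossing bound \eqref{eq: V upper bound}. I would prove it with the comparison principle: since $\widehat\delta>2-\delta$, the drift of $\widehat{\cS}$ (equal to $2-\widehat\delta$ for $x<0$ and $2-\delta-\widehat\delta$ for $x\ge 0$) is everywhere strictly smaller than that of $\cS$ (equal to $\delta$, resp.\ $0$), so Proposition \ref{p:comparison} (applied to the non-killed flows, killing only lowering $\widehat{\cS}$) yields $\widehat{\cS}\le\cS$. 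Writing $x_0:=V(-r)$ and $a_0:=\cS_{-r,x_0}(0)=Y^*_{-x_0}$, one has $\widehat{\cS}_{x_0,z}(a_0)>0$; for $x\in(x_0,z)$ I would use the perfect flow property of $\cS$ (Proposition \ref{p:perfect} (i), or its coalescence part (ii) when $a_0=0$, which is licit since $\cS$ is non-killed) to write $\cS_{-r,x}(0)=\cS_{x_0,x}(a_0)\ge\widehat{\cS}_{x_0,x}(a_0)$, and then monotonicity together with the perfect flow property of $\widehat{\cS}$ to conclude
\begin{equation*}
\widehat{\cS}_{x,z}\circ\cS_{-r,x}(0)\ge\widehat{\cS}_{x,z}\big(\widehat{\cS}_{x_0,x}(a_0)\big)=\widehat{\cS}_{x_0,z}(a_0)>0.
\end{equation*}
Finally \eqref{eq: V second def} follows by combining both bounds with the elementary fact that, for the continuous $g$ with $g(-r)\le 0$, the first time $g\ge 0$ equals the first zero of $g$; the displayed set is empty exactly when $g<0$ on all of $[-r,z)$, in which case the lines meet only at $z$ and the $\wedge z$ returns $V(-r)=z$.

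For part (ii), I would invoke \eqref{eq: V second def}: it suffices to show that the process $x\mapsto\widehat{\cS}_{x,z}\circ\cS_{-r,x}(0)$, $x\in[-r,z)$, is measurable with respect to $\cW^{r,-}=\cW^-_S$, where $S=\cS_{-r,\cdot}(0)$. That $S$ itself is $\cW^{r,-}$-measurable is already known, the forward flow line being driven by (hence measurable with respect to) $\cW^-_S$, as in the proof of Proposition \ref{p:independence_W-&W^+}. For the composition, the key point is that the $\widehat{\cS}$-flow line issued from $(S_x,x)$ never rises above $S$: the comparison $\widehat{\cS}\le\cS$ together with the perfect flow/coalescence property of $\cS$ gives $\widehat{\cS}_{x,y}(S_x)\le\cS_{x,y}(S_x)=\cS_{-r,y}(0)=S_y$ for all $y\in[x,z]$. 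Hence this flow line solves the SDE \eqref{eq:BESQflow vary} driven only by the restriction of $\cW$ to $\{\ell\le S_y\}$, i.e.\ by $\cW^-_S$, and by strong uniqueness it is a measurable functional of $\cW^{r,-}$. Since for $y<z$ the event $\{V(-r)>y\}$ is, by \eqref{eq: V second def}, the event that $\widehat{\cS}_{x,z}\circ\cS_{-r,x}(0)=0$ for all $x\in[-r,y]$, it belongs to $\mathscr{W}^*_r=\sigma(\cW^{r,-})$, which is the assertion.

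The main obstacle I anticipate is the non-crossing bound \eqref{eq: V upper bound}, and in particular the degenerate meeting at height zero ($a_0=0$): there the perfect flow property of $\cS$ is not directly available and must be replaced by the coalescence part of Proposition \ref{p:perfect} (ii), valid for the non-killed flow $\cS$ even when two flow lines first meet at their common time. Keeping track of the killing of $\widehat{\cS}$ (while $\cS$ is not killed) in the comparison and in the SDE reconstruction of part (ii) is the other delicate point, but killing only decreases $\widehat{\cS}$ and so preserves all the inequalities used.
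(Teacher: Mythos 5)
Your proposal is correct and follows essentially the same route as the paper: part (i) rests on the identification $Y^*_{-x}=\cShat^*_{-z,-x}(0)$, Proposition \ref{p:properties dual}, the comparison principle $\cShat\le\cS$ and the perfect flow property, exactly as in the paper's argument. For part (ii), where the paper simply cites Proposition \ref{p:difference} (ii) with $(\cShat,\cS_{-r,\cdot}(0))$ in place of $(\cS,Y)$, you re-derive its content (lines started below $S$ stay below $S$, hence solve an SDE driven by $\cW^-_S$ and are measurable by strong uniqueness), which is the same mechanism inlined rather than quoted.
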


\begin{proof}
      By Proposition \ref{p:comparison}, $\widehat{\cS}\le \cS$ since $2-\deltahat<\delta$. For any $x\in[-r,z)$, Proposition \ref{p:properties dual} (i) applied to $\widehat{\cS}$, $a=Y^*_{-x}$ and $b=0$ implies that $\widehat{\cS}_{x,z}(Y^*_{-x})>0$ and $\widehat{\cS}_{x,y}(Y^*_{-x})\ge Y^*_{-y}$ for all $y\in [x,z)$. 
    Suppose that $V(-r)<z$. By the perfect flow property of $\cS$, for $x=V(-r)$ and $y\in [V(-r),z)$,
    \[
    \cS_{-r,y}(0)=\cS_{x,y}\circ \cS_{-r,x}(0) = \cS_{x,y}(Y^*_{-x}) \ge \widehat{\cS}_{x,y}(Y^*_{-x})\ge Y^{*}_{-y}.
    \]

    \noindent Hence, for any $y \in [V(-r),z)$, $\widehat{\cS}_{y,z}\circ \cS_{-r,y}(0) \ge \widehat{\cS}_{y,z}(Y^*_{-y})>0$ which yields \eqref{eq: V upper bound}. On the other hand, for any $x\in [-r,V(-r))$, $\cS_{-r,x}(0)<Y^*_{-x}$ hence by Proposition \ref{p:properties dual} (ii) applied to $\widehat{\cS}$, $a'=\cS_{-r,x}(0)$ and $b=0$, $\widehat{\cS}_{x,z} \circ \cS_{-r,x}(0)=0$. It proves \eqref{eq: V lower bound} then \eqref{eq: V second def}. We now use Proposition \ref{p:difference} (ii) with $(\widehat{\cS},\cS_{-r,\cdot}(0))$ in place of $(\cS,Y)$. Since $2-\deltahat<\delta$, $\varphi_-(a,-r)=\infty$ in the notation of the proposition and we can also check that $t(Y)=\infty$, noting that $\overline{\delta_0}\,|_{r_1}\, \overline{\delta_1}=\delta\,|_0\,0$.  We conclude with Proposition \ref{p:difference} (ii). 
    \end{proof}

\begin{figure}[htbp]
\centering
   \scalebox{1}{ 
        \def\svgwidth{0.4\columnwidth}
\begingroup%
  \makeatletter%
  \providecommand\color[2][]{%
    \errmessage{(Inkscape) Color is used for the text in Inkscape, but the package 'color.sty' is not loaded}%
    \renewcommand\color[2][]{}%
  }%
  \providecommand\transparent[1]{%
    \errmessage{(Inkscape) Transparency is used (non-zero) for the text in Inkscape, but the package 'transparent.sty' is not loaded}%
    \renewcommand\transparent[1]{}%
  }%
  \providecommand\rotatebox[2]{#2}%
  \newcommand*\fsize{\dimexpr\f@size pt\relax}%
  \newcommand*\lineheight[1]{\fontsize{\fsize}{#1\fsize}\selectfont}%
  \ifx\svgwidth\undefined%
    \setlength{\unitlength}{178.99732455bp}%
    \ifx\svgscale\undefined%
      \relax%
    \else%
      \setlength{\unitlength}{\unitlength * \real{\svgscale}}%
    \fi%
  \else%
    \setlength{\unitlength}{\svgwidth}%
  \fi%
  \global\let\svgwidth\undefined%
  \global\let\svgscale\undefined%
  \makeatother%
  \begin{picture}(1,1.07793146)%
    \lineheight{1}%
    \setlength\tabcolsep{0pt}%
    \put(0,0){\includegraphics[width=\unitlength,page=1]{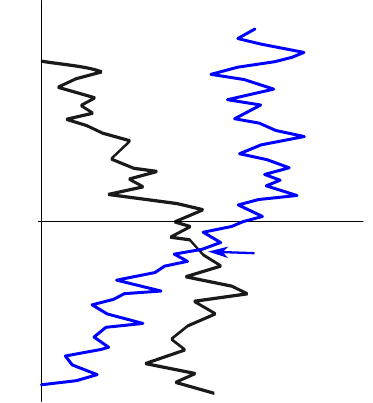}}%
    \put(0.68975694,0.37634625){\color[rgb]{0,0,1}\makebox(0,0)[lt]{\lineheight{1.25}\smash{\begin{tabular}[t]{l}$V(-r)$\end{tabular}}}}%
    \put(0,0){\includegraphics[width=\unitlength,page=2]{meeting_3_proof.pdf}}%
    \put(0.03850527,0.46427358){\color[rgb]{0,0,0}\makebox(0,0)[lt]{\lineheight{1.25}\smash{\begin{tabular}[t]{l}$0$\end{tabular}}}}%
    \put(-0.00298143,0.03212943){\color[rgb]{0,0,1}\makebox(0,0)[lt]{\lineheight{1.25}\smash{\begin{tabular}[t]{l}$-r$\end{tabular}}}}%
    \put(0.03278018,0.90143588){\color[rgb]{0,0,0}\makebox(0,0)[lt]{\lineheight{1.25}\smash{\begin{tabular}[t]{l}$z$\end{tabular}}}}%
    \put(0,0){\includegraphics[width=\unitlength,page=3]{meeting_3_proof.pdf}}%
  \end{picture}%
\endgroup%

    }
    \caption{The blue line represents a ${\rm BESQ}(\delta\,|_0\, 0)$ flow line starting from $(0, -r)$. The black line represents $Y^*$, the ${\rm BESQ}(\delta+\deltahat \,|_0\, \deltahat)$ flow line starting from $(0, -z)$ driven by $-\cW^*$. The brown lines represent $\widehat{\cS}$, the killed $\rm BESQ(2-\deltahat\, |_0\, 2-\delta-\deltahat)$ flow driven by $\cW$. This image illustrates \eqref{eq: V second def}.}
    \label{fig:meeting3pr}
\end{figure}

 \noindent The following theorem characterizes the distribution of the process $(V(-r),\, r\ge 0)$. Observe that the process is right-continuous. We denote by $P^*_z$ the law of the process $(V(-r)+r,\,r\ge 0)$.


\begin{theorem}\label{thm:U_process*}

    \begin{enumerate}[(i)]
\item Suppose $z=0$ and $\deltahat>0$. For any $r> 0$, $\frac{V(-r)+r}{r}$ has distribution $\mathcal{B}(\frac{\deltahat}{2},\frac{\delta}{2})$.
\item The family $(P^z_*,z\ge 0)$ defines a time-homogeneous Feller process. For any $z\ge 0$, the process  $\left(V(-r)+r,\, r\ge 0\right)$ is a homogeneous $(\mathscr{W}^*_r)_{r\ge 0}$-adapted Feller process starting from $z$. 
\item Suppose $z>0$ and write $\mathrm{x}:=\inf\{r\ge 0\,:\, V(-r)<z\}$.  We have $\frac{z}{\mathrm{x}+z}\sim\mathcal{B}(\frac{\delta}{2},1)$ and conditionally on  $\{{\rm{x}}= x \}$, $\frac{ V(-x)+x}{x+z}\sim \mathcal{B}(\frac{\delta+\deltahat}{2},1)$.
\end{enumerate}
\end{theorem}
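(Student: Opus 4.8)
The plan is to follow the template of the proofs of Theorems \ref{thm:U_process} and \ref{thm:U_process-}, while exploiting the forward reformulation $V(-r)=\inf\{x\in[-r,z):\cShat_{x,z}\circ\cS_{-r,x}(0)>0\}\wedge z$ from Proposition \ref{p:V adapted}, which lets me work entirely with the forward flows $\cS$ (dimension $\delta\,|_0\,0$) and $\cShat$ (the killed $\rm BESQ(2-\deltahat\,|_0\,2-\delta-\deltahat)$ flow whose dual produces $Y^*$), both driven by $\cW$. For (i) ($z=0$) I would start from the threshold identity $\{V(-r)>x\}=\{\cShat_{x,0}\circ\cS_{-r,x}(0)=0\}$ for $x\in[-r,0]$, which is exactly \eqref{eq: V upper bound}--\eqref{eq: V lower bound}. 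Conditioning on $\cF_x$, the flow $\cShat_{x,\cdot}$ is driven by $\cW$ on $[x,0]$ and hence independent of the $\cF_x$-measurable height $\cS_{-r,x}(0)$; plugging in this height, the conditional probability of $\{\cShat_{x,0}(\cS_{-r,x}(0))=0\}$ is the probability that a $\rm BESQ^{2-\deltahat}$ started at $\cS_{-r,x}(0)$ hits $0$ within time $-x$, which by Lemma \ref{distribution:hitting0} equals $\p(\gamma\ge \cS_{-r,x}(0)/(-2x))$ for $\gamma\sim\mathrm{Gamma}(\deltahat/2)$. Since $\cS_{-r,x}(0)$ is a $\rm BESQ^\delta_0$ marginal at time $x+r$, hence distributed as $2(x+r)\gamma'$ with $\gamma'\sim\mathrm{Gamma}(\delta/2)$ independent of $\gamma$, I obtain $\p(V(-r)>x)=\p\big(\tfrac{\gamma}{\gamma+\gamma'}\ge \tfrac{x+r}{r}\big)$, so that $\tfrac{V(-r)+r}{r}\dequiv\tfrac{\gamma}{\gamma+\gamma'}\sim\mathcal B(\tfrac{\deltahat}{2},\tfrac{\delta}{2})$, as claimed.

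The main work is (ii), the Markov and Feller properties. The filtration is $\mathscr{W}^*_r=\sigma(\cW^-_{\cS_{-r,\cdot}(0)})$, the noise below the (rising, as $r$ grows) forward line $\cS_{-r,\cdot}(0)$, and $V(-r)$ is $\mathscr{W}^*_r$-measurable by Proposition \ref{p:V adapted}(ii). I would apply Proposition \ref{p:difference} with $Y=\cS_{-r,\cdot}(0)$ (a $\ominus$-flow line, since its switching time $r_1=0$ is deterministic) and $\cShat$ in place of $\cS$: this splits $\cW$ into $\cW^-_{\cS_{-r,\cdot}(0)}$ (generating $\mathscr{W}^*_r$) and the independent white noise $\cW^+_{\cS_{-r,\cdot}(0)}$ (Proposition \ref{p:difference}(iii)). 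The key geometric observation, to be established from the non-crossing of forward and dual lines (Proposition \ref{p:properties dual}) together with $\cShat\le\cS$ and $\cS_{-y,\cdot}(0)\ge\cS_{-r,\cdot}(0)$ for $y\ge r$, is that all data needed to determine $V(-y)$ for $y\ge r$ --- the line $\cS_{-y,\cdot}(0)$ and the part of $Y^*$ relevant to the meeting (which, since $V(-y)\le V(-r)$, is seen only on the range $x\le V(-y)\le V(-r)$, where the dual line lies above $\cS_{-r,\cdot}(0)$) --- live in the region above $\cS_{-r,\cdot}(0)$, and are therefore measurable with respect to $\cW^+_{\cS_{-r,\cdot}(0)}$ and the single value $V(-r)$. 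Combined with the independence of $\cW^+_{\cS_{-r,\cdot}(0)}$ and $\mathscr{W}^*_r$, this shows that conditionally on $\mathscr{W}^*_r$ the process $(V(-y))_{y\ge r}$ depends only on $V(-r)$ and is a fresh copy; time-homogeneity follows from the $1$-self-similarity of the construction. I expect this to be the \emph{main obstacle}: unlike Theorems \ref{thm:U_process}--\ref{thm:U_process-}, where the meeting is between two forward lines separated by a single flow line, here the meeting involves a dual line, and one must argue carefully that the relevant (surviving) lines never re-enter the past region below $\cS_{-r,\cdot}(0)$, so that the future is genuinely driven by the independent noise $\cW^+_{\cS_{-r,\cdot}(0)}$ and coupled to the past only through $V(-r)$. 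The Feller property I would then obtain as in Theorem \ref{thm:U_process-}(ii) by sandwiching: $V(-r)\le V(0)=z$, while the comparison principle (Proposition \ref{p:comparison}) bounds the downward increment $z-V(-r)$ by the meeting level of a dominating comparison flow whose law is given by (i), yielding continuity of the semigroup as $r\downarrow 0$.

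For (iii) I would follow Theorem \ref{thm:U_process}(iii). The law of $\mathrm x$ is clean: $\{\mathrm x>x\}=\{V(-x)=z\}$, and by the non-crossing of forward and dual lines this event equals $\{\cS_{-x,z}(0)=0\}$, i.e. the forward line (a $\rm BESQ^\delta$ on $[-x,0]$ followed by a $\rm BESQ^0$ on $[0,z]$) is extinct by time $z$; using the extinction probability $e^{-v/2z}$ of $\rm BESQ^0_v$ and the $\mathrm{Gamma}(\delta/2)$ law of $\cS_{-x,0}(0)$ gives $\p(\mathrm x>x)=(z/(z+x))^{\delta/2}$, i.e. $z/(z+\mathrm x)\sim\mathcal B(\delta/2,1)$. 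For the jump target I would write, for $x<y$ and $u\le z$,
\[
P^z_*(\mathrm x\in(x,y],\,V(-\mathrm x)<u)=\p\big(\cS_{-x,z}(0)=0,\ \cShat_{u,z}\circ\cS_{-y,u}(0)>0\big)+O\big((y-x)^2\big),
\]
the error term (at least two jumps in $(x,y]$) being negligible by the strong Markov property and the bound on $\p(\mathrm x\in(0,\varepsilon))$ coming from the first part, exactly as in \eqref{eq:jump twice}. Factorizing the leading probability by property ({\bf P}) of Section \ref{s:def} into an extinction probability for the $\rm BESQ(\delta\,|_x\,0)$ part and a survival probability for the difference flow, which carries the combined dimension $\delta+\deltahat$ through the dual $\cShat$, and differentiating in $x$ via Lemma \ref{l:conv_hitting0}, yields the conditional law $\tfrac{V(-x)+x}{x+z}\sim\mathcal B(\tfrac{\delta+\deltahat}{2},1)$.
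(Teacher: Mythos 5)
Your proposal follows essentially the same route as the paper: part (i) from the threshold identity \eqref{eq: V upper bound}--\eqref{eq: V lower bound} (the paper feeds it into Lemma \ref{distribution:hitting0}; your Gamma computation unpacks the same lemma); part (ii) by applying Proposition \ref{p:difference} with $Y=\cS_{-r,\cdot}(0)$, using the $\ominus$-property and independence of $\cW^{\pm}_Y$, and recasting $V(-y)$, $y\ge r$, as a functional of the flows above $\cS_{-r,\cdot}(0)$ and of $V(-r)$ alone --- the paper makes your ``key geometric observation'' precise through \eqref{eq:V(-y)} and \eqref{eq:repr V 2}, including the replacement of the killed flow $\cShat$ by its non-killed version $\tcS$ so that Proposition \ref{p:difference} is applicable; part (iii) by the one-jump expansion, property ({\bf P}) and Lemma \ref{l:conv_hitting0}. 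However, two of your sub-steps would fail as written.

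First, the two-jump estimate in (iii) does \emph{not} go through ``exactly as in \eqref{eq:jump twice}''. In Theorem \ref{thm:U_process}(iii) the jump goes upward, so the post-jump state satisfies $\mathfrak{U}-\mathrm{x}\ge z-\mathrm{x}$, and the monotonicity of $z'\mapsto P^{z'}(\mathrm{x}\le \eps)$ (non-increasing in $z'$) bounds the second jump uniformly. Here the jump goes \emph{downward}: the post-jump state $V(-\mathrm{x})+\mathrm{x}$ can land arbitrarily close to $0$, where $P_*^{z'}(\mathrm{x}\le s)=1-(z'/(z'+s))^{\delta/2}$ is close to $1$, so monotonicity now works against you and the naive bound degenerates. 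This is why the paper inserts the extra truncation step, proving $\limsup_{\eps\downarrow 0}\limsup_{r\downarrow x}\frac{1}{r-x}P_*^z(\mathrm{x}\in(x,r],\,V(-\mathrm{x})\le -x+\eps)=0$ (deduced from the very joint probability you compute) before invoking the strong Markov property on the complementary event.

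Second, the Feller property cannot be obtained by your comparison sandwich. Proposition \ref{p:comparison} forces you to replace the drifts $\delta\,|_0\,0$ and $\delta+\deltahat\,|_0\,\deltahat$ by constant ones, and with constant drifts the configuration (forward line from $(0,-r)$, dual line from $(0,-z)$) is just the $z=0$ configuration of part (i) translated, with $r+z$ in place of $r$: the resulting meeting level sits at distance of order $z+r$ below $z$ (a Beta fraction of $z+r$), not of order $r$, so no ``dominating comparison flow whose law is given by (i)'' controls $z-V(-r)$. The mechanism making $z-V(-r)$ small is precisely the drift change at level $0$, which keeps $\cS_{-r,\cdot}(0)$ of height $O(r)$, and comparison destroys it. The paper argues instead via $1$-self-similarity, $P_*^z(V(-r)\neq z)=P_*^1(V(-r/z)\neq 1)$, combined with $V(-r)\in[-r,\sqrt r]$ when $z\le\sqrt r$; alternatively, the identity $P_*^z(\mathrm{x}>r)=(z/(z+r))^{\delta/2}$ from \eqref{eq: distr x V} yields $P_*^z(V(-r)\neq z)\le \delta r/(2z)$, which gives the same conclusion. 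Both repairs are available to you, but the step as you wrote it is wrong.
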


\begin{proof}
    \begin{enumerate}[(i)]
\item Let $\widehat{\cS}$ denote as before the killed BESQ$(2-\deltahat\, |_0\, 2-\delta-\deltahat)$ flow, driven by $\cW$. By equations \eqref{eq: V upper bound} and \eqref{eq: V lower bound}, for any $x\in [-r,0)$,
\[
\p(V(-r)\le x)=\p(\widehat{\cS}_{x,0}\circ \cS_{-r,x}(0)>0).
\]

\noindent  The result follows from Lemma \ref{distribution:hitting0} with $\delta_1=\delta$ and $\delta_2=2-\deltahat$. 

\item  Let $y>r>0$.
 Necessarily, $V(-y)\le V(-r)$.  Let $a^r:=\cS_{-r,V(-r)}(0)$. We claim that 
\begin{equation}\label{eq:V(-y)}
    V(-y)=\inf\{x\in [-y,V(-r))\,:\, \cShat_{x,V(-r)}\circ \cS_{-y,x}(0)> a^r\}\land V(-r).
\end{equation}

\noindent Let us prove it. Suppose $x<V(-y)$. By \eqref{eq: V lower bound} with $y$ instead of $r$, $\cShat_{x,z}\circ \cS_{-y,x}(0)=0$. Either $\cShat_{x,V(-r)}\circ \cS_{-y,x}(0)=0$, or, by Proposition \ref{p:perfect} (i), $\cShat_{x,V(-r)}\circ \cS_{-y,x}(0)>0$ (in particular $V(-r)<z$) and $\cShat_{V(-r),z}\circ \cShat_{x,V(-r)}\circ \cS_{-y,x}(0)=0$. In the second case, by Proposition \ref{p:properties dual} (i) applied to $\cShat$ between levels $V(-r)$ and $z$,  $a\ge a^r=Y^*_{-V(-r)}$ and $b=0$, we necessarily have $\cShat_{x,V(-r)}\circ \cS_{-y,x}(0)< a^r$. We deduce that $V(-y)$ is smaller than the RHS of \eqref{eq:V(-y)}. We prove the reverse inequality. We can suppose that $V(-y)<V(-r)$ otherwise the inequality is clear. Let  $a^y=\cS_{-y,V(-y)}(0)=Y^*_{-V(-y)}$. By \eqref{eq: V upper bound} with $y$ instead of $r$ and $V(-y)$ instead of $x$, $\cShat_{V(-y),z}(a^y)>0$. In particular, since $\cShat$ is killed, $\cShat_{V(-y),V(-r)}(a^y)>0$. By another use of Proposition \ref{p:properties dual} (i), $\cShat_{V(-y),V(-r)}(a^y)>a^r$. We get \eqref{eq:V(-y)}.  Let $\tcS$ be the non-killed version of $\widehat{\cS}$. Observe that for any $x\in [-y,V(-r))$, $\cShat_{x,V(-r)}\circ \cS_{-y,x}(0)> a^r$ if and only if $\tcS_{x,V(-r)}\circ \cS_{-y,x}(0)> a^r$. One direction comes from $\cShat\le \tcS$. For the other direction, if $\tcS_{x,V(-r)}\circ \cS_{-y,x}(0)> a^r$, then $\tcS_{x,c}\circ \cS_{-y,x}(0)> \cS_{-r,c}(0)$ for any $c\in (x,V(-r))$. Otherwise, $\tcS_{x,c} \circ \cS_{-y,x}(0)= \cS_{-r,c}(0)$ for some $c\in (x,V(-r))$, and $\tcS_{x,V(-r)}\circ \cS_{-y,x}(0)=\tcS_{c,V(-r)}\circ \cS_{-r,c}(0)\le a_r$ since $\tcS\le \cS$ by the comparison principle of Proposition \ref{p:comparison}. Hence \eqref{eq:V(-y)} also reads
\begin{equation}\label{eq:repr V 2}
    V(-y)=\inf\{x\in [-y,V(-r))\,:\, \tcS_{x,V(-r)}\circ \cS_{-y,x}(0)> a^r\}\land V(-r).
\end{equation}

\noindent We want to apply Proposition \ref{p:difference} with $\cS_{-r,\cdot}(0)$ in place of $Y$. We have in the notation of the proposition $\cS^+_{-y,x}(0)=\cS_{-y,x}(0)-\cS_{-r,x}(0)$, where we set $\cS_{-r,x}(0)=0$ when $x< -r$.   With natural notation (using $\tcS$ instead of $\cS$ in Proposition \ref{p:difference}), we deduce that for any $x\le r'\le \widetilde{\varphi}_+\left(\cS^+_{-y,x}(0),x\right)$,
\[
\tcS^+_{x,r'}\circ \cS^+_{-y,x}(0) =\tcS_{x,r'}\circ \cS_{-y,x}(0) - \cS_{-r,r'}(0).
\]

\noindent  From our choice of parameters, $\widetilde{\varphi}_+\left(\cS^+_{-y,x}(0),x\right)=\inf\{c>x\,:\, \tcS_{x,c} \circ \cS_{-y,x}(0) \le \cS_{-r,c}(0)\}$.  We recall that by the comparison principle, $\tcS_{x,c}\circ \cS_{-y,x}(0)\le \cS_{-r,c}(0)$ for all $c\ge \widetilde{\varphi}_+\left(\cS^+_{-y,x}(0),x\right)$. Substituting $r'$ for $V(-r)$, we deduce that we can rewrite $V(-y)$ as 
\[
V(-y)=\inf\{x\in [-y,V(-r))\,:\, \tcS^+_{x,V(-r)}\circ \cS^+_{-y,x}(0)> 0\}\land V(-r).
\]

\noindent By Proposition \ref{p:V adapted}, $V(-r)$ is measurable with respect to $\mathscr{W}^*_r$. The Markov property is then a consequence of Proposition \ref{p:difference} (i) and (iii), applied to $\cS$ and $\tcS$. We can  check the time-homogeneity from the expression of $V(-y)$ in the last display. Finally, we prove the Feller property. By scaling, the distribution of $V(-r)$ under $P_*^z$ is that of $zV(-r/z)$ under $P_*^1$. 
Hence $P_*^z(V(-r)\neq z)=P_*^1(V(-r/z)\neq 1)\le P_*^1(V(-\sqrt{r})\neq 1)$ if $z\ge \sqrt{r}$ while $V(-r)\in [-r,\sqrt{r}]$ under $P_*^z$ when $z\le \sqrt{r}$. We deduce the Feller property.

\item By \eqref{eq: V second def}, $ P_*^z({\rm x} > r)=\p(\cS_{-r,z}(0) = 0)$. Then, by Lemma \ref{distribution:hitting0}, 
\begin{equation}\label{eq: distr x V}
    P_*^z({\rm x} > r) = \p(T_r<z+r)= \left( 1- \frac{r}{z+r}\right)^{\frac{\delta}{2}},
\end{equation}

\noindent where $T_r$ is the hitting time of $0$ after time $r$ by a ${\rm BESQ}(\delta \, |_r \, 0)$ process. It gives the distribution of $\rm x$. Let $r>x\ge 0$ and $y\in [-x,z)$. By equations \eqref{eq: V lower bound} and \eqref{eq: V upper bound},
\begin{align}\nonumber
P_*^z( \mathrm{x} \in (x,r],\, V(-\mathrm{x}) \le y)
&\le 
P_*^z( V(-x)=z, V(-r)\le y) \\
& \le
\p(\cS_{-x,z}(0)=0,\, \cShat_{y,z}\circ \cS_{-r,y}(0)>0).\label{eq: proof V<r}
\end{align}

\noindent  By property ({\bf P}) of Section \ref{s:def},
\begin{align*}
\p(\cS_{-x,z}(0)=0,\, \cShat_{y,z}\circ \cS_{-r,y}(0) >0)
& =\p(T_x<x+z)\p(T_{r-x,y+x}>x+z)\\
&=\p(\mathrm{x}> x)\p(T_{r-x,y+x}>x+z)
\end{align*}

\noindent where, as before, $T_x$ is the hitting time of $0$ after time $x$ by a ${\rm BESQ}_0(\delta\,|_x\, 0)$ process and  $T_{a,b}$ is the hitting time of $0$ after time $b$ by a ${\rm BESQ}_{-a}(\delta\,|_0\, 0 \,|_{b}\, 2-\delta-\deltahat)$ process. By Lemma \ref{l:conv_hitting0} (ii), 
$$
\lim_{r\downarrow x} \frac{1}{r-x}\p(T_{r-x,y+x}> x+z)= \frac{\delta}{2(y+x)} \bigg( \frac{x+y}{x+z} \bigg)^{\frac{\delta+\deltahat}{2}}.
$$

\noindent Together with \eqref{eq: distr x V}, we get 
\begin{equation}\label{eq:proof V limit}
\lim_{r\downarrow x} \frac{1}{r-x} \p(\cS_{-x,z}(0)=0,\, \cShat_{y,z} \circ \cS_{-r,y}(0) >0)=\frac{\delta}{2z} \bigg( \frac{z}{x+z} \bigg)^{\frac{\delta}{2}-1} \bigg( \frac{x+y}{x+z} \bigg)^{\frac{\delta+\deltahat}{2}-1}.
\end{equation}

\noindent Going back to \eqref{eq: proof V<r}, it yields that, 
\[
\limsup_{\varepsilon \downarrow 0}\limsup_{r\downarrow x} \frac{1}{r-x}  P_*^z( \mathrm{x} \in (x,r],\, V(-\mathrm{x}) \le -x+\varepsilon )=0.
\]

\noindent By the strong Markov property at time $\mathrm{x}$, and since $z'\to P_*^{z'}(\mathrm{x}\le r-x)$ is non-increasing in $z'$,
\begin{align*}
&P_*^z(\mathrm{x}>  x,\, \text{ jumps at least twice in } (x,r]) \\
&\le
P_*^z(\mathrm{x}\in (x,r],\, V(-\mathrm{x}) \le -x+\varepsilon)+
P_*^z(\mathrm{x}\in  (x,r])P_*^{\varepsilon}(\mathrm{x}\le  r-x).
\end{align*}

\noindent The second term is $O((r-x)^2)$ by \eqref{eq: distr x V}. Therefore, making $r\downarrow x$ then $\varepsilon\downarrow 0$, 
\[
\lim_{r\downarrow x} \frac{1}{r-x}P_*^z(\mathrm{x}>  x,\, \text{ jumps at least twice in } (x,r])=0.
\]

\noindent On the other hand,
\begin{align*}
& \p(\cS_{-x,z}(0)=0,\,  \cShat_{y,z}\circ \cS_{-r,y}(0)  >0) \\
&\le 
P_*^z( \mathrm{x} \in (x,r],\, V(-\mathrm{x}) \le y) + 
P_*^z(\mathrm{x}> x,\, \text{ jumps at least twice in } [x,r]).
\end{align*}

\noindent We conclude from \eqref{eq: proof V<r} and \eqref{eq:proof V limit} that 
$$
P(\mathrm{x} \in \d x,\, V(-x) \le y) =\frac{\delta}{2z} \bigg( \frac{z}{x+z} \bigg)^{\frac{\delta}{2}-1} \bigg( \frac{x+y}{x+z} \bigg)^{\frac{\delta+\deltahat}{2}-1} \dd x.
$$

\noindent The proof of the theorem is complete. 

\end{enumerate}
\end{proof}


\section{Application to the skew Brownian motion}
\label{s:skewBM}

In this section we  apply  the results of Section \ref{s:meeting} to the skew Brownian flow. Let $B$ be a Brownian motion, $\beta\in (-1,0)\cup (0,1)$ and $r\in \mathbb{R}$. Recall  from the introduction that $X^r$ denotes the strong solution of the SDE \eqref{eq:SDE Xtr} with $L^r$ satisfying \eqref{def:L}, and $\cL(t,x)$ is the bicontinuous version of the local time of $B$ at time $t$ and position $x$.

\subsection{Embedding in a BESQ flow}\label{s:repre_skew_BM}

This section aims at embedding the collection $(X^r)_{r\in \mathbb{R}}$ in a ${\rm BESQ}$ flow driven by the white noise $\cW$ defined in \eqref{def:W}. 
In \cite{aidekon2024infinite}, $\cW$  is shown to be related to Ray--Knight theorems for the Brownian motion.  Specifically, let $\tau_a^{B,r}:=\inf\{t\ge 0\,:\, \cL(t,r)>a \}$ be the inverse local time of $B$. Then $(\cL(\tau^{B,r}_a,x),\,x\ge r)_{(a,r)\in \r_+\times \r}$ is a ${\rm BESQ}(2\,|_0\, 0)$ flow driven by $\cW$. Its dual  $(\cL(\tau^{B,-r}_a,-x),\,x\ge r)_{(a,r)\in \r_+\times \r}$ is a ${\rm BESQ}(2\,|_0\, 0)$ flow driven by $-\cW^*$, where $\cW^*$ is the image of $\cW$ by the map $(a,x)\mapsto (a,-x)$. See \cite[Proposition 3.6]{aïdékon2023stochastic}.

\bigskip

We go back to the study of the skew Brownian flow. We first treat the case $\beta\in (0,1)$, then deduce the  case $\beta\in (-1,0)$ by symmetry. \\

Recall from \eqref{def:taurx} that for any $r\in \mathbb{R}$
\[
\tau^r_x:=\inf\{ t\ge 0 \,:\,  L_t^r > x\},\, x\ge r.
\] 

\noindent The following lemma collects some properties on the local times of $X$. Since $|X|$ is a reflecting Brownian motion, we can apply standard results of the theory of local times of the Brownian motion, see \cite[Chapter VI]{revuz2013continuous}.

\begin{lemma}\label{l:taurx}
Let $r\in \r$. Almost surely: for all $x\ge r$ and $t\ge 0$,
\begin{enumerate}[(i)]
    \item $L_t^r\le x$ for all $t\in [0, \tau_x^r)$, $L_t^r=x$ for $t=\tau_x^r$ and $L_t^r>x$ for all $t>\tau_x^r$;
    \item $X^r_{\tau^r_x}=0$ and $B_{\tau^r_x}=x$;
    \item $\cL(t,x)>\cL(\tau^r_x,x)$ for $x<L_t^r$ and $\cL(t,x)\le \cL(\tau^r_x,x)$ for $x\ge L_t^r$. In particular, $L^r_t=\sup\{x>r\,:\, \cL(t,x)>\cL(\tau^r_x,x)\}\lor r$ where $\sup\emptyset=-\infty$ by convention;
    \item $\cL(t,x)>\cL(\tau_x^r,x)$ if and only if $t>\tau_x^r$;  if $\cL(t,x)<\cL(\tau_x^r,x)$, then $L_t^r<x$.
\end{enumerate}
\end{lemma}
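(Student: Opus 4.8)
The plan is to reduce the four assertions to pathwise properties of the increasing additive functional $L^r$ and of the jointly continuous local time field $\cL$ of $B$. First I would record the inputs used throughout. Since $|X^r|$ is a reflecting Brownian motion, its local time $\ell^r$ at $0$ is continuous and non-decreasing, increases exactly on the closed zero set $\{t:X^r_t=0\}$ (the support of $\dd\ell^r$), and $\ell^r_t\to\infty$; hence $L^r=r+\beta\ell^r$ (with $\beta>0$) is continuous, non-decreasing, satisfies $L^r_0=r$ and $L^r_t\to\infty$, so $\tau^r_x<\infty$ for every $x\ge r$.

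Assertion (i) is then pure monotonicity: the set $\{t:L^r_t>x\}$ is an up-set in $t$ by monotonicity of $L^r$, and by continuity of $L^r$ together with $L^r_0=r\le x$ its infimum $\tau^r_x$ satisfies $L^r_{\tau^r_x}=x$, so the set equals $(\tau^r_x,\infty)$; the three claims of (i) read off immediately. For (ii), (i) shows $L^r$, hence $\ell^r$, increases on every right-neighbourhood of $\tau^r_x$, so $\tau^r_x$ lies in the support of $\dd\ell^r$, i.e. $X^r_{\tau^r_x}=0$; then $B_{\tau^r_x}=L^r_{\tau^r_x}-X^r_{\tau^r_x}=x$ from $B=L^r-X^r$.

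The core is the dichotomy in (iii). By (i), for $x\ge r$ we have $x<L^r_t\iff t>\tau^r_x$ and $x\ge L^r_t\iff t\le\tau^r_x$. For $t\le\tau^r_x$ the bound $\cL(t,x)\le\cL(\tau^r_x,x)$ is just monotonicity of $t\mapsto\cL(t,x)$. For $t>\tau^r_x$ I would prove the strict increase $\cL(t,x)>\cL(\tau^r_x,x)$: since $B_{\tau^r_x}=x$ and $\tau^r_x$ is a stopping time, the strong Markov property gives that $B$ restarted at $x$ is a Brownian motion, which accumulates local time at its starting level $x$ in every right-neighbourhood of $\tau^r_x$; thus $\cL(\cdot,x)$ strictly increases right after $\tau^r_x$. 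The ``in particular'' follows since $\{x>r:\cL(t,x)>\cL(\tau^r_x,x)\}=(r,L^r_t)$, whose supremum joined with $r$ equals $L^r_t$. Assertion (iv) then follows: the equivalence combines (i) and (iii), and for the second part I prove the contrapositive $L^r_t\ge x\Rightarrow\cL(t,x)\ge\cL(\tau^r_x,x)$, treating $L^r_t>x$ by the strict increase and $L^r_t=x$ by noting that $t$ then lies in the flat stretch $\{s:L^r_s=x\}=[s_0,\tau^r_x]$, on whose interior $\ell^r$ is constant, so $X^r\ne0$ and $B=x-X^r\ne x$, whence $\cL(\cdot,x)$ is constant there and $\cL(t,x)=\cL(\tau^r_x,x)$.

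The main obstacle is to make the strict increase in (iii) hold simultaneously for all $x\ge r$ on one almost-sure event, because the strong Markov argument is carried out at the $x$-dependent stopping time $\tau^r_x$. The non-strict inequalities, the monotonicity, and (one checks) the continuity of $x\mapsto\cL(\tau^r_x,x)$ already hold simultaneously, so the only quantity to control uniformly is the accumulation of visits of $B$ to $x$ at $\tau^r_x$ from the right. I expect to secure this pathwise from excursion theory: infinitely many positive excursions of $X^r$ accumulate at $\tau^r_x$ from the right, and near $\tau^r_x$ the gap $L^r_\cdot-x$ shrinks to $0$, so these excursions force $B$ to cross level $x$ at times decreasing to $\tau^r_x$; equivalently one invokes the standard fact that, almost surely for all $x$ at once, the support of $\dd_t\cL(\cdot,x)$ is $\overline{\{t:B_t=x\}}$. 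This uniformization over $x$ is the step demanding the most care.
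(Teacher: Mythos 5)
Parts (i), (ii), (iv) and the non-strict half of (iii) of your proposal are correct and essentially identical to the paper's arguments (monotonicity and continuity of $L^r$ for (i); $\dd\ell^r$ carried by the zero set of $X^r$ for (ii); the flat-stretch/excursion argument for (iv)). The genuine gap is exactly where you locate it: the strict inequality $\cL(t,x)>\cL(\tau^r_x,x)$ for $t>\tau^r_x$ \emph{simultaneously for all} $x\ge r$. Your strong Markov argument at $\tau^r_x$ is fine for each fixed $x$, but the null set depends on $x$, and neither of your proposed repairs closes the gap. The ``standard fact'' you invoke is false: it is not true that a.s., for all levels $x$ at once, the support of $\dd_t\cL(\cdot,x)$ equals $\overline{\{t:B_t=x\}}$. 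Take $x=\max_{[0,1]}B$ and $t$ the a.s.\ unique time of this maximum: then $B_t=x$, but locally the level set of $x$ is the singleton $\{t\}$, and since $\dd_t\cL(\cdot,x)$ has no atoms and is supported in the closure of the level set, $\cL(\cdot,x)$ is constant in a neighbourhood of $t$. So visits (and this is the point of the counterexample) do not imply local-time increase uniformly in the level. Your excursion sketch suffers from the same defect one step earlier: for fixed $x$, whether positive excursions of $X^r$ just after $\tau^r_x$ are high enough to bring $B$ back to level $x$ is a Borel--Cantelli statement over scales (the height needed, $\beta(v-u)$, is comparable to the heights available), and upgrading it to all $x$ at once is a nontrivial covering problem whose null sets again depend on $x$. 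That such uniform statements cannot be waved through in this model is illustrated by Section \ref{s:bifur_time} of the paper, where closely related exceptional-level events genuinely occur (semi-flat bifurcations when $\beta>\tfrac13$).

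The paper's resolution is a different device, and it is the idea your proposal is missing: the Pitman--Winkel time change. Set $A^+_s=\int_0^s\ind_{(0,\infty)}(X^r_u)\,\dd u$, let $\alpha^+$ be its right-continuous inverse and $B^+_u:=B_{\alpha^+_u}$. By \cite[Lemma 2.3]{pitman2018squared}, $-B^+\dequiv |B|-\mu\cL(\cdot,0)$ with $\mu=\frac{2\beta}{1+\beta}<1$, a perturbed reflecting Brownian motion, which a.s.\ has no monotone points. This is a single almost-sure property of one auxiliary process, independent of $x$, so it can be applied at every $x$ simultaneously: for $u\le u_0:=A^+_{\tau^r_x}$ one has $B^+_u\le L^r_{\alpha^+_u}\le x$ with equality at $u_0$, while the starts of positive excursions after $\tau^r_x$ give times $u\downarrow u_0$ with $B^+_u=L^r>x$; since $u_0$ cannot be a point of increase of $B^+$, one finds $u\downarrow u_0$ with $B^+_u=x$, i.e.\ $B_{\alpha^+_u}=x$ at times $\alpha^+_u\downarrow\tau^r_x$, whence the increase of $\cL(\cdot,x)$ at $\tau^r_x$. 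This conversion of an $x$-by-$x$ strong Markov argument into one pathwise property of a fixed process is the step you would need to supply; without it the ``for all $x$'' form of (iii) is unproved, and that uniform form is essential downstream, since the paper applies the lemma at random levels (e.g.\ $x=L^r_t$, $x=\widehat L_t$, $x=B_s$) in Corollary \ref{c:embedding+} and in the proofs of Theorems \ref{thm:representation_by_WN}, \ref{thm: skewBM meeting} and \ref{thm: skewBM RK}.
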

\begin{proof}
\begin{enumerate}[(i)]
    \item It is a consequence of the definition of $\tau_x^r$ and the continuity of $L^r$. 
    \item By \cite[Chapter VI, Proposition 1.3]{revuz2013continuous},  $ \dd_t L_t^r $ is almost surely carried by the zeros of $ X_t^r $. It implies that $\tau_x^r$ must be a zero of $X$. 
    Since $ X_t^r = L_t^r - B_t $, we conclude that $ B_{\tau_x^r} = x $. 
    \item $x<L_t^r$ implies $t>\tau_x^r$ by (i), and hence $\cL(t,x)\ge \cL(\tau_x^r,x)$. Let us show that $\cL(t,x)> \cL(\tau_x^r,x)$. For $s\ge 0$, let $A^+_s:=\int_0^s \ind_{(0,+\infty)}(X^r_u) \dd u$, $\alpha^+$ be the right-continuous inverse of $A^+$ and $B^+_u:=B_{\alpha^+_u}$. Set $s=\tau^r_x$ and notice that $s$ is a point of increase of $A^+$. By \cite[Lemma 2.3]{pitman2018squared}, $-B^+ \dequiv |B|-\mu\cL(\cdot,0)$ with $\mu=\frac{2\beta}{1+\beta}<1$, hence has no monotone points a.s.   Thus one can find  $u>A_s^+$ arbitrarily close to $A_s^+$ such that $B^+_u=B^+_{A_s^+}$, i.e. $B_{\alpha_u^+}=B_s=x$. Since $\alpha_u^+ \downarrow s$ as $u\downarrow A_s^+$, it implies that $s$ is a point of increase of $\cL(\cdot,x)$, hence $\cL(t,x)>\cL(\tau_x^r,x)$. Finally, if $x\ge L_t^r$, then $t\le \tau_x^r$ hence $\cL(t,x)\le \cL(\tau^r_x,x)$.

    \item The first statement can be derived from (i) and (iii). For the second one, $\cL(t,x)<\cL(\tau_x^r,x)$ implies $t<\tau_x^r$. By (i), we have $L_t^r\le x$. If $L_t^r=x$, then $L_u^r=x$ for any $u\in [t, \tau_x^r]$. Therefore, $X_u^r\neq 0$ for all $u\in (t,\tau_x^r)$, which means $B$ is making an excursion away from $x$. This contradicts $\cL(t,x)<\cL(\tau_x^r,x)$.
\end{enumerate}
\end{proof}

 Recall that $\mathcal W$ was defined in \eqref{def:W} and that the ${\rm BESQ}( \delta \, |_a \, \delta')$ driven by $\mathcal W$ was defined in Definition \ref{def:vary}. We prove Theorem \ref{thm:representation_by_WN} stated in the introduction. Let $\beta\in(0,1)$, $\delta:=\frac{1-\beta}{\beta}$ and $r\in \r$. We show that the process $(\cL(\tau^r_x,x),\,x\ge r)$ is the  flow line starting at the point $(0,r)$ of the ${\rm BESQ}(2+\delta \, |_0 \, \delta)$ flow driven by $\cW$.

\begin{proof} [Proof of Theorem \ref{thm:representation_by_WN}]
By definition of the ${\rm BESQ}(2+\delta \, |_0 \, \delta)$ flow, we need to show that the process $(\cL(\tau^r_x,x),\,x\ge r)$ satisfies
\begin{align*}
     \cL(\tau^r_x,x) = 2 \int_r^x \cW([0,\cL(\tau^r_y,y)], \dd y)   +\delta (x-r) + 2(r^--x^-),\, x\ge r
\end{align*}
where $z^-:=\max(-z,0)$. We write for any $z\in \r$, $z^+=\max(z,0)$. By Tanaka's formula \cite[Chapter VI, Theorem 1.2]{revuz2013continuous}, we have for $t\ge 0$ and $x\ge r$,
\begin{align*}
    (x-B_t)^+ = x^+ - \int_0^t \ind_{\{B_u< x\}} \dd B_u + \frac{1}{2}\cL(t,x)
\end{align*}
and
\begin{align*}
    (X_t^r)^+ &=  (X_0^r)^+ +\int_0^t \ind_{\{X_u^r> 0\}} \dd X_u^r + \frac{1}{2}\phi^r(t,0)\\
    &=
    r^+ -\int_0^t \ind_{\{X_u^r> 0\}} \dd B_u + \frac{1}{2}\phi^r(t,0)
\end{align*}
where $\phi^r(t,z)$ denotes the local time of $X^r$ at time $t$ and position $z$, taken continuous in $t$ and right-continuous in  $z$.  By \cite{walsh1978diffusion}, equation (8), $\phi^r(t,0)=(1+\beta)\ell_t^r$ in the notation \eqref{def:Ltilde}. By Lemma \ref{l:taurx} (ii), for all $x\ge r$,  $X_t^r=0$  and $B_t=x$ at time $t=\tau^r_x$. Recall \eqref{def:L}. Using the two equations above for such a $t$, we get
\begin{equation*}\label{eq:proofflowline1}
    \cL(\tau_x^r,x) = 2 (r^+ - x^+ ) + 2 \int_0^{\tau_x^r} ( \ind_{\{B_u<x\}} - \ind_{\{X_u^r> 0\}}) \dd B_u + \frac{1+\beta}{\beta} (x-r). 
\end{equation*}

\noindent Since $\frac{1+\beta}{\beta}-2=\delta$, it remains to show that
\begin{equation}\label{eq:proofflowline2}
\int_0^{\tau_x^r} ( \ind_{\{B_u<x\}} - \ind_{\{X_u^r> 0\}}) \dd B_u 
=\int_r^x \cW([0,\cL(\tau^r_y,y)], \dd y). 
\end{equation}

\noindent Having $X_u^r>0$ and $u\le \tau^r_x$ implies by Lemma \ref{l:taurx} (i) that $B_u=L_u^r -X_u^r \le x - X_u^r < x$. Hence, for $u\in [0,\tau^r_x]$,
$$
      \ind_{\{B_u< x\}} - \ind_{\{X_u^r> 0\}} =   \ind_{ \{B_u< x,\, X_u^r\le 0\}}. 
$$

\noindent Notice that if $X_u^r<0$, then $B_u>L_u^r$ by definition of $X^r$ hence $u<\tau^r_{B_u}$ by Lemma \ref{l:taurx} (i) and $B_u>r$ by definition of $L^r$. As long as $u$ is not the start or the end of an excursion of $B$, it also implies that $\cL(u,B_u) < \cL(\tau_{B_u}^r,B_u)$.   Conversely, if $\cL(u,B_u) < \cL(\tau_{B_u}^r,B_u)$, then $u<\tau_{B_u}^r$ hence $X_u^r=L^r_u-B_u\le 0$ by another use of Lemma \ref{l:taurx} (i). So we proved that as long as $u$ is not associated to some excursion of $B$ and $X_u^r \neq 0$, 
\begin{equation*}\label{eq:proofflowline3}
\ind_{\{B_u< x\}} - \ind_{\{X_u^r> 0\}} = \ind_{\{\cL(u,B_u) < \cL(\tau_{B_u}^r,B_u), \, r < B_u < x\}}.
\end{equation*}

\noindent This equation is therefore true almost surely for Lebesgue-a.e. $u$. Equation \eqref{eq:proofflowline2} is then a consequence of the definition of $\cW$ in equation \eqref{def:W} and Proposition 3.2 of \cite{aidekon2024infinite}. 
\end{proof}

\medskip

\begin{corollary}\label{c:embedding+}
    Let $\beta \in(0,1)$ and $\delta:=\frac{1-\beta}{\beta}$. Let $\cS$ be the ${\rm BESQ}(\delta+2 \,|_0 \, \delta)$ flow driven by $\mathcal W$. Fix $r\in \r$. Then $L_t^r=\sup\{x>r\,:\, \cL(t,x) > \cS_{r,x}(0)\}\vee r$.
\end{corollary}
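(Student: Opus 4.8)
The plan is to combine Theorem \ref{thm:representation_by_WN} with part (iii) of Lemma \ref{l:taurx}, which already expresses $L^r_t$ in terms of the comparison between $\cL(t,\cdot)$ and $\cL(\tau^r_\cdot,\cdot)$; the corollary is then obtained simply by rewriting $\cL(\tau^r_x,x)$ as $\cS_{r,x}(0)$.

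First I would record the pointwise identification. Theorem \ref{thm:representation_by_WN} asserts that the process $(\cL(\tau^r_x,x),\, x\ge r)$ coincides with the flow line $\cS_{r,\cdot}(0)$ of the ${\rm BESQ}(2+\delta\,|_0\,\delta)$ flow driven by $\cW$. Since $\cS_{r,\cdot}(0)$ is by Definition \ref{def:BESQflow} the pathwise unique strong solution of the defining SDE, and $(\cL(\tau^r_x,x))_{x\ge r}$ is shown in the proof of Theorem \ref{thm:representation_by_WN} to solve that same equation, the two continuous processes agree almost surely, for all $x\ge r$ simultaneously. Thus, almost surely,
\[
\cL(\tau^r_x,x)=\cS_{r,x}(0)\qquad \text{for all } x\ge r.
\]
I would then substitute this identity into the representation of $L^r_t$ supplied by Lemma \ref{l:taurx} (iii), namely $L^r_t=\sup\{x>r\,:\,\cL(t,x)>\cL(\tau^r_x,x)\}\vee r$. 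Replacing $\cL(\tau^r_x,x)$ by $\cS_{r,x}(0)$ yields exactly $L^r_t=\sup\{x>r\,:\,\cL(t,x)>\cS_{r,x}(0)\}\vee r$, which is the claimed formula (with the convention $\sup\emptyset=-\infty$ handled by the $\vee r$).

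There is essentially no substantive obstacle here: the entire content of the corollary is already carried by Theorem \ref{thm:representation_by_WN} and Lemma \ref{l:taurx} (iii). The only point requiring minor care is to ensure the identity $\cL(\tau^r_x,x)=\cS_{r,x}(0)$ and the conclusion of Lemma \ref{l:taurx} hold on a single almost-sure event valid simultaneously for all $x\ge r$ and $t\ge 0$, rather than for each fixed $x$ separately; this is guaranteed by the continuity in $x$ of both sides together with the fact that Lemma \ref{l:taurx} is stated as an almost-sure assertion holding simultaneously over all $x\ge r$ and $t\ge 0$.
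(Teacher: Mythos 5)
Your proposal is correct and is exactly the paper's own argument: the authors prove this corollary in one line by citing Theorem \ref{thm:representation_by_WN} together with Lemma \ref{l:taurx} (iii), which is precisely the substitution you carry out. Your added remark about the identification $\cL(\tau^r_x,x)=\cS_{r,x}(0)$ holding on a single almost-sure event simultaneously in $x$ is a fair point of care, and it is handled as you say by continuity and the fact that both cited results are stated as almost-sure statements uniform in $x$ and $t$.
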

\begin{proof}
It is a consequence of Theorem \ref{thm:representation_by_WN} and Lemma \ref{l:taurx} (iii). 
\end{proof}

\bigskip

We deal with the case $\beta\in (-1,0)$ by symmetry. In this case, we define for any $r\in \mathbb{R}$
\begin{equation}\label{def:tau beta negative}
    \tau^r_x:=\inf\{ t\ge 0 \,:\,  L_t^r< x\},\, x\le r.
\end{equation}

\noindent Recall that $\cW^*$ denotes the image of $\cW$ by the map $(a,x)\mapsto (a,-x)$.

\begin{proposition}\label{p:embedding-}
Let $\beta\in (-1,0)$, $\delta:=\frac{1-|\beta|}{|\beta|}$ and $r\in \mathbb{R}$. The process $(\cL(\tau^{-r}_{-x},-x),\,x\ge r)$ is the flow line starting at $(0,r)$ of the ${\rm BESQ}(\delta+2\, |_0\,  \delta)$ flow $\cS^*$ driven by $-{\mathcal W}^*$. Moreover,  for any $r\in \mathbb{R}$, a.s.,  $L_t^r=\inf\{x<r\,:\, \cL(t,x) > \cS^*_{-r,-x}(0)\} \land r$ with the convention that $\inf \emptyset=\infty$.
\end{proposition}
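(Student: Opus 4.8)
The plan is to deduce the statement from the case $\beta\in(0,1)$, already established in Theorem \ref{thm:representation_by_WN} and Corollary \ref{c:embedding+}, by means of the reflection $x\mapsto -x$. Fix $\rho\in\r$ and consider $X^\rho$, the skew Brownian motion of parameter $\beta\in(-1,0)$. I would set $\widehat B:=-B$, which is again a standard Brownian motion, and $\widehat X:=-X^\rho$. Since the symmetric local time at $0$ is unchanged under a global sign flip, denoting it by $\ell$ we have $\widehat X_t=-L_t^\rho+B_t=(-\rho)+(-\beta)\ell_t-\widehat B_t$. Thus $\widehat X$ solves \eqref{eq:SDE Xtr}--\eqref{def:L} with parameter $\widehat\beta:=|\beta|\in(0,1)$ and starting value $\widehat\rho:=-\rho$, driven by $\widehat B$, and by strong uniqueness it is the skew Brownian motion with these data; its local-time process is $\widehat L_t=\widehat\rho+\widehat\beta\ell_t=-L_t^\rho$.

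The key step is to identify the white noise $\widehat\cW$ attached to $\widehat B$ through \eqref{def:W}. Writing $\widehat\cL(t,y)$ for the local time of $\widehat B$ at level $y$, one has $\widehat\cL(t,y)=\cL(t,-y)$, so $\widehat\cL(t,\widehat B_t)=\cL(t,B_t)$; combined with $\dd\widehat B_t=-\dd B_t$ this gives $\widehat\cW(g)=-\int_0^\infty g(\cL(t,B_t),-B_t)\,\dd B_t=-\cW^*(g)$, where $\cW^*$ is the image of $\cW$ under $(a,x)\mapsto(a,-x)$. Hence $\widehat\cW=-\cW^*$, which is precisely the noise appearing in the statement. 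I expect this sign bookkeeping (together with the verification $\widehat\cL(t,y)=\cL(t,-y)$ for the bicontinuous versions) to be the only delicate point of the proof.

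It then remains to apply the results for $\widehat\beta\in(0,1)$ and to change variables back. By Theorem \ref{thm:representation_by_WN} applied to $\widehat X$, the process $(\widehat\cL(\widehat\tau_x^{\widehat\rho},x),\,x\ge\widehat\rho)$ is the flow line starting at $(0,\widehat\rho)$ of the ${\rm BESQ}(2+\delta\,|_0\,\delta)$ flow $\cS^*$ driven by $-\cW^*$, where $\widehat\tau_x^{\widehat\rho}=\inf\{t:\widehat L_t>x\}$. Using $\widehat L_t=-L_t^\rho$ one checks $\widehat\tau_x^{\widehat\rho}=\inf\{t:L_t^\rho<-x\}=\tau^\rho_{-x}$ in the notation \eqref{def:tau beta negative}, and $\widehat\cL(\widehat\tau_x^{\widehat\rho},x)=\cL(\tau^\rho_{-x},-x)$; taking $\rho=-r$ yields the first assertion. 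For the second, I would apply Corollary \ref{c:embedding+} to $\widehat X$, giving $\widehat L_t=\sup\{x>\widehat\rho:\widehat\cL(t,x)>\cS^*_{\widehat\rho,x}(0)\}\vee\widehat\rho$. Substituting $\widehat L_t=-L_t^\rho$, $\widehat\cL(t,x)=\cL(t,-x)$ and then setting $y=-x$ turns this supremum into $L_t^\rho=\inf\{y<\rho:\cL(t,y)>\cS^*_{-\rho,-y}(0)\}\wedge\rho$, the convention $\sup\emptyset=-\infty$ becoming $\inf\emptyset=+\infty$. Setting $\rho=r$ completes the proof.
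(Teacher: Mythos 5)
Your proposal is correct and takes essentially the same route as the paper's proof: both identify $-\cW^*$ as the white noise of \eqref{def:W} associated with the reflected Brownian motion $-B$, and then apply Theorem \ref{thm:representation_by_WN} and Corollary \ref{c:embedding+} to the skew Brownian motion of parameter $|\beta|$ driven by $-B$. Your write-up merely makes explicit the bookkeeping (invariance of the symmetric local time under sign flip, $\widehat L_t=-L_t^\rho$, $\widehat{\cL}(t,y)=\cL(t,-y)$, and the change of variables $y=-x$) that the paper leaves to the reader.
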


\begin{proof}
We notice that for any $g\in L^2(\r_+\times\r)$ with compact support,
\[
    \cW^*(g) = \int_{\r_+\times\r} g(\ell,-x) \cW(\dd \ell,\dd x)
    = \int_0^{+\infty} g(\cL(t,B_t),-B_t)\dd B_t.
\]
Therefore $-\cW^*$  admits the representation  \eqref{def:W} when replacing $B$ with $-B$. The result is then a consequence of Theorem \ref{thm:representation_by_WN} and Corollary \ref{c:embedding+} applied to $-B$.
\end{proof}

\subsection{Meeting of skew Brownian motions}

This section is devoted to the proof of Theorem \ref{thm: skewBM meeting}. Recall that $X^r$ and $\widehat X$ are the solutions of \eqref{eq:SDE Xtr}, associated to the parameter $\beta\in (-1,0)\cup (0,1)$ and the fixed parameter $\widehat{\beta} \in (0,1)$ respectively and starting from $r$ and $0$. 
We defined $\widehat L_t:=\widehat X_t+B_t$, as in \eqref{def:L}. Let $Y$ be the flow line starting at $(0,0)$ of the ${\rm BESQ}^{\deltahat}$ flow driven by $\cW$, with $\deltahat:=\frac{1-\widehat{\beta}}{\widehat{\beta}}$. Finally, for $x\ge 0$, $\widehat \tau_x:=\inf\{t\ge 0\,:\, \widehat L_t>x\}$ is the inverse local time at $0$ of $\widehat X$. Recall from the introduction that  $T(r)$ is the first meeting time of the processes $X^r$ and $\widehat X$ as defined in \eqref{def:Tbetax}. Theorem \ref{thm: skewBM meeting} (i)-(ii) and (iii) are consequences of  Propositions \ref{description:U_beta>0} and \ref{description:U_beta<0} respectively. \\

We first treat the case $\beta\in(0,1)$. Write $\cS$ for the ${\rm BESQ}(2+\delta\,|_0 \, \delta)$ flow driven by $\cW$, with $\delta:=\frac{1-\beta}{\beta}$.  We let $\widehat{L}_{T(r)}:=\infty$ if $T(r)=\infty$ and as usual $\inf \emptyset =\infty$ by convention.

\begin{proposition}\label{description:U_beta>0}
 Let $\beta\in (0,1)$.  For any $r\in \r$, a.s.,
\begin{equation}\label{eq:U}
   \widehat{L}_{T(r)}=\inf\{x\ge \max(r,0)\,: \, \cS_{r,x}(0)=Y_x \}. 
\end{equation}
In particular, Theorem \ref{thm: skewBM meeting} (i)-(ii) hold by Theorems \ref{thm:U_process} and \ref{thm:U_process-}.

\end{proposition}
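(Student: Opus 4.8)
The plan is to transfer the identity \eqref{eq:U} from the skew Brownian motions to their local times, where both sides become statements about the inverse local time clocks $\tau^r_x$ and $\widehat\tau_x$, read off through Theorem \ref{thm:representation_by_WN}. First I would record two dictionary entries. By Theorem \ref{thm:representation_by_WN}, $\cS_{r,x}(0)=\cL(\tau^r_x,x)$ for $x\ge r$. Applying the same theorem with $\widehat\beta$ in place of $\beta$ (the white noise $\cW$ does not depend on the skewness parameter), $(\cL(\widehat\tau_x,x),\,x\ge0)$ is the flow line from $(0,0)$ of the ${\rm BESQ}(2+\deltahat\,|_0\,\deltahat)$ flow driven by $\cW$; since it starts at level $0$ it solves, for $x\ge0$, the same SDE with drift $\deltahat$ as the ${\rm BESQ}^{\deltahat}$ flow line $Y$ driven by $\cW$, so pathwise uniqueness gives $Y_x=\cL(\widehat\tau_x,x)$ for $x\ge0$. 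Because $X^r_t=\widehat X_t\iff L^r_t=\widehat L_t$, we also have $T(r)=\inf\{t\ge0:L^r_t=\widehat L_t\}$, with common value $x^*:=L^r_{T(r)}=\widehat L_{T(r)}$ by continuity ($x^*=\infty$ if $T(r)=\infty$). Finally, using Lemma \ref{l:taurx}(iv) for both $X^r$ and $\widehat X$, the three cases $\tau^r_x<\widehat\tau_x$, $\tau^r_x=\widehat\tau_x$, $\tau^r_x>\widehat\tau_x$ give respectively $\cS_{r,x}(0)<Y_x$, $\cS_{r,x}(0)=Y_x$, $\cS_{r,x}(0)>Y_x$. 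Hence $\cS_{r,x}(0)=Y_x\iff\tau^r_x=\widehat\tau_x$, and \eqref{eq:U} reduces to showing $\inf\{x\ge\max(r,0):\tau^r_x=\widehat\tau_x\}=x^*$.

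For the lower bound $U(r)\ge x^*$, I would fix $\max(r,0)\le x<x^*$. Since $L^r_{T(r)}=\widehat L_{T(r)}=x^*>x$ and both local times are continuous and nondecreasing, $\tau^r_x<T(r)$ and $\widehat\tau_x<T(r)$. If they coincided at some $s$, then Lemma \ref{l:taurx}(i) would give $L^r_s=x=\widehat L_s$, whence $X^r_s=x-B_s=\widehat X_s$ with $s<T(r)$, contradicting the definition of $T(r)$. Thus $\tau^r_x\ne\widehat\tau_x$, so $\cS_{r,x}(0)\ne Y_x$ for every such $x$. The same reasoning, valid for all finite $x$ when $T(r)=\infty$ (as $L^r_\infty=\widehat L_\infty=\infty$), shows the flow lines never meet in that case, so both sides of \eqref{eq:U} equal $\infty$.

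For the upper bound I would establish $\tau^r_{x^*}=\widehat\tau_{x^*}$ when $T(r)<\infty$, which I expect to be the main obstacle: one must locate the correct zero without invoking full coalescence, which may genuinely fail for $\beta\ne\widehat\beta$ once both processes sit at $0$. I would split according to the meeting value. If $X^r_{T(r)}=\widehat X_{T(r)}=0$, then, the zero set of a skew Brownian motion having no isolated points, both $L^r$ and $\widehat L$ strictly increase immediately after $T(r)$, so $\tau^r_{x^*}=\widehat\tau_{x^*}=T(r)$. If instead $v_0:=X^r_{T(r)}=\widehat X_{T(r)}\ne0$, then while nonzero both processes satisfy $\mathrm dX=-\mathrm dB$, hence stay equal and reach $0$ simultaneously at $s_1:=\inf\{t>T(r):X^r_t=0\}$; on $(T(r),s_1)$ neither local time moves, so $L^r_{s_1}=\widehat L_{s_1}=x^*$, and both increase immediately after $s_1$, giving $\tau^r_{x^*}=\widehat\tau_{x^*}=s_1$. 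In either case $\cS_{r,x^*}(0)=Y_{x^*}$; since $x^*=\widehat L_{T(r)}\ge0$ and $x^*=L^r_{T(r)}\ge r$, we have $x^*\ge\max(r,0)$, so $U(r)\le x^*$. Together with the lower bound this proves \eqref{eq:U}.

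It then remains to match the right-hand side of \eqref{eq:U} with the processes of Section \ref{s:meeting}. For part (i) I take $r\ge0$, where the flow lines $\cS_{r,\cdot}(0)$ coincide with those of a ${\rm BESQ}^{\delta}$ flow and $Y$ is the ${\rm BESQ}^{\deltahat}$ line from $(0,0)$: this is exactly the setting of Theorem \ref{thm:U_process} with $z=0$. For part (ii) I take $-r\le0$, so $\cS_{-r,\cdot}(0)$ runs with drift $2+\delta$ below level $0$ and $\delta$ above, matching Theorem \ref{thm:U_process-} with $\delta'=2+\delta$ and $z=0$. Under the respective hypotheses $\deltahat<\delta+2$ and $\delta<\deltahat+2$, these theorems identify $(U(r))_{r\ge0}$ and $(U(-r))_{r\ge0}$ with the processes of Theorems \ref{thm: main left} and \ref{thm: main right}, yielding Theorem \ref{thm: skewBM meeting}(i)--(ii).
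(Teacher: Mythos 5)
Your overall architecture is sound and largely parallels the paper's: the dictionary $\cS_{r,x}(0)=\cL(\tau^r_x,x)$, $Y_x=\cL(\widehat\tau_x,x)$, the equivalence $\cS_{r,x}(0)=Y_x\Leftrightarrow\tau^r_x=\widehat\tau_x$ via Lemma \ref{l:taurx} (iv), and your lower bound (no flow-line meeting below $\widehat L_{T(r)}$, since a common value of the two inverse local times would force a meeting of the skew Brownian motions strictly before $T(r)$) are exactly the paper's ingredients, the last one being its argument read in contrapositive form. The identification with Theorems \ref{thm:U_process} and \ref{thm:U_process-} at the end is also correct.

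The gap is in your upper bound. You assert that $L^r$ and $\widehat L$ strictly increase immediately after $T(r)$ because ``the zero set of a skew Brownian motion has no isolated points.'' That justification does not hold up: a non-isolated point of the zero set can perfectly well be the left endpoint of an excursion interval, and immediately after such a zero the local time stays constant for a positive amount of time; perfectness of the zero set says nothing about accumulation of zeros from the right at a \emph{given} time. The claim itself is true, but what you must invoke is that $T(r)$ is a stopping time (hitting time of $0$ by the continuous adapted process $X^r-\widehat X$): on $\{X^r_{T(r)}=0\}$, the strong Markov property makes $(X^r_{T(r)+s})_{s\ge 0}$ a skew Brownian motion started from $0$, whose local time at $0$ increases immediately a.s.\ since $0$ is a regular point, and likewise for $\widehat X$. (Your second case, a meeting at a nonzero value, is in fact vacuous: $X^r-\widehat X$ has finite variation and moves only when one of the two processes sits at $0$, so the first meeting necessarily occurs at $0$; keeping the case does no harm.) Note that the paper sidesteps this delicate point entirely: by Lemma \ref{l:taurx} (iii) (equivalently Corollary \ref{c:embedding+}), $\cL(s,L^r_s)=\cS_{r,L^r_s}(0)$ and $\cL(s,\widehat L_s)=Y_{\widehat L_s}$ for \emph{every} $s$; evaluating at $s=T(r)$, where $L^r_{T(r)}=\widehat L_{T(r)}$, gives directly $\cS_{r,\widehat L_{T(r)}}(0)=Y_{\widehat L_{T(r)}}$, i.e.\ the flow lines meet at level $\widehat L_{T(r)}$, with no need to control inverse local times after $T(r)$. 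You should either adopt that route or repair your argument with the strong Markov property as indicated.
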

\begin{proof}
Let $U$ denote the RHS of \eqref{eq:U}. Suppose that $X_t^{r}=\widehat X_t$ for some $t\ge 0$. By \eqref{eq:SDE Xtr}, we would have $L_t^{r}=\widehat L_t$. By Corollary \ref{c:embedding+} or Lemma \ref{l:taurx} (iii), we have for any $s\ge 0$,  ${\mathcal L}(s,L_s^{r})=\cS_{r,L_s^{r}}(0)$ and ${\mathcal L}(s,\widehat L_s)=Y_{\widehat L_s}$. It implies that $\cS_{r,\widehat L_t}(0)=Y_{\widehat L_t}$, therefore $\widehat L_t\ge U$.  Applying it to $t=T(r)$ in case it is finite, we get $\widehat{L}_{T(r)}\ge U$.
Suppose now that $U<\infty$. Then $\cS_{r,U}(0)=Y_U$, hence using Theorem \ref{thm:representation_by_WN}, $\cL(\tau_U^{r},U)=\cL(\widehat \tau_U, U)$. Lemma \ref{l:taurx} (iv) implies that $\tau_U^{r}=\widehat \tau_U$ and at this time $X^{r}$ and $\widehat X$ are both at position 0 by Lemma \ref{l:taurx} (ii). In particular, $T(r)\le \widehat \tau_U$ hence $\widehat{L}_{T(r)}\le U$  by Lemma \ref{l:taurx} (i).

\end{proof}

We treat the case $\beta\in(-1,0)$. From \eqref{eq:SDE Xtr}, observe that $T(r)=\inf\{ t\geq 0: L_t^{r}=\widehat{L}_t\}=\inf \{t\geq 0: \widehat{\beta}\,  \widehat{\ell}_t - \beta \ell_t^{r}= r\}$ where $\widehat{\ell}$ is the symmetrized local time at $0$ of $\widehat{X}$. It is then finite a.s. if $r\ge 0$ and infinite a.s. if $r<0$. 
Recall from \eqref{def:tau beta negative} that when $\beta<0$,
$$\tau^{r}_x:=\inf\{t\ge 0: L^{r}_t<x\},\ x\le r.$$
 Let $\cS$ be the killed $\rm BESQ(2-\delta\,|_0 \, -\delta)$ flow driven by $\cW$ and $\cS^*$ be its dual flow. Note that $\cS^*$ is a non-killed $\rm BESQ(2+\delta\,|_0 \, \delta)$ flow driven by $-\cW^*$ by Proposition \ref{p:BESQdual vary}. 

\begin{proposition}\label{description:U_beta<0}
Let $\beta\in(-1,0)$ and $\delta=\frac{1-|\beta|}{|\beta|}$. For any $r\ge 0$, a.s.
\begin{equation}\label{eq:U beta neg}
    \widehat{L}_{T(r)}=-\inf\{x\in [-r,0]\,:\, \cS^*_{-r,x}(0)=Y_{-x}\}.
\end{equation}

\noindent As a result, Theorem \ref{thm: skewBM meeting} (iii) holds (take $(\cS^*,Y,0)$ in place of $(\cS,Y^*,z)$ in Section \ref{s:meeting*}).
\end{proposition}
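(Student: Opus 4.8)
The plan is to mirror the proof of Proposition \ref{description:U_beta>0}, with two changes forced by the sign of $\beta$: the embedding of $X^r$ is now the one supplied by Proposition \ref{p:embedding-} rather than Corollary \ref{c:embedding+}, and the monotonicity that closes the argument is that of $L^r$ (which is now non-increasing) instead of that of $\widehat L$. First I would record the two embeddings at the level of local times. Applying Theorem \ref{thm:representation_by_WN} to $\widehat X$ (of parameter $\widehat\beta\in(0,1)$, started at $0$) gives $Y_y=\cL(\widehat{\tau}_y,y)$ for $y\ge 0$ — here I use that a ${\rm BESQ}(2+\deltahat\,|_0\,\deltahat)$ flow line emanating from $(0,0)$ is a ${\rm BESQ}^{\deltahat}$ flow line, so it coincides with $Y$ — together with all four statements of Lemma \ref{l:taurx} for $\widehat X$. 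For $X^r$ I would invoke Proposition \ref{p:embedding-}, which gives $\cS^*_{-r,-y}(0)=\cL(\tau^r_y,y)$ for $y\le r$; the matching boundary, hitting, and monotonicity facts (the $\beta<0$ analogues of Lemma \ref{l:taurx}) I would derive by the same reflection used to prove Proposition \ref{p:embedding-}, namely that $-X^r$ is a skew Brownian motion of parameter $|\beta|\in(0,1)$ driven by $-B$, started at $-r$, with local time process $-L^r$. Translating Lemma \ref{l:taurx} through the correspondences $\cL^{-B}(t,\cdot)=\cL(t,-\cdot)$ and $\widetilde{\tau}_x=\tau^r_{-x}$ produces, for $y\le r$: the boundary equality $\cL(t,L^r_t)=\cL(\tau^r_{L^r_t},L^r_t)$; the identities $L^r_{\tau^r_y}=y$, $X^r_{\tau^r_y}=0$, $B_{\tau^r_y}=y$; and the characterization $\cL(t,y)>\cL(\tau^r_y,y)\iff t>\tau^r_y$.

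Substituting $y=-x$ and using both embeddings, the right-hand side of \eqref{eq:U beta neg} becomes $W:=\sup\{y\in[0,r]:\cL(\tau^r_y,y)=\cL(\widehat{\tau}_y,y)\}$, which is a.s.\ finite: exactly as in Section \ref{s:meeting*} with $z=0$ the meeting set is non-empty (since $\cS^*_{-r,-r}(0)=0\le Y_r$ and $\cS^*_{-r,0}(0)\ge 0=Y_0$) and, by continuity of the flow lines $\cS^*_{-r,\cdot}(0)$ and $Y$, closed, so the supremum is attained. Recall also that $T(r)<\infty$ a.s.\ for $r\ge 0$. For the inequality $\widehat L_{T(r)}\le W$ I would evaluate the embeddings at $t=T(r)$: since $X^r_{T(r)}=\widehat X_{T(r)}$ forces $L^r_{T(r)}=\widehat L_{T(r)}=:m$, the two boundary equalities give $\cL(\tau^r_m,m)=\cL(T(r),m)=\cL(\widehat{\tau}_m,m)$, so $m$ belongs to the set defining $W$ and $m\le W$.

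For the reverse inequality $\widehat L_{T(r)}\ge W$ I would use that the supremum is attained, i.e.\ $\cL(\tau^r_W,W)=\cL(\widehat{\tau}_W,W)$. Applying the characterization once for $\widehat X$ (with $t=\tau^r_W$, $y=W$) and once for $X^r$ (with $t=\widehat{\tau}_W$, $y=W$) forces $\tau^r_W=\widehat{\tau}_W=:t_0$; at $t_0$ both processes sit at $0$, so $t_0$ is a meeting time and $T(r)\le t_0$, while $L^r_{t_0}=L^r_{\tau^r_W}=W$. The decisive point is that $L^r_t=r+\beta\ell^r_t$ is non-increasing because $\beta<0$, so $T(r)\le t_0$ yields $\widehat L_{T(r)}=L^r_{T(r)}\ge L^r_{t_0}=W$. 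Combining the two inequalities gives \eqref{eq:U beta neg}. I expect the main obstacle to lie precisely in this second half: establishing the $\beta<0$ analogues of Lemma \ref{l:taurx} under the reflection (tracking which inequalities reverse and the level/time dictionary $y=-x$), and then cleanly converting the \emph{time} inequality $T(r)\le t_0$ into the \emph{level} inequality via monotonicity of $L^r$.

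The distributional statement then follows by recognizing the right-hand side of \eqref{eq:U beta neg} as $-V(-r)$, with $V$ the functional of Section \ref{s:meeting*} built from the forward flow $\cS^*$, the dual line $Y$, and $z=0$. On the relevant range $[-r,0]$ the flow $\cS^*$ has dimension $2+\delta$, and the line $Y$, emanating from $(0,0)$, is the ${\rm BESQ}^{\deltahat}$ flow line that plays the role of $Y^*$ there; hence Theorem \ref{thm:U_process*} (equivalently Theorem \ref{thm: main dual}) applies with $2+\delta$ in place of $\delta$, giving that $(\widehat L_{T(r)},\,r\ge 0)$ is distributed as $(-V(-r),\,r\ge 0)$, which is Theorem \ref{thm: skewBM meeting} (iii).
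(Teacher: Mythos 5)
Your proposal is correct, and while its first half coincides with the paper's, the decisive second half follows a genuinely different route. Both arguments use the embeddings (Proposition \ref{p:embedding-} for $X^r$, Corollary \ref{c:embedding+} for $\widehat X$) to show that $\widehat L_{T(r)}$ lies in the meeting set $\{y\in[0,r]:\cS^*_{-r,-y}(0)=Y_y\}$. For the converse, the paper stays in the BESQ-flow picture and proves that the meeting level is \emph{unique} in $(0,r]$: writing $\cY$ for the ${\rm BESQ}^{\deltahat}$ flow and $\cY^*$ for its dual, it combines the comparison $\cS^*\ge \cY^*$ below level $0$ (Proposition \ref{p:comparison}) with the non-crossing of forward and dual lines (Proposition \ref{p:properties dual}) and the perfect flow property to show that a meeting at $x$ excludes meetings at all $x'\in(0,x)$; uniqueness plus membership then yields \eqref{eq:U beta neg}. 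You instead transplant the paper's own $\beta>0$ proof (Proposition \ref{description:U_beta>0}) across the reflection $-X^r$: take the attained supremum $W$ of the meeting set, use the reflected Lemma \ref{l:taurx} (iv) in both directions to force $\tau^r_W=\widehat\tau_W=:t_0$, observe that both processes vanish at $t_0$ so that $T(r)\le t_0$, and convert this time bound into the level bound $\widehat L_{T(r)}=L^r_{T(r)}\ge L^r_{t_0}=W$ using that $L^r$ is non-increasing when $\beta<0$ (the correct replacement for the monotone increase of $\widehat L$ used when $\beta>0$). Your route is more elementary: it needs only the local-time facts of Lemma \ref{l:taurx}, transported by exactly the reflection the paper already uses to prove Proposition \ref{p:embedding-}, plus attainment of the supremum (non-emptiness by the intermediate value theorem, closedness by continuity), and it treats the two signs of $\beta$ in a unified way. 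The paper's route buys the stronger structural statement that the meeting level is unique (so the infimum in \eqref{eq:U beta neg} could equally be taken as a supremum) and avoids restating Lemma \ref{l:taurx} for negative $\beta$. The concluding distributional identification with $(-V(-r))_{r\ge 0}$ of Section \ref{s:meeting*} (parameters $(2+\delta,\deltahat)$, $z=0$, via Theorem \ref{thm:U_process*}) is handled at the same level of detail in both.
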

\begin{proof}
 Fix $r> 0$. By Proposition \ref{p:embedding-}, $(\cL(\tau^r_x,x),\,x\le r)$ is the flow line $\cS^*_{-r,-x}(0)$. 
  For $s\ge 0$, $\cL(s,L^{r}_s)=\cS^*_{-r,-L^{r}_s}(0)$ and $\cL(s,\widehat{L}_s)=Y_{\widehat{L}_s}$ by Proposition \ref{p:embedding-} and Corollary \ref{c:embedding+} respectively.
   Then \eqref{eq:SDE Xtr} implies that $\cS^*_{-r,-\widehat{L}_t}(0)=Y_{\widehat{L}_t}$ when $t=T(r)$. 
   If we show that $\cS^*_{-r,-x}(0)= Y_x$ for a unique $x\in [0,r]$, then  \eqref{eq:U beta neg} is proved. Let  ${\mathcal Y}$ be the ${\rm BESQ}^{\deltahat}$ flow driven by $\cW$ and  ${\mathcal Y^*}$ be the dual flow of ${\mathcal Y}$, i.e. a killed ${\rm BESQ}^{2-\deltahat}$ flow. Observe that $Y_x={\mathcal Y}_{0,x}(0)$ and  $2-\deltahat<2+\delta$.
   By the comparison principle in Proposition \ref{p:comparison}, $\cS^*\ge {\mathcal Y}^*$ below level $0$. 
   By Proposition \ref{p:properties dual} (i) 
   applied to ${\mathcal Y}^*$ in place of $\cS$ between level 
   $-x<0$ and $0$, ${\mathcal Y}^*_{-x,0}(Y_x)>0$.
   By Proposition \ref{p:properties dual} (ii) applied to ${\mathcal Y}$ in place of $\cS$ with $b=Y_x$ and $a'=0<a={\mathcal Y}^*_{-x,0}(Y_x)$, we have for $x'\in (0,x)$, $Y_{x'}<{\mathcal Y}^*_{-x,-x'}(Y_x)$ except if ${\mathcal Y}^*_{-x,-x'}(Y_x)=0$. 
   Since $\cS^*$ is a ${\rm BESQ}^{2+\delta}$ flow below $0$, $\cS^*_{-x,-x'}(a)>0$ for all $x' \in [0,x)$ and $a\ge 0$. We deduce from $\cS^*\ge \cY^*$ that $\cS^*_{-x,-x'}(Y_x)>Y_{x'}$ for all $x'\in(0,x)$. Hence if $\cS^*_{-r,-x}(0)=Y_x$ for some $x\in (0,r]$, then $\cS^*_{-r,-x'}(0)= \cS^*_{-x,-x'}(Y_x)>Y_{x'}$ for all $x'\in (0,x)$, where we used the perfect flow property of $\cS^*$. It finishes the proof of the claim. 
\end{proof}

\subsection{Ray-Knight theorems}

This section is devoted to the proof of Theorem \ref{thm: skewBM RK} (i) and (ii) which are consequences of Proposition \ref{p:skewBM RK i} and \ref{p:skewBM RK ii} respectively. Recall the SDE \eqref{eq:SDE Xtr} and  the notation \eqref{def:taurx}.  Fix $\beta\in(0,1)$ and let $\delta=\frac{1-\beta}{\beta}$. 
As in the last section, we make use of the embedding of the skew Brownian flow in a BESQ flow.
For $z\ge 0$, let $Y$ be the ${\rm BESQ}(\delta\,|_z\, 0)$ flow line starting from $(0,0)$ driven by the white noise $\cW$ defined in  \eqref{def:W}.  By the Ray--Knight theorem recalled at the beginning of Section \ref{s:repre_skew_BM} and  Theorem \ref{thm:representation_by_WN}, 
we have a.s.
\[
Y=(\cL(\tau^0_{\min(z,x)}, x),\, x\ge 0).
\]

\noindent In the following proposition, $\cS$ denotes the ${\rm BESQ}(2+\delta\, |_0\, \delta)$ flow driven by $\cW$. It recovers the distribution of  $(L_{\tau^0_z}^r,\, r\ge 0)$ computed in \cite[Theorem 1.2]{burdzy2001local}.

\begin{proposition}\label{p:skewBM RK i}
For any $r\ge 0$, a.s.
\begin{equation}
    L_{\tau^0_z}^r = \inf \{ x\ge \max(r,z): \cS_{r,x}(0)= Y_x\}.
\end{equation}

\noindent As a result, Theorem \ref{thm: skewBM RK} (i) holds (take $(\cS, Y, z)$ in place of $(\cS,Y^0,z)$ in Section \ref{s:meetingL}).
\end{proposition}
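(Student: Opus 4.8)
The plan is to fix $z>0$ and $r\ge 0$, set $s_*:=\tau^0_z$ and $x_*:=L^r_{s_*}$, and show directly that $x_*$ coincides with $U:=\inf\{x\ge \max(r,z)\,:\,\cS_{r,x}(0)=Y_x\}$, the right-hand side of the claimed identity. The two facts I would rely on throughout are the embeddings of this section: on the one hand $\cS_{r,x}(0)=\cL(\tau^r_x,x)$ by Theorem \ref{thm:representation_by_WN}, and on the other hand $Y_x=\cL(\tau^0_{\min(z,x)},x)$, so that $Y_x=\cL(s_*,x)$ for every $x\ge z$. The argument is close in spirit to the proof of Proposition \ref{description:U_beta>0}, the new feature being that $Y$ is "frozen" at the time $s_*$ beyond level $z$.

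First I would check that $x_*\ge \max(r,z)$. The bound $x_*=r+\beta\ell^r_{s_*}\ge r$ is immediate since $\beta>0$. For $x_*\ge z$ I would use the monotonicity $L^r\ge L^0$ of the skew Brownian flow for $r\ge 0$ (equivalently $X^r\ge X^0$, the flow being coalescing and non-crossing), which gives $x_*=L^r_{s_*}\ge L^0_{s_*}=z$ by Lemma \ref{l:taurx}(i). Should one prefer a self-contained argument, the same inequality follows from Corollary \ref{c:embedding+} together with the ordering $\cS_{r,\cdot}(0)\le \cS_{0,\cdot}(0)$ of the flow lines issued from $(0,r)$ and $(0,0)$, which holds by the coalescence property of Proposition \ref{p:perfect}(ii).

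The heart of the proof is to compare $\cL(s_*,x)$ with $\cS_{r,x}(0)=\cL(\tau^r_x,x)$ via Lemma \ref{l:taurx}(iv), and this is where I expect the only real care to be needed. For $x\in[\max(r,z),x_*)$, continuity of $L^r$ together with $L^r_{s_*}=x_*>x$ gives $\tau^r_x<s_*$, whence $\cL(s_*,x)>\cL(\tau^r_x,x)=\cS_{r,x}(0)$ by Lemma \ref{l:taurx}(iv); since $x\ge z$ we have $Y_x=\cL(s_*,x)$, so $Y_x>\cS_{r,x}(0)$ and no meeting occurs on $[\max(r,z),x_*)$. At $x=x_*$ one has $s_*\le \tau^r_{x_*}$ (because $L^r_{s_*}=x_*$), so Lemma \ref{l:taurx}(iv) forbids $\cL(s_*,x_*)>\cS_{r,x_*}(0)$; and $L^r_{s_*}=x_*$ is not strictly below $x_*$, so the same lemma forbids $\cL(s_*,x_*)<\cS_{r,x_*}(0)$. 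Hence $\cS_{r,x_*}(0)=\cL(s_*,x_*)=Y_{x_*}$, a meeting at $x_*$. Combining the two points yields $U=x_*=L^r_{s_*}$, the asserted identity. The delicate point is precisely this boundary equality at $x_*$, which must also cover the case where $L^r$ has a flat piece at $s_*$ (i.e. $s_*<\tau^r_{x_*}$); it is handled cleanly by Lemma \ref{l:taurx}(iv).

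Finally, the distributional statement of Theorem \ref{thm: skewBM RK}(i) follows by identification. Since $r\ge 0$ and $x\ge \max(r,z)\ge 0$, only the region $x\ge 0$ is relevant, on which the ${\rm BESQ}(2+\delta\,|_0\,\delta)$ flow $\cS$ is a ${\rm BESQ}^\delta$ flow and $Y$ is the ${\rm BESQ}(\delta\,|_z\,0)$ flow line from $(0,0)$; these are exactly the objects $(\cS,Y^0)$ of Section \ref{s:meetingL} with $\deltahat=0$. Thus $(L^r_{\tau^0_z})_{r\ge 0}=(U(r))_{r\ge 0}$ is the process studied in Theorem \ref{thm:U_process}, and since here $U(0)=\inf\{x\ge z:\cS_{0,x}(0)=Y_x\}=z$ deterministically (because $Y$ and $\cS_{0,\cdot}(0)$ agree up to level $z$ by pathwise uniqueness), it is the process of Theorem \ref{thm: main left} with $\deltahat=0$ conditioned on $U(0)=z$, as claimed.
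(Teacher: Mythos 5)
Your proof is correct and follows essentially the same route as the paper's: both rely on the embeddings $\cS_{r,x}(0)=\cL(\tau^r_x,x)$ and $Y_x=\cL(\tau^0_z,x)$ for $x\ge z$, the coalescence ordering $L^r\ge L^0$, and Lemma \ref{l:taurx} to identify $L^r_{\tau^0_z}$ as the first meeting level at or above $\max(r,z)$. The only cosmetic difference is that the paper handles the lower bound by showing any meeting level $x$ satisfies $x\ge L^r_{\tau^0_z}$ via Lemma \ref{l:taurx}(iii), while you exclude meetings on $[\max(r,z),L^r_{\tau^0_z})$ via the equivalence in Lemma \ref{l:taurx}(iv) — an interchangeable use of the same lemma.
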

\begin{proof}
 Recall that the skew Brownian flow is coalescent, hence $X^r\ge X^0$ if $r\ge 0$ and by \eqref{eq:SDE Xtr} $L^r\ge L^0$. It implies that $\tau^0_x \ge \tau^r_x$, hence by Lemma \ref{l:taurx} (i), $L_{\tau^0_x}^r \ge x$ for any $x\ge 0$. By Theorem \ref{thm:representation_by_WN}, $\cS_{r,x}(0)=\cL(\tau^r_x,x)$. 
 Setting $x=L_{\tau^0_z}^r$, we have $\tau_x^r\ge \tau^0_z$. Necessarily $x\ge \max(r,z)$ and $\cS_{r,x}(0)\ge \cL(\tau^0_z,x)=Y_x$. Lemma \ref{l:taurx} (iv) says that $\cL(t,x)<\cL(\tau_x^r,x)$ implies $L_t^r<x$. Applying it to $t=\tau^0_z$, we deduce that $\cL(\tau^0_z,x)\ge \cL(\tau_x^r,x)$, i.e. $Y_x\ge \cS_{r,x}(0)$. We proved that $\cS_{r,x}(0)=Y_x$.

 Conversely, let $x\ge \max(r,z)$ such that $\cS_{r,x}(0)= Y_x$ and let us show that $x\ge L_{\tau^0_z}^r$. We have by assumption $\cL(\tau_x^r,x)=\cL(\tau^0_z,x)$. Lemma \ref{l:taurx} (iii) shows that $\cL(t,x)>\cL(\tau_x^r,x)$ if $x<L_t^r$. We apply it to $t=\tau^0_z$ to complete the proof.

\end{proof}

The next proposition gives the distribution of the process  $(L_{\tau^0_z}^{-r},\, r\ge 0)$. We keep the notation $\cS$ for the ${\rm BESQ}(2+\delta\, |_0\, \delta)$ flow driven by $\cW$. Let $\cShat$ be the ${\rm BESQ}(2\, |_0\, 0)$ flow driven by $\cW$. For any $x\ge r$ and $b\ge 0$, we let, with the convention $\cS_{0,y}(0)=\cShat_{0,y}=0$ if $y<0$,  
\begin{align*}
    \cS^+_{r,x}(b) &:=\cS_{r,x}(b+\cS_{0,r})-\cS_{0,x}(0),\\
    \cShat^+_{r,x}(b) &:= \max(\cShat_{r,x}(b+\cS_{0,r}(0))-\cS_{0,x}(0),0).
\end{align*}

\noindent It is in agreement with the notation of Proposition \ref{p:difference}. Indeed notice that  $\cShat_{r,x}(b+\cS_{0,r}(0))\le \cS_{0,x}(0)$  for $x\ge \inf\{y>r\,:\, \cShat_{r,y}(b+\cS_{0,r}(0))\le \cS_{0,y}(0)\}$ by the comparison principle in Proposition \ref{p:comparison} and the perfect flow property of $\cS$ and $\cShat$. By Proposition \ref{p:difference}, $\cS^+$ and $\cShat^+$ are respectively a ${\rm BESQ}(2+\delta\,|_0\, 0)$ flow and a ${\rm BESQ}(2\,|_0\, -\delta)$ flow driven by the same white noise.
\begin{proposition}\label{p:skewBM RK ii}
    For any $r\ge 0$, a.s.
    \begin{equation}\label{eq: RK L(-x)}
        L^{-r}_{\tau^0_z} = \inf\{x\in [-r,z)\,:\, \cShat^+_{x,z}\circ \cS_{-r,x}^+(0)>0\} \land z.
    \end{equation}
    As a result,  $(L^{-r}_{\tau^0_z},\, r\ge 0)$ is distributed as the process $(V(-r),\, r\ge 0)$ of Section \ref{s:meeting*} with $(2+\delta,0)$ in place of $(\delta,\deltahat)$ and Theorem \ref{thm: skewBM RK} (ii) holds. 
\end{proposition}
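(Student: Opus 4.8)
The plan is to prove the pathwise identity \eqref{eq: RK L(-x)} by reducing the event $\{\cShat^+_{x,z}\circ\cS^+_{-r,x}(0)>0\}$ to a statement about the Brownian local time, and then to deduce the distributional statement by matching the pair $(\cS^+,\cShat^+)$ to the pair of flows $(\cS,\widehat{\cS})$ appearing in the representation \eqref{eq: V second def} of Proposition \ref{p:V adapted}. Throughout I would fix $r\ge 0$ and argue almost surely, writing $M:=L^{-r}_{\tau^0_z}$; the passage from the pathwise identity to equality in law of the processes will come from the fact that both sides are the \emph{same} deterministic functional of their respective flow pairs.

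First I would unwind the composition. From the definitions one has $\cS^+_{-r,x}(0)=\cS_{-r,x}(0)-\cS_{0,x}(0)$ (using $\cS_{0,\cdot}(0)\equiv 0$ below $0$, and $\cS_{-r,x}(0)\ge \cS_{0,x}(0)$ by coalescence, Proposition \ref{p:perfect}(ii), so the input is nonnegative), hence
$$
\cShat^+_{x,z}\circ\cS^+_{-r,x}(0)=\max\bigl(\cShat_{x,z}(\cS_{-r,x}(0))-\cS_{0,z}(0),\,0\bigr),
$$
so that $\cShat^+_{x,z}\circ\cS^+_{-r,x}(0)>0$ is equivalent to $\cShat_{x,z}(\cS_{-r,x}(0))>\cS_{0,z}(0)$. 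The crux is then the Ray--Knight identity
$$
\cShat_{x,z}(\cS_{-r,x}(0))=\cL(\tau^{-r}_x,z),\qquad -r\le x<z .
$$
Indeed, Theorem \ref{thm:representation_by_WN} gives $\cS_{-r,x}(0)=\cL(\tau^{-r}_x,x)$, the flow $\cShat$ is the Brownian local time flow $\cShat_{x,z}(a)=\cL(\tau^{B,x}_a,z)$ recalled at the start of Section \ref{s:repre_skew_BM}, and Lemma \ref{l:taurx}(iv) (for $-r$ at position $x$) shows that $\tau^{-r}_x$ is a right point of increase of $\cL(\cdot,x)$, i.e. $\tau^{B,x}_{\cL(\tau^{-r}_x,x)}=\tau^{-r}_x$; composing these three facts yields the identity. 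Likewise $\cS_{0,z}(0)=\cL(\tau^0_z,z)$ by Theorem \ref{thm:representation_by_WN} at $r=0$.

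With these reductions the event $\{\cShat^+_{x,z}\circ\cS^+_{-r,x}(0)>0\}$ becomes $\{\cL(\tau^{-r}_x,z)>\cL(\tau^0_z,z)\}$, and I would close the pathwise identity through the chain
$$
\cL(\tau^{-r}_x,z)>\cL(\tau^0_z,z)\iff \tau^{-r}_x>\tau^0_z\iff L^{-r}_{\tau^0_z}\le x ,
$$
the first equivalence being Lemma \ref{l:taurx}(iv) at position $z$ for $X^0$, and the second the definition of $\tau^{-r}_x$ together with the monotonicity of $L^{-r}$. Since $-r\le M\le L^0_{\tau^0_z}=z$ (coalescence of the skew Brownian flow forces $L^{-r}\le L^0$), the set $\{x\in[-r,z):M\le x\}$ has infimum $M$ when $M<z$ and is empty (so the infimum $\wedge z$ equals $z=M$) when $M=z$; in both cases the right-hand side of \eqref{eq: RK L(-x)} equals $M$, which is \eqref{eq: RK L(-x)}.

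It remains to identify the law. By Proposition \ref{p:difference}, applied along the flow line $\cS_{0,\cdot}(0)$ as recorded before the statement, the pair $(\cS^+,\cShat^+)$ is a ${\rm BESQ}(2+\delta\,|_0\,0)$ flow together with a killed ${\rm BESQ}(2\,|_0\,-\delta)$ flow driven by a \emph{common} white noise; since BESQ flows are a.s. determined by their driving noise, this pair has exactly the joint law of the pair $(\cS,\widehat{\cS})$ of Section \ref{s:meeting*} under $(\delta,\deltahat)\to(2+\delta,0)$, for which the hypotheses $\delta>0$ and $\deltahat>2-\delta$ become $2+\delta>0$ and $0>-\delta$ and do hold. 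Because the representation \eqref{eq: V second def} of Proposition \ref{p:V adapted} expresses $(V(-r))_{r\ge 0}$ as precisely the functional $r\mapsto\inf\{x\in[-r,z):\widehat{\cS}_{x,z}\circ\cS_{-r,x}(0)>0\}\wedge z$ of that pair, while \eqref{eq: RK L(-x)} identifies $(L^{-r}_{\tau^0_z})_{r\ge 0}$ with the same functional of $(\cS^+,\cShat^+)$, the two right-continuous processes coincide in law, which is Theorem \ref{thm: skewBM RK}(ii). I expect the main obstacle to be the Ray--Knight identity of the second paragraph: one must make sure that the frozen local time profile of $B$ above level $x$ at the stopping time $\tau^{-r}_x$ is genuinely a $\cShat$ flow line, which is exactly what Lemma \ref{l:taurx}(iv) secures by turning $\tau^{-r}_x$ into an inverse local time of $B$ at $x$.
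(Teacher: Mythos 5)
Your proof is correct and follows essentially the same route as the paper's: the same unwinding of $\cS^+,\cShat^+$ into the condition $\cShat_{x,z}\circ\cS_{-r,x}(0)>\cS_{0,z}(0)$, the same ingredients (Theorem \ref{thm:representation_by_WN}, the Brownian local-time flow identity for $\cShat$, Lemma \ref{l:taurx}, the monotonicity $L^{-r}\le L^0$), and the same identification with $(V(-r))_{r\ge 0}$ through \eqref{eq: V second def} and the fact that both pairs of flows are driven by a single white noise. The only cosmetic difference is that you compress the paper's two separate inequality arguments into one identity $\cShat_{x,z}\circ\cS_{-r,x}(0)=\cL(\tau^{-r}_x,z)$ plus a chain of equivalences; note only that the strictness $\tau^{-r}_x>\tau^0_z$ (given $L^{-r}_{\tau^0_z}\le x$) is not a consequence of monotonicity alone but of $B_{\tau^{-r}_x}=x\ne z=B_{\tau^0_z}$, i.e.\ Lemma \ref{l:taurx}(ii), exactly as in the paper.
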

\begin{proof}
    By definition of $\cS^+$ and $\cShat^+$, we can rewrite \eqref{eq: RK L(-x)} as 
    \begin{equation}\label{eq: RK L(-x) proof}
         L^{-r}_{\tau^0_z} = \inf\{x\in [-r,z)\,:\, \cShat_{x,z}\circ \cS_{-r,x}(0)> \cS_{0,z}(0)\} \land z.
    \end{equation}

    \noindent Let us prove it. By Theorem \ref{thm:representation_by_WN}, $\cS_{-r,x}(0)=\cL(\tau^{-r}_x,x)$, and by the discussion at the beginning of Section \ref{s:repre_skew_BM}, $\cShat_{x,z}(b)=\cL(\tau^{B,x}_b,z)$. Let $x=L_{\tau_z^0}^{-r}$ and suppose that $x<z$. Let $b=\cS_{-r,x}(0)=\cL(\tau_x^{-r},x)$.  
    Hence $\tau_b^{B,x}\ge \tau_x^{-r}\ge \tau_z^0$ by Lemma \ref{l:taurx} (i). Since $B_{\tau_z^0}=z>x$ by Lemma \ref{l:taurx} (ii),  $\tau_b^{B,x} > \tau_z^0$. By the first statement of Lemma \ref{l:taurx} (iv), it implies that $\cL(\tau_b^{B,x},z)>\cL(\tau_z^0,z)$, hence $\cShat_{x,z}\circ \cS_{-r,x}(0)> \cS_{0,z}(0)$. It proves that $L^{-r}_{\tau^0_z}$ is greater than the RHS of \eqref{eq: RK L(-x) proof}. 
    Let us prove the reverse inequality. Since $X^{-r}\le X^0$, it implies that $L^{-r}\le L^0$, hence $\tau_x^0\le \tau_x^{-r}$ for any $x\ge 0$. Thus $L_{\tau^0_z}^{-r} \le z$ by Lemma \ref{l:taurx} (i). Suppose then that there exists $x\in[-r,z)$ such that $\cShat_{x,z}\circ \cS_{-r,x}(0)> \cS_{0,z}(0)$. We want to show that $x\ge L^{-r}_{\tau^0_z}$. We write again $b=\cS_{-r,x}(0)=\cL(\tau_x^{-r},x)$. Then  $\cShat_{x,z}\circ \cS_{-r,x}(0)= \cL(\tau_b^{B,x},z) > \cS_{0,z}(0)=\cL(\tau_z^0,z)$. In particular,  $\tau_b^{B,x}>\tau_z^0$, hence $\cL(\tau_z^0,x)\le b=\cL(\tau_x^{-r},x)$. We conclude with Lemma \ref{l:taurx} (iii). Use \eqref{eq: V second def} with $(\cShat^+, \cS^+,z)$ in place of $(\cShat, \cS,z)$ to recognize the process $(V(-r),\,r\ge 0)$ in \eqref{eq: RK L(-x)}. Recall indeed that we can replace in \eqref{eq: V second def} $\cShat$ by its non-killed version, see \eqref{eq:repr V 2} with $(r,0)$ in place of $(y,r)$.
    \end{proof}


\section{Bifurcation in the skew Brownian flow}\label{s:bifur_time}

Fix $\beta \in(0,1)$. In  \cite{burdzy2004lenses}, Burdzy and Kaspi generalize the skew Brownian flow to various starting times. Let $X^{s,r}$ be the solution of \eqref{eq:SDE Xtr} when replacing the Brownian motion $B$ by $(B_t-B_s,\, t\ge s)$, i.e.
\begin{align}\label{SDE_general}
    X_t^{s,r} =r - (B_t-B_s) + \beta \ell_t^{s,r},\, t\ge s
\end{align}
where  $\ell_t^{s,r}$ is the local time of $X^{s,r}$ at position $0$:
\begin{align*}
    \ell_t^{s,r}=
\begin{cases}
    \lim\limits_{\varepsilon \rightarrow 0} \frac{1}{2\varepsilon} \int_s^t \ind_{\{|X_u^{s,r}| < \varepsilon\}} \, \mathrm{d}u, &\quad t \ge s, \\
    0, & \quad 0 < t < s.
\end{cases}
\end{align*}

\noindent The processes $X^{s,r}$ can be simultaneously defined for all $s,r$  rationals. To extend the construction of the flow to all $(s,r)$, Burdzy and Kaspi define
\begin{align*}
    X_t^{s,r-}&=\sup_{u,y\in\mathbb{Q},\ u<s,\ X_s^{u,y}<r} X_t^{u,y},\\
    X_t^{s,r+}&=\inf_{u,y\in\mathbb{Q},\ u<s,\ X_s^{u,y}>r} X_t^{u,y}.
\end{align*}

\noindent We refer to \cite{burdzy2004lenses} for the properties of the flows $(X^{s,r-})_{s,r}$ and $(X^{s,r+})_{s,r}$. Almost surely, for all $s,r$ rationals,  $X^{s,r-}=X^{s,r+}=X^{s,r}$.  Nevertheless, there are exceptional times $s$, called bifurcation times, such that $X_t^{s,0-}<X_t^{s,0+}$ for all $t>s$ close enough to $s$, \cite[Theorem 1.3 (ii)]{burdzy2004lenses}.  A bifurcation time $s$ is called semi-flat if furthermore $X_t^{s,0-}<0$ for all $t>s$ close enough to $s$. By \cite[Theorem 1.4]{burdzy2004lenses}, semi-flat bifurcation times exist when $\beta\in (\frac13,1)$ and do not exist when $\beta\in (0,\frac13)$. We will show that at the critical value $\beta=\frac13$, semi-flat bifurcations time do not exist either. Burdzy and Kaspi predict that semi-flat bifurcation times are atypical, and add: ``{\it One can probably formalize the claim by computing the Hausdorff dimensions of ordinary and semi-flat bifurcation times for various values of $\beta$}", \cite{burdzy2004lenses}. This is the goal of this section. As noted by Burdzy and Kaspi, the exponent $\frac13$ already appears in \cite[Corollary 1.5]{burdzy2001local}, where it is shown that for $\beta>\frac13$, there exist times when $L_s^0=- \inf_{[0,s]} B$: ``{\it It would be interesting to find a direct link between that result and Theorem 1.4 above, for example, via a time reversal argument}", \cite{burdzy2004lenses}. We show that this intuition is correct, and give proofs of the Hausdorff dimensions via time-reversal arguments which connect \cite[Corollary 1.5]{burdzy2001local} and semi-flat bifurcation times, see Proposition \ref{p:rep_semiflattime}. To this end, we introduce for $t\ge 0$ and $x\in \r$, the backward skew Brownian motion $\check{X}^{t,x}$ which is driven by the time-reversed Brownian motion at time $t$, i.e. the solution of
\begin{align}\label{SDE_backward}
    \check{X}_s^{t,x} = x + \left(B_t-B_s\right) - \beta
    \check{\ell}_s^{t,x},\, 0\le s\le t
\end{align}
where  $\check{\ell}^{t,x}_s$ is the symmetric local time of $\check{X}^{t,x}$ at position $0$ in $[s,t]$.

\begin{proposition}\label{p:for_back_SDE}
With probability 1, for all quadruples $\left(s, r, t, x\right) \in \q^4$ with $t>s\ge 0$:

(i) if $r<\check{X}_s^{t, x}$, then $X_u^{s, r} \leq \check{X}_u^{t, x}$ for all $u \in [s,t]$, while if $r>\check{X}_s^{t, x}$, then $X_u^{s, r} \geq \check{X}_u^{t, x}$ for all $u \in [s,t]$;  

(ii) if $x<X_t^{s, r}$, then $\check{X}_u^{t, x} \leq X_u^{s, r}$ for all $u \in [s,t]$, while if $x>X_t^{s, r}$, then $\check{X}_u^{t, x} \geq X_u^{s, r}$ for all $u \in [s,t]$.
\end{proposition}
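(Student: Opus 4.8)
The plan is to reduce both statements to a single non-crossing property for the difference process $D_u := X_u^{s,r} - \check{X}_u^{t,x}$, $u\in[s,t]$, and then to establish that property via coalescence at joint zeros. The first step is to observe that $D$ is a continuous finite-variation process driven only by local times. Using \eqref{SDE_general} and \eqref{SDE_backward} and noting that the two Brownian parts $-(B_u-B_s)$ and $+(B_t-B_u)$ have cancelling $u$-dependence, I would compute
\[
D_u = (r-x) + (B_s-B_t) + \beta\big(\ell_u^{s,r} + \check{\ell}_u^{t,x}\big),
\]
so that $D$ has no martingale part and $\mathrm{d}D_u = \beta\,\mathrm{d}\ell_u^{s,r} + \beta\,\mathrm{d}\check{\ell}_u^{t,x}$, where $\mathrm{d}\ell_u^{s,r}\ge 0$ is carried by $\{X_u^{s,r}=0\}$ while $\mathrm{d}\check{\ell}_u^{t,x}\le 0$ (read forward in $u$) is carried by $\{\check{X}_u^{t,x}=0\}$. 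In particular $D$ is continuous, and on any time interval where neither process visits $0$ it is constant.

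The heart of the argument is that $D$ cannot change sign. I would treat the case $r<\check{X}_s^{t,x}$ of part (i), where $D_s<0$, and show $D_u\le 0$ on $[s,t]$; the case $r>\check{X}_s^{t,x}$ is symmetric, and the two cases of part (ii) follow by the same analysis applied at the \emph{last} zero before $t$ rather than the \emph{first} zero after $s$ (there $D_t\ne 0$ is the hypothesis $x\ne X_t^{s,r}$). Set $u_1:=\inf\{u\ge s:\ D_u=0\}$; if $u_1>t$ there is nothing to prove, so assume $u_1\le t$, whence $D_{u_1}=0$ by continuity. I would first observe that $u_1$ must be a \emph{joint} zero, i.e. $X_{u_1}^{s,r}=\check{X}_{u_1}^{t,x}=0$: if their common value were nonzero, both processes would remain away from $0$ in a neighbourhood of $u_1$, forcing $D$ to be locally constant and equal to $0$, contradicting $D_u<0$ for $u\in(s,u_1)$.

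The key claim is then that the two processes coalesce from $u_1$ on. For $u\ge u_1$ both are anchored at $0$, and writing $n_u,m_u$ for the symmetric local times of $X^{s,r},\check{X}^{t,x}$ at $0$ accumulated over $[u_1,u]$, both satisfy the same skew Brownian SDE $Z_u=-(B_u-B_{u_1})+\beta\,(\text{its symmetric local time})$ with $Z_{u_1}=0$ and the same driver $-(B-B_{u_1})$; for $\check{X}^{t,x}$ this uses that $-\check{\ell}_u^{t,x}$, read forward in $u$, has increments equal to the symmetric local time of $\check{X}^{t,x}$ at $0$. By the pathwise uniqueness of the skew Brownian SDE \cite{harrison1981skew}, both then coincide with the strong solution started at $(u_1,0)$, so $X_u^{s,r}=\check{X}_u^{t,x}$ and $D_u=0$ for all $u\in[u_1,t]$. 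Hence $D\le 0$ throughout, which proves the case. Finally, I would run this argument for each fixed rational quadruple and intersect over the countable set $\q^4$.

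The main obstacle is the justification of coalescence at $u_1$: since $\check{X}_u^{t,x}$ depends on $B$ over $[u,t]$, the meeting time $u_1$ is \emph{not} a stopping time for the forward filtration, so pathwise uniqueness cannot be invoked naively at $u_1$. I would circumvent this by using the cocycle (perfect flow) property of the forward skew Brownian flow \cite{burdzy2001local}, which holds simultaneously for all times $s\le u_1\le u$ almost surely, to write $X_u^{s,r}=X_u^{u_1,0}$, and separately identifying $\check{X}^{t,x}|_{[u_1,t]}$ as a solution of the forward skew SDE started at $(u_1,0)$; since the strong solution of this SDE is a measurable functional of the driving path alone, both equal $X^{u_1,0}$ on $[u_1,t]$ irrespective of adaptedness. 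Equivalently, one may note that $|X^{s,r}|$ and $|\check{X}^{t,x}|$ both equal the Skorokhod reflection of $-(B-B_{u_1})$ from $0$ on $[u_1,t]$, a pathwise functional of the driver, and that the excursion signs of a strong-solution skew Brownian motion are themselves determined by the driver, so the two sign choices must agree.
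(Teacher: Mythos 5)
Your reduction to the difference process $D_u:=X_u^{s,r}-\check{X}_u^{t,x}$ is a genuinely different route from the paper's proof, which approximates both the forward and the backward flow by the smooth-drift flows $\Phi^n_{s,t}$ of \cite[Theorem 1.6]{burdzy2001local}, notes that these are flows of homeomorphisms (so order between a point and an inverse image is propagated), and passes to the limit in probability. Your first two steps are correct and clean: $D$ has no martingale part and moves only through the two local-time terms, and the first zero $u_1$ of $D$ must be a \emph{joint} zero of the two processes. The fatal gap is the coalescence claim at $u_1$, and neither workaround repairs it. Harrison--Shepp pathwise uniqueness \cite{harrison1981skew} is a statement about solutions adapted to a common filtration under which the driver is a Brownian motion; here $\check{X}^{t,x}$ on $[u_1,t]$ anticipates the future of $B$ and $u_1$ is not a stopping time, as you say. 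Your fix (a) --- ``the strong solution is a measurable functional of the driver, hence any solution equals it irrespective of adaptedness'' --- is a non sequitur: strong existence plus pathwise uniqueness among adapted solutions does not give uniqueness among anticipating solutions started at exceptional random times. In the present setting this is exactly the issue, not a technicality: the bifurcation times of Burdzy--Kaspi \cite{burdzy2004lenses}, studied in Section \ref{s:bifur_time} of this paper, are precisely random times $\sigma$ at which the skew SDE driven by the given path of $B$ admits two distinct solutions $X^{\sigma,0-}\neq X^{\sigma,0+}$ from $(\sigma,0)$; and the time $u_1$ you produce, being a time at which a backward line sits at $0$, is exactly such a bifurcation time (this is the content of Proposition \ref{p:rep_ordbifurtime}, whose proof in turn relies on the proposition you are proving). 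So uniqueness is guaranteed to \emph{fail} at $u_1$, and the correct picture is that the forward and backward lines follow different solutions after $u_1$. Your fix (b) is also based on a false premise: for a skew Brownian motion $Z$ with driver $w$, $|Z|$ is a reflecting Brownian motion but is \emph{not} the Skorokhod reflection of $w$ (during an excursion over $(g,d)$ one has $Z=w-w_g$, not $w$ minus its running infimum), and while the sign of each excursion can be read off from $w$, the excursion intervals --- the zero set --- are not a pathwise-unique functional of $w$; that is again exactly what fails at bifurcation times.

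There is also a structural reason the coalescence claim cannot be what carries the proof: if it held on a touching event of positive probability, it would force $X_t^{s,r}=\check{X}_t^{t,x}=x$, an event of probability zero for fixed rational data since $X_t^{s,r}$ has a continuous law. So either the touching event is null --- in which case a valid proof must establish \emph{that}, which yours does not (this is where all the difficulty sits) --- or the claim is false. In fact the touching event is null: a joint zero of $X^{s,r}$ and $\check{X}^{t,x}$ would mean, through the embedding of Section \ref{s:repre_skew_BM} and the Remark closing Section \ref{s:bifur_time}, that a forward ${\rm BESQ}$ flow line passes through a bifurcation point of the flow, which Corollary \ref{c: no middle bifurcation} forbids; but that argument uses the BESQ machinery (or the non-crossing statement itself), i.e.\ precisely what is to be proved. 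The paper's approximation proof sidesteps the meeting analysis entirely, because weak inequalities between homeomorphic flow lines survive the passage to the limit.
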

\begin{proof}
Following \cite[Theorem 1.6]{burdzy2001local}, we first introduce the approximation of the solutions to the SDE \eqref{SDE_general}. Let $f$ be a nonnegative smooth and symmetric function on $\r$, compactly supported on $[-\frac{1}{2}, \frac{1}{2}]$ with $\int_{\r} f(x)\dd x=1$. Denote $\frac{1}{2} \log ((1+\beta) /(1-\beta))$ by $\gamma$ and let $f_n(x)=n \gamma f(n x)$ for $x \in \r$ and $n \geq 1$. By \cite[Theorem 1.6]{burdzy2001local}, for any $r\in \r$ and $s\ge 0$, the solution, call it $ \Phi_{s,\cdot}^n(r)$, of 
$$
\Phi_{s,t}^n(r)= r - (B_t-B_s)+\int_s^t f_n\circ \Phi_{s,u}^n(r) \dd u,\, t\ge s
$$

\noindent converges in probability to $X^{s,r}$ in the space of continuous functions equipped with the topology of uniform convergence on compact intervals.  By \cite[Theorem 4.5.1]{kunita1990stochastic}, they define a Brownian flow, see \cite[Chapter 4]{kunita1990stochastic}. The maps $r\mapsto \Phi_{s,t}^n(r)$ are homeomorphisms of the real line. Let $\Phi_{t,s}^n:=(\Phi^{n}_{s,t})^{-1}$ denote the inverse maps. By \cite[Theorem 4.2.10]{kunita1990stochastic}, the inverse maps  also define a Brownian flow, called backward flow, solution of 
$$
\Phi_{t,s}^n(r)= r + (B_t-B_s)-\int_s^t f_n\circ \Phi_{t,u}^n(r) \dd u,\, s\in [0,t].
$$

\noindent  Since we consider homeomorphims, if $r<\Phi^n_{t,s}(x)$, resp. $r>\Phi^n_{t,s}(x)$, then $\Phi^n_{s,u}(r)<\Phi^n_{t,u}(x)$, resp. $\Phi^n_{s,t}(r)>\Phi^n_{t,u}(x)$ for $u\in [s,t]$.   By another use of \cite[Theorem 1.6]{burdzy2001local}, $(\Phi_{t,s}^n(x),\,s\in [0,t])$ converges in probability to $(\check{X}_{s}^{t,x},\,s\in [0,t])$ as $n\to\infty$. We deduce statement (i) of the proposition. The proof of (ii)  follows similar lines.
\end{proof}

\begin{proposition}\label{p:rep_ordbifurtime}
    Almost surely, the set of bifurcation times is  $\{s: \exists t\in \q_+ \text{ with } t>s \text{ and } \check{X}_s^{t,0}=0 \}$. 
\end{proposition}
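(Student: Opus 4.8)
The plan is to prove the two inclusions between the set $\mathcal B$ of bifurcation times and the set $\mathcal C:=\{s:\exists\, t\in\q_+,\ t>s,\ \check X_s^{t,0}=0\}$ separately, exploiting the forward–backward comparison of Proposition \ref{p:for_back_SDE} together with the coalescing (hence monotone and non-crossing) structure of both flows. The guiding principle is the analogy with Proposition \ref{p:bifur dual}: just as the bifurcation points of a BESQ flow are exactly the points lying on a dual flow line, a time $s$ should be a bifurcation time at level $0$ precisely when the point $(s,0)$ lies on a backward flow line, and the countable family $\{\check X^{t,0}:t\in\q_+\}$ of backward lines anchored at level $0$ at a rational time should capture all of them. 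Since Proposition \ref{p:for_back_SDE} is stated only for rational quadruples, I would first record that its comparisons extend to the limiting flow lines $X^{s,0\pm}$ by monotonicity of $r\mapsto X^{u,r}$ and density of $\q$: a forward line ordered with respect to a backward line at a single common time stays ordered on the whole common interval, so the two families never cross.

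For the inclusion $\mathcal C\subseteq\mathcal B$, suppose $\check X_s^{t,0}=0$ for some rational $t>s$ and set $\ell:=\check X^{t,0}$, so $\ell_s=0$. Applying Proposition \ref{p:for_back_SDE}(i) to rational forward starts $(u,y)$ with $u<s$ and passing to the limit, any forward line that is $<0$ at time $s$ stays weakly below $\ell$ on $[s,t]$ while any forward line that is $>0$ at time $s$ stays weakly above $\ell$; taking suprema and infima yields $X_{t'}^{s,0-}\le \ell_{t'}\le X_{t'}^{s,0+}$ for $t'\in(s,t]$. Thus $\ell$ separates the two limiting forward lines, and it remains to promote this to a strict gap for all $t'>s$ small enough. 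Here I would invoke the excursion structure of the skew Brownian motion at its reflecting level $0$: since $s$ is a zero of $\ell$, the forward lines approaching $(s,0)$ from the two sides are forced onto opposite sides of the excursion that $\ell$ performs immediately after $s$, wedging a strictly positive gap between $X^{s,0-}$ and $X^{s,0+}$ that persists for all $t'$ close to $s$ by continuity, whence $s\in\mathcal B$.

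For the reverse inclusion $\mathcal B\subseteq\mathcal C$, let $s$ be a bifurcation time, so the open lens $\{(t',z):X_{t'}^{s,0-}<z<X_{t'}^{s,0+}\}$ is nonempty for all small $t'>s$. Choosing a rational $t'$ and a rational level $x$ with $X_{t'}^{s,0-}<x<X_{t'}^{s,0+}$, I would apply Proposition \ref{p:for_back_SDE}(ii) to rational forward starts approximating $(s,0)$: forward lines ending below $x$ force $\check X_s^{t',x}\le r$ for rationals $r$ just above $0$, those ending above force $\check X_s^{t',x}\ge r$ for rationals $r$ just below $0$, which squeezes $\check X_s^{t',x}=0$, i.e. the backward line through the interior of the lens passes through $(s,0)$. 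The final step is to replace the interior level $x$ by the distinguished level $0$ of the statement: using backward coalescence together with the reflecting nature of level $0$ (at which the bifurcation occurs), I would show that some backward line anchored at $(t,0)$, $t\in\q_+$, agrees below time $s$ with the line through $(s,0)$ — equivalently, that the lens meets level $0$ at a rational time — giving $\check X_s^{t,0}=0$ and hence $s\in\mathcal C$.

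The hard part will be the same in both directions: upgrading the soft separation $X_{t'}^{s,0-}\le \ell_{t'}\le X_{t'}^{s,0+}$ to a strict gap, and matching the anchoring level to the required level $0$ (the endpoint $x=0$ at a rational time $t$). I expect both to be resolved through a careful use of the excursion theory of the skew Brownian motion at $0$ — the skew/reflecting behaviour at level $0$ is exactly what makes $(s,0)$ a genuine splitting point — together with a density and monotonicity argument ensuring that the countable family $\{\check X^{t,0}:t\in\q_+\}$ already suffices, and the usual null-set bookkeeping so that the identity $\mathcal B=\mathcal C$ holds simultaneously for all $s$ almost surely.
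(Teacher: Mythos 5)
Your skeleton --- two inclusions, comparison via Proposition \ref{p:for_back_SDE}, suprema/infima over rational forward lines, null-set bookkeeping --- is the same as the paper's, but the two steps you yourself flag as ``the hard part'' are genuine gaps, and they are precisely where the paper's proof does its work. In the direction $\mathcal C\subseteq\mathcal B$, your sandwich $X^{s,0-}_{t'}\le\check X^{t,0}_{t'}\le X^{s,0+}_{t'}$ is fine, but the upgrade to a strict gap via ``the forward lines are forced onto opposite sides of the excursion that $\ell$ performs after $s$'' is not an argument: away from level $0$ both the forward and the backward equations reduce to $\dd X=-\dd B$, so nothing in the comparison structure by itself forbids $X^{s,0-}$, $\check X^{t,0}$ and $X^{s,0+}$ from simply coinciding after $s$; ruling this out is the whole point. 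The paper does it at the single anchor time $t$: it shows $X^{s,0+}_t\ge\check X^{t,0}_t=0$ and then excludes equality by two null-set facts --- by \cite[Lemma 2.7]{burdzy2004lenses}, if $X^{s,0+}$ has accumulated local time at $0$ before $t$ it coincides with some rational-data line $X^{u,r'}$, and a.s. no such line hits $0$ at a rational time; if it has not, then $X^{s,0+}$ equals $B_s-B_\cdot$ on $(s,t)$, i.e. $B$ is performing an excursion, and a.s. no excursion of $B$ ends at a rational time. (A smaller instance of the same issue: Proposition \ref{p:for_back_SDE} requires strict inequality to be invoked, and the paper secures it via $\p(\check X^{t,0}_{s'}\in\q)=0$; your ``passing to the limit'' glosses over this.)

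In the direction $\mathcal B\subseteq\mathcal C$, your detour through an interior rational level $x$ does yield a backward line anchored at $(t',x)$ passing through $(s,0)$, but the statement requires an anchor at level $0$, and your ``final step'' --- that the lens meets level $0$ at a rational time, or equivalently that some line $\check X^{t,0}$ with $t\in\q_+$ agrees with $\check X^{t',x}$ below time $s$ --- is exactly the unproved claim; backward coalescence before time $s$ is not established anywhere and is not obviously available. The paper makes this claim its first step and thereby avoids the detour entirely: if $0$ never strictly separated $X^{s,0-}_t$ and $X^{s,0+}_t$ for $t$ near $s$, the two limiting lines would keep the same sign on a right neighbourhood of $s$, hence accumulate no local time at $0$, hence both equal $B_s-B_\cdot$ and coincide, contradicting bifurcation; with a rational $t$ satisfying $X^{s,0-}_t<0<X^{s,0+}_t$ in hand, Proposition \ref{p:for_back_SDE} applied with $x=0$ squeezes $\check X^{t,0}_s=0$ directly. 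Until you supply proofs of these two steps (strictness of the gap, and anchoring at level $0$), the proposal is a correct outline of the statement's geometry rather than a proof.
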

\begin{proof}
    Assume $s$ is a bifurcation time. Then there exists $t>s $ rational such that $X_{t}^{s,0-}<0<X_{t}^{s,0+}$. Otherwise the signs of $X^{s,0-}$ and $X^{s,0+}$ should be the same on a neighborhood of $s$, which entails that they are simply equal to $B_s-B_t$ hence are equal. For any rationals $s',r$ such that $s'<s$ and $X_s^{s',r}<0$, we have $ X_t^{s',r} \le X_t^{s,0-}<0$.     By Proposition \ref{p:for_back_SDE} (ii), it implies that $\check{X}^{t,0}_{u}\ge X_{u}^{s',r}$ for any $u\in [s',t]$, hence for $u=s$. By definition of $X^{s,0-}_s$, we find that $\check{X}^{t,0}_{s}\ge X_s^{s,0-}=0$, the last equality by \cite[Proposition 1.1]{burdzy2004lenses}. A similar reasoning for $X^{s,0+}$ yields $\check{X}^{t,0}_s\le 0$, hence $\check{X}^{t,0}_s=0$. 
    
    Suppose now that there exists $t\in \q_+$ such that $t>s$ and $\check{X}_s^{t,0}=0$. Let rationals $s',r$ such that $s'<s$ and $X_s^{s',r}>0$. By Proposition \ref{p:for_back_SDE} (i), one must have $\check{X}^{t,0}_{s'}\le r$. The probability that $\check{X}^{t,0}_{s'}\in {\mathbb Q}$ is $0$ hence $\check{X}^{t,0}_{s'}<r$. By another use of   Proposition \ref{p:for_back_SDE} (i),  it holds that  $X_{u}^{s',r} \geq \check{X}_{u}^{t, 0}$, for all $u\in [s',t]$, hence for $u=t$. By the definition of $X^{s,0+}$,  $X_t^{s, 0+} \geq \check{X}_t^{t, 0}=0$. We cannot have $X_t^{s,0+}=0$. In fact, by \cite[Lemma 2.7]{burdzy2004lenses}, for any $s\ge 0$, if the local time of $X^{s,0+}$ at 0 is positive at some time $t'\in(s,t)$, then there exist $u,r'\in \q$ such that $X_t^{s, 0+}=X_t^{u, r'}$, and the probability that $X^{u,r'}$ for some rationals $u,r'$ hits $0$ at a rational time is zero. If the local time of $X^{s,0+}$ at 0 is still 0 at time $t$, then $X^{s,0+}_u = B_s-B_u$ for all $u\in(s,t)$ by \cite[Proposition 1.1]{burdzy2004lenses} and \eqref{SDE_general}. It implies $B$ is making an excursion in $(s,t)$, and the probability that $B$ ends an excursion at a rational time is $0$. A similar argument applies to $X^{s,0-}_t$. We deduce that $X_t^{s,0-}<0<X_t^{s,0+}$ hence $s$ is a bifurcation time.
\end{proof}

\begin{theorem}\label{orddimension_times}
The set of ordinary bifurcation times has Hausdorff dimension $\frac{1}{2}$ almost surely. 
\end{theorem}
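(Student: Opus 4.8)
The plan is to read off the Hausdorff dimension directly from the representation of the bifurcation times established in Proposition~\ref{p:rep_ordbifurtime}. That proposition identifies the set of ordinary bifurcation times with
\[
\mathcal{Z}:=\bigcup_{t\in\q_+}Z_t,\qquad Z_t:=\{s\in[0,t)\,:\,\check{X}^{t,0}_s=0\}.
\]
Since $\q_+$ is countable and Hausdorff dimension is countably stable, that is $\dim_H\big(\bigcup_n A_n\big)=\sup_n\dim_H A_n$, it suffices to show that for each fixed $t\in\q_+$ the set $Z_t$ has Hausdorff dimension $\tfrac12$ almost surely; the dimension of $\mathcal{Z}$ is then the supremum of these, namely $\tfrac12$. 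Thus a single lower/upper bound computation, performed at the level of one backward flow line, yields both inequalities at once.

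To compute $\dim_H Z_t$ for fixed $t$, I would reverse time. Setting $\check{Y}_u:=\check{X}^{t,0}_{t-u}$ and $\tilde{B}_u:=B_t-B_{t-u}$ for $u\in[0,t]$, the defining equation \eqref{SDE_backward} becomes $\check{Y}_u=\tilde{B}_u-\beta\tilde{\ell}_u$, where $\tilde{\ell}_u=\check{\ell}^{t,0}_{t-u}$ is the symmetric local time of $\check{Y}$ at $0$ on $[0,u]$. As $\tilde{B}$ is a standard Brownian motion on $[0,t]$, this is precisely the skew Brownian motion SDE \eqref{eq:SDE Xtr}--\eqref{def:L} started at $0$, with $-\tilde B$ and $-\beta$ in place of $B$ and $\beta$. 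Hence $|\check{Y}|$ is a reflecting Brownian motion, and it is classical that the zero set of a reflecting Brownian motion has Hausdorff dimension $\tfrac12$ almost surely. The reversal $s\mapsto t-s$ is an affine homeomorphism of $[0,t]$, hence preserves Hausdorff dimension, so $Z_t$, being the image of the zero set of $\check{Y}$ with the single point $s=t$ removed, also has dimension $\tfrac12$ almost surely.

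Combining the two observations gives $\dim_H\mathcal{Z}=\tfrac12$ almost surely, which is the assertion. The only point requiring a word of care is the time-reversal identification of $\check{X}^{t,0}$ as a skew Brownian motion started at $0$; this is essentially a relabelling of \eqref{SDE_backward} and I expect no genuine obstacle here, the substantive work having already been carried out in Proposition~\ref{p:rep_ordbifurtime}. I would simply note that $|\check{Y}|$ is a reflecting Brownian motion for every $\beta\in(0,1)$, so the classical zero-set dimension computation applies uniformly and no restriction on $\beta$ is needed.
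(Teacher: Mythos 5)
Your proof is correct and follows essentially the same route as the paper: invoke Proposition \ref{p:rep_ordbifurtime}, note that the zero set of a (backward) skew Brownian motion has the law of the zero set of a reflecting Brownian motion, hence dimension $\frac12$, and conclude by countable stability over $t\in\q_+$. The paper states this in two sentences; you have merely made explicit the time-reversal identification of $\check{X}^{t,0}$ as a skew Brownian motion with parameter $-\beta$ and the countable-union step, both of which the paper leaves implicit.
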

\begin{proof}
The zero sets of the Brownian motion and of the skew Brownian motion are identical in law. We obtain the result by Proposition \ref{p:rep_ordbifurtime} and the fact that the Hausdorff dimension of the zero set of the Brownian motion is almost surely $\frac{1}{2}$.

\end{proof}

\begin{proposition}\label{p:rep_semiflattime}
The set of semi-flat bifurcation times is a.s. $\{s: \exists t\in \q_+,\, t>s \text{ such that } \check{X}_s^{t,0}=0, \, B_s = \inf_{s\le u\le t} B_u \}$.   
\end{proposition}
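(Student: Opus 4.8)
The plan is to characterise semi-flat times through the local behaviour of the lower flow line $X^{s,0-}$, in the same spirit as the proof of Proposition \ref{p:rep_ordbifurtime}, the genuinely new input being an analysis of the skew local time at the endpoints of an excursion of $B$. I work on the almost sure event where Proposition \ref{p:for_back_SDE} holds for all rational quadruples and where, at every bifurcation time, $X_s^{s,0-}=X_s^{s,0+}=0$ by \cite[Proposition 1.1]{burdzy2004lenses}. The elementary but crucial remark is that, as long as $X^{s,0-}$ stays strictly negative it accumulates no local time at $0$, so by \eqref{SDE_general} it evolves as $X_u^{s,0-}=B_s-B_u$; hence $X_u^{s,0-}<0$ precisely when $B_u>B_s$.

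For the inclusion $\supseteq$, suppose $t\in\q_+$, $t>s$, $\check{X}_s^{t,0}=0$ and $B_s=\inf_{s\le u\le t}B_u$. Then $s$ is a bifurcation time by Proposition \ref{p:rep_ordbifurtime}. Since $B_u\ge B_s$ on $[s,t]$, every rational approximating line $X^{s',r}$ (with $s'<s$ and $X_s^{s',r}<0$) satisfies $X_u^{s',r}=X_s^{s',r}-(B_u-B_s)\le X_s^{s',r}<0$ throughout $[s,t]$, hence never reaches $0$ there; passing to the supremum defining $X^{s,0-}$ gives $X_u^{s,0-}=B_s-B_u$ on $[s,t]$. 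Comparing with \eqref{SDE_general} forces the local time of $X^{s,0-}$ at $0$ to vanish on $[s,t]$, equivalently the local time of $B$ at level $B_s$ to be constant there; this is incompatible with $B$ touching the level $B_s$ in $(s,t)$ while staying above it (such a touch is a reflection and would build up local time). Thus $B_u>B_s$, i.e. $X_u^{s,0-}<0$, for all $u$ in a right neighbourhood of $s$, so $s$ is semi-flat.

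For the converse inclusion let $s$ be semi-flat. Then $X^{s,0-}<0$ on the whole first excursion interval $(s,g)$ of $B$ above $B_s$, where $g:=\inf\{u>s:X_u^{s,0-}=0\}>s$, and on this interval $B_u>B_s$, $B_g=B_s$; in particular $B_s=\inf_{s\le u\le t}B_u$ for every $t\in(s,g)$. It remains to find a rational $t\in(s,g)$ with $\check{X}_s^{t,0}=0$, which by the squeezing argument of Proposition \ref{p:rep_ordbifurtime} follows as soon as $X_t^{s,0-}<0<X_t^{s,0+}$. The first inequality holds on all of $(s,g)$, so the task is to exhibit a subinterval of $(s,g)$ on which $X^{s,0+}>0$. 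Write $W_u:=X_u^{s,0+}-X_u^{s,0-}$ and let $\ell^{0\pm}$ denote the local times at $0$ of $X^{s,0\pm}$; by \eqref{SDE_general}, $\mathrm{d}W_u=\beta(\mathrm{d}\ell_u^{0+}-\mathrm{d}\ell_u^{0-})$, and $\ell^{0-}$ is frozen on $(s,g)$, so $W$ is non-decreasing there. As $W>0$ near $s$ by the bifurcation property, we get $W=\beta\ell^{0+}>0$ throughout $(s,g)$ and hence $\ell_g^{0+}>0$. Evaluating $X_u^{s,0+}=B_s-B_u+\beta\ell_u^{0+}$ at $u=g$, where $B_g=B_s$, yields $X_g^{s,0+}=\beta\ell_g^{0+}>0$, so $X^{s,0+}>0$ on $(g-\delta,g)$ for some $\delta>0$. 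Picking a rational $t\in(g-\delta,g)$ then gives $X_t^{s,0-}<0<X_t^{s,0+}$, whence $\check{X}_s^{t,0}=0$, while $B_s=\inf_{s\le u\le t}B_u$ holds since $t<g$.

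The main obstacle is exactly this last step of the forward direction: one must keep the rational $t$ produced by the squeezing lemma inside the first excursion interval $(s,g)$, where the minimum condition is free. The decisive input is the identity $X_g^{s,0+}=\beta\ell_g^{0+}$ together with strict positivity of the accumulated local time $\ell_g^{0+}$, which itself rests on the monotonicity of the lens width $W$ on $(s,g)$ — a consequence of the lower line contributing no local time while it is negative. The complementary delicate point, in the reverse direction, is the local time bookkeeping showing that the forced identity $X_u^{s,0-}=B_s-B_u$ precludes $B$ returning to the level $B_s$ along a sequence $u_n\downarrow s$; both arguments hinge on the same mechanism, namely reading off the skew local time directly from the excursion geometry of $B$.
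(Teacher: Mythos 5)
Your proof follows the paper's architecture: both directions rest on the squeezing argument extracted from Proposition \ref{p:rep_ordbifurtime} (via Proposition \ref{p:for_back_SDE}) and on the identification $X^{s,0-}_u=B_s-B_u$, valid while the lower line accrues no local time at $0$. In the direction ``semi-flat $\Rightarrow$ in the set'' your route is correct but genuinely different from the paper's: the paper simply notes that the rational $t$ with $\check{X}^{t,0}_s=0$ furnished by Proposition \ref{p:rep_ordbifurtime} can be chosen arbitrarily close to $s$, hence inside the excursion interval of $B$ above $B_s$, which makes $B_s=\inf_{[s,t]}B$ automatic; you instead place $t$ near the right endpoint $g$ of that excursion, using the monotonicity of the lens width $W=\beta(\ell^{0+}-\ell^{0-})$ on $(s,g)$ to get $X^{s,0+}_g=\beta\ell^{0+}_g>0$. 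That lens argument is sound (it uses that both $X^{s,0\pm}$ solve \eqref{SDE_general}, i.e.\ \cite[Proposition 1.1]{burdzy2004lenses}) and is a nice quantitative variant, though longer than the paper's one-line choice of $t$.

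The converse direction, however, contains a genuine flaw. The justification ``such a touch is a reflection and would build up local time'' is false: if $B\ge B_s$ on $[s,t]$ and touches the level $B_s$ at an isolated interior time $g$, then $g$ is a local minimum of $B$, near which $B-B_s$ behaves like a three-dimensional Bessel process, and Brownian motion accrues \emph{zero} local time at the level of a local minimum (the occupation of $[B_s,B_s+\varepsilon]$ there is of order $\varepsilon^2$). So the asserted incompatibility does not hold, and such configurations really occur: take any two-sided local minimum $g$ of $B$ whose level was visited earlier, let $s$ be the last prior visit to that level and $t$ a rational slightly after $g$; then $B_s=\inf_{[s,t]}B$, the local time of $B$ at level $B_s$ is constant on $[s,t]$, and yet $B$ touches $B_s$ at $g\in(s,t)$. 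What semi-flatness actually requires is only the weaker statement that returns of $B$ to the level $B_s$ do not accumulate at $s$ from the right; this is true, but your argument does not establish it, and it needs a different mechanism --- for instance, reverse time from the rational $t$: by L\'evy's theorem the set $\{u\le t:\ B_u=\inf_{[u,t]}B\}$ becomes the zero set of a reflected Brownian motion whose local time at $0$ is the running minimum, and accumulating returns at the same level would force that local time to be constant across infinitely many points of its support, which is impossible since the support of the local time measure is the zero set. (The paper's own write-up asserts $-(B_v-B_s)<0$ for all $v\in(s,t)$ without comment, which is likewise too strong at isolated touches; but since you chose to make this point explicit, the argument supplied for it must be valid, and as written it is not.)
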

\begin{proof}
Suppose first that $s$ is a semi-flat bifurcation time. Since $X^{s,0-}<0$  on a neighborhood of $s$,  $X^{s,0-}$ is $B_s-B_t$ and $B$ is making an excursion above $B_s$ on some interval $[s,s']$.  Take $t\in (s,s')$ rational  such that $\check{X}^{t,0}_s=0$ by Proposition \ref{p:rep_ordbifurtime}
since $s$ is a bifurcation time (such time $t$ can be chosen arbitrarily close to $s$). We have $B_s = \inf_{s\le u\le t} B_u$ indeed. 
We now suppose that there exists $t$ rational such that $\check{X}_s^{t,0}=0$ and $B_s = \inf_{s\le u\le t} B_u$ for some $s<t$. Then $s$ is a bifurcation point by Proposition \ref{p:rep_ordbifurtime}. Moreover, for every $(u,y)\in\q_+\times\q$ with $X^{u,y}_s<0$, $X^{u,y}_v = X^{u,y}_s - (B_v-B_s)$ for every $s<v<t$ by \eqref{SDE_general}. It implies $X^{s,0-}_v=X_s^{s,0-}-( B_v-B_s)=-(B_v-B_s)<0$ for $s<v<t$ by definition of $X^{s,0-}$ and \cite[Proposition 1.1]{burdzy2004lenses}. Hence $s$ is a semi-flat bifurcation time.
\end{proof}

The phase transition at $\beta=\frac13$ in the following proposition was shown in \cite[Corollary 1.5]{burdzy2001local} and recovered via BESQ processes in \cite[Section 4]{pitman2018squared}.

\begin{proposition}\label{p:repr semiflat}
Let $X^0$ be the solution to \eqref{eq:SDE Xtr} with $r=0$. Fix $t\ge 0$ and let $I:=\{s\in (0,t): L^0_s = B_s = \sup_{0\le u\le s} B_u\}$. If $\beta\le \frac{1}{3}$, then $I=\emptyset$ a.s. If $\beta>\frac{1}{3}$, it is not empty and its Hausdorff dimension is $\frac{3\beta-1}{4\beta}$ a.s.
\end{proposition}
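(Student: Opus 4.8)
The plan is to reduce the statement to a classical Poissonian \emph{covering} problem carried by the negative excursions of $X^0$, solve that covering problem on the local‑time scale, and then transport the resulting Hausdorff dimension to the time scale through the inverse local time of $X^0$.

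First I would reformulate $I$. Since $X^0_s=L^0_s-B_s$, the condition $L^0_s=B_s$ says $X^0_s=0$, while $B_s=\sup_{0\le u\le s}B_u$ says $s$ is a record time of $B$. Because $L^0$ increases only on $\{X^0=0\}$, one has $L^0_s\le \sup_{0\le u\le s}B_u$ for all $s$, so at a record time necessarily $X^0_s\le 0$, and $s\in I$ exactly when $s=\tau^0_x$ with $x=L^0_s=B_s$ and the first passage of $B$ to $x$ occurs \emph{at} $\tau^0_x$. Writing $Z:=\{x\ge 0:\ \tau^0_x=\inf\{t:B_t=x\}\}$ I get $I=\{\tau^0_x:x\in Z\}$ up to a countable set (endpoints of excursions), which is irrelevant for the dimension. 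The crucial point is the excursion description of $Z$: a level $x$ fails to lie in $Z$ iff some negative excursion of $X^0$, based at an $L^0$–value $c<x$, rises high enough that $B$ overshoots $x$ before $L^0$ reaches $x$; that is, $Z^{c}=\bigcup_i(c_i,c_i+H_i)$, where $(c_i,H_i)$ ranges over the $L^0$–locations and the heights $H_i=\max_{\mathrm{exc}}|X^0|$ of the negative excursions of $X^0$. (Positive excursions push $B$ downward, hence never create records and do not enter $Z$.)

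Second I would identify the law of $(c_i,H_i)$. Up to sign the excursions of $X^0$ are those of the reflecting Brownian motion $|X^0|$, each receiving an independent negative sign with probability $q=\tfrac{1-\beta}{2}$, so $(c_i,H_i)$ is a Poisson point process. Using that the local time $\ell^0$ of \eqref{def:Ltilde} is the (semimartingale/Skorokhod) local time of $|X^0|$ at $0$, the It\^o excursion measure of heights is $n(H>h)=1/h$ per unit $\ell^0$; after the change of variables $c=L^0=\beta\,\ell^0$ the pairs $(c_i,H_i)$ form a Poisson point process of intensity $\lambda\,h^{-2}\,dc\,dh$ with $\lambda=\tfrac{q}{\beta}=\tfrac{1-\beta}{2\beta}$. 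This intensity is scale invariant, so $Z$ is a self‑similar regenerative set. Classical results on covering a half–line by Poissonian intervals (Shepp; Fitzsimmons–Fristedt–Shepp) then give that $Z$ is a.s. nonempty iff $\int_{0}t^{-\lambda}\,dt<\infty$, i.e.\ iff $\lambda<1$, which is exactly $\beta>\tfrac13$; and that, when nonempty, $Z$ is the range of a stable subordinator of index $1-\lambda$, so $\dim_H Z=1-\lambda=\tfrac{3\beta-1}{2\beta}$ a.s. For $\beta\le\tfrac13$ the line is a.s.\ completely covered, $Z$ has no positive point, and $I=\emptyset$, giving the first assertion.

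Third I would transport the dimension along $I=\{\tau^0_x:x\in Z\}$. Here $\tau^0$ is the inverse local time of $X^0$, hence a stable subordinator of index $\tfrac12$ in the variable $x$, and $Z$ is the range of a stable subordinator $\zeta$ of index $1-\lambda$ in the local‑time variable. Since $Z$ is a regeneration set for the local‑time filtration of $X^0$, the increments of $\tau^0$ across the gaps of $Z$ are i.i.d.\ stable$(\tfrac12)$ and adapted at the regeneration points; conditioning at the successive points of $Z$ by the strong Markov property shows that $I$ is itself a self‑similar regenerative set and that the Laplace exponent of the underlying subordinator is the composition $q\mapsto(q^{1/2})^{\,1-\lambda}=q^{(1-\lambda)/2}$. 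Hence $I$ is the range of a stable subordinator of index $\tfrac{1-\lambda}{2}$, so $\dim_H I=\tfrac12\dim_H Z=\tfrac{3\beta-1}{4\beta}$ a.s. Self‑similarity of $\tau^0$ and $Z$ also gives $Z\cap(0,\varepsilon)\ne\emptyset$ for every $\varepsilon>0$ a.s., whence $I\cap(0,t)\ne\emptyset$ a.s.\ when $\beta>\tfrac13$, and a germ‑field $0$–$1$ law makes the dimension a.s.\ constant on $(0,t)$.

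The two delicate points I would treat carefully are: (a) pinning the constant $\lambda=\tfrac{1-\beta}{2\beta}$, where the height normalization $n(H>h)=1/h$ with respect to $\ell^0$ (rather than the naive $1/(2h)$) is precisely what moves the phase transition to the correct value $\beta=\tfrac13$ instead of $\tfrac15$; and (b) the transfer $\dim_H I=\tfrac12\dim_H Z$, for which $\tau^0$ and $Z$ are \emph{not} independent. The safe route for (b) is exactly the regeneration argument above: independence is only needed \emph{across the gaps of} $Z$, and there $\tau^0$ does have independent stable$(\tfrac12)$ increments, so the composition‑of‑exponents computation is legitimate without any global independence. As a consistency check, this dimension equals $\tfrac{2-\delta}{4}$ with $\delta=\tfrac{1-\beta}{\beta}$, matching the value announced for semi‑flat bifurcation times and the time‑reversal identity of Proposition~\ref{p:rep_semiflattime}.
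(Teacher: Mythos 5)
Your route is genuinely different from the paper's. The paper identifies the relevant level set directly as the zero set of the ${\rm BESQ}^\delta$ process $x\mapsto \cL(\tau^0_x,x)$ (via Theorem \ref{thm:representation_by_WN} and Lemma \ref{l:taurx}), reads off its dimension $\frac{2-\delta}{2}$, and then transfers to the time axis in one stroke using Hawkes' \emph{uniform} dimension theorem for the $\frac12$-stable subordinator $\tau^0$ \cite{hawkes1974uniform} — the word ``uniform'' is what makes the dependence between the level set and $\tau^0$ harmless, since the dimension-halving holds a.s. simultaneously for all Borel sets. Your first two steps are correct and self-contained: the identification $I=\{\tau^0_x: x\in Z\}$ up to a countable set, the covering description $Z^c=\bigcup_i(c_i,c_i+H_i)$, the intensity $\lambda\,h^{-2}\,dc\,dh$ with $\lambda=\frac{1-\beta}{2\beta}=\frac{\delta}{2}$ (your normalization check $n(H>h)=1/h$ is indeed the crux, and your value is consistent with the paper since $1-\lambda=\frac{2-\delta}{2}$), and the Shepp/Fitzsimmons--Fristedt--Shepp criterion, which also settles the critical case $\beta=\frac13$.

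The gap is in your transfer step (b), precisely at the point you flagged as delicate. Your claim that ``the increments of $\tau^0$ across the gaps of $Z$ are i.i.d. stable$(\tfrac12)$'' is false, and with it the composition-of-exponents computation as you justify it. Conditionally on $Z$, the negative excursions sitting inside a gap $(g,d)$ are conditioned on their height intervals covering $(g,d)$ without covering $d$ or any later point of $Z$; since the duration of an excursion is strongly dependent on its height (e.g. $E[D\mid H=h]\asymp h^2$), the conditional law of $\tau^0_d-\tau^0_g$ given $Z$ is \emph{not} that of an unconditioned $\frac12$-stable increment of size $d-g$. Equivalently, $E\bigl[e^{-q\tau^0_{\zeta_u}}\mid \zeta_u=x\bigr]\neq e^{-x\sqrt{2q}}$, which is exactly the identity the Bochner-type composition $q\mapsto (q^{1/2})^{1-\lambda}$ requires. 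The conclusion of the step is nevertheless true and repairable without independence: the strong Markov property at points of $I$ does show that $\sigma:=\tau^0\circ\zeta$ is a subordinator, and Brownian scaling (time scales by $c$, levels by $c^{1/2}$, hence the intrinsic local time of the $(1-\lambda)$-stable set $Z$ by $c^{(1-\lambda)/2}$) gives $\sigma_{su}\overset{d}{=}s^{2/(1-\lambda)}\sigma_u$, forcing $\sigma$ to be stable of index $\frac{1-\lambda}{2}$ — only the multiplicative constant in its exponent, irrelevant for the dimension, is affected by the dependence. Alternatively, replace the whole step by the paper's citation of Hawkes' uniform theorem, which is the cleanest fix.
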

\begin{proof}
Recall the definition of the inverse $\tau^0$ in \eqref{def:taurx}. Note that $\tau^0$ is a $\frac{1}{2}$-stable subordinator. By Theorem \ref{thm:representation_by_WN} or \cite[Theorem 1.3]{pitman2018squared}, the process $(\cL(\tau^0_x,x))_{x\ge 0}$ is a squared Bessel process of dimension $\delta=\frac{1-\beta}{\beta}$ starting at 0, therefore can hit $0$ at $x>0$ if and only if $\delta<2$, i.e. $\beta>\frac{1}{3}$.

 For every $s\in I$, $\cL(s,B_s) = 0$ because $B$ reaches its maximum at $s$. Also $\cL(\tau^0_{B_s},B_s)=0$, otherwise it would imply $L^0_s<B_s$ by the second statement of Lemma \ref{l:taurx} (iv) applied to  $r=0$ and $x=B_s$, which would contradict $s\in I$. Therefore $\cL(\tau^0_x,x)=0$ with $x=B_s>0$.  It already implies that $I$ is empty a.s. if  $\beta\le  \frac{1}{3}$. Moreover, $s\in \{\tau^0_{x-},\tau_x^0\}$ with $x=B_s$ since $L^0_s=B_s$. Hence \[
 I\subset \{\tau^0_{x-},\tau_x^0,\,:\, 0<x\le L^0_t,\, \cL(\tau^0_x,x)=0\}.
 \]

\noindent Conversely, if $\cL(\tau^0_x,x)=0$ for some $0<x\le L^0_t$, then we have $B_{\tau^0_x} = x$ by Lemma \ref{l:taurx} (ii) and $B_s\le x$ for all $s\le \tau^0_x$. Therefore $\tau^0_x\in I$ (recall that $t$ is fixed so $\tau_x^0<t$ a.s.) hence
\[
\{\tau_x^0\,:\, 0<x\le L^0_t,\, \cL(\tau^0_x,x)=0\}\subset I.
\]

\noindent Notice that the set of $x$ such that $\tau_{x-}^0<\tau_x^0$ is countable. Hence $I$ has the same Hausdorff dimension as the set on the left-hand side. The set of zeros of the process  $(\cL(\tau^0_x,x),\,x\ge 0)$ has Hausdorff dimension $\frac{2-\delta}{2}$. The result  follows from \cite[Theorem 4.1]{hawkes1974uniform}, which gives the dimension of the range of a Borel set under a $\frac{1}{2}$-stable subordinator.

\end{proof}

Recall that $\beta\in (0,1)$.
\begin{theorem}\label{t:hausdorff quadri times}
Semi-flat bifurcation times exist if and only if $\beta >\frac{1}{3}$. In this case, the set of semi-flat bifurcation times has Hausdorff dimension $\frac{3\beta-1}{4\beta}$ almost surely.
\end{theorem}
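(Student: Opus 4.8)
The plan is to deduce the statement from the two preceding propositions by a time-reversal argument, transferring the description of semi-flat bifurcation times in Proposition \ref{p:rep_semiflattime} to the forward set $I$ studied in Proposition \ref{p:repr semiflat}. Fix a rational $t>0$ and set $J_t:=\{s\in(0,t):\check{X}^{t,0}_s=0,\ B_s=\inf_{s\le u\le t}B_u\}$, so that by Proposition \ref{p:rep_semiflattime} the set of semi-flat bifurcation times is $\bigcup_{t\in\q_+}J_t$. I would introduce the time-reversed Brownian motion $\tilde B_v:=B_t-B_{t-v}$, $v\in[0,t]$, which is again a standard Brownian motion on $[0,t]$, and define $\tilde X_v:=-\check{X}^{t,0}_{t-v}$. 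First I would check, directly from the backward SDE \eqref{SDE_backward}, that $\tilde X$ is the forward skew Brownian motion driven by $\tilde B$ and started at $0$: substituting $s=t-v$ turns \eqref{SDE_backward} into $-\check X^{t,0}_{t-v}=-\tilde B_v+\beta\,\check\ell^{t,0}_{t-v}$, and since the symmetric local time at $0$ is invariant both under time reversal on $[t-v,t]$ and under the sign change $z\mapsto -z$, the quantity $\check\ell^{t,0}_{t-v}$ equals the local time at $0$ of $\tilde X$ on $[0,v]$. Thus $\tilde X_v=-\tilde B_v+\beta\tilde\ell_v$, i.e. $\tilde X$ solves \eqref{eq:SDE Xtr}--\eqref{def:L} with driving noise $\tilde B$ and $r=0$.

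Next I would translate the two defining conditions of $J_t$ through the substitution $w=t-s$. Since $\check X^{t,0}_s=-\tilde X_{t-s}$, the condition $\check X^{t,0}_s=0$ is equivalent to $\tilde X_w=0$; and since $B_u=B_t-\tilde B_{t-u}$, the condition $B_s=\inf_{s\le u\le t}B_u$ becomes $\tilde B_w=\sup_{0\le v\le w}\tilde B_v$. Writing $\tilde L:=\tilde X+\tilde B$ for the local-time process of $\tilde X$, the relation $\tilde X_w=0$ is equivalent to $\tilde L_w=\tilde B_w$, so that $J_t$ is the image under the isometry $s\mapsto t-s$ of exactly the set $\tilde I_t:=\{w\in(0,t):\tilde L_w=\tilde B_w=\sup_{0\le v\le w}\tilde B_v\}$ attached to the forward skew Brownian motion $\tilde X$. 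In particular $\dim_{\mathrm H}J_t=\dim_{\mathrm H}\tilde I_t$.

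It then remains to invoke Proposition \ref{p:repr semiflat}: since $(\tilde X,\tilde B)$ has the same law as the forward pair $(X^0,B)$ on $[0,t]$, the set $\tilde I_t$ is a.s. empty when $\beta\le\frac13$, and a.s. nonempty with Hausdorff dimension $\frac{3\beta-1}{4\beta}$ when $\beta>\frac13$; the same then holds for $J_t$. Running over the countably many $t\in\q_+$ and using the countable stability $\dim_{\mathrm H}(\bigcup_t J_t)=\sup_t\dim_{\mathrm H}J_t$, I would conclude that for $\beta\le\frac13$ the set of semi-flat bifurcation times is a.s. empty, while for $\beta>\frac13$ it is a.s. nonempty of Hausdorff dimension $\frac{3\beta-1}{4\beta}$.

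The main obstacle I anticipate is the rigorous justification of the time-reversal identity $\tilde X_v=-\check X^{t,0}_{t-v}$, and in particular the claim that the symmetric local time at $0$ is preserved under the combined time reversal and sign flip. I would make this precise either through the occupation-times representation of the symmetric local time, or, to avoid any subtlety with reversed filtrations, by passing through the smooth approximations $\Phi^n$ of Proposition \ref{p:for_back_SDE}, whose forward and backward flows are genuine inverse Brownian flows: the reversed approximate flow solves the approximate forward SDE driven by $\tilde B$, and one passes to the limit exactly as in Proposition \ref{p:for_back_SDE}. A secondary point to verify is that Proposition \ref{p:repr semiflat} is a statement about the law of the pair $(X^0,B)$, so that it transfers verbatim to $(\tilde X,\tilde B)$ simultaneously for all rational $t$.
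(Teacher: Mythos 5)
Your proposal is correct and follows essentially the same route as the paper: reduce via Proposition \ref{p:rep_semiflattime} to the set $J_t$ for fixed rational $t$, perform the time reversal $\check B_s=B_t-B_s$ (which turns $\check X^{t,0}$, up to a sign flip, into the forward skew Brownian motion driven by the reversed Brownian motion), recognize the set of Proposition \ref{p:repr semiflat}, and conclude by countable stability of Hausdorff dimension over $t\in\q_+$. Your write-up is in fact more detailed than the paper's (the verification of the reversed SDE, the invariance of the symmetric local time under reversal and sign change, and the measurability/law-transfer point are all left implicit there), and those verifications are sound.
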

\begin{proof}
 By Proposition \ref{p:rep_semiflattime}, it suffices to deal with the set 
\[
I:=\{s<t: \check{X}_s^{t,0}=0, \, B_s = \inf_{s\le u\le t} B_u \}
\]

 \noindent for fixed $t$ and compute its Hausdorff dimension if it is not empty. Set $\check{B}^t_s:= B_t-B_s$ for $s\in [0,t]$. The set $I$ can be rewritten as
 \[
 \{s<t\,:\, \check{X}^{t,0}_s=0,\, \check{B}_s = \sup_{s\le u\le t} \check{B}_u\}.
 \]

\noindent We apply Proposition \ref{p:repr semiflat}.
\end{proof}

\begin{remark}
    The bifurcation times are related to the bifurcation points studied in Section \ref{s:bifur}. Let $\cS$ denote the ${\rm BESQ}(2+\delta\,|_0\, \delta)$ flow driven by $\cW$ given in \eqref{def:W}. Recall from Section \ref{s:repre_skew_BM} that the skew Brownian motion $X^r$ is associated with the $\cS$-flow line starting from $(0,r)$. A similar description holds for $X^{s,r}$ using the flow line starting from the point $(\cL(s,B_s+r),B_s+r)$.  From this point of view, if $s$ is a bifurcation time, then $(\cL(s,B_s),B_s)$ is a bifurcation point. If it is a semi-flat bifurcation time, then $(\cL(s,B_s),B_s)$ is a bifurcation point as in Theorem \ref{t:bifur quadri} with $\cS^2=\cS$ and $\cS^1$ the ${\rm BESQ}(2\,|_0\, 0)$ flow there. Theorems \ref{orddimension_times} and \ref{t:hausdorff quadri times} give the Hausdorff dimension of the times when the curve $(\cL(s,B_s),B_s)$ visits these points.  Finally, the backward skew Brownian motion $\check{X}$ is obtained when replacing the flow $\cS$ with its dual.
\end{remark}

\appendix

\section{Hitting time of BESQ processes}

\begin{lemma}\label{distribution:hitting0}
Let $(S_x,x\ge 0)$ be a ${\rm BESQ}(\delta_1\, |_r\, \delta_2)$ process starting at $0$ with $\delta_1>0$, $\delta_2<2$ and $r>0$. Let $T:=\inf\{x\ge r: S_x=0\}$. Then $\frac{r}{T}$ has the beta distribution $\mathcal{B}(\frac{2-\delta_2}{2},\frac{\delta_1}{2})$. In particular, the distribution of $\frac{r}{T}$ does not depend on $r$. 
\end{lemma}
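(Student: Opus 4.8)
The plan is to split the process at the deterministic switching time $r$: first identify the law of $S_r$, then the law of the time needed to return to $0$ after $r$, and finally combine the two through the classical Gamma--Beta identity. The factor $r$ will cancel automatically, yielding the claimed $r$-independence for free.

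First I would record the marginal law of $S_r$. Since $(S_x,\,0\le x\le r)$ is a ${\rm BESQ}^{\delta_1}$ process started at $0$, its transition density from $0$ (see \cite[Chapter XI]{revuz2013continuous}) shows that $S_r$ has the Gamma law with shape $\tfrac{\delta_1}{2}$ and scale $2r$; equivalently $S_r\deq 2r\,G_1$ with $G_1\sim{\rm Gamma}(\tfrac{\delta_1}{2},1)$. Because $\delta_1>0$, we have $S_r>0$ almost surely, so the process genuinely has to come back down. Next, by the Markov property at the deterministic time $r$, conditionally on $S_r=a>0$ the process $(S_{r+x},\,x\ge 0)$ is a ${\rm BESQ}^{\delta_2}_a$ process independent of $(S_x,\,x\le r)$. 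As $\delta_2<2$ it hits $0$ almost surely, and the standard first-passage law for squared Bessel processes (derivable from the scaling relation $T_0({\rm BESQ}^{\delta_2}_a)\deq a\,T_0({\rm BESQ}^{\delta_2}_1)$ together with the scale function $x\mapsto x^{1-\delta_2/2}$, or read off the explicit density in \cite[Chapter XI]{revuz2013continuous}) gives
\[
T-r \;\deq\; \frac{a}{2\,G_2},\qquad G_2\sim{\rm Gamma}\!\left(\tfrac{2-\delta_2}{2},1\right),
\]
with $G_2$ independent of $G_1$.

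Combining the two steps, $T-r=\dfrac{S_r}{2G_2}=\dfrac{2rG_1}{2G_2}=\dfrac{rG_1}{G_2}$, whence
\[
T = r\,\frac{G_1+G_2}{G_2}, \qquad \frac{r}{T}=\frac{G_2}{G_1+G_2}.
\]
The factor $r$ cancels, which already proves the independence of $r$. Finally, by the classical Gamma--Beta relation, for independent $G_2\sim{\rm Gamma}(\tfrac{2-\delta_2}{2})$ and $G_1\sim{\rm Gamma}(\tfrac{\delta_1}{2})$ the ratio $\frac{G_2}{G_1+G_2}$ has the beta law $\mathcal{B}(\tfrac{2-\delta_2}{2},\tfrac{\delta_1}{2})$, which is exactly the asserted distribution.

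The main obstacle is the first-passage law of ${\rm BESQ}^{\delta_2}_a$, in particular making sure it is valid for the full range $\delta_2<2$ (including $\delta_2<0$, where one uses the absorbed version of the process from \eqref{eq:besselprocess negative}). If one prefers not to quote it, I would obtain it by solving $2x\,v''+\delta_2\,v'=\lambda v$ for the Laplace transform $v(x)=E_x[e^{-\lambda T_0}]$, or by checking directly that $t\mapsto \frac{(a/2)^{(2-\delta_2)/2}}{\Gamma((2-\delta_2)/2)}\,t^{\delta_2/2-2}e^{-a/(2t)}$ is the density of $T_0$; everything else reduces to the routine Gamma/Beta computation above.
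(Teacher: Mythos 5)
Your proof is correct and follows essentially the same route as the paper's: the same split at the switching time $r$, the same two distributional inputs ($S_r\deq 2rY$ with $Y\sim\Gamma(\tfrac{\delta_1}{2})$, and the hitting time of $0$ for ${\rm BESQ}^{\delta_2}_a$ distributed as $\tfrac{a}{2X}$ with $X\sim\Gamma(\tfrac{2-\delta_2}{2})$, valid for all $\delta_2<2$), and the same Gamma--Beta identity to conclude. The paper simply cites these facts from Revuz--Yor (Chapter XI, Corollary 1.4 and Exercise 1.23) and the G\"oing-Jaeschke--Yor survey rather than rederiving them, so no further justification of the first-passage law is needed.
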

\begin{proof}
The hitting time of $0$ by a ${\rm BESQ}^{\delta}_a$ process is distributed as $\frac{a}{2X}$ where $X\sim \Gamma(\frac{2-\delta}{2})$, see \cite[Exercise 1.23, Chapter XI]{revuz2013continuous} or \cite[Equation (15)]{going2003survey}. The r.v. $S_r$ is distributed as $2r Y$ where $Y\sim \Gamma(\frac{\delta_1}{2})$, see \cite[Corollary 1.4, Chapter XI]{revuz2013continuous}. We deduce that $\frac{r}{T}$ is distributed as $\frac{X}{X+Y}$ where $X\sim \Gamma(\frac{2-\delta_2}{2}) $ and $Y\sim \Gamma(\frac{\delta_1}{2})$ are taken independent and we thus obtain the result.     
\end{proof}

\begin{lemma}\label{l:conv_hitting0}
Let $\delta_1>0$, $\delta_2<2$, and $a,b$ be positive real numbers.  Suppose either
\begin{enumerate}[(i)]
    \item $(S_x,x\ge 0)$ is a ${\rm BESQ}(\delta_1\, |_a \, 0 \, |_{b}\, \delta_2)$ process starting at $0$,
    or
    \item $(S_x,x\ge -a)$ is a ${\rm BESQ}(\delta_1\, |_0 \, 0 \, |_{b}\, \delta_2)$ process starting at $0$.
\end{enumerate}
In both cases, let $T_{a,b}:=\inf\{y\ge b: S_x=0\}$. The conditional distribution of $T_{a,b}$ given $\{S_{b}>0\}$ converges as $a\to 0$ to the distribution of $\frac{b}{A}$, where $A$ has the beta distribution $\mathcal{B}(\frac{2-\delta_2}{2},1)$.
\end{lemma}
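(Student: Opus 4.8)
The plan is to reduce both cases to the same structure and then identify two independent ingredients whose combination produces the $\mathcal B(\frac{2-\delta_2}{2},1)$ law. In case (i) write $\xi:=S_a$ and note that, by \cite[Corollary 1.4, Chapter XI]{revuz2013continuous}, $\xi$ is distributed as the value at time $a$ of a ${\rm BESQ}^{\delta_1}_0$ process, so $\xi\to 0$ in probability as $a\to 0$; in case (ii) set $\xi:=S_0$, which has exactly the same law. In both cases the process then performs a ${\rm BESQ}^0$ phase of duration $\tau$ (with $\tau=b-a$ in case (i) and $\tau=b$ in case (ii), so $\tau\to b$), followed by a ${\rm BESQ}^{\delta_2}$ phase started at time $b$. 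By the Markov property at the fixed time $b$, conditionally on $S_b=s>0$ the hitting time $T_{a,b}-b$ of $0$ by the ${\rm BESQ}^{\delta_2}_s$ tail is, by the computation in the proof of Lemma \ref{distribution:hitting0} (see \cite[Exercise 1.23, Chapter XI]{revuz2013continuous}), distributed as $\frac{s}{2X}$ with $X\sim\Gamma(\frac{2-\delta_2}{2})$ independent of $S_b$. Hence it suffices to determine the limiting conditional law of $S_b$ given $\{S_b>0\}$.

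The heart of the argument, and the step I expect to be the main obstacle, is to show that the conditional law of $S_b$ given $\{S_b>0\}$ converges weakly, as $a\to 0$, to the exponential law with mean $2b$. I would exploit that on the ${\rm BESQ}^0$ phase the process is a nonnegative martingale, so that the Doob $h$-transform with $h(x)=x$ turns the ${\rm BESQ}^0$ flow killed at $0$ into a ${\rm BESQ}^4$ process: for $v>0$ and $g\ge 0$ one has $\mathbf{E}^0_v[\mathbf 1_{\{S_\tau>0\}}\,g(S_\tau)]=v\,\mathbf{E}^4_v[g(R_\tau)/R_\tau]$, where $R$ is a ${\rm BESQ}^4_v$ process. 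Letting the starting value $v=\xi$ tend to $0$ and using that a ${\rm BESQ}^4_0$ process has at time $\tau$ the gamma density $\frac{1}{(2\tau)^2}\,w\,e^{-w/(2\tau)}$, the numerator behaves like $\xi\,\frac{1}{(2\tau)^2}\int_0^\infty g(w)e^{-w/(2\tau)}\,\dd w$ and the survival probability (take $g\equiv 1$) like $\xi\,\frac{1}{2\tau}$. Averaging over $\xi$ through its size-biased law — which concentrates at $0$, its mean being $(\delta_1+2)a\to 0$ — the conditional density of $S_b$ converges pointwise to $\frac{1}{2b}e^{-w/(2b)}$, and Scheffé's lemma upgrades this to weak convergence. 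The technical care lies in justifying the interchange of the $v\to 0$ limit with the expectation, i.e. the continuity and local boundedness of $v\mapsto\mathbf{E}^4_v[g(R_\tau)/R_\tau]$ near $v=0$, which I would handle by the continuity of the ${\rm BESQ}^4$ entrance law together with dominated convergence; alternatively one can run the same estimate directly from the explicit ${\rm BESQ}^0$ transition density and the small-argument asymptotics $I_1(z)\sim z/2$.

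Finally I would assemble the pieces. Since $X\sim\Gamma(\frac{2-\delta_2}{2})$ is independent of $S_b$, the weak convergence of $S_b\mid\{S_b>0\}$ to $S_\infty\sim\mathrm{Exp}(\text{mean }2b)$ yields, by the continuous mapping theorem, the weak convergence of $T_{a,b}=b+\frac{S_b}{2X}$ to $b+\frac{S_\infty}{2X}$. Writing $S_\infty=2bE$ with $E\sim\Gamma(1)$ independent of $X$, this limit equals $b\,\frac{X+E}{X}$, so that $\frac{b}{T}=\frac{X}{X+E}$. The beta--gamma identity then gives $\frac{X}{X+E}\sim\mathcal B(\frac{2-\delta_2}{2},1)$, whence the limit of $T_{a,b}$ given $\{S_b>0\}$ is $\frac{b}{A}$ with $A\sim\mathcal B(\frac{2-\delta_2}{2},1)$, as claimed. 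Equivalently, the same conclusion follows by reading off $\lim_{a\to0}\mathbf{P}(T_{a,b}>u\mid S_b>0)=(b/u)^{(2-\delta_2)/2}$ for $u\ge b$ directly from the density limit of the second paragraph.
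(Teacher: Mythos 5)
Your proof is correct and follows the same overall skeleton as the paper's: identify the limiting conditional law of $S_b$ given $\{S_b>0\}$ as an exponential, apply the Markov property at time $b$ together with the representation $s/(2X)$, $X\sim\Gamma(\frac{2-\delta_2}{2})$, of the ${\rm BESQ}^{\delta_2}$ hitting time of $0$, and conclude with the beta--gamma algebra. The difference is in how the key convergence step is established: the paper simply invokes the ${\rm BESQ}$ semigroup (the explicit transition density) together with analyticity and boundedness of the Bessel function and dominated convergence --- i.e. exactly the ``alternative'' route you mention at the end of your second paragraph --- whereas your main argument runs through the Doob $h$-transform $h(x)=x$ turning killed ${\rm BESQ}^0$ into ${\rm BESQ}^4$, the ${\rm BESQ}^4$ entrance law, size-biasing of $\xi$ (whose size-biased mean $(\delta_1+2)a\to 0$ you compute correctly), and Scheff\'e's lemma. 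Both the $h$-transform identity and the resulting density limit $\frac{1}{2b}e^{-w/(2b)}$ are correct, so your argument is a legitimate and somewhat more self-contained substitute for the paper's terse appeal to Bessel-function asymptotics. One point worth flagging: you obtain the limit law of $S_b$ given $\{S_b>0\}$ as exponential with mean $2b$ (rate $\frac{1}{2b}$), while the paper's proof states ``rate parameter $1/b$''; your value is the correct one --- it is what the small-argument asymptotics $I_1(z)\sim z/2$ of the ${\rm BESQ}^0$ transition density gives, and it is the only value consistent with the paper's own limiting expression $\frac{Y+2bX}{2X}$ producing $\frac{b}{T}\sim\mathcal{B}(\frac{2-\delta_2}{2},1)$ --- so the ``$1/b$'' in the paper appears to be a typo that your computation silently corrects.
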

\begin{proof}
Applying Lemma \ref{distribution:hitting0}, we compute that 
\begin{align}\label{Sb_positive}
    \p(S_{b}>0)\sim \frac{a\delta_1}{2b} \quad \text{as}\quad a\to 0.
\end{align}
We first prove that, conditional on $S_{b}>0$, $S_{b}$ converges in distribution to an exponential random variable $Y$ with rate parameter $1/b$ as $a\rightarrow 0$. This result follows from the semi-group of BESQ$^\delta$ \cite[Corollary 1.4, Chapter XI]{revuz2013continuous} and the analyticity and boundedness of the Bessel function \cite[Section 5.7]{lebedev1972special}, which allows us to apply the dominated convergence theorem. By Markov property, 
\begin{align*}
    \p(T_{a,b}> x\mid S_{b}>0)=\e[\p_{S_{b}}(T>x-b)\mid S_{b}>0],
\end{align*}
where $T$ is the hitting time of $0$ by a ${\rm BESQ}^{\delta_2}_a$ process under $\p_a$. Since $T$ is distributed as $\frac{a}{2X}$ under $\p_a$ where $X\sim \Gamma(\frac{2-\delta_2}{2})$, $T_{a,b}$ given $\{S_{b}>0\}$ converges in distribution as $a\to 0$ to $\frac{Y+2bX}{2X}$, and we obtain the result.
\end{proof}

\section{Dimension of the graph of a BESQ flow line}

If $(X_t,\, t\ge 0)$ is a Brownian motion and 
$E\subseteq \r_+$ is a Borel set, then its graph 
under $X$ defined as
$$ Gr\ X(E) = \{(t,X_t):\ t\in E\} $$
has dimension
\begin{align}\label{BM:haus_dim}
    \dim Gr\ X(E) = \min \left( 2\dim E, \dim E + \frac{1}{2} \right)\; a.s.
\end{align}

The case $E=\r_+$ is the graph of the Brownian motion, which has Hausdorff dimension $\frac32$ \cite{taylor}. The general case can be deduced from \cite[Theorem 2.1 \& Theorem 2.3]{yimin1994dimension}. Since the law of a Bessel process is locally mutually absolutely continuous with respect to the law of the Brownian motion when it is away from $0$, equation \eqref{BM:haus_dim} still holds for the ${\rm BES}^\delta$ process when $\delta>0$, and therefore also for the ${\rm BESQ}^\delta$ process since the map $(t,x)\mapsto(t,x^2)$ is a diffeomorphism from $\r_+\times(\varepsilon,+\infty)$ to $\r_+\times(\varepsilon^2,+\infty)$ for all $\varepsilon>0$. When  $\delta\le 0$, $X$ is absorbed at $0$, so we need to replace $E$ with  $E\cap [0,T]$ in both sides of \eqref{BM:haus_dim}, where $T$ is the absorption time of $X$.

\begin{corollary}\label{dimension:BESQ_difference}
Let $(a,r) \in (0,\infty)\times \r$ and  $\cS^1$, $\cS^2$ be resp. a ${\rm BESQ}^{\delta_1}$ flow and a ${\rm BESQ}^{\delta_2}$ flow driven by $\cW$. Set $d:=\delta_2-\delta_1$. We suppose that $d\in (0,2)$. Define $T:=\inf\{x\ge r\,:\, \cS^1_{r,x}(a)=0\}$ and 
$$ \B := \{(b,x):\, \cS^1_{r,x}(a) = \cS^2_{r,x}(a) = b,\, x\le T \}.$$
Then $\B$ has Hausdorff dimension $\min(2-d,\frac{3-d}{2})$.
\end{corollary}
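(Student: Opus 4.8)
The plan is to recognise $\B$ as the graph of one flow line over the zero set of the difference of the two flow lines, and then to feed the known dimension of that zero set into the graph--dimension formula \eqref{BM:haus_dim}. First I would set $X_x:=\cS^1_{r,x}(a)$ and $D_x:=\cS^2_{r,x}(a)-\cS^1_{r,x}(a)$. By property ({\bf P}1) when $\delta_1\ge 0$, the process $D$ is a ${\rm BESQ}^d_0$ process on $[r,\infty)$ independent of $X$; when $\delta_1<0$ one first conditions on $T=\varphi(a,r)=t$, after which ({\bf P}3) makes $(D_x,\,r\le x\le t)$ a ${\rm BESQ}^d_0$ process independent of $X$ (the final dimension will not depend on $t$). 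Writing $Z:=\{x\in[r,T]:D_x=0\}$, the condition $\cS^1_{r,x}(a)=\cS^2_{r,x}(a)$ is precisely $D_x=0$, so $\B=\{(X_x,x):x\in Z\}=Gr\,X(Z)$. Since $d\in(0,2)$, the zero set of a ${\rm BESQ}^d$ process has Hausdorff dimension $\tfrac{2-d}{2}=1-\tfrac d2$ (as used in the proof of Proposition~\ref{p:repr semiflat}), and because $r$ is a.s.\ a right limit point of this zero set, a.s.\ $\dim\big(Z_\infty\cap[r,r+\eta]\big)=1-\tfrac d2$ for all $\eta>0$, where $Z_\infty$ is the full zero set of $D$.

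Substituting $\dim Z=1-\tfrac d2$ into \eqref{BM:haus_dim} gives the target value $\min\big(2(1-\tfrac d2),(1-\tfrac d2)+\tfrac12\big)=\min\big(2-d,\tfrac{3-d}{2}\big)$. The main obstacle is that \eqref{BM:haus_dim} applies only to base sets independent of $X$, whereas $Z=Z_\infty\cap[r,T]$ is coupled to $X$ through the cutoff $T$ (a functional of $X$), even though $D$, and hence $Z_\infty$, is independent of $X$. When $\delta_1<0$ this is harmless: conditioning on $T=t$ makes $t$ a constant and $Z=Z_\infty\cap[r,t]$ independent of $X$, so \eqref{BM:haus_dim} applies to $E=Z$ with $\dim Z=1-\tfrac d2$ and yields the answer at once. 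For $\delta_1\ge 0$ I would instead sandwich the random endpoint $T$ between deterministic ones.

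For the upper bound I would use $\B=Gr\,X(Z_\infty\cap[r,T])\subseteq\bigcup_{N\in\N}Gr\,X\big(Z_\infty\cap[r,N]\big)$. For each fixed $N$, the set $Z_\infty\cap[r,N]$ is independent of $X$ and has dimension at most $1-\tfrac d2$, so \eqref{BM:haus_dim} (with the $E\cap[r,T]$ convention when $\delta_1\le 0$) bounds the dimension of each piece by $\min\big(2-d,\tfrac{3-d}{2}\big)$; a countable union gives $\dim\B\le\min\big(2-d,\tfrac{3-d}{2}\big)$ a.s.

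For the lower bound I would use $T>r$ a.s.: along a sequence $\eta_k\downarrow 0$, on the event $\{T>r+\eta_k\}$ one has $Z\supseteq Z_\infty\cap[r,r+\eta_k]$, hence $\B\supseteq Gr\,X\big(Z_\infty\cap[r,r+\eta_k]\big)$. The base set $Z_\infty\cap[r,r+\eta_k]$ is independent of $X$ with dimension exactly $1-\tfrac d2$, so \eqref{BM:haus_dim} shows its graph has dimension $\min\big(2-d,\tfrac{3-d}{2}\big)$ a.s.; thus the lower bound holds on $\{T>r+\eta_k\}$, and since $\bigcup_k\{T>r+\eta_k\}$ has full probability it holds a.s. Combining the two bounds yields $\dim\B=\min\big(2-d,\tfrac{3-d}{2}\big)$. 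The steps demanding the most care are the justification that $\dim(Z_\infty\cap[r,r+\eta])=1-\tfrac d2$ for all $\eta>0$, and the clean decoupling of the $X$-measurable cutoff $T$ from the independent zero set $Z_\infty$ via the deterministic sandwiching.
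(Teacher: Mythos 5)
Your proof is correct and follows essentially the same route as the paper's: identify the difference $\cS^2_{r,\cdot}(a)-\cS^1_{r,\cdot}(a)$ as a ${\rm BESQ}^d_0$ process independent of $\cS^1_{r,\cdot}(a)$ via ({\bf P}1)/({\bf P}3), view $\B$ as the graph of its zero set (of dimension $\frac{2-d}{2}$) under $\cS^1_{r,\cdot}(a)$, and apply \eqref{BM:haus_dim}. Your deterministic sandwiching of the cutoff $T$ is a careful refinement of a step the paper leaves implicit (it simply conditions on $T$ and invokes \eqref{BM:haus_dim}), not a genuinely different argument.
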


\begin{proof}
By property ({\bf P}1) or ({\bf P}3), conditionally on $T$, $x\mapsto \cS^2_{r,x}(a)-\cS^1_{r,x}(a)$ is a squared Bessel process with dimension $d$ before time $T$, starting at position $0$ and independent of $\cS^1_{r,\cdot}(a)$. The set $\B$ can now be viewed as the graph of the zero set of $\cS^2_{r,\cdot}(a)-\cS^1_{r,\cdot}(a)$ under $\cS^1_{r,\cdot}(a)$ before time $T$. This zero set has Hausdorff dimension $\frac{2-d}{2}>0$.  The result then follows from equation \eqref{BM:haus_dim}.
\end{proof}

\bibliographystyle{abbrvurl}
\bibliography{skewBM}

\begin{thebibliography}{10}

\bibitem{aidekon2024infinite}
E.~A{\"\i}d{\'e}kon, Y.~Hu, and Z.~Shi.
\newblock An infinite-dimensional representation of the {Ray-Knight} theorems.
\newblock {\em Science China Mathematics}, 67(1):149--162, 2024.

\bibitem{aïdékon2023stochastic}
E.~A{\"\i}d{\'e}kon, Y.~Hu, and Z.~Shi.
\newblock The stochastic {Jacobi} flow.
\newblock {\em To appear in The Annals of Probability}, 2024.

\bibitem{arratia}
R.~A. Arratia.
\newblock {\em Coalescing Brownian motions on the line}.
\newblock University of Wisconsin-Madison, 1979.

\bibitem{barlow1999coalescence}
M.~Barlow, K.~Burdzy, H.~Kaspi, and A.~Mandelbaum.
\newblock Coalescence of skew brownian motions.
\newblock {\em S{\'e}minaire de Probabilit{\'e}s XXXV}, pages 202--205, 2001.

\bibitem{bertoin-legall00}
J.~Bertoin and J.-F. Le~Gall.
\newblock The {Bolthausen--Sznitman} coalescent and the genealogy of continuous-state branching processes.
\newblock {\em Probability theory and related fields}, 117:249--266, 2000.

\bibitem{borga}
J.~Borga.
\newblock The skew {Brownian} permuton: A new universality class for random constrained permutations.
\newblock {\em Proceedings of the London Mathematical Society}, 126(6):1842--1883, 2023.

\bibitem{burdzy2001local}
K.~Burdzy and Z.-Q. Chen.
\newblock Local time flow related to skew {Brownian} motion.
\newblock {\em The Annals of Probability}, 29(4):1693--1715, 2001.

\bibitem{burdzy2004lenses}
K.~Burdzy and H.~Kaspi.
\newblock Lenses in skew {Brownian} flow.
\newblock {\em The Annals of Probability}, 32(4):3085--3115, 2004.

\bibitem{carmona_besq}
P.~Carmona, F.~Petit, and M.~Yor.
\newblock Some extensions of the arc sine law as partial consequences of the scaling property of brownian motion.
\newblock {\em Probability Theory and Related Fields}, 100(1):1--29, 1994.

\bibitem{dawson-li12}
D.~A. Dawson and Z.~Li.
\newblock {Stochastic equations, flows and measure-valued processes}.
\newblock {\em The Annals of Probability}, 40(2):813 -- 857, 2012.

\bibitem{gloter2013distance}
A.~Gloter and M.~Martinez.
\newblock Distance between two skew {Brownian} motions as a {S.D.E.} with jumps and law of the hitting time.
\newblock {\em The Annals of Probability}, 41(3A):1628--1655, 2013.

\bibitem{Gloter2018BouncingSB}
A.~Gloter and M.~Martinez.
\newblock Bouncing skew {Brownian} motions.
\newblock {\em Journal of Theoretical Probability}, 31:319--363, 2018.

\bibitem{going2003survey}
A.~G{\"o}ing-Jaeschke and M.~Yor.
\newblock A survey and some generalizations of {Bessel} processes.
\newblock {\em Bernoulli}, 9(2):313--349, 2003.

\bibitem{gwynne_holden_sun}
E.~Gwynne, N.~Holden, and X.~Sun.
\newblock Joint scaling limit of a bipolar-oriented triangulation and its dual in the peanosphere sense, 2016.
\newblock \href {https://arxiv.org/abs/1603.01194} {\path{arXiv:1603.01194}}.

\bibitem{harrison1981skew}
J.~M. Harrison and L.~A. Shepp.
\newblock On skew {Brownian} motion.
\newblock {\em The Annals of probability}, pages 309--313, 1981.

\bibitem{hawkes1974uniform}
J.~Hawkes and W.~E. Pruitt.
\newblock Uniform dimension results for processes with independent increments.
\newblock {\em Zeitschrift f{\"u}r Wahrscheinlichkeitstheorie und Verwandte Gebiete}, 28(4):277--288, 1974.

\bibitem{itomckean}
K.~It\^o and H.~P. McKean.
\newblock {\em Diffusion Processes and their Sample Paths}.
\newblock Springer Berlin, Heidelberg, 1974.

\bibitem{kunita1990stochastic}
H.~Kunita.
\newblock {\em Stochastic flows and stochastic differential equations}, volume~24.
\newblock Cambridge university press, 1990.

\bibitem{lambert}
A.~Lambert.
\newblock The genealogy of continuous-state branching processes with immigration.
\newblock {\em Probability theory and related fields}, 122(1):42--70, 2002.

\bibitem{lejan_raimond}
Y.~Le~Jan and O.~Raimond.
\newblock Three examples of brownian flows on $\mathbb {R}$.
\newblock {\em Annales de l'I.H.P. Probabilit\'es et statistiques}, 50(4):1323--1346, 2014.

\bibitem{lebedev1972special}
N.~Lebedev and R.~Silverman.
\newblock {\em Special Functions and Their Applications}.
\newblock Dover Books on Mathematics. Dover Publications, 1972.

\bibitem{lejay2006constructions}
A.~Lejay.
\newblock On the constructions of the skew {Brownian} motion.
\newblock {\em Probability Surveys}, 3:413--466, 2006.

\bibitem{pitman2018squared}
J.~Pitman and M.~Winkel.
\newblock {Squared {Bessel} processes of positive and negative dimension embedded in {Brownian} local times}.
\newblock {\em Electronic Communications in Probability}, 23:1--13, 2018.

\bibitem{pitmanyor}
J.~Pitman and M.~Yor.
\newblock A decomposition of {Bessel} bridges.
\newblock {\em Zeitschrift f{\"u}r Wahrscheinlichkeitstheorie und verwandte Gebiete}, 59(4):425--457, 1982.

\bibitem{revuz2013continuous}
D.~Revuz and M.~Yor.
\newblock {\em Continuous martingales and {Brownian} motion}, volume 293.
\newblock Springer Science \& Business Media, 2013.

\bibitem{taylor}
S.~J. Taylor.
\newblock The {{\(\alpha\)}}-dimensional measure of the graph and set of zeros of a {Brownian} path.
\newblock {\em Proc. Camb. Philos. Soc.}, 51:265--274, 1955.

\bibitem{toth_werner}
B.~T{\'o}th and W.~Werner.
\newblock The true self-repelling motion.
\newblock {\em Probability Theory and Related Fields}, 111(3):375--452, 1998.

\bibitem{walsh1978diffusion}
J.~B. Walsh.
\newblock A diffusion with a discontinuous local time.
\newblock {\em Temps locaux, Astérisque}, 52-53:37--45, 1978.

\bibitem{Walsh1986AnIT}
J.~B. Walsh.
\newblock {\em An introduction to stochastic partial differential equations}.
\newblock Springer, 1986.

\bibitem{yimin1994dimension}
Y.~Xiao and H.~Lin.
\newblock Dimension properties of sample paths of self-similar processes.
\newblock {\em Acta Mathematica Sinica}, 10(3):289--300, 1994.

\end{thebibliography}

\end{document}